\DeclareMathOperator{\N}{\mathbb{N}}
\DeclareMathOperator{\Z}{\mathbb{Z}}
\DeclareMathOperator{\Q}{\mathbb{Q}}
\DeclareMathOperator{\R}{\mathbb{R}}
\DeclareMathOperator{\C}{\mathbb{C}}
\DeclareMathOperator{\kk}{k}
\DeclareMathOperator{\A}{\mathbb{A}}
\DeclareMathOperator{\Aut}{Aut}
\DeclareMathOperator{\Burn}{B}
\DeclareMathOperator{\Bir}{Bir}
\DeclareMathOperator{\Bs}{\mathcal{B}}
\DeclareMathOperator{\Cb}{\mathcal{C}_b}
\DeclareMathOperator{\Cbk}{\mathcal{C}_{b,k}}
\DeclareMathOperator{\CC}{\mathcal{C}}
\DeclareMathOperator{\conv}{Conv}
\DeclareMathOperator{\Div}{Div}
\DeclareMathOperator{\dist}{d}
\DeclareMathOperator{\Exc}{Exc}
\DeclareMathOperator{\F}{\mathbb{F}}
\DeclareMathOperator{\GL}{GL}
\DeclareMathOperator{\Gal}{Gal}
\DeclareMathOperator{\Hh}{\mathbb{H}}
\DeclareMathOperator{\id}{id}
\DeclareMathOperator{\Ind}{Ind}
\DeclareMathOperator{\Jonq}{Jonq}
\DeclareMathOperator{\Nb}{N}
\DeclareMathOperator{\NS}{NS}
\DeclareMathOperator{\PGL}{PGL}
\DeclareMathOperator{\Psaut}{Psaut}
\newcommand{\PP}{\mathbb{P}}
\DeclareMathOperator{\SL}{SL}
\DeclareMathOperator{\Lk}{Lk}
\DeclareMathOperator{\Min}{Min}
\DeclareMathOperator{\Fix}{Fix}
\DeclareMathOperator{\Spec}{Spec}
\DeclareMathOperator{\Hyp}{Hyp}
\DeclareMathOperator{\Hypt}{\tilde{Hyp}}
\newcommand{\dashmapsto}{\mapstochar\dashrightarrow}
\newtheorem{theorem}{Theorem}[section]
\newtheorem{lemma}[theorem]{Lemma}
\newtheorem{proposition}[theorem]{Proposition}
\newtheorem{corollary}[theorem]{Corollary}
\theoremstyle{definition}
\newtheorem{definition}[theorem]{Definition}
\newtheorem{example}[theorem]{Example}
\newtheorem{question}[theorem]{Question}
\newtheorem{fact}[theorem]{Fact}
\theoremstyle{remark}
\newtheorem{remark}[theorem]{Remark}
\numberwithin{equation}{section}
\begin{document}

\title{Survey on Cremona groups from a median geometric point of view}

\author{Anne Lonjou}
\address{Laboratoire de mathématiques d’Orsay, Université Paris-Saclay, 91405, Orsay, France}
\email{anne.lonjou@universite-paris-saclay.fr}
\thanks{Partially supported by MCIN /AEI /10.13039/501100011033 / FEDER through the spanish grant Proyecto PID2022-138719NA-I00, and by the french National Agency of Research (ANR) through the project GOFR ANR-22-CE40-0004.}

\subjclass[2020]{Primary 14E07; 20F65; 37F10}

\date{\today}

\begin{abstract}
	This expository article presents recent constructions of actions of Cremona groups on median graphs. It is aimed at both geometric group theorists and algebraic geometers.
\end{abstract}

\maketitle

\tableofcontents

\section{Introduction}

Groups of birational transformations of algebraic varieties, i.e. transformations given locally by quotients of polynomials, have a special place in 
algebraic geometry as the classification of algebraic varieties is up to birational transformations instead of isomorphisms (given locally by polynomials). One of the richest and interesting ones is the Cremona group, i.e., the group of birational transformations of the projective space.

Even if these groups are coming from algebraic geometry, geometric group theoretic methods have been particularly powerful in order to study Cremona groups of rank $2$ (see Subsection \ref{Subsection_State_art}). Nevertheless, until recently, these methods could not be used to study Cremona groups of higher ranks as no (non-trivial) action on geometric spaces where known.

The aim of this survey is to introduce gently recent constructions (\cite{Lonjou_Urech_cube_complexe}, \cite{GLU_Neretin}, \cite{Lonjou_Przytycki_Urech}, \cite{Cantat_Cornulier_Commensurating}) of actions of Cremona groups on median graphs (or CAT(0) cube complexes) to both algebraic geometers and geometric group theorists, as well as explaining which kind of results can be obtained from these actions.

On the other hand, having examples of groups that are huge (in the sense for instance that they are not finitely generated) and acting on geometrical objects not locally compact or of infinite dimension gives a motivation to try to extend the scope of classical results of geometric group theory.

Let us illustrate this briefly on a specific question that will be of interest along this survey.
Some birational transformations are conjugate (by a birational map) to an automorphism of a variety; they are called \emph{regularizable}.
One of the question that has lead to these constructions of median graphs is the following. 

\begin{question}\label{question_loc_elliptic}
	Let $G$ be a finitely generated subgroup of $\Bir(X)$ such that for any $g\in G$, $g$ is regularizable. Is $G$ regularizable, i.e., do there exist a variety $Y$ and a birational map $\varphi : Y \dashrightarrow X$ such that $G$ is conjugated to a subgroup of automorphisms of $Y$:  $\varphi^{-1}G\varphi\subset\Aut(Y)$?
\end{question}

As we will see in Section \ref{Section_Median_graphs_surfaces}, the plane Cremona group acts on a median graph, called the blow-up graph. Moreover, the Cremona transformations inducing a elliptic isometries on this graph, namely the ones fixing a vertex, are exactly the Cremona transformations that are regularizable.  
Hence, from a geometric group theoretic point of view, the question can be rephrased as follows: if a finitely generated group acts on a median graph purely elliptically, does the entire group fix a same vertex. This is a natural question from the theory of median graphs and several works have been done in this direction (see Subsection \ref{Subsection_purely_elliptic}). The answer to the above question is no in general, but it has been answered positively by \cite{GLU_Neretin} with the extra condition that the median graph is without infinite cubes. This has been motivated by the study of the action of the Neretin group and on the Cremona group over finite fields acting on median graphs without infinite cubes.

This expository article has been built on a mini-course given for early career mathematicians working in geometric group theory at ``Riverside workshop on geometric group theory'' in March 2023 and organized by Matthew Durham and Thomas Koberda. It also includes proofs or idea of proofs when considered better for the understanding. Everything in this survey can be found either in the classical literature or in the references given, except the construction of the rational blow-up graph constructed in \ref{Subsection_rational_blow_up graph}.

\subsection*{Acknowledgements}
The author would like to thank warmly Matthew Durham and Thomas Koberda to have organized this nice workshop, as well as the referees and Serge Cantat for their careful reading and comments that helped to strongly improve this survey. Finally, the author would like to thank strongly her co-authors Anthony Genevois and Christian Urech whose many joint works are presented here and are the heart of this survey. She thanks them also for the several discussions and references given.

\subsection*{Organization of the survey}
In order to make it the simplest possible for geometric group theorists, the scope of the constructions has been narrowed compared to the original articles. For instance, whenever possible, the constructions will be done in the context of quasi-projective varieties over algebraically closed fields. Nevertheless, one should keep in mind that these restrictions are only done to remove a bit of technicality but that one of the strengths of these constructions is that they allow to prove results for groups of birational transformations over arbitrary fields.

 Section \ref{Section_Median} is an introduction to median geometry. A large part of this section is inspired by the book \cite{Genevois_book_median}. Subsection \ref{Subsection_purely_elliptic} is dedicated to purely elliptic actions on median graphs. 
 Section \ref{Section_Cremona_groups}, largely inspired by the book \cite{Lamy_book}, introduces Cremona groups and is principally addressed to non algebraic geometers. A small state of the art on geometric group theoretic aspects of Cremona groups can be found in Subsection~\ref{Subsection_State_art}. 
In Section \ref{Section_Median_graphs_surfaces}, we present several recent constructions of actions of groups of birational transformations of surfaces on median graphs (\cite{Lonjou_Urech_cube_complexe}, \cite{GLU_Neretin}, \cite{Lonjou_Przytycki_Urech}) with some results that can be obtained from those actions. In Subsections \ref{Subsection_rational_blow_up graph} and \ref{Subsection_Jonquieres} we answer positively Question \ref{question_loc_elliptic} respectively for Cremona groups of rank $2$ over finite fields and for the Jonquières group.
Finally, in Section \ref{Section_median_graphs_higher_ranks} we present the construction of actions of groups of birational transformations of varieties of any dimension on median graphs, and some results that can be obtained, as well as a comparison with the related construction of \cite{Cantat_Cornulier_Commensurating} and \cite{Cornulier_FW} in Subsubsection \ref{subsubsection_Cornulier_Cantat}.

\section{Median geometry}\label{Section_Median}
In this Section we introduce basic material of median geometry with an accent in Subsection \ref{Subsection_purely_elliptic} on purely elliptic actions.

\subsection{CAT(0) cube complexes vs median graphs}\label{Subsection_CC_vs_median} In this subsection, we introduce the notions of CAT(0) cube complexes and median graphs and see how they are related. We then justify our choice of using the median vocabulary, which emphasis the combinatorial aspect rather than the CAT(0) one. There exists a large literature on this topic. You can find this material for instance in \cite{Bridson_Haefliger}, \cite{Sageev_lecturenotes}, \cite{Hagen_lecture_notes_CAT0_Median}, \cite{Genevois_book_median}.

\medskip
\subsubsection{Cube complexes}\label{subsec:cc}
A \emph{face} of an Euclidean cube $C$ is the convex hull of a non-empty subset of the set of vertices of the cube $C$.
 A \emph{cube complex} $\mathcal{C}$ is a union of finite dimensional Euclidean unit cubes glued together by isometries between some faces. We denote by $\mathcal{C}^0$ its vertex set. The cube complex is said \emph{of finite dimension} if it admits a global bound on the dimension of its cubes (in this case, its dimension is the maximal dimension of its cubes), otherwise it is \emph{of infinite dimension}. It is \emph{locally compact} if any vertex belongs to only finitely many edges. If no vertex of $\mathcal{C}$ belongs to an increasing sequence of cubes $(C_i)_{1\leq i\leq n}$, meaning that the dimension of $C_i$ is $i$ and $C_i$ is a face of $C_{i+1}$, then it is said \emph{locally of finite dimension}. Otherwise, the cube complex contains an \emph{infinite cube} meaning a copy of the graph whose vertices are the finitely supported sequences in $\{0,1\}^I$, for some infinite set $I$, and whose edges connect two sequences whenever they differ on a single coordinate.

 From now on, we assume cube complexes to be connected. A \emph{path}, in the 1-skeleton of a cube complex $\mathcal{C}$, between two vertices $x,y\in\mathcal{C}^0$ is a sequence of vertices of $\mathcal{C}$, $p=\left(x_0=x, x_1,\dots, x_n=y\right)$, such that two consecutive vertices are adjacent, i.e., connected by an edge. The integer $n$ is called the \emph{length} of this path, and it is denoted by $L(p)$. A \emph{string} between two points $x,y\in \mathcal{C}$ is a sequence of points of $\mathcal{C}$, $s=\left(x_0=x, x_1,\dots,x_n=y\right)$, such that two consecutive points $x_i,x_{i+1}$ belong to a common Euclidean cube. The integer $n$ is called the \emph{size} of the string, and it is denoted by $S(s)$. Note that the Euclidean distance $\lvert x_{i+1}-x_i\rvert$ between two successive points is well defined. The \emph{length} $L$ of this string will be the sum of the Euclidean distances between all pairs of consecutive points of this string: 
 \[L(s):=\sum_{i=0}^{n-1}\lvert x_{i+1}-x_i\rvert.\]
(Connected) cube complexes can be endowed with several natural metrics. Let us introduce the most classical ones. Let $\mathcal{C}$ be a (connected) cube complex.\begin{itemize}
\item The one usually used is the \emph{graph-metric} with the convention that any edge of the cube complex is isometric to the Euclidean interval $[0,1]$. It is denoted by $\dist$. The graph-metric between two vertices $x,y \in\mathcal{C}^0$ is defined as the infinimum of the lengths of all paths between $x$ and $y$:
\[\dist(x,y):= \inf\{L(p)\mid p \text{ is a path between } x \text{ and }y \}.\]

\item The \emph{Euclidean metric} is the one that will justify the name of CAT(0) cube complex even if in practice it is not so easy to deal with this metric. 
The \emph{Euclidean distance} between two points $x$ and $y$ of $\mathcal{C}$  is defined as the infinimum of the lengths of all strings between $x$ and $y$:
\[\dist_{Euc}(x,y):= \inf\{L(s)\mid s \text{ is a string between } x \text{ and }y \}.\]

\item The $\dist_\infty$ metric is the minimal size of all the strings between $x$ and $y$:
\[\dist_{\infty}(x,y):= \min\{S(s) \mid s \text{ is a string between } x \text{ and }y \}.\]
 \end{itemize}
 For instance, in Figure \ref{fig:cc}, $\dist(u,x_4)=4$, $\dist_{Euc}(u,x_4)= \sqrt{3} +1$ and $\dist_{\infty}(u,x_4)=2$.
  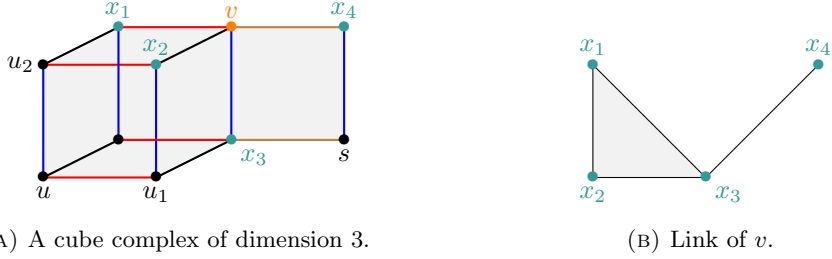
\begin{figure}
 	\begin{subfigure}{.45\textwidth}
 		\begin{center}
 			\begin{tikzpicture}
 				\fill[gray!10] (0,0) -- (1.5,0)-- (1.5,1.5) -- (0,1.5)-- cycle;
 				\fill[gray!10](1,0.5) -- (1,2)-- (2.5,2) -- (2.5,0.5)-- cycle;
 				\fill[gray!10] (0,0)  -- (0,1.5)-- (1,2) -- (1,0.5) -- cycle;
 				\fill[gray!10](1.5,0) -- (1.5,1.5)-- (2.5,2) -- (2.5,0.5)-- cycle;
 				\fill[gray!10] (2.5,2) -- (4,2) -- (4,0.5)-- (2.5,0.5)--cycle;
 				\draw[thick,red] (0,0) -- (1.5,0);
 				\draw[thick,red] (1.5,1.5) -- (0,1.5);
 				\draw[thick,blue] (1.5,0)-- (1.5,1.5);
 				\draw[thick,blue] (0,1.5)-- (0,0);
 				\draw[thick,blue] (1,0.5) -- (1,2);
 				\draw[thick,red] (1,2)-- (2.5,2); 
 				\draw[thick,blue] (2.5,2) -- (2.5,0.5);
 				\draw[thick,red] (2.5,0.5)-- (1,0.5);
 				\draw [thick](0,0) -- (1,0.5);
 				\draw [thick](1.5,0) -- (2.5,0.5);
 				\draw [thick](0,1.5) -- (1,2);
 				\draw [thick](1.5,1.5)  -- (2.5,2);
 				\draw[thick,brown] (2.5,2) -- (4,2);
 				\draw[thick,blue](4,2) -- (4,0.5);
 				\draw[thick,brown] (4,0.5)-- (2.5,0.5);
 				\draw(0,0) node {$\bullet$} node[below]{$u$};
 				\draw (1.5,0) node {$\bullet$} node[below]{$u_1$};
 				\draw (0,1.5) node {$\bullet$}node[left]{$u_2$};
 				\draw[color=teal!80] (1.5,1.5) node {$\bullet$}node[above]{$x_2$};
 				\draw (1,0.5) node {$\bullet$};
 				\draw[color=teal!80] (2.5,0.5) node {$\bullet$} node[below right]{$x_3$};
 				\draw[color=teal!80] (1,2) node {$\bullet$} node[above]{$x_1$};
 				\draw[color=orange] (2.5,2) node {$\bullet$} node[above]{$v$};
 				\draw[color=teal!80] (4,2) node {$\bullet$} node[above]{$x_4$};
 				\draw (4,0.5) node {$\bullet$} node[below]{$s$};
 			\end{tikzpicture}
 			\caption{A cube complex of dimension $3$.			\label{fig:cc}}
 		\end{center}
 	\end{subfigure}
 	\hfill
 	\begin{subfigure}{.45\textwidth}
 		\begin{center}
 			\begin{tikzpicture}
 				\fill[gray!10] (1,0.5) -- (1,2)-- (2.5,0.5)-- cycle;
 				\draw (1,0.5) -- (1,2)-- (2.5,0.5)-- cycle;
 				\draw (4,2)-- (2.5,0.5);
 				\draw[color=teal!80] (1,0.5) node {$\bullet$} node[below]{$x_2$};
 				\draw[color=teal!80] (2.5,0.5) node {$\bullet$} node[below right]{$x_3$};
 				\draw[color=teal!80]  (1,2) node {$\bullet$} node[above]{$x_1$};
 				\draw[color=teal!80]  (4,2) node {$\bullet$} node[above]{$x_4$};
 			\end{tikzpicture}
 			\caption{Link of $v$. \label{fig:ex_link_flag}}
 		\end{center}
 	\end{subfigure}	
 	\caption{Example of a flag link.	\label{fig:flag_link}}
 \end{figure}

 In a metric space $(X,\dist)$, a \emph{geodesic} between two points $x,y\in X$, is a continuous path $\gamma : [0,\dist(x,y)] \rightarrow X $ realizing the distance between $x$ and $y$: $\gamma(0)=x
 $, $\gamma(\dist(x,y))=y$ and for all $t_1,t_2\in[0,\dist(x,y)] $, 
 $\dist(\gamma(t_1),\gamma(t_2))=\lvert t_1 - t_2\rvert$. For instance, in Figure \ref{fig:cc}, the respective paths in the 1-skeleton of the cube complex $\left(u,u_1,x_3,v,x_4\right)$ and $\left(u,u_2,x_1,v,x_4\right)$ are geodesics between $u$ and $x_4$ for the graph-metric, while they are not geodesics for the Euclidean metric. On the other hand, for the Euclidean metric, the diagonal $[u,v]$ of the cube union $[v,x_4]$ is a geodesic.

\medskip
 \subsubsection{CAT(0) metric spaces}
A metric space $(X,\dist)$ is \emph{geodesic}
 if any pair of points can be joined by a geodesic. In such a space a \emph{geodesic triangle} $T$ will  be the data of a triple of points $x_1,x_2,x_3\in X$ together with a choice of three geodesics $[x_i,x_j]$ for $1\leq i<j\leq 3$.
 A comparison triangle  $\bar{T}$ in $\R^2$ is a triangle $\bar{x}_1\bar{x}_2\bar{x}_3$ of $\R^2$ whose sides have the same lengths as the ones of $T$: $\lvert \bar{x}_i-\bar{x}_j\rvert=\dist(x_i,x_j)$ for $1\leq i<j\leq 3$. Hence to any point $p$ of $T$, there exists a unique comparison point $\bar{p}$ on $\bar{T}$ defined as follows: 
 $\bar{p}\in [\bar{x}_i,\bar{x}_j]$ and $\lvert \bar{p}-\bar{x}_i\rvert=\dist(p,x_i)$ where $1\leq i<j\leq 3$ are such that $p\in [x_i,x_j]$. 
 
 \begin{definition}
 	A geodesic metric space $(X,\dist)$ is \emph{a CAT(0) space} if for any geodesic triangle $T$ and for any pairs of points $p_1,p_2\in T$, the following inequality holds:
 	\[\dist(p_1,p_2)\leq \lvert \bar{p}_1-\bar{p}_2\rvert,\] where $\bar{p}_1$ and $\bar{p}_2$ are the comparison points in $\bar{T}$ of $p_1$ and $p_2$.
 \end{definition}

 A \emph{CAT(0) cube complex} is a cube complex that is a CAT(0) metric space when endowed with the Euclidean metric. They first appeared in \cite{Gromov} as a nice family of examples of CAT(0) spaces. Since then, they have been largely studied on their own. Indeed, they are powerful tools to study groups even if their CAT(0) geometry is rarely used (see for instance \cite{Genevois_median_vs_cc}). In general, checking whether a geodesic metric space is CAT(0) or not can be very difficult, nevertheless there exists a nice combinatorial criterion in the case of cube complexes.

\medskip 
 \subsubsection{Combinatorial point of view on cube complexes}
 Consider $v$ a vertex of $\mathcal{C}$. The \emph{link} of $v$, denoted by $\Lk(v)$, is the simplicial complex whose vertices are the vertices of $\mathcal{C}$ that are adjacent to $v$ and a set of vertices $\{v_1,\dots, v_n\}$ in $\Lk(v)$ spans a {$(n-1)$-}simplex in $\Lk(v)$ if $\{v, v_1,\dots, v_n\}$ is contained in the set of vertices of a $n$-cube in $\mathcal{C}$ (see for instance Figures \ref{fig:flag_link} and \ref{fig:nonflag_link}). 
 The link of $v$ is called \emph{flag} if every finite set $\{v_1,\dots, v_n\}$ of vertices of $\Lk(v)$ that are pairwise adjacent, spans a $(n-1)$-simplex in $\Lk(v)$. For instance, the link of $v$ in Figure \ref{fig:ex_link_flag} is flag while the one in Figure \ref{fig:ex_link_non_flag} is not.

 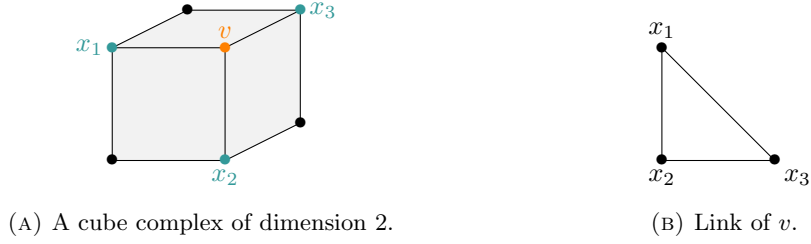
\begin{figure}
 	\begin{subfigure}{.45\textwidth}
 		\begin{center}
 			\begin{tikzpicture}
 				\fill[gray!10](0,0) -- (1.5,0)-- (1.5,1.5) -- (0,1.5)-- cycle;
 				\fill[gray!10] (1.5,0)-- (1.5,1.5) -- (2.5,2)-- (2.5,0.5) -- cycle;
 				\fill[gray!10] (1.5,1.5) -- (2.5,2)-- (1,2) -- (0,1.5) -- cycle;
 				\draw (0,0) -- (1.5,0)-- (1.5,1.5) -- (0,1.5)-- (0,0);
 				\draw  (1,2)-- (2.5,2) -- (2.5,0.5);
 				\draw (1.5,0) -- (2.5,0.5);
 				\draw (0,1.5) -- (1,2);
 				\draw (1.5,1.5)  -- (2.5,2);
 					\draw (0,0) node {$\bullet$};
 				\draw[color=teal!80] (1.5,0) node {$\bullet$}node[below]{$x_2$};
 				\draw[color=teal!80] (0,1.5) node {$\bullet$}node[left]{$x_1$};
 				\draw[color=orange]  (1.5,1.5) node {$\bullet$}node[above]{$v$};
 				\draw (2.5,0.5) node {$\bullet$};
 				\draw (1,2) node {$\bullet$};
 				\draw[color=teal!80] (2.5,2) node {$\bullet$} node[right]{$x_3$};
 			\end{tikzpicture}
 			\caption{A cube complex of dimension $2$.			\label{fig:cc2}}
 		\end{center}
 	\end{subfigure}
 	\hfill
 	\begin{subfigure}{.45\textwidth}
 		\begin{center}
 			\begin{tikzpicture}
 				\draw(1,0.5) node {$\bullet$} node[below]{$x_2$};
 				\draw(2.5,0.5) node {$\bullet$} node[below right]{$x_3$};
 				\draw (1,2) node {$\bullet$} node[above]{$x_1$};
 				\draw (1,0.5) -- (1,2)-- (2.5,0.5)-- cycle;
 			\end{tikzpicture}
 			\caption{Link of $v$.			\label{fig:ex_link_non_flag}}
 		\end{center}
 	\end{subfigure}	
 	\caption{Example of a non-flag link.	\label{fig:nonflag_link}}
 \end{figure}

By a theorem of Mikhail Gromov \cite[Section 4.2.C]{Gromov} in the finite dimensional case, and by Ian Leary \cite{Leary} in the infinite-dimensional case, we have the following useful criterion that will be taken in what follows as definition of CAT(0) cube complexes, as it is usually the one used in practice.

\begin{theorem}[\cite{Gromov}, \cite{Leary}]\label{thm_ccCAT((0))}
A cube complex is CAT(0) if and only if it is simply connected and  the link of any vertex is flag.
\end{theorem}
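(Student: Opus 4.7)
The plan is to prove both directions using Gromov's link criterion as the bridge between local combinatorics and local metric geometry, combined with a Cartan--Hadamard type theorem for the global-to-local passage.

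For the forward direction, assume $\mathcal{C}$ is CAT(0) with the Euclidean metric. CAT(0) spaces are contractible — via geodesic retraction to a basepoint, which is well-defined because geodesics are unique — so $\mathcal{C}$ is simply connected. Any CAT(0) space is trivially locally CAT(0) at every point, in particular at every vertex $v$ of $\mathcal{C}$. Gromov's link criterion says that local CAT(0) at $v$ is equivalent to $\Lk(v)$ being CAT(1) in its all-right spherical metric, and for all-right spherical complexes the CAT(1) property is well-known to be equivalent to being a flag simplicial complex. Hence every link is flag.

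The converse is a two-step argument. The first step is the same chain of equivalences run backwards: flag links make each $\Lk(v)$ CAT(1) in the all-right spherical metric, which by Gromov's link criterion gives local CAT(0) at every vertex; away from vertices local CAT(0) is automatic since locally the space is a Euclidean cube. The second step is a Cartan--Hadamard theorem: a simply connected, complete, locally CAT(0) geodesic metric space is globally CAT(0). Applied to $\mathcal{C}$ this finishes the proof.

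The hard part — and where Leary's extension of Gromov's original finite-dimensional argument enters — is the infinite-dimensional case. When $\mathcal{C}$ contains an infinite cube or has unbounded local dimension, the Euclidean metric is no longer automatically complete, local compactness fails, and the classical Cartan--Hadamard theorem does not directly apply. The way around this is to reduce any comparison question for a geodesic triangle $T$ in $\mathcal{C}$ to a finite-dimensional subcomplex containing $T$: the union of cubes traversed by the three sides of $T$ is a finite-dimensional, hence complete and locally compact, subcomplex on which the classical Gromov argument applies. I would spend most of the effort verifying that geodesics in an infinite-dimensional cube complex with flag links are indeed supported in finite-dimensional subcomplexes, so that the CAT(0) comparison inequality transfers from each such subcomplex to the ambient space without loss.
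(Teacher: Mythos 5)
The paper does not actually prove Theorem \ref{thm_ccCAT((0))}: it imports it as a black box from \cite{Gromov} (finite-dimensional case) and \cite{Leary} (infinite-dimensional case) and thereafter uses it as the working definition of a CAT(0) cube complex, so there is no in-paper argument to compare yours against; I can only measure your sketch against the cited sources. Your finite-dimensional outline is the standard one and is sound modulo the usual black boxes: contractibility of CAT(0) spaces, Gromov's lemma that an all-right spherical complex is CAT(1) if and only if it is flag, the link condition, and the Cartan--Hadamard theorem (see \cite{Bridson_Haefliger}). One prerequisite you leave implicit even there: Cartan--Hadamard needs $(\mathcal{C},\dist_{Euc})$ to be a complete geodesic space, which for finite-dimensional unit cube complexes follows from Bridson's theorem on complexes with finitely many isometry types of cells, not from local compactness (which can already fail in dimension $2$).

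The genuine gap is in your infinite-dimensional step. First, before you can speak of a geodesic triangle $T$ you must know that $\dist_{Euc}$ is a geodesic (and complete) metric; Bridson's theorem no longer applies when cube dimensions are unbounded, and establishing this is a substantial part of Leary's contribution rather than a formality. Second, your reduction is close to circular: you propose to work with ``the union of cubes traversed by the three sides of $T$,'' but the existence of those sides, and the fact that a geodesic meets only finitely many cubes, is precisely what is in doubt in the non-locally-compact, possibly incomplete setting. Third, ``finite-dimensional, hence complete and locally compact'' is false as stated: infinitely many squares glued along a common vertex form a $2$-dimensional complex that is not locally compact (completeness still holds, but via Bridson's theorem, not compactness). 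Finally, even granting a finite-dimensional subcomplex $K$ containing $T$, a comparison inequality verified inside $K$ transfers to $\mathcal{C}$ only if $K$ is isometrically embedded (for instance convex), which also requires proof. A cleaner route, closer to \cite{Leary}, is to first establish completeness and existence of geodesics directly (for example by exhausting by finite-dimensional convex subcomplexes, or by exploiting the combinatorial/median structure) and only then run the local-to-global argument.
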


 For instance, the cube complex of Figure \ref{fig:cc} is CAT(0) while the one of Figure \ref{fig:cc2} is not.

 Note that with the above definition-theorem, the non-positive curvature of a cube complex can be understood only by its combinatorial structure. In what follows we will never use the Euclidean metric, but most of the time the graph metric.
 We will see in the next subsection that $1$-skeletons of CAT(0) cube complexes are median graphs and that indeed they contain already all the information needed for our purpose.

\medskip

\subsubsection{Median graphs}
In this survey, graphs are always assumed to be connected and \emph{simplicial}: no self-loops nor multiple edges are allowed.

\begin{definition} Let $X$ be a graph and $x_1,x_2,x_3 \in X$ three vertices. A \emph{median point of $x_1$, $x_2$, $x_3$}  is a vertex $m \in X$ satisfying
	\[d(x_i,x_j)= d(x_i,m)+d(m,x_j) \text{ for all } i \neq j.\]
	A graph is \emph{median} if any triple of vertices admits a unique median point.
\end{definition} 

Trees are the most basic examples of median graphs. Indeed, three points $x_1,x_2,x_3$ of a tree always define a tripod. The center of the tripod, i.e., the only vertex that is at the intersection of the three geodesics $c=[x_1,x_2]\cap[x_2,x_3]\cap[x_3,x_1]$ is the median point (see for instance Figure \ref{fig:median_tree}). Another important family of examples is the one of \emph{hypercubes} that are 1-skeletons of cubes (see for instance Figure \ref{fig:median_hypercube}).
If we remove the point $m$ of the hypercube of Figure \ref{fig:median_hypercube}, we obtain a graph which is not median anymore: the points $x_1,x_2,x_3$ do not have a median point (see Figure \ref{fig:non_median_half_hypercube}). In the complete bi-partite graph $K_{2,3}$ there exists three points admitting two median points (see Figure \ref{fig:non_median_bipartite}), so $K_{2,3}$ is not a median graph.

\begin{figure}
	\begin{subfigure}{.45\textwidth}
		\begin{center}
			\begin{tikzpicture}				
								\draw (-1,1) -- (0,0) -- (2,0) -- (3,1);
				\draw (-2,-0.5) -- (0,0)-- (1,1);
				\draw (-1,1) -- (0,0)-- (1,1);
				\draw[color=teal!80] (-1,1) node {$\bullet$}node[above]{$x_1$};
				\draw[color=teal!80] (-2,-0.5) node{$\bullet$} node[below]{$x_2$};
				\draw[color=teal!80] (2,0) node{$\bullet$} node[below]{$x_3$};
				\draw[color=orange] (0,0) node{$\bullet$} node[below]{$m$};
			\end{tikzpicture}
			\caption{Trees: $m$ is the median point of $x_1$, $x_2$ and $x_3$.\label{fig:median_tree}}
		\end{center}
	\end{subfigure}
	\hfill
	\begin{subfigure}{.45\textwidth}
		\begin{center}
			\begin{tikzpicture}
					\draw (0,0) -- (1.5,0)-- (1.5,1.5) -- (0,1.5)-- (0,0);
				\draw (1,0.5) -- (1,2)-- (2.5,2) -- (2.5,0.5)-- (1,0.5);
				\draw (0,0) -- (1,0.5);
				\draw (1.5,0) -- (2.5,0.5);
				\draw (0,1.5) -- (1,2);
				\draw (1.5,1.5)  -- (2.5,2);
				\draw[color=teal!80] (0,0) node {$\bullet$} node[left]{$x_1$};
				\draw (1.5,0) node {$\bullet$};
				\draw (0,1.5) node {$\bullet$};
				\draw (1.5,1.5) node {$\bullet$};
				\draw[color=orange] (1,0.5) node {$\bullet$}node[above left]{$m$};
				\draw[color=teal!80] (2.5,0.5) node {$\bullet$} node[right]{$x_2$};
				\draw[color=teal!80] (1,2) node {$\bullet$} node[above]{$x_3$};
				\draw (2.5,2) node {$\bullet$};
			\end{tikzpicture}
			\caption{A hypercube.			\label{fig:median_hypercube}}
		\end{center}
	\end{subfigure}	
	\caption{Examples of median graphs.	\label{fig:median_graph}}
\end{figure}
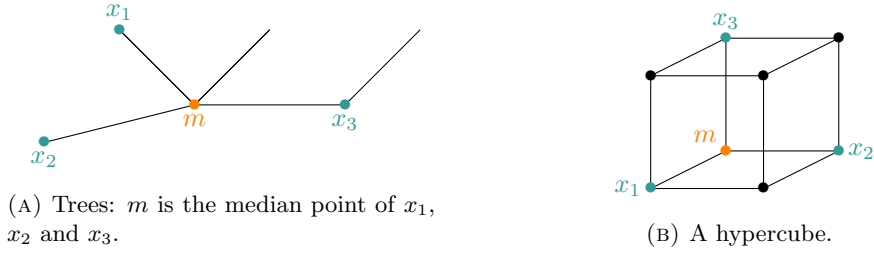

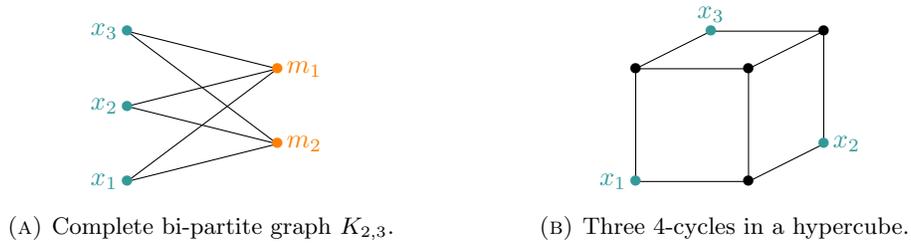
\begin{figure}
	\begin{subfigure}{.45\textwidth}
		\begin{center}
			\begin{tikzpicture}				
				\draw (0,0) -- (2,0.5) -- (0,1) -- (2,1.5) -- (0,2);
				\draw (0,0) -- (2,1.5);
				\draw (0,2) -- (2,0.5);
				\draw[color=teal!80] (0,0) node {$\bullet$} node[left]{$x_1$};
				\draw[color=teal!80] (0,1) node {$\bullet$} node[left]{$x_2$};
				\draw[color=teal!80] (0,2) node {$\bullet$} node[left]{$x_3$};
				\draw[color=orange] (2,1.5) node {$\bullet$} node[right]{$m_1$};
				\draw[color=orange] (2,0.5) node {$\bullet$} node[right]{$m_2$};
			\end{tikzpicture}
			\caption{Complete bi-partite graph $K_{2,3}$.	\label{fig:non_median_bipartite}}
		\end{center}
	\end{subfigure}
	\hfill
	\begin{subfigure}{.45\textwidth}
		\begin{center}
			\begin{tikzpicture}
				\draw (0,0) -- (1.5,0)-- (1.5,1.5) -- (0,1.5)-- (0,0);
				\draw  (1,2)-- (2.5,2) -- (2.5,0.5);
				\draw (1.5,0) -- (2.5,0.5);
				\draw (0,1.5) -- (1,2);
				\draw (1.5,1.5)  -- (2.5,2);
				\draw[color=teal!80] (0,0) node {$\bullet$} node[left]{$x_1$};
				\draw (1.5,0) node {$\bullet$};
				\draw (0,1.5) node {$\bullet$};
				\draw (1.5,1.5) node {$\bullet$};
				\draw[color=teal!80] (2.5,0.5) node {$\bullet$} node[right]{$x_2$};
				\draw[color=teal!80] (1,2) node {$\bullet$} node[above]{$x_3$};
				\draw (2.5,2) node {$\bullet$};
			\end{tikzpicture}
			\caption{Three $4$-cycles in a hypercube.	\label{fig:non_median_half_hypercube}}
		\end{center}
	\end{subfigure}	
	\caption{Example of non-median graphs.	\label{fig:non_median_graph}}
\end{figure}

\begin{theorem}[\cite{Chepoi_mediangraphs, Gerasimov_fixed_point_free_action, roller_thesis}]\label{thm:MedianVsCC}
	A graph is median if and only if it is the one-skeleton of 
	a CAT(0) cube complex. 
\end{theorem}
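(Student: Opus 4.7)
The plan is to prove the two implications separately, using the theory of hyperplanes as the central tool in each direction.

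For the direction $(\Leftarrow)$, let $\mathcal{C}$ be a CAT(0) cube complex with 1-skeleton $X$. I would first develop the theory of hyperplanes. Define an equivalence relation on the edges of $\mathcal{C}$ by declaring two edges to be parallel if they are opposite sides of a square, and take the transitive closure; the equivalence classes are the \emph{hyperplanes}. The key structural result, whose proof uses the flag condition and simple connectivity from Theorem \ref{thm_ccCAT((0))}, is that removing the open midcubes of any hyperplane disconnects $\mathcal{C}$ into exactly two components, each of which is convex in the graph metric (the two \emph{halfspaces}). Moreover, $\dist(x,y)$ equals the number of hyperplanes separating $x$ from $y$. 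Given three vertices $x_1,x_2,x_3$, for each hyperplane $h$ exactly one of its two halfspaces contains at least two of the $x_i$'s; I would then show that the intersection of all these ``majority'' halfspaces consists of a single vertex, the median $m$. Uniqueness is automatic since distinct vertices are separated by some hyperplane; existence reduces to a finite Helly-type property for halfspaces, which follows from their pairwise convexity and the observation that any two majority halfspaces must meet (they share at least one $x_i$).

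For the direction $(\Rightarrow)$, let $G$ be a median graph. I would construct a cube complex $\mathcal{C}$ whose 1-skeleton is $G$ by gluing a combinatorial $k$-cube onto every subset of $2^k$ vertices that induces the 1-skeleton of a $k$-cube. By Theorem \ref{thm_ccCAT((0))}, it suffices to check that $\mathcal{C}$ is simply connected and that every link is flag. The flag condition would be proved by induction on $n$: given pairwise adjacent $v_1,\dots,v_n\in\Lk(v)$, I would use the median operation on triples $(v_i,v_j,v_k)$ to produce, inductively, the $2^n$ vertices of a cube having $v$ as one corner and the $v_i$'s as neighbors of $v$. Simple connectivity I would establish by showing that any combinatorial loop $\gamma$ in $G$ can be filled in by squares: taking a vertex $x$ minimizing $\sum_{y\in\gamma}\dist(x,y)$ and replacing short arcs of $\gamma$ by geodesics through medians with $x$ performs a ``disk filling by squares'' and shortens $\gamma$ until it bounds a square or is trivial.

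The main obstacle is in the first direction: showing that hyperplanes separate $\mathcal{C}$ into exactly two convex components is the crux and requires a careful argument by contradiction, using minimal disk fillings (available thanks to simple connectivity) of a hypothetical loop crossing a hyperplane only once, then invoking the flag condition at an interior vertex of the disk to derive a reduction. Once this is in place, both the distance formula and the construction of the median become essentially formal. The flag verification in the reverse direction is technically delicate but reduces to iterated, well-behaved applications of the median property.
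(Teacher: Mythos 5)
The paper does not actually prove this theorem: it is imported from the literature (Chepoi, Gerasimov, Roller) with only the statement used later, so there is no in-paper argument to compare yours against. Your sketch follows the standard route of those references and of Sageev's lecture notes --- hyperplanes and halfspaces for the passage from CAT(0) cube complexes to median graphs, and iterated medians plus square-filling for the converse --- and the overall architecture is sound. You correctly identify the two-sidedness and convexity of halfspaces (via minimal disk diagrams and the flag condition) as the crux of one direction, and the flag condition for links (via an inductive median construction of the missing cube vertices) as the delicate point of the other.

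One step, as written, is not justified. You claim that existence of the median, i.e.\ non-emptiness of the intersection of the majority halfspaces, ``follows from their pairwise convexity'' via a finite Helly-type property. Pairwise convexity of pairwise-intersecting subgraphs does not imply a common vertex in a general graph (three consecutive-vertex arcs in a $6$-cycle are convex and pairwise intersecting with empty triple intersection); the Helly property for halfspaces of a CAT(0) cube complex --- the analogue of Proposition \ref{prop:Helly} --- is itself a substantive lemma that must be proved from the cube-complex structure, and you cannot invoke the median-graph version without circularity, since medianness of the $1$-skeleton is exactly what you are establishing. You also need the reduction from the (possibly infinite) family of all hyperplanes to the finitely many that separate some pair $x_i,x_j$. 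The standard fixes are either to prove directly that pairwise-intersecting halfspaces globally intersect (take a vertex of $H_1^+\cap H_2^+$ at minimal distance from $H_3^+$ and use the local structure of hyperplane carriers to derive a contradiction), or to bypass Helly by constructing the median as the gate of $x_3$ in the interval between $x_1$ and $x_2$. With that lemma supplied, the rest of your outline --- uniqueness via separating hyperplanes, the distance formula, the inductive cube construction, and the reduction of loops by squares --- is the standard and correct development; the simple-connectivity step is the vaguest part of your write-up but the intended square-pushing argument does go through.
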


 Indeed, as explained in \cite{Genevois_median_vs_cc}, for the purpose of geometric group theory we more often work with the median geometry than with the CAT(0) geometry. Hence, starting from now on we will speak about ``median graphs'' instead of ``CAT(0) cube complexes'' to emphasis the phenomenon even if in the literature the term ``CAT(0) cube complex''. Nevertheless, the cubical structure is always present; so probably a better name would be median cube complexes... 
 
 In a median graph, we will call \emph{cube} the 1-skeleton of a cube.
We call \emph{cube completion} of a median graph the cube complex obtained by filling the cubes of the graph. The \emph{dimension} of a median graph is the one of its cube completion. Hence, by analogy we will say that a median graph is respectively of finite dimension, infinite dimension, locally of finite dimension, locally compact when the cube completion will be.
The \emph{cubical subdivision} of a median graph $\mathcal{G}$ is the graph whose vertices are the cubes of $\mathcal{G}$ and whose edges connect two cubes $C_1,C_2$ if one contains the other one and $\lvert \dim(C_1)-\dim(C_2)\rvert =1$ (see Figure \ref{Fig_ex} for an example). \begin{figure}
	\begin{subfigure}{.45\textwidth}
		\begin{center}
			\begin{tikzpicture}
				\fill[gray!10] (0,0) -- (1.5,0)-- (1.5,1.5) -- (0,1.5)-- cycle;
				\fill[gray!10](1,0.5) -- (1,2)-- (2.5,2) -- (2.5,0.5)-- cycle;
				\fill[gray!10] (0,0)  -- (0,1.5)-- (1,2) -- (1,0.5) -- cycle;
				\fill[gray!10](1.5,0) -- (1.5,1.5)-- (2.5,2) -- (2.5,0.5)-- cycle;
				\fill[gray!10] (2.5,2) -- (4,2) -- (4,0.5)-- (2.5,0.5)--cycle;
				\draw (0,0) -- (1.5,0);
				\draw (1.5,1.5) -- (0,1.5);
				\draw (1.5,0)-- (1.5,1.5);
				\draw (0,1.5)-- (0,0);
				\draw[dotted] (1,0.5) -- (1,2);
				\draw (1,2)-- (2.5,2); 
				\draw (2.5,2) -- (2.5,0.5);
				\draw[dotted] (2.5,0.5)-- (1,0.5);
				\draw [dotted](0,0) -- (1,0.5);
				\draw (1.5,0) -- (2.5,0.5);
				\draw (0,1.5) -- (1,2);
				\draw (1.5,1.5)  -- (2.5,2);
				\draw (2.5,2) -- (4,2);
				\draw(4,2) -- (4,0.5);
				\draw (4,0.5)-- (2.5,0.5);
				\draw(0,0) node {$\bullet$};
				\draw(1.5,0) node {$\bullet$};
				\draw(0,1.5) node {$\bullet$};
				\draw(1.5,1.5) node {$\bullet$};
				\draw(1,0.5) node {$\bullet$};
				\draw(2.5,0.5) node {$\bullet$};
				\draw(1,2) node {$\bullet$};
				\draw(2.5,2) node {$\bullet$};
				\draw(4,2) node {$\bullet$};
				\draw (4,0.5) node {$\bullet$};
			\end{tikzpicture}
		\end{center}
	\end{subfigure}
	\hfill
	\begin{subfigure}{.45\textwidth}
		\begin{center}
			\begin{tikzpicture}
				\fill[gray!10] (0,0) -- (1.5,0)-- (1.5,1.5) -- (0,1.5)-- cycle;
				\fill[gray!10](1,0.5) -- (1,2)-- (2.5,2) -- (2.5,0.5)-- cycle;
				\fill[gray!10] (0,0)  -- (0,1.5)-- (1,2) -- (1,0.5) -- cycle;
				\fill[gray!10](1.5,0) -- (1.5,1.5)-- (2.5,2) -- (2.5,0.5)-- cycle;
				\fill[gray!10] (2.5,2) -- (4,2) -- (4,0.5)-- (2.5,0.5)--cycle;
				\draw[dotted] (0.75,0) -- (1.75, 0.5)-- (1.75, 2);
				\draw (0.75,0) -- (0.75,1.5) -- (1.75, 2);
				\draw[dotted]  (0.5,1.75)--(0.5,0.25) -- (2, 0.25);
				\draw (0.5,1.75)-- (2,1.75) --  (2, 0.25);
				\draw (0,0.75) --  (1.5,0.75)-- (2.5, 1.25) -- (4,1.25);
				\draw (3.25,0.5) -- (3.25, 2);
				\draw[dotted] (0,0.75)-- (1,1.25)-- (2.5, 1.25);
				\draw[dotted] (0.75,0.75)--(1.75,1.25); 
				\draw[dotted] (0.5, 1)--(2, 1);
				\draw[dotted] (1.25, 0.25)--(1.25, 1.75);
				\draw (0,0) -- (1.5,0);
				\draw (1.5,1.5) -- (0,1.5);
				\draw (1.5,0)-- (1.5,1.5);
				\draw (0,1.5)-- (0,0);
				\draw[dotted] (1,0.5) -- (1,2);
				\draw (1,2)-- (2.5,2); 
				\draw (2.5,2) -- (2.5,0.5);
				\draw[dotted] (2.5,0.5)-- (1,0.5);
				\draw[dotted](0,0) -- (1,0.5);
				\draw(1.5,0) -- (2.5,0.5);
				\draw(0,1.5) -- (1,2);
				\draw(1.5,1.5)  -- (2.5,2);
				\draw (2.5,2) -- (4,2);
				\draw(4,2) -- (4,0.5);
				\draw (4,0.5)-- (2.5,0.5);
				\draw[red](0,0) node {$\bullet$};
				\draw[orange](0,0.75) node {$\bullet$};
				\draw[orange](0.75,0) node {$\bullet$};
				\draw[yellow](0.75,0.75) node {$\bullet$};
				\draw[orange](1.5,0.75) node {$\bullet$};
				\draw[orange](2.5,1.25) node {$\bullet$};
				\draw[orange](1,1.25) node {$\bullet$};
				\draw[yellow](1.75,1.25) node {$\bullet$};
				\draw[orange](1.75,0.5) node {$\bullet$};
				\draw[orange](1.75,2) node {$\bullet$};
				\draw[orange](0.75,1.5) node {$\bullet$};
				\draw[red] (1.5,0) node {$\bullet$};
				\draw[red] (0,1.5) node {$\bullet$};
				\draw[red] (1.5,1.5) node {$\bullet$};
				\draw[red] (1,0.5) node {$\bullet$};
				\draw[orange] (0.5,0.25) node {$\bullet$};
				\draw[orange] (2,0.25) node {$\bullet$};
				\draw[yellow] (2,1) node {$\bullet$};
				\draw[orange] (2,1.75) node {$\bullet$};
				\draw[orange] (0.5,1.75) node {$\bullet$};
				\draw[yellow] (0.5,1) node {$\bullet$};
				\draw[red] (2.5,0.5) node {$\bullet$};
				\draw[red] (1,2) node {$\bullet$};
				\draw[orange](3.25,2) node {$\bullet$};
				\draw[orange](3.25,0.5) node {$\bullet$};
				\draw[yellow](3.25,1.25) node {$\bullet$};
				\draw[orange](4,1.25) node {$\bullet$};
				\draw[red] (2.5,2) node {$\bullet$};
				\draw[red] (4,2) node {$\bullet$};
				\draw[red] (4,0.5) node {$\bullet$};
				\draw[yellow] (1.25,0.25) node {$\bullet$};
				\draw[teal] (1.25,1) node {$\bullet$};
				\draw[yellow] (1.25,1.75) node {$\bullet$};
			\end{tikzpicture}
		\end{center}
	\end{subfigure}
	\caption{Example of a median graph with its cubical subdivision (respectively red, orange, yellow and green vertices correspond respectively to 0,1,2,3-dimensional cubes).\label{Fig_ex}}
\end{figure}
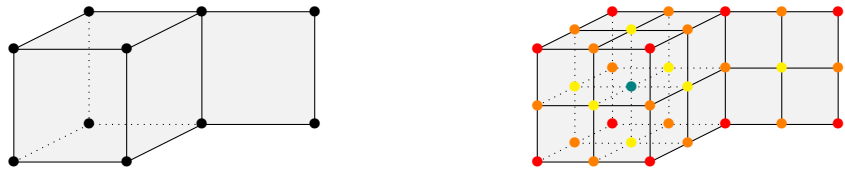

\begin{definition}
	A \emph{cubical orientation} on a median graph $\mathcal{G}$ is an orientation of all the edges of $\mathcal{G}$ so that two opposite sides in a 4-cycle have parallel orientations.
\end{definition}

Note that it is equivalent to have a map that associates to any hyperplane $H$ one of the two halfspaces that it delimits, denoted by $H^+$. 
Be aware that usually an orientation of a median graph requires also the condition that for any two hyperplane $H_1$ and $H_2$,  $H_1^+\cap H_2^+\neq\emptyset$; in this survey, we will not ask for this extra-condition.
It is always possible to endow a median graph with a cubical orientation. For instance, fix a vertex $v\in \mathcal{G}$ in the median graph. For any hyperplane $H$, $H^+$ is the halfspace containing $x$.

\subsection{Hyperplanes}
A fundamental tool in the study of median graphs is the notion of ``hyperplanes''. Throughout this section $\mathcal{G}$ is a median graph.

Two edges $e$ and $f$ of $\mathcal{G}$ are \emph{equivalent} if there exists a sequence of edges $(e_i)_{0\leq i \leq n}$ such that $e_0=e$, $e_n=f$ and for every $0\leq i \leq n-1$, the two edges $e_i$ and $e_{i+1}$ are the opposite edges of a $4$-cycle of $\mathcal{G}$. 

\begin{definition}
A \emph{hyperplane} of a median graph $\mathcal{G}$ is the equivalence class of an edge $e$ and it will be denoted by $[e]$. 
\end{definition}

For instance, the median graph of Figure \ref{fig:cc} admits $4$ hyperplanes: the blue one, the red one, the dark one and the brown one.

A subgraph $Y$ of $\mathcal{G}$ is \emph{convex} if all geodesics between two vertices in $Y$ are contained in $\mathcal{G}$.
An important property of convex subgraphs in median graphs is that they satisfy the \emph{Helly property for median graphs}:

\begin{proposition}\label{prop:Helly}
Let $\mathcal{G}$ be a median graph and $Y_1,\dots,Y_n$ convex subgraphs that pairwise intersect. Then the intersection $\bigcap_{1\leq i\leq n}Y_i$ is non-empty.
\end{proposition}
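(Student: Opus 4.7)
The plan is to prove the Helly property by induction on $n$, with the key geometric input being the $n=3$ case, which is where the median structure is actually used. The cases $n=1,2$ are trivial (the hypothesis is the $n=2$ case).

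For the case $n=3$: I would pick points $x_{12} \in Y_1 \cap Y_2$, $x_{13} \in Y_1 \cap Y_3$, and $x_{23} \in Y_2 \cap Y_3$, which exist by the pairwise intersection hypothesis. Let $m$ be the median point of $x_{12}, x_{13}, x_{23}$, which exists and is unique because $\mathcal{G}$ is median. Then I would show $m \in Y_i$ for each $i$. For $Y_1$: by definition of the median, $\dist(x_{12}, x_{13}) = \dist(x_{12}, m) + \dist(m, x_{13})$, so $m$ lies on some geodesic from $x_{12}$ to $x_{13}$. Since both endpoints belong to $Y_1$ and $Y_1$ is convex, $m \in Y_1$. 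The same argument using the pairs $(x_{12}, x_{23})$ and $(x_{13}, x_{23})$ yields $m \in Y_2$ and $m \in Y_3$.

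For the inductive step, assuming the statement for $n-1$ and given pairwise intersecting convex subgraphs $Y_1, \dots, Y_n$, I would set $Y_1' := Y_1 \cap Y_n$. This is nonempty by hypothesis, and convex since the intersection of two convex subgraphs is convex (immediate from the definition of convexity). I then replace the family $Y_1, \dots, Y_n$ by $Y_1', Y_2, \dots, Y_{n-1}$ and verify that it is still pairwise intersecting. The only non-trivial intersections to check are $Y_1' \cap Y_i$ for $2 \le i \le n-1$; but $Y_1' \cap Y_i = Y_1 \cap Y_n \cap Y_i$, which is nonempty by the already-established $n=3$ case applied to the pairwise intersecting triple $Y_1, Y_i, Y_n$. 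The inductive hypothesis then produces a point in $Y_1' \cap Y_2 \cap \dots \cap Y_{n-1} = Y_1 \cap Y_2 \cap \dots \cap Y_n$, as desired.

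The main conceptual step is the $n=3$ case: this is where uniqueness and existence of the median are essential, and where convexity of the $Y_i$ is converted into an actual common point via a geodesic argument. Once that is in hand, the extension to all $n$ is a routine induction, with the only subtlety being to verify that pairwise intersection is preserved when replacing two sets by their intersection, which itself relies on the $n=3$ case. I do not expect any serious obstacle beyond making sure that the median lies on a geodesic between each pair of the chosen points, which is precisely the defining property of median points.
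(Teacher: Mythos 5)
Your proof is correct, and it is the standard argument for the Helly property in median graphs: the $n=3$ case is handled by taking the median of three points chosen in the pairwise intersections and using that the median lies on a geodesic between each pair (hence in each convex set), and the general case follows by induction after replacing two of the sets by their intersection, with the $n=3$ case guaranteeing that the reduced family still pairwise intersects. The paper states this proposition without proof, treating it as a standard fact from the median-graph literature, so there is nothing to compare against; your write-up fills that gap correctly, and the only point worth making explicit is the (easy) observation that $\dist(x,m)+\dist(m,y)=\dist(x,y)$ produces a geodesic from $x$ to $y$ through $m$ by concatenating geodesics $[x,m]$ and $[m,y]$, which is exactly what lets convexity apply.
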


Note that the Helly property for median graphs does not hold for an infinite family of convex subgraphs.

 	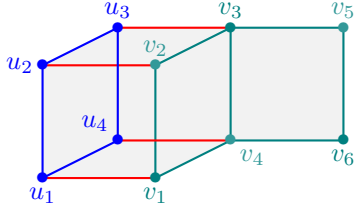
\begin{figure}
	\begin{center}
		\begin{tikzpicture}
			\fill[gray!10] (0,0) -- (1.5,0)-- (1.5,1.5) -- (0,1.5)-- cycle;
			\fill[gray!10](1,0.5) -- (1,2)-- (2.5,2) -- (2.5,0.5)-- cycle;
			\fill[gray!10] (0,0)  -- (0,1.5)-- (1,2) -- (1,0.5) -- cycle;
			\fill[gray!10](1.5,0) -- (1.5,1.5)-- (2.5,2) -- (2.5,0.5)-- cycle;
			\fill[gray!10] (2.5,2) -- (4,2) -- (4,0.5)-- (2.5,0.5)--cycle;
			\draw[thick,red] (0,0) -- (1.5,0);
			\draw[thick,red] (1.5,1.5) -- (0,1.5);
			\draw[thick,teal] (1.5,0)-- (1.5,1.5);
			\draw[thick, blue] (0,1.5)-- (0,0);
			\draw[thick, blue] (1,0.5) -- (1,2);
			\draw[thick,red] (1,2)-- (2.5,2); 
			\draw[thick, teal] (2.5,2) -- (2.5,0.5);
			\draw[thick,red] (2.5,0.5)-- (1,0.5);
			\draw [thick,blue](0,0) -- (1,0.5);
			\draw [thick,teal](1.5,0) -- (2.5,0.5);
			\draw [thick,blue](0,1.5) -- (1,2);
			\draw [thick,teal](1.5,1.5)  -- (2.5,2);
			\draw[thick,teal] (2.5,2) -- (4,2);
			\draw[thick,teal](4,2) -- (4,0.5);
			\draw[thick,teal] (4,0.5)-- (2.5,0.5);
			\draw[blue](0,0) node {$\bullet$} node[below]{$u_1$};
			\draw[teal] (1.5,0) node {$\bullet$} node[below]{$v_1$};
			\draw[blue] (0,1.5) node {$\bullet$}node[left]{$u_2$};
			\draw[color=teal!80] (1.5,1.5) node {$\bullet$}node[above]{$v_2$};
			\draw[blue] (1,0.5) node {$\bullet$} node[above left]{$u_4$};
			\draw[color=teal!80] (2.5,0.5) node {$\bullet$} node[below right]{$v_4$};
			\draw[blue] (1,2) node {$\bullet$} node[above]{$u_3$};
			\draw[teal] (2.5,2) node {$\bullet$} node[above]{$v_3$};
			\draw[color=teal!80] (4,2) node {$\bullet$} node[above]{$v_5$};
			\draw[teal] (4,0.5) node {$\bullet$} node[below]{$v_6$};
		\end{tikzpicture}
		\caption{The red hyperplane and its two halfspaces: the blue and green subgraphs.	\label{fig:halfspace}}
	\end{center}
\end{figure}

By \cite[Theorem 4.10]{Sageev-ends_of_groups}, the graph obtained from $\mathcal{G}$ by removing the (interiors of the) edges in the hyperplane $H$ has exactly two connected components, referred to as \emph{halfspaces} (see for instance Figure \ref{fig:halfspace}), which are both convex subgraphs.  
A path in $\mathcal{G}$ is said to \emph{cross} a hyperplane $H$, if there exist two consecutive vertices of this path that are connected by an edge belonging to the equivalence class $H$.
Two vertices of $\mathcal{G}$, or more generally two subgraphs of $G$ are \emph{separated} by a hyperplane $H$ if they belong respectively to different halfspaces that $H$ delimits. 
Two distinct hyperplanes are \emph{transverse} if there exists a $4$-cycle that contains an edge (indeed $2$) in both of them, or equivalently if each halfspace delimited by one intersects the two halfspaces delimited by the second hyperplane. If they are not transverse they will be called \emph{disjoint}.
For instance, in the median graph of Figure \ref{fig:cc}, the blue hyperplane is transverse to the three other hyperplanes. The red hyperplane and the brown one are disjoint, and they both separate $x_1$ from $x_4$.

 The \emph{convex hull} of a subgraph $Y$, denoted by $\conv(Y)$ is the smallest convex subgraph of $\mathcal{G}$ containing $Y$, and it can be characterized through the following intersection of halfspaces.
 
 \begin{proposition}\label{prop_convex_hull_hyperplanes}
 Let $\mathcal{G}$ be a median graph and $Y$ be a subgraph of $\mathcal{G}$. The convex hull of $Y$ coincides with the intersection of all the halfspaces containing $Y$.
 \end{proposition}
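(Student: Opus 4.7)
The plan is to prove the two inclusions separately. Write $C := \bigcap \{H^+ \mid Y \subseteq H^+\}$.

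The inclusion $\conv(Y) \subseteq C$ is formal: by \cite[Theorem 4.10]{Sageev-ends_of_groups} each halfspace is convex, an arbitrary intersection of convex subgraphs is convex, so $C$ is a convex subgraph containing $Y$, and hence $\conv(Y) \subseteq C$ by minimality of the convex hull.

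For the reverse inclusion it suffices to establish the \emph{separation property}: for every vertex $v \notin \conv(Y)$ there exists a hyperplane $H$ with $\conv(Y) \subseteq H^+$ and $v \in H^-$; such an $H^+$ then witnesses $v \notin C$. To prove separation, set $K := \conv(Y)$ and take the standard gate projection $p = \pi_K(v) \in K$, characterised by $d(v,q) = d(v,p) + d(p,q)$ for every $q \in K$. Since $v \notin K$ we have $p \neq v$; let $e = \{p,p'\}$ be the first edge of a geodesic from $p$ to $v$ and set $H := [e]$, with $p \in H^+$, $p' \in H^-$ (hence $v \in H^-$ as well, since the $p$-to-$v$ geodesic crosses $H$ only at $e$). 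If some $q \in K$ lay in $H^-$, then the concatenation of a geodesic from $v$ to $p$ with a geodesic from $p$ to $q$ would itself be a geodesic by the gate identity, yet it would cross the hyperplane $H$ at least twice, contradicting the fact that any geodesic in a median graph crosses each hyperplane at most once.

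The main obstacle is invoking cleanly the two underlying combinatorial facts: the existence of the gate projection onto any convex subgraph of a median graph, and the uniqueness of hyperplane crossings along a geodesic. Both are standard in median geometry (see \cite{Genevois_book_median}, \cite{Sageev_lecturenotes}); a completely self-contained argument would need to develop them first, typically by combining Theorem \ref{thm:MedianVsCC} with Sageev's hyperplane separation theorem.
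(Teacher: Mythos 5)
Your proof is correct. Note that the paper itself states Proposition \ref{prop_convex_hull_hyperplanes} without proof (it is quoted as a standard fact of median geometry), so there is no in-paper argument to compare against; your two-inclusion scheme --- halfspaces are convex and intersections of convex subgraphs are convex for one direction, and a separation argument via the gate projection for the other --- is the standard proof. The separation step is sound: the gate identity $d(v,q)=d(v,p)+d(p,q)$ makes the concatenation $[v,p]\cup[p,q]$ a geodesic, and a second crossing of $H=[e]$ would contradict the single-crossing property, which in this survey is exactly Theorem \ref{theorem:combinatorial_geodesic}. The only external input you rely on is the existence of the gate projection onto a convex subgraph, which you flag explicitly; that is consistent with the level of citation the survey itself uses for such facts, though a fully self-contained treatment would indeed have to establish it (e.g., as the median $m(v,y_1,y_2)$-based nearest-point projection developed in \cite{Genevois_book_median}).
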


The \emph{neighborhood} of a hyperplane $H$ (sometimes called \emph{the carrier} of $H$) and denoted by $\Nb(H)$ is the subgraph whose vertices are all the vertices belonging to some edges of $H$, and an edge links two such vertices if they are linked already in $\mathcal{G}$. The neighborhood of a hyperplane is a convex subgraph. For instance, in Figure \ref{fig:cc}, the neighborhood of the blue hyperplane is all the median graph while the neighborhood of the brown one is the $4$-cycle $v-x_4-s-x_3$. 

We say that a family of hyperplanes $H_1,\dots H_n$ \emph{generates a cube} of dimension $n$ if the intersection of their neighborhoods $\cap_{1\leq i \leq n }\Nb(H_i)$ is a cube of dimension $n$. For instance, in Figure \ref{fig:cc} the red, blue and dark hyperplanes generate a cube of dimension $3$. Cubes can be generated by pairwise families of transverse hyperplanes (see \cite{Genevois_book_median} or as a consequence of \cite[Proposition~2.1]{Sageev_lecturenotes} for instance):

\begin{proposition}\label{prop:cubes_hyperplanes}
	Let $\mathcal{G}$ a median graph. 
	A finite family of $n$ hyperplanes generates a cube of dimension $n$ if and only if they are pairwise transverse.
\end{proposition}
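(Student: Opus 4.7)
The plan is to dispatch the forward direction by direct inspection and to derive the converse by combining the Helly property with the flag-link criterion. If the hyperplanes $H_1,\dots,H_n$ generate an $n$-cube $C\subset \bigcap_i \Nb(H_i)$, then for each pair $i\neq j$ the $2$-dimensional face of $C$ whose edge-directions are $H_i$ and $H_j$ is a $4$-cycle exhibiting transversality of $H_i$ and $H_j$, which handles one implication.

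For the converse, assume the $H_i$ are pairwise transverse. Each pair $H_i,H_j$ then admits a witnessing $4$-cycle, which in particular produces a common vertex of $\Nb(H_i)$ and $\Nb(H_j)$; hence the convex subgraphs $\Nb(H_1),\dots,\Nb(H_n)$ pairwise intersect. Applying the Helly property (Proposition~\ref{prop:Helly}) to this finite family produces a common vertex $v\in\bigcap_i\Nb(H_i)$. For each $i$, let $e_i$ denote the unique edge at $v$ lying in $H_i$; uniqueness holds because two distinct edges at the same vertex cannot belong to the same hyperplane equivalence class.

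The key remaining point is to show that $e_1,\dots,e_n$ span an $n$-cube at $v$. Since Theorem~\ref{thm_ccCAT((0))} guarantees that the link $\Lk(v)$ is flag, it suffices to prove that every pair $e_i,e_j$ spans a $4$-cycle at $v$. This amounts to the median-graph statement that whenever two transverse hyperplanes both contain $v$ in their neighborhoods, the two edges at $v$ dual to them span a square. I would prove it by starting with a $4$-cycle witnessing transversality of $H_i$ and $H_j$ and propagating it to $v$ via iterated medians inside the convex subgraph $\Nb(H_i)\cap\Nb(H_j)$, whose convexity (as an intersection of convex subgraphs) is what keeps the construction inside the relevant neighborhoods.

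I expect this ``propagation-via-medians'' step to be the main technical obstacle; everything else is bookkeeping. Once pairwise square-spanning at $v$ is secured, the flag condition yields an $n$-cube $C$ at $v$ whose $n$ edge-directions at $v$ are precisely the $e_i$; and since $C$ contains an edge in each $H_i$ we have $C\subset\bigcap_i\Nb(H_i)$, which delivers the desired $n$-cube and completes the proof.
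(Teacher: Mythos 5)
The paper does not actually prove Proposition~\ref{prop:cubes_hyperplanes} --- it quotes it from \cite{Genevois_book_median} and \cite{Sageev_lecturenotes} --- so there is no in-paper argument to compare with; your outline is the standard one, and its architecture is sound: carriers are convex, pairwise transversality gives pairwise-intersecting carriers, the Helly property (Proposition~\ref{prop:Helly}) produces a common vertex $v$, and flagness of $\Lk(v)$ (via Theorems~\ref{thm:MedianVsCC} and~\ref{thm_ccCAT((0))}) reduces the problem to pairs. Your uniqueness claim for the edge $e_i$ at $v$ dual to $H_i$ is also correct and provable along the lines you hint at: if $\{v,a\}$ and $\{v,a'\}$ were distinct edges dual to $H_i$, the path $a-v-a'$ would cross $H_i$ twice, hence could not be geodesic by Theorem~\ref{theorem:combinatorial_geodesic}, forcing $d(a,a')\leq 1$; this is impossible since median graphs are bipartite and $a\neq a'$.

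The one genuine gap is exactly the step you flag yourself: that two transverse hyperplanes whose carriers share a vertex $v$ span a square at $v$ (absence of ``inter-osculation''). Your plan of propagating a witnessing $4$-cycle towards $v$ by iterated medians can be made to work, but it is an unnecessary detour and is left unexecuted; a single median closes the gap. Write $e_i=\{v,a\}$, $e_j=\{v,b\}$ and choose halfspaces so that $v\in H_i^-\cap H_j^-$; since only $H_i$ separates $v$ from $a$ and only $H_j$ separates $v$ from $b$, one gets $a\in H_i^+\cap H_j^-$ and $b\in H_i^-\cap H_j^+$, and transversality provides a vertex $w\in H_i^+\cap H_j^+$. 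Let $m$ be the median of $a$, $b$, $w$. As $m$ lies on geodesics from $a$ to $w$ and from $b$ to $w$, and geodesics cross only the hyperplanes separating their endpoints (Theorem~\ref{theorem:combinatorial_geodesic}), we get $m\in H_i^+\cap H_j^+$; as $m$ lies on a geodesic from $a$ to $b$ and $d(a,b)=2$ (exactly $H_i$ and $H_j$ separate $a$ from $b$), $m$ is a common neighbour of $a$ and $b$, and it is distinct from $v$, $a$, $b$ by the halfspace constraints. Thus $v,a,m,b$ is the required square, after which the flag condition finishes your argument as written.
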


Note that Proposition \ref{prop:cubes_hyperplanes} can not be extended to infinite families of pairwise transverse hyperplanes as it is illustrated in the following example  \cite[Example 1.6.5]{Genevois_book_median}.

\begin{example}\label{ex:pairwise_transverse_pas_cube}
Let $\{0,1\}^{\N}$ denote the set of finitely supported sequences $\N \rightarrow \{0,1\}$. We define a graph as follows. The vertex-set consists in the set $\{0,1\}^{\N} \times \N$ and the edges are of two types:
\begin{itemize}
	\item Type 1: there is an edge connecting $(u,p)$ and $(u, p\pm 1)$;
	\item Type 2: there is an edge connecting $(u, p)$ and $(v, p)$ if $u$ and $v$ differ at a single integer $q\leq p$.
\end{itemize}
It is a median graph containing an infinite family of pairwise transverse hyperplanes as all the hyperplanes defined by edges of type 2 are pairwise transverse.
 But it is locally finite so it does not contain an infinite cube.
\end{example}

Hyperplanes are also an important tool to characterize geodesics.
	\begin{theorem}[{\cite[Theorem 4.13]{Sageev-ends_of_groups}}]\label{theorem:combinatorial_geodesic}
A path joining two vertices is geodesic if and only if it crosses each hyperplane at most once. In particular, the distance between two vertices is equal to the number of hyperplanes that separate these two vertices.
\end{theorem}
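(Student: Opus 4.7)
The key tool is a parity observation: for any path $p = (x_0, \ldots, x_n)$ and any hyperplane $H$, since removing the interiors of the edges of $H$ disconnects $\mathcal{G}$ into exactly the two halfspaces $H^+$ and $H^-$, each edge of $p$ lying in $H$ toggles which halfspace the current vertex lies in. Hence, writing $c(p, H)$ for the number of times $p$ crosses $H$, its parity matches whether or not $H$ separates $x_0$ from $x_n$. Summing over hyperplanes gives
\[ L(p) = \sum_H c(p, H) \geq N(x_0, x_n), \]
where $N(x, y)$ denotes the number of hyperplanes separating $x$ from $y$.

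For the ``if'' direction: a path $p$ crossing each hyperplane at most once has $c(p, H) = 1$ for every separating $H$ and $c(p, H) = 0$ otherwise (by the parity constraint), so $L(p) = N(x_0, x_n)$, meeting the lower bound. Hence $p$ is a geodesic.

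For the converse I argue by contradiction. Suppose $p$ is a geodesic that crosses some hyperplane $H$ twice, and let $i < j$ index its first two crossings, so the sub-path $p' = (x_{i-1}, x_i, \ldots, x_j)$ crosses $H$ exactly twice. The parity observation then places both endpoints $x_{i-1}$ and $x_j$ in the same halfspace, say $H^+$, while the intermediate vertex $x_i$ lies in $H^-$. Since $p$ is a geodesic, so is its sub-path $p'$, yielding a geodesic from $x_{i-1}$ to $x_j$ that exits $H^+$. This contradicts the convexity of $H^+$ given by \cite[Theorem 4.10]{Sageev-ends_of_groups}.

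The ``in particular'' clause is then immediate: any geodesic between $x$ and $y$ crosses each separating hyperplane exactly once and no others, so $\dist(x, y) = N(x, y)$. The main structural ingredients are the convexity of halfspaces and the fact that they are the connected components of $\mathcal{G} \setminus H$; everything else is bookkeeping of crossings. In this sense there is no serious obstacle, provided one has Sageev's result on halfspaces in hand; the only subtlety is to insist on the \emph{first two} crossings so that $p'$ lies in a single halfspace apart from one excursion, giving a clean contradiction with convexity.
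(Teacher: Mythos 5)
Your argument is correct. The survey does not prove this statement at all --- it is quoted directly from Sageev's paper as Theorem 4.13 --- so there is no in-paper proof to compare against; your parity-plus-convexity argument is the standard one. Two small remarks. First, the inequality $L(p)=\sum_H c(p,H)\geq N(x_0,x_n)$ uses that every edge of the path lies in exactly one hyperplane (hyperplanes partition the edge set into equivalence classes), which is worth saying explicitly. Second, your parity observation implicitly uses that each edge belonging to $H$ has one endpoint in each of the two halfspaces (so that a crossing genuinely toggles sides, rather than possibly staying in the same component); this is part of the same package as the quoted fact that $\mathcal{G}$ minus the open edges of $H$ has exactly two convex components, and is routine, but it is a distinct assertion from the two-component statement alone. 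With those points made explicit, the proof is complete: the choice of the \emph{first} two crossings ensures the subpath $p'$ stays in $H^-$ strictly between the two crossing edges, so convexity of the halfspace containing its endpoints gives the contradiction, and the ``in particular'' clause follows by combining the lower bound with the ``only if'' direction applied to any geodesic.
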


For instance, in the median graph of Figure \ref{fig:cc}, the path $[x_1,v]\cup [v,x_4]$ is geodesic while any other path joining $x_1$ and $x_4$ is not.

\subsection{Isometries}\label{subsection:isometries}
As we are interested in group actions, we describe in this section the possible isometries of median graphs. Let us introduce first some definitions.
	Let $f$ be an isometry of a median graph $\mathcal{G}$.
The \emph{translation length} of $f$ is the non-negative integer:
\[\ell(f)\coloneqq\min\{d(f(x), x)\mid x \in \mathcal{G}^0\}.\]
The set of vertices realizing the translation length of $f$ is called the \emph{minimizing set} of $f$ and it is denoted by
\[\Min(f)\coloneqq\{x\in V(\mathcal{L}) \mid \dist(x,f(x))=\ell(f)  \} .\]An isometry $f$ \emph{inverses a hyperplane $H$} if it switches its two corresponding halfspaces: $f(H^+)=H^-$ (and thus $f(H^-)=H^+$). If such hyperplane $H$ does not exist, we say that $f$ \emph{acts without inversion}. An isometry $f$ \emph{acts stably without inversion} if $f$ and all its iterates act without inversion. 

\medskip
We define now several isometry's types.
	\begin{definition}\label{def_isom}
	Let $f$ be an isometry of a median graph $\mathcal{G}$.
	\begin{itemize}
		\item $f$ is \emph{elliptic} if it fixes a vertex. In this case, the minimizing set of $f$ is also called the \emph{fixed-point set} of $f$ and it is denoted by $\Fix(f)$.
		\item $f$ is \emph{periodic} if it preserves a cube.
		\item $f$ is \emph{loxodromic} if it preserves a bi-infinite geodesic path and acts as a {non-trivial} translation along it. Such a geodesic path is called an \emph{axis of $f$}.
		\item $f$ is \emph{ helixodromic} if it stabilizes a product $Q \times L$ of a cube $Q$ with a bi-infinite geodesic path $L$ such that it acts on $L$ as a translation and on $Q$ with no fixed point.
	\end{itemize}
\end{definition}

Note that an elliptic isometry is also periodic, but all the others cases are disjoint. A periodic isometry of a locally finite dimensional median graph $\mathcal{G}$ induces an elliptic isometry on the cubical subdivision of $\mathcal{G}$; the isometry induced fixes the vertex corresponding to the cube preserved by the initial isometry.
Be aware that, often in the literature, ``elliptic isometries'' are defined as the ones fixing a point of the space and not only a vertex. Hence, when the median graph is locally of finite dimension, they correspond to our notion of ``periodic isometries''.

As shown in the following theorem the isometry's types defined in Definition \ref{def_isom} are the only ones existing for median graphs.

\begin{theorem}[\cite{Haglund_isometries_semisimple},\cite{Genevois_book_median}]\label{thm_classification_isometries}
	Let $f$ be an isometry of a median graph $\mathcal{G}$. Then $f$ is either periodic, or loxodromic, or helixodromic; the different cases being exclusive. Moreover, if $f$ acts stably without inversion then it is either elliptic or loxodromic, and in particular, for any $x\in \Min(f)$, $\dist(x,f^n(x))=n\ell(f)$ for all $n\in \N$.
\end{theorem}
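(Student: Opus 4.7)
My plan is to reduce the classification to two core facts: the translation length $\ell(f)$ is realized at a vertex, and for $x\in\Min(f)$ with $f$ acting stably without inversion, the concatenation $\bigcup_{n\in\Z}[f^n(x),f^{n+1}(x)]$ is a bi-infinite geodesic. Granted these, the ``moreover'' part of the statement is immediate: either $\ell(f)=0$, so $f$ fixes a vertex and is elliptic, or $\ell(f)>0$ and one gets a genuine axis along which $f$ translates, so $f$ is loxodromic; in either case $\dist(x,f^n(x))=n\ell(f)$ for all $n\in\N$.

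To show $\ell(f)$ is attained, I would take a sequence $(x_n)$ with $\dist(x_n,f(x_n))\to\ell(f)$. Since distances are integer-valued, it suffices to exhibit, for any $y\notin\Min(f)$, a vertex with strictly smaller displacement. The natural candidate is the median $p:=m(y,f(y),f^2(y))$, or $m(y,f(y),f^{-1}(y))$ compared to $y$; using Theorem 2.8 (geodesics vs.\ hyperplanes), one verifies that each hyperplane separating $p$ from $f(p)$ already separates $y$ from $f(y)$, giving $\dist(p,f(p))\leq\dist(y,f(y))$, with strict inequality whenever some hyperplane crossed by $[y,f(y)]$ is ``redundant''.

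For the concatenation lemma, suppose $[x,f(x)]\cup[f(x),f^2(x)]$ fails to be geodesic. By Theorem 2.8 some hyperplane $H$ is crossed by both subpaths, hence $H$ separates $x$ from $f(x)$ while $f^{-1}(H)$ separates $x$ from $f(x)$ as well. If $f(H)=H$, then $f$ swaps the two halfspaces, i.e.\ $f$ inverts $H$, contradicting our hypothesis; otherwise $H\neq f^{-1}(H)$ and both separate $x$ from $f(x)$, so the median $q:=m(x,f^{-1}(x),f(x))$ is a vertex whose displacement is strictly less than $\ell(f)$ (two ``duplicate'' hyperplanes have been absorbed), contradicting $x\in\Min(f)$. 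Iterating this observation yields the bi-infinite geodesic axis.

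The remaining case, where some iterate of $f$ inverts a hyperplane, produces the periodic and helixodromic regimes. Passing to a high enough power $f^k$ that acts without inversion, the previous steps yield either a fixed vertex or an axis $L$ for $f^k$. The finitely many hyperplanes inverted by intermediate powers of $f$ intersect in a nonempty convex subgraph by the Helly property (Proposition 2.6), and applying Proposition 2.13 they assemble into an $f$-invariant cube $Q$ on which $f$ has no fixed vertex; combining this with $L$ gives the product decomposition $Q\times L$ characteristic of helixodromic isometries, and if $\ell(f)=0$ then $Q$ alone is preserved, so $f$ is periodic. Exclusivity follows by comparing orbit growth (bounded for periodic vs.\ linear for the other two) and inversion behavior (absent for loxodromic). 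The main obstacle is precisely this last step: extracting the cube $Q$ cleanly from the inverted hyperplanes and verifying that $f$ genuinely splits as a translation on $L$ times a cube isometry on $Q$; once the concatenation lemma is in hand, the rest of the argument is bookkeeping.
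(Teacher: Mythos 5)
The paper does not prove this theorem itself: it cites Haglund and Genevois for the ``stably without inversion'' dichotomy, and reduces the general trichotomy to it by passing to the \emph{cubical subdivision}, on which \emph{every} isometry automatically acts stably without inversion; elliptic isometries of the subdivision are then read off as periodic isometries of $\mathcal{G}$, and loxodromic ones as loxodromic or helixodromic. Your treatment of the stably-without-inversion case follows the same median-and-hyperplane route as the cited sources, and its backbone is sound: both $q=m(f^{-1}x,x,f(x))$ and $f(q)=m(x,f(x),f^2(x))$ lie in the convex interval between $x$ and $f(x)$, so every hyperplane separating $q$ from $f(q)$ separates $x$ from $f(x)$, whence $\dist(q,f(q))\leq\dist(x,f(x))$. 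But the strictness step --- ``two duplicate hyperplanes have been absorbed'' --- is asserted, not proved. Running the majority-vote criterion for the median on the offending hyperplane $H$ and on $f^{-1}(H)$, one finds configurations (depending on which halfspaces contain $f^{-2}(x)$ and $f^{2}(x)$) in which both hyperplanes still separate $q$ from $f(q)$, so the single median comparison does not immediately yield a contradiction; the actual argument needs an extra layer (an extremal choice of $H$, or an iteration). This is a fixable gap in a standard proof, but as written the key inequality is not established. (Also, the opening worry about $\ell(f)$ being attained is moot: it is a minimum of a nonempty set of non-negative integers.)

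The more serious gap is in your reduction of the general case. You propose to pass to a power $f^k$ acting without inversion, but your concatenation lemma requires $f^k$ to act \emph{stably} without inversion, and the existence of such a $k$ is not justified and is not obvious: if $f^{d}$ inverts a hyperplane $H$ whose $f$-orbit has size $d$, then $f^{m}$ inverts $H$ exactly when $m/d$ is an odd integer, so one must bound the $2$-adic valuations of the periods $d$ over \emph{all} hyperplanes inverted by \emph{some} power of $f$ --- a possibly infinite family --- and you do not address this. Downstream, the extraction of the invariant cube $Q$ needs the inverted hyperplanes to be shown pairwise transverse (Helly applies to pairwise-intersecting convex subgraphs, not automatically to carriers of hyperplanes) and the family to be finite before Proposition \ref{prop:cubes_hyperplanes} yields a cube, and the splitting of $f$ as a translation on $L$ times a fixed-point-free cube isometry on $Q$ is exactly the substantive content, which you defer as ``bookkeeping.'' The cubical subdivision device used in the paper exists precisely to bypass all of this: no power-taking, no analysis of inverted hyperplanes, and the periodic/helixodromic cases fall out of the correspondence between vertices of the subdivision and cubes of $\mathcal{G}$. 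I would recommend either adopting that reduction or supplying the missing transversality, finiteness, and stable-non-inversion arguments explicitly.
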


The heart of the above theorem is the second part. It is due to \cite{Haglund_isometries_semisimple} where it is also shown that the condition of acting stably without inversion is not restrictive. Indeed, any isometry of a median graph $\mathcal{G}$ acts without inversion on the cubical subdivision of $\mathcal{G}$. The general statement comes from the fact that
an isometry of a median graph $\mathcal{G}$ that is elliptic on the cubical subdivision of $\mathcal{G}$ is periodic on $\mathcal{G}$, and one that is loxodromic on the cubical subdivision of $\mathcal{G}$ is either loxodromic or helixodromic on $\mathcal{G}$. 
Theorem \ref{thm_classification_isometries} can be found in \cite{Genevois_book_median}.

The following corollary is a direct consequence of Theorem \ref{thm_classification_isometries} and it is one of the main tools we will use. 
\begin{corollary}\label{prop_action_semisimple_hag}
	Let $f$ be an isometry of a cube oriented median graph $\mathcal{G}$ that preserves this orientation. Then $f$ is either elliptic or loxodromic, and in particular, for any $x\in \Min(f)$, $\dist(x,f^n(x))=n\ell(f)$ for all $n\in \N$.
\end{corollary}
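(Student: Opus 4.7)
The plan is to reduce the corollary to the second half of Theorem \ref{thm_classification_isometries} by showing that any orientation-preserving isometry $f$ acts stably without inversion. The theorem will then hand us the dichotomy elliptic/loxodromic and the length formula $\dist(x, f^n(x)) = n\ell(f)$ for $x \in \Min(f)$, so no further work is needed beyond the reduction.

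First I would verify that orientation preservation implies acting without inversion. Recall that a cubical orientation is equivalent to a choice, for each hyperplane $H$, of a halfspace $H^+$ (the common target of the parallel directed edges crossing $H$). If $f$ preserves the edge orientation, then for any hyperplane $H$ and any edge $e \in H$ directed from $H^-$ to $H^+$, the image $f(e)$ lies in $f(H)$ and is directed from $f(H^-)$ to $f(H^+)$; by definition of the orientation attached to $f(H)$, this forces $f(H^+) = f(H)^+$. In particular, if $f$ were to invert a hyperplane $H$, i.e.\ $f(H) = H$ with $f(H^+) = H^-$, we would get $H^- = f(H^+) = f(H)^+ = H^+$, a contradiction. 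Hence $f$ acts without inversion.

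The same argument applies verbatim to every iterate $f^n$, since the composition of orientation-preserving maps is orientation-preserving. Therefore $f$ acts stably without inversion in the sense of the definition preceding Theorem \ref{thm_classification_isometries}. Applying the second assertion of that theorem, $f$ is either elliptic or loxodromic, and for every $x \in \Min(f)$ and every $n \in \N$ we have $\dist(x, f^n(x)) = n\ell(f)$, which is exactly the statement of the corollary.

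The only conceivable obstacle is the sign bookkeeping in the first step, namely checking that the induced assignment $H \mapsto f(H^+)$ really agrees with the orientation attached to $f(H)$ rather than its opposite. This is immediate from the equivalence between edge orientations and halfspace choices once one keeps track of the direction of a single edge in $H$, so the whole argument is essentially a one-line reduction to Theorem \ref{thm_classification_isometries}.
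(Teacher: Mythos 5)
Your proof is correct and follows exactly the route the paper intends: the paper states the corollary as a direct consequence of Theorem \ref{thm_classification_isometries}, the implicit reduction being precisely that orientation preservation rules out hyperplane inversion for $f$ and all its iterates, so $f$ acts stably without inversion and the theorem applies. Your spelled-out verification that $f(H^+)=f(H)^+$ forbids $f(H^+)=H^-$ when $f(H)=H$ is the right bookkeeping and there is nothing to add.
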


The second main tool that we will use is given in the following proposition, which has been shown in \cite{Gerasimov_fixed_point_free_action} for the case of finitely generated groups and in \cite[Corollary 7.G.4 and Remark 7.F.8]{Cornulier_wallings} for the general case.

\begin{proposition}[\cite{Gerasimov_fixed_point_free_action},\cite{Cornulier_wallings}]\label{prop:fixedpoint}
	If a group acts isometrically on a median graph with a bounded orbit then it preserves a cube. Moreover, if the action preserves a cubical orientation the group fixes a vertex.
\end{proposition}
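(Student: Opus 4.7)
The statement splits cleanly into two halves, and I would first dispatch the easier second half conditional on the first. Assuming the action preserves a cube $C$ of dimension $n$, let $H_1, \ldots, H_n$ be the hyperplanes crossing $C$; the group $G$ permutes the set $\{H_i\}$. Using the cubical orientation, let $v$ be the unique vertex of $C$ lying in the intersection $\bigcap_{i=1}^{n} H_i^+$, i.e.\ the ``positive corner''. For any $g \in G$, since $g$ permutes the $H_i$ and satisfies $g(H_i^+) = (gH_i)^+$ by orientation-preservation, the image $g(v)$ lies in $\bigcap_i (gH_i)^+ = \bigcap_j H_j^+$, hence equals $v$ by uniqueness of the positive corner. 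So the second half reduces entirely to the first.

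For the first (and main) assertion, my instinct is the CAT(0) circumcenter argument: endow the cube completion with the Euclidean metric, observe that $d_{\mathrm{Euc}} \leq d$ so the orbit $Gx$ is bounded in CAT(0) distance, take its circumcenter $c$, note that $c$ is $G$-invariant by uniqueness, and conclude by letting $C$ be the smallest cube whose relative interior contains $c$. In parallel, to stay within the median vocabulary of the survey, I would consider the $G$-invariant function $f(v) = \sup_{g \in G} d(v, gx)$ on $\mathcal{G}^0$. It takes integer values bounded above by $\sup_g d(x, gx) < \infty$, so its infimum $m$ is attained. The sublevel set $F = \{v : f(v) \leq m\} = \bigcap_{g \in G} B(gx, m)$ is a non-empty, $G$-invariant, bounded, convex subgraph (using that balls are convex and that intersections of convex subgraphs are convex). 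It would then remain to extract a $G$-invariant cube from $F$, which I would attempt by iterating the construction inside $F$ and invoking the Helly property of Proposition~\ref{prop:Helly} to keep intermediate intersections non-empty.

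The hard part, in either approach, is infinite-dimensionality. The CAT(0) route needs the ambient space (or a $G$-invariant subspace of it) to be complete, which the cube completion of an infinite-dimensional median graph typically is not; as Example~\ref{ex:pairwise_transverse_pas_cube} illustrates, even ``tame-looking'' bounded subsets can have unbounded convex hulls, so one cannot simply replace $\mathcal{G}$ by $\mathrm{Conv}(Gx)$. Likewise, the combinatorial iteration need not stabilize at a cube without further finiteness. I would therefore expect the final argument to localize the construction: work with a nested sequence of $G$-invariant bounded convex subgraphs $F \supseteq F' \supseteq \cdots$ along which a controlled numerical invariant strictly decreases, or else appeal to the Roller compactification to find a $G$-fixed point on a suitable compact completion that then corresponds to an invariant cube. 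This packaging of the difficulty is essentially the content of the cited works of Gerasimov and of Cornulier.
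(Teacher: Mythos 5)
Your treatment of the second assertion is correct and coincides with the paper's: a preserved cube in a cubically oriented median graph has a distinguished corner (the unique vertex all of whose cube-edges point away from it, equivalently your $\bigcap_i H_i^+$ corner), and orientation-preservation forces this corner to be fixed. The problem is the first assertion, where your proposal stops exactly where the proof has to start. You correctly diagnose that the CAT(0) circumcenter argument needs completeness and so only covers the finite-dimensional case, and that one cannot pass to $\conv(G\cdot x)$ because convex hulls of bounded sets can be unbounded in infinite dimension; but having ruled out these two routes you only gesture at ``a nested sequence of invariant convex subgraphs'' or the Roller compactification and defer to the references. The paper's argument, which you do not reproduce, runs as follows: (i) a bounded orbit forces the existence of a \emph{finite} orbit; (ii) the convex hull of a finite set is a finite median subgraph (only the finitely many hyperplanes separating two orbit points cross it), and it is $G$-invariant; (iii) in this finite median graph one takes the intersection of all halfspaces that contain strictly more vertices than their complements --- any two such halfspaces must meet by a counting argument, so the Helly property (Proposition~\ref{prop:Helly}) makes the intersection non-empty, and every hyperplane crossing it is balanced, hence any two of them are transverse, so by Proposition~\ref{prop:cubes_hyperplanes} the intersection is an invariant cube. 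Steps (i)--(iii), in particular the reduction to a finite orbit and the majority-halfspace trick, are the actual content of the proposition and are absent from your proposal.

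There is also a concrete false step in your fallback combinatorial route: balls are \emph{not} convex in median graphs, so the sublevel set $F=\bigcap_{g\in G}B(gx,m)$ need not be a convex subgraph and the Helly property does not apply to it. Already in the standard Cayley graph of $\Z^2$ (Figure~\ref{fig:Cay_Z2}) the ball of radius $1$ about the origin contains $e_1$ and $e_2$ but not $e_1+e_2$, which lies on a geodesic between them. So even as a partial reduction, the passage from ``bounded orbit'' to ``non-empty bounded invariant convex subgraph'' cannot be carried out the way you propose; this is precisely why the paper routes the argument through a finite orbit and its (automatically convex and finite) convex hull instead.
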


The last assertion, is an immediate consequence of the fact that a cube of an oriented median graph has always a vertex such that all the edges of the cube issued from this vertex are oriented to go away of this vertex. Hence, a group preserving a cube such that the cubical orientation is preserved, fixes this vertex.

Note that in the finite dimensional case and using the CAT(0) metric of the cube completion, Proposition \ref{prop:fixedpoint} is very well known. When the dimension is finite, the cube completion is a complete CAT(0) metric space and so such a group fixes the circumcenter of the orbit (see \cite{Bridson_Haefliger}) and then the cube preserved is the smallest (for the dimension) cube containing this fixed point.

The general case relies on the finite dimensional case as one can prove that if a group acts isometrically on a median graph with a bounded orbit, it also admits a finite orbit in the graph. The convex hull of this finite orbit is a finite median subgraph invariant by the action of the group. In order to end the proof without using the CAT(0) metric, a median argument is given in \cite[Lemma 4.1.2]{Genevois_book_median}: because the median graph is finite, halfspaces contain a finite number of vertices. Take the intersection of all halfspaces that are strictly bigger than their opposite halfspace. By a cardinality argument two such halfspaces pairwise intersect and so by Helly property for median graphs \ref{prop:Helly}, this intersection is a non-empty convex subgraph. Moreover it contains only hyperplanes whose two halfspaces contain the same number of vertices and so they are pairwise transverse, forming a cube by Proposition \ref{prop:cubes_hyperplanes}.

\subsection{Purely elliptic actions on median graphs}\label{Subsection_purely_elliptic}
In this subsection, we are interested in the orbits of groups acting on median graphs \emph{purely elliptically} meaning that any element fixes a vertex of the graph.  Notice that having a bounded orbit is equivalent to having all orbits bounded. Moreover, by Proposition \ref{prop:fixedpoint}, it is also equivalent to preserve a cube. 

\begin{question}\label{question:purely_elliptic}
	Let $G$ be a finitely generated group acting purely elliptically on a median graph. Are the $G$-orbits bounded?
\end{question}

Without the assumption that the group is finitely generated, we can answer negatively  Question \ref{question:purely_elliptic}. For instance, the infinite torsion group $\bigoplus_{\Z} \Z / 2\Z$ admits a purely elliptic action on a tree with unbounded orbits. It is an example of a more general construction that can be found in \cite[Corollary 4.2.2]{Genevois_book_median}. The construction is explicit and it is as follows. For $i\in \Z$, let $H_i$ denote the subgroup of $\bigoplus_{\Z} \Z / 2\Z$ corresponding to the $\Z/2\Z$ of the i-th factor, and for $i\in \N^*$, let $G_i$ be the proper subgroup of $\bigoplus_{\Z} \Z / 2\Z$ generated by the subgroups $H_{-i},\dots, H_{-1}, H_0,H_1,\dots,H_i$.
Consider the graph $\mathcal{T}$ with vertex-set $\{gG_i\mid g\in\bigoplus_{\Z} \Z / 2\Z \text{ and } i\in \N^* \}$ and whose edges connect $gG_i$ and $gG_{i+1}$ for all $g\in \bigoplus_{\Z} \Z / 2\Z$ and $i\in \N^*$. $\bigoplus_{\Z} \Z / 2\Z $ acts by left multiplication on the vertex-set and the vertex-stabilizers are conjugate of the subgrougs $G_i$.

It remains to understand why $\mathcal{T}$ is a tree.
 Indeed, it is connected because any vertex $gG_i$ can be linked by a path to $G_1$:  due to the fact that $G_1\subset G_2\subset \dots $ covers $\bigoplus_{\Z} \Z / 2\Z$ there exists $j\in \N^*$ such that $g\in G_j$. If $j\leq i$ then $gG_i=G_i$ and the path is obvious, otherwise $gG_i-gG_{i+1}-\dots-gG_j=G_j-G_{j-1}-\dots G_1$ is a path between $gG_i$ and $G_1$.
To see that there is no cycle, define the height of a vertex $gG_i$ as $i$. $\mathcal{T}$ can not have any cycle because a vertex of height $i$ has a unique adjacent vertex that has height $i+1$.

\medskip

Adding a restriction on the dimension of the median graph, Question \ref{question:purely_elliptic} has been answered positively. The theorem below is optimal in the sense that there exist counter-examples to Question \ref{question:purely_elliptic} when the median graph has infinite cubes (see Subsection \ref{subsec:purely_elliptic_counterex}).
\begin{theorem}[\cite{GLU_Neretin}]\label{thm:purely_elliptic_locally_finite_dim}
	Let $G$ be a finitely generated group acting purely elliptically on a locally finite dimensional median graph $\mathcal{G}$, then $G$ has a bounded orbit.
\end{theorem}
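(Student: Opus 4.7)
The plan is to argue by contradiction: if $G$ is finitely generated and purely elliptic but has unbounded orbits on the locally finite dimensional median graph $\mathcal{G}$, then I would construct a non-elliptic element of $G$, contradicting the hypothesis. To set things up I first pass to the cubical subdivision $\mathcal{G}'$, which remains locally finite dimensional and on which every isometry of $\mathcal{G}$ acts without inversion (as recalled just before Corollary \ref{prop_action_semisimple_hag}). Every element of $G$ stays elliptic on $\mathcal{G}'$, since vertices of $\mathcal{G}$ remain vertices of $\mathcal{G}'$, and by Theorem \ref{thm_classification_isometries} any isometry of $\mathcal{G}'$ is then either elliptic or loxodromic. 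A bounded orbit on $\mathcal{G}'$ descends to one on $\mathcal{G}$, so I work on $\mathcal{G}'$ henceforth.

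Assume toward a contradiction that some orbit of $G$ is unbounded. Fix a generator $g_1$ and a vertex $v_1 \in \Fix(g_1)$; the $G$-orbit of $v_1$ is then also unbounded. Picking a sequence $w_k \in G$ with $d(v_1, w_k v_1) \to \infty$, set $g := g_1$ and $h_k := w_k g_1 w_k^{-1}$: both are elliptic as conjugates of a generator, with $v_1 \in \Fix(g)$ and $w_k v_1 \in \Fix(h_k)$, so that the two fixed-vertex sets lie at distance going to infinity.

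The core step is then to show that, for $k$ large enough, the subgroup $\langle g, h_k\rangle \subseteq G$ contains a loxodromic element. The idea is to analyse the \emph{bridge} between $\Fix(g)$ and $\Fix(h_k)$ --- a geodesic realising their distance --- together with the family of hyperplanes separating the two fixed sets. A ping-pong style alternating product of $g$ and $h_k$ is then shown to translate non-trivially along this bridge, and Corollary \ref{prop_action_semisimple_hag} would identify this element as loxodromic, yielding the desired non-elliptic element of $G$ and the contradiction.

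This last step is the technical heart of the proof, and exactly where the local finite dimensionality hypothesis is essential. In the presence of an infinite cube, two elliptic involutions can have arbitrarily distant fixed sets while every alternating product remains periodic (compare Example \ref{ex:pairwise_transverse_pas_cube}), so the hypothesis is unavoidable. The ping-pong construction must carefully use the absence of infinite cubes --- via Proposition \ref{prop:cubes_hyperplanes} and the classification Theorem \ref{thm_classification_isometries} --- to rule out helixodromic behaviour and to ensure that the constructed element genuinely has positive translation length rather than merely preserving some bounded combinatorial configuration.
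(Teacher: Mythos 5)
There is a genuine gap, and it sits exactly at the step you defer: the claim that two elliptic elements with distant fixed sets must generate a loxodromic via a ping-pong along a ``bridge''. First, your reduction to this situation is already broken: from $d(v_1,w_kv_1)\to\infty$ you cannot conclude that $d(\Fix(g_1),\Fix(h_k))\to\infty$, because the fixed-vertex sets of elliptic isometries of a median graph are typically unbounded, so $\Fix(g_1)$ and $w_k\Fix(g_1)$ may intersect even when $v_1$ and $w_kv_1$ are far apart. Second, and more seriously, the statement ``$g,h$ elliptic with $\Fix(g)\cap\Fix(h)=\emptyset$ implies $\langle g,h\rangle$ contains a loxodromic'' is, for actions preserving a cubical orientation, \emph{equivalent} to Theorem \ref{thm:purely_elliptic_locally_finite_dim} for two-generated groups (if $\langle g,h\rangle$ were purely elliptic with bounded orbits it would fix a vertex, which would lie in both fixed sets). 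So you have reduced the theorem to a special case of itself, and the only argument offered for that case is the assertion that an alternating product ``is then shown to translate non-trivially''. Unlike the tree case (Theorem \ref{thm:elliptic_action_tree}), where the unique bridge between the two fixed sets makes this a two-line computation, in a median graph of dimension $\geq 2$ there is no such canonical bridge and no known elementary ping-pong of this kind; this is precisely why even the finite-dimensional case (Theorem \ref{thm:purely_elliptic_finite_case}) is proved by a completely different mechanism. (Also, Example \ref{ex:pairwise_transverse_pas_cube} is a locally finite graph \emph{without} infinite cubes; the counterexamples you want to invoke are those of Subsection \ref{subsec:purely_elliptic_counterex}.)

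For comparison, the paper's proof never reduces to two elements and never uses fixed-point sets directly. It first restricts to the convex hull of an orbit, where finite generation gives finitely many $G$-orbits of hyperplanes (Lemma \ref{lem:ConvexHullOrbit}); the only device for producing loxodromics is Lemma \ref{fact:loxodromic} (three pairwise disjoint hyperplanes in one orbit, one separating the other two, force a nested halfspace $gH^+\subset H^+$). A pigeonhole argument then shows the convex hull is bounded for $\dist_\infty$. The locally-finite-dimension hypothesis enters afterwards, through the dichotomy between balanced and unbalanced hyperplanes: balanced hyperplanes are pairwise transverse, so by Proposition \ref{prop:cubes_hyperplanes} and the absence of infinite cubes the intersection of all larger halfspaces, if non-empty, is a finite invariant cube; and if that intersection were empty one constructs, via iterated projections and the Helly property (Proposition \ref{prop:Helly}), a geodesic ray whose $\dist_\infty$-diameter is infinite (Lemma \ref{fact:RayDiamInfty}), contradicting the boundedness established before. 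None of these ingredients appears in your sketch, so as written the proposal does not constitute a proof.
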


The finite dimensional case was already known: it is a consequence of \cite[Theorem 5.1]{Sageev-ends_of_groups} that has been remarked by Pierre-Emmanuel Caprace in \cite[Proposition B.8]{Chatterji_Fernos_Iozzi}. A proof can also be found in \cite[Theorem 3.1]{Fioravanti_Tits} or in  \cite{Leder_varghese_Elliptic_action}.

\medskip
\subsubsection{Finite dimensional case}\label{subsec:purely_elliptic_finite_dim}In this section, we focus on the idea of the proof of Theorem \ref{thm:purely_elliptic_locally_finite_dim}, in the specific case of finite dimensional median graphs following \cite{Leder_varghese_Elliptic_action}. It illustrates nicely the importance of hyperplanes in median graphs.

Let us start first with a baby case: the case of (simplicial) trees. 

\begin{theorem}[\cite{Serre1980}]\label{thm:elliptic_action_tree}
Let $G$ be a finitely generated group acting on a tree $\mathcal{T}$ and generated by $g_1,\dots, g_n$. Assume that $g_ig_j$ induces an elliptic isometry of $\mathcal{T}$ for all $0\leq i<j\leq n$ (with the convention that $g_0=\id$). 
Then the group $G$ fixes a vertex of $\mathcal{T}$.
\end{theorem}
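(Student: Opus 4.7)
The plan is to show that the fixed-point sets $\mathrm{Fix}(g_1),\ldots,\mathrm{Fix}(g_n)$ are nonempty, pairwise intersecting convex subgraphs of $\mathcal{T}$, and then to conclude by the Helly property (Proposition \ref{prop:Helly}) that they admit a common vertex, which will automatically be fixed by every generator and hence by all of $G$.

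First I would record the easy structural facts. Taking $i=0$ in the hypothesis gives that each $g_j$ itself is elliptic, so each $\mathrm{Fix}(g_j)$ is nonempty. Since the fixed-point set of an isometry of a tree is a subtree, each $\mathrm{Fix}(g_j)$ is a convex subgraph of $\mathcal{T}$. Viewing $\mathcal{T}$ as a (1-dimensional) median graph, Proposition \ref{prop:Helly} will apply to the finite family $\{\mathrm{Fix}(g_i)\}_{i=1}^n$ as soon as pairwise intersection is established.

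The heart of the argument is therefore the following tree-specific fact: if $g,h$ are isometries of a tree with $g$ and $h$ both elliptic and $gh$ elliptic, then $\mathrm{Fix}(g)\cap\mathrm{Fix}(h)\neq\emptyset$. I would prove this by contrapositive. If the two subtrees are disjoint, pick vertices $p\in\mathrm{Fix}(g)$ and $q\in\mathrm{Fix}(h)$ realising the distance $d=d(\mathrm{Fix}(g),\mathrm{Fix}(h))>0$; the geodesic $[p,q]$ meets $\mathrm{Fix}(g)$ only at $p$ and $\mathrm{Fix}(h)$ only at $q$. Then $gh(q)=g(q)$, and since $g$ fixes $p$ and acts as an isometry, $g(q)$ lies on the opposite side of $\mathrm{Fix}(g)$ from $q$, so that the geodesic from $q$ to $gh(q)$ passes through $p$ and has length $2d$. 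An analogous analysis at the other end shows that the midpoint of $[q,gh(q)]$ is moved by a full distance $2d$, which forces $gh$ to translate along a genuine axis, contradicting ellipticity of $gh$. Applied to each pair $(g_i,g_j)$ with $i<j$, this yields $\mathrm{Fix}(g_i)\cap\mathrm{Fix}(g_j)\neq\emptyset$.

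With pairwise intersection in hand, the Helly property for median graphs delivers a vertex $v\in\bigcap_{i=1}^n\mathrm{Fix}(g_i)$, and $v$ is fixed by the whole group $G=\langle g_1,\ldots,g_n\rangle$. The main obstacle is the intermediate step: the implication ``$g,h,gh$ elliptic $\Rightarrow$ common fixed point'' is the essential tree-theoretic input and is exactly what fails in higher-dimensional median graphs, so the delicate point is to carry out the geometric argument at the bridge between $\mathrm{Fix}(g)$ and $\mathrm{Fix}(h)$ carefully enough to exhibit a translated axis for $gh$.
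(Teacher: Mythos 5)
Your proof is correct, but it is organized differently from the paper's. You establish the pairwise intersections $\Fix(g_i)\cap\Fix(g_j)\neq\emptyset$ via the classical bridge argument (two elliptic isometries of a tree whose product is elliptic must share a fixed vertex) and then invoke the Helly property for convex subtrees (Proposition \ref{prop:Helly}) to produce a common fixed vertex; this is essentially Serre's original argument. The paper instead argues by induction on the number of generators: assuming $\Fix(G_2)$ nonempty for $G_2=\langle g_2,\dots,g_n\rangle$, it takes the bridge $[P,Q]$ between $\Fix(g_1)$ and $\Fix(G_2)$ and observes that $g_1Q=g_1g_kQ$ for every $k$, so ellipticity of each $g_1g_k$ forces the single midpoint $P$ to be fixed by every $g_k$, contradicting disjointness --- Helly is never invoked. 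The two proofs share the same geometric core (the bridge between disjoint fixed subtrees, and the fact that an elliptic isometry of a tree fixes the midpoint of $[v,gv]$ for every vertex $v$); your version isolates the pairwise statement, which is precisely the input that fails for higher-dimensional median graphs, as you note, while the paper's induction produces the fixed vertex directly and sidesteps Helly. One cosmetic remark on your key lemma: you do not actually need to exhibit a translation axis for $gh$; it suffices to observe that $gh$ moves the midpoint $p$ of $[q,gh(q)]$ (indeed $d(p,gh(p))=d(p,h(p))=2d>0$), which already contradicts the midpoint-fixing property of elliptic isometries of trees recalled in the paper just before its proof.
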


\begin{corollary}[\cite{Serre1980}]Let $G$ be a finitely generated group acting purely elliptically on a tree, then the group $G$ fixes a vertex of this tree.
\end{corollary}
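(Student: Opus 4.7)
The plan is to reduce the corollary immediately to Theorem \ref{thm:elliptic_action_tree}. Since $G$ is finitely generated, I would fix a finite generating set $g_1,\dots,g_n$. The purely elliptic hypothesis says that every element of $G$ fixes a vertex of $\mathcal{T}$; in particular, each generator $g_i$ is elliptic (which, with the convention $g_0=\id$, handles the pairs $(0,i)$), and each pairwise product $g_i g_j$ with $1\leq i<j\leq n$ is again an element of $G$ and hence also elliptic. The hypotheses of Theorem \ref{thm:elliptic_action_tree} are therefore verified on the chosen generating set, and the theorem delivers a common fixed vertex for $G$.

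The interesting observation is that no further work is required: the hard content of the corollary is entirely packaged inside Theorem \ref{thm:elliptic_action_tree}, which in fact asserts considerably more than what we use; it only needs ellipticity on generators and on their pairwise products, not on the whole group. Thus there is genuinely no obstacle to overcome, and the finite generation hypothesis is used in the most direct possible way, namely to expose a finite list of elements whose ellipticity must be checked.

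As a sanity check I would confirm that the finite generation hypothesis is indispensable, by pointing to the construction recalled earlier in the section: the torsion group $\bigoplus_{\Z}\Z/2\Z$ admits a purely elliptic action on a tree with unbounded orbits, so it cannot fix a vertex. This shows the corollary is sharp even in the simplest case of a median graph, and it also explains why the subsection will turn next to the more subtle case of higher-dimensional (locally finite dimensional) median graphs, where the analogue of Theorem \ref{thm:elliptic_action_tree} requires the additional structure provided by Theorem \ref{thm:purely_elliptic_locally_finite_dim}.
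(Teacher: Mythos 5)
Your proposal is correct and matches the paper exactly: the corollary is stated there as an immediate consequence of Theorem \ref{thm:elliptic_action_tree}, obtained by observing that pure ellipticity makes every generator and every pairwise product of generators elliptic, so the theorem applies directly to any finite generating set. Your remarks on the necessity of finite generation (via $\bigoplus_{\Z}\Z/2\Z$) are also consistent with the counterexample the paper recalls in Subsection \ref{Subsection_purely_elliptic}.
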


Periodic isometries of trees are either elliptic, or fix the middle of an edge. Indeed, as already mentioned, they induce elliptic isometries on the cubical subdivision of the initial tree, which is also a tree. Hence, as an immediate consequence we have:

\begin{corollary}[\cite{Serre1980}]Let $G$ be a finitely generated group acting purely periodically on a tree, then the group $G$ fixes a vertex or the middle point of an edge.
\end{corollary}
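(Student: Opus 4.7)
The plan is to reduce this statement to the previous corollary by passing to the cubical subdivision. Recall from the paragraph just before the classification Theorem~\ref{thm_classification_isometries} that any periodic isometry of a locally finite dimensional median graph $\mathcal{G}$ induces an elliptic isometry on the cubical subdivision of $\mathcal{G}$. Trees are locally of finite dimension (their cubes are just vertices and edges), so we may apply this fact.

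Let $\mathcal{T}'$ denote the cubical subdivision of $\mathcal{T}$. By definition, the vertex set of $\mathcal{T}'$ is the disjoint union of the vertex set of $\mathcal{T}$ and the edge set of $\mathcal{T}$, with an edge of $\mathcal{T}'$ joining a vertex $v$ of $\mathcal{T}$ to an edge $e$ of $\mathcal{T}$ whenever $v$ is an endpoint of $e$. It is immediate that $\mathcal{T}'$ is again a (simplicial) tree, obtained combinatorially by inserting a new vertex in the middle of each edge of $\mathcal{T}$. The action of $G$ on $\mathcal{T}$ induces an action of $G$ on $\mathcal{T}'$, and since each element of $G$ is periodic on $\mathcal{T}$, each element is elliptic on $\mathcal{T}'$. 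Thus the action of $G$ on $\mathcal{T}'$ is purely elliptic.

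By the previous corollary applied to $\mathcal{T}'$, the group $G$ fixes some vertex $w \in \mathcal{T}'$. Translating back: either $w$ corresponds to a vertex of $\mathcal{T}$, in which case $G$ fixes that vertex of $\mathcal{T}$, or $w$ corresponds to an edge $e$ of $\mathcal{T}$, in which case $G$ stabilizes $e$. In the latter case $G$ either fixes both endpoints of $e$ (hence the midpoint) or swaps them (again fixing the midpoint of $e$ as a point of the geometric realization). In all cases, $G$ fixes a vertex or the midpoint of an edge of $\mathcal{T}$.

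No step is really an obstacle here: the entire argument is the observation that trees have a simple cubical subdivision and that the passage from periodic to elliptic for isometries on the subdivision, already recorded in the text, does all the work. The only mild subtlety is identifying the vertex set of the cubical subdivision of a tree with the vertices and edges of the tree, and interpreting an edge-stabilizing isometry of a tree as one that fixes the corresponding midpoint.
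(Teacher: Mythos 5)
Your proof is correct and takes essentially the same route as the paper: the text justifies the corollary by observing that periodic isometries of a tree induce elliptic isometries on the cubical subdivision (which is again a tree), so the previous corollary applies there and the fixed vertex translates back to a vertex or an edge-midpoint of the original tree. Your write-up just spells out the same reduction in more detail.
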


Indeed actions on trees are really rigid. For instance, if you consider an elliptic isometry $g$ of a tree $\mathcal{T}$, then it has the strong property that for any vertex $v\in \mathcal{T}^0$, the middle point $m(v,gv)\in\mathcal{T}^0$ between $v$ and $gv$ is fixed by $g$.

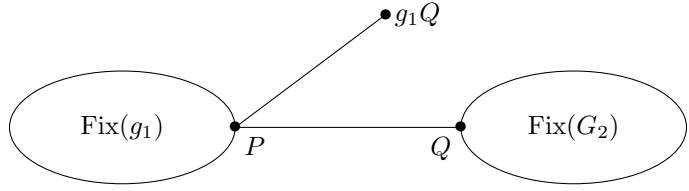
\begin{figure}
	\begin{center}
		\begin{tikzpicture}
			\draw (0,0) node {$\bullet$} node[below right]{$P$};
			\draw (3,0) node {$\bullet$} node[below left]{$Q$};
			\draw (2,1.5) node {$\bullet$} node[right]{$g_1Q$};
			\draw (0,0) -- (2,1.5);
			\draw (0,0) -- (3,0);
			\draw (4.5,0) node{$\Fix(G_2)$};
			\draw (-1.5,0) node{$\Fix(g_1)$};
			\draw (-1.5,0) circle (1.5 and 0.75);
			\draw (4.5,0) circle (1.5 and 0.75);
		\end{tikzpicture}
		\caption{The fixed-point sets of $g_1$ and $G_2$ are disjoint. \label{fig:purely_elliptic_tree}}
	\end{center}
\end{figure}

\begin{proof}[Proof of Theorem \ref{thm:elliptic_action_tree}]
	We prove the theorem by induction on the number of generators of $G$. If $G$ is cyclic, the result is immediate. Assume that $G$ is generated by $n$ elements. We denote by $G_2$ the subgroup of $G$ generated by $g_2,\dots, g_n$. 
Assume by contradiction that the fixed-point sets of $g_1$ and $G_2$ are disjoint. Because we are in a tree, there exists a unique geodesic joining $\Fix(g_1)$ and $\Fix(G_2)$, denoted by $[P,Q]$ with $P\in\Fix(g_1)$, $Q\in \Fix(G_2)$ and this geodesic realizes the distance between the two fixed-point sets (see Figure \ref{fig:purely_elliptic_tree}).
 This implies that $g_1Q$ is not equal to $Q$ and that the concatenation of the geodesics $[g_1Q,P]\cup[P,Q]$ is still geodesic. Moreover, $g_1Q=g_1g_kQ$ for all $2\leq k\leq n$, and $g_1g_k$ induces an elliptic isometry by assumption. As a consequence, $P$, which is the middle of $g_1g_kQ$ and $Q$, is fixed by $g_1g_k$, for all $2\leq k\leq n$. This implies that $P$ is fixed by $g_k$, for all $2\leq k \leq n$, hence by $G_2$, that is a contradiction.  
\end{proof}

\begin{theorem}[\cite{Sageev-ends_of_groups}]\label{thm:purely_elliptic_finite_case}
	Let $G$ be a finitely generated group acting purely elliptically on a finite dimensional median graph, then $G$ has a bounded orbit.
\end{theorem}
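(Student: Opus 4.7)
The plan is to induct on the number of generators $n$ of $G$, following the skeleton of the proof of Theorem \ref{thm:elliptic_action_tree} but carried out inside the cube completion $|\mathcal{G}|$. Since $\mathcal{G}$ has finite dimension, $|\mathcal{G}|$ is a complete CAT(0) space, so every elliptic isometry has a nonempty closed convex fixed-point set and every bounded subset admits a unique circumcenter; in particular a subgroup has a bounded orbit if and only if it fixes a point of $|\mathcal{G}|$. This will let me translate freely between ``bounded orbit'' and ``global fixed point''.

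The base case $n=1$ is immediate from the assumption that $g_1$ is elliptic. For the inductive step, set $G_2 = \langle g_2, \ldots, g_n \rangle$. Since $G_2 \leq G$ is also purely elliptic, the induction hypothesis gives a bounded $G_2$-orbit, hence a point $Q \in |\mathcal{G}|$ fixed by $G_2$; the generator $g_1$ likewise fixes some $P \in |\mathcal{G}|$. If $\mathrm{Fix}(g_1) \cap \mathrm{Fix}(G_2) \neq \emptyset$ then $G$ fixes any common point and we are done. Otherwise pick $P \in \mathrm{Fix}(g_1)$ and $Q \in \mathrm{Fix}(G_2)$ realizing the distance between these two disjoint closed convex subsets of $|\mathcal{G}|$. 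For each $k \in \{2, \ldots, n\}$ the element $g_1 g_k$ is elliptic (because $G$ is purely elliptic) and satisfies $g_1 g_k(Q) = g_1 Q$, since $g_k$ fixes $Q$. If one can show that $P \in \mathrm{Fix}(g_1 g_k)$ for each such $k$, then $g_k(P) = g_1^{-1}(P) = P$ for every $k \geq 2$, so $P \in \mathrm{Fix}(G_2)$, contradicting the disjointness assumption.

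The main obstacle is precisely the last step, namely showing that the nearest point $P$ of $\mathrm{Fix}(g_1)$ to $\mathrm{Fix}(G_2)$ must also lie in $\mathrm{Fix}(g_1 g_k)$. In the tree case this followed from two tree-specific properties: an elliptic isometry of a tree fixes the midpoint of any orbit pair $\{Q, hQ\}$, and that midpoint coincides with the nearest-point projection $P$. Both facts fail in higher-dimensional CAT(0) cube complexes, so the midpoint argument must be replaced. To recover the conclusion I would combine the identity $\pi_{\mathrm{Fix}(g_1 g_k)}(Q) = \pi_{\mathrm{Fix}(g_1 g_k)}(g_1 Q)$ (valid in any CAT(0) space, because $g_1 g_k$ preserves its fixed-point set pointwise and sends $Q$ to $g_1 Q$) with the combinatorial description of geodesics via hyperplane crossings (Theorem \ref{theorem:combinatorial_geodesic}) and a Helly-type intersection result for the convex fixed-point sets of $g_1$, $G_2$, and $g_1 g_k$ in the spirit of Proposition \ref{prop:Helly}. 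The idea is to pin down this common projection on the geodesic $[Q, g_1 Q]$ and force it to agree with $P$ using the convex-geometric constraints that $P$ satisfies as the nearest-point pair.
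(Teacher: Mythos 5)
Your induction does not go through, and the obstacle you flag at the end is not a technical gap to be filled but a step that is actually false in dimension $\geq 2$. The inductive step only uses that $g_1$, the subgroup $G_2$, and the products $g_1g_k$ are elliptic; it discards the hypothesis that \emph{every} element of $G$ is elliptic. But that weaker hypothesis does not imply a global fixed point once the median graph has dimension $2$. Concretely, let $\mathcal{G}$ be the standard square tiling of $\R^2$ (a $2$-dimensional median graph), let $g$ be the rotation by $\pi/2$ about a vertex $u$ and $h$ the rotation by $\pi/2$ about an adjacent vertex $v$. Both are cubical isometries fixing a vertex, their product $gh$ is a rotation by $\pi$ and hence fixes a point of the cube completion, yet $\Fix(g)=\{u\}$ and $\Fix(h)=\{v\}$ are disjoint and $\langle g,h\rangle$ contains translations, so its orbits are unbounded. (This is consistent with the theorem only because $\langle g,h\rangle$ is not purely elliptic.) In particular no combination of nearest-point projections, the Helly property, or the hyperplane description of geodesics can force your point $P$ to lie in $\Fix(g_1g_k)$: in the example above the analogous containment simply fails. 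The tree argument of Theorem \ref{thm:elliptic_action_tree} relies on the fact that an elliptic isometry of a tree fixes the midpoint of $[x,hx]$ for every vertex $x$, and this is exactly the property that rotations destroy in higher dimension.

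The paper's proof is of a completely different nature and genuinely uses the full purely elliptic hypothesis on all of $G$. One assumes the orbit of a vertex $v$ is unbounded, picks $g\in G$ with $\dist(v,gv)$ large, and observes that every hyperplane separating $v$ from $gv$ lies in the $G$-orbit of one of the finitely many hyperplanes separating $v$ from $g_iv$ for the generators $g_i$. The pigeonhole principle together with Lemma \ref{fact_hyperplane_dim} (which is where finite dimension enters) produces three pairwise disjoint hyperplanes in a single $G$-orbit, one separating the other two, and Lemma \ref{fact:loxodromic} then manufactures a loxodromic element of $G$, contradicting pure ellipticity. If you want to salvage an argument in your style, you would at minimum need to re-inject the ellipticity of elements other than $g_1$, $g_k$ and $g_1g_k$ (e.g.\ of long alternating products), which is essentially what the hyperplane-counting argument does implicitly.
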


The strategy of the proof is the following. Assuming that $G$ does not have a bounded orbit will allow us to build a loxodromic element of $G$ that will lead to a contradiction. To do so, the game is to find three parallele hyperplanes in a same orbit:

\begin{lemma}[\cite{Sageev-ends_of_groups}]\label{fact:loxodromic}
Consider the action of a group $G$ on a median graph such that there exists a hyperplane $H$ and two elements $g,h\in G$ such that the hyperplanes $H$, $gH$ and $hH$ are pairwise disjoint with  $gH$ that separates $H$ and $hH$. Then $G$ contains a loxodromic element.
\end{lemma}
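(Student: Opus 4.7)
\textit{Proof plan.}

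The natural candidate for a loxodromic element is $f := hg^{-1} \in G$, which satisfies $f(gH) = hg^{-1}gH = hH$, i.e.\ $f(H_1) = H_2$, where I set $H_0 := H$, $H_1 := gH$, $H_2 := hH$. Since $H_1$ separates $H_0$ from $H_2$, the hyperplane $H_2$ lies strictly in the halfspace $H_1^+$ of $H_1$ not containing $H_0$; so $f$ ``skewers'' $H_1$ by sending it across itself to a disjoint hyperplane.

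First I would replace $\mathcal{G}$ by its cubical subdivision so that $G$ acts stably without inversion; by Theorem~\ref{thm_classification_isometries} every element of $G$ is then either elliptic or loxodromic, and by Proposition~\ref{prop:fixedpoint} it suffices to exhibit an element with unbounded orbits. The iteration strategy is to prove that $\{f^n(H_1)\}_{n \geq 0}$ is an infinite chain of pairwise disjoint hyperplanes, with each $f^n(H_1)$ strictly deeper inside $H_1^+$ than the previous; by Theorem~\ref{theorem:combinatorial_geodesic} such a chain forces $d(v, f^n(v))$ to grow linearly in $n$ for any vertex $v \in H_1^-$, yielding an unbounded $\langle f \rangle$-orbit and hence loxodromicity of $f$.

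The main obstacle is the orientation step, namely $f(H_1^+) \subseteq H_2^+$, where $H_2^+$ denotes the halfspace of $H_2$ disjoint from $H_0$ and $H_1$ (from which the rest of the chain follows by induction). A single skewering hyperplane is not by itself sufficient: the reflection $x \mapsto -x$ of $\mathbb{Z}$ skewers the hyperplane $\{1/2\}$ but is elliptic. The third hyperplane $H_0$ supplies the missing rigidity. Indeed, $g^{-1}$ sends $H_1$ to $H_0 \subset H_1^-$, hence sends $H_1^+$ to one of the two halfspaces of $H_0$; then $h$ sends $H_0$ forward past $H_1$ to $H_2 \subset H_1^+$, and tracking how $h$ moves halfspaces of $H_0$ forces the composed image $f(H_1^+) = h(g^{-1}(H_1^+))$ to land in $H_2^+$. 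The computation becomes cleanest after passing to a finite-index subgroup of $G$ that preserves a fixed cubical orientation on the orbit $G \cdot H$, or equivalently by replacing $f$ by an even power guaranteed to preserve orientations; in the worst case a short case analysis by the two possible orientations of $g$ and $h$ on the pair $(H_0, H_1)$ and $(H_0, H_2)$ shows that the sign works out in each case, and the iteration then concludes that $G$ contains a loxodromic element.
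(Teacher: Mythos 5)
Your overall mechanism (skewer a halfspace, iterate to get an infinite chain of pairwise disjoint hyperplanes, conclude unbounded orbit and hence loxodromicity via Theorem \ref{theorem:combinatorial_geodesic} and Theorem \ref{thm_classification_isometries}) is the right one and is what the paper uses. The gap is your commitment to the single candidate $f=hg^{-1}$ and the claim that ``the sign works out in each case'', i.e.\ that $f(H_1^+)\subseteq H_2^+$ always holds. This is false. Take the median graph to be the line with vertex set $\mathbb{Z}$, let $H$ be the hyperplane dual to the edge $[0,1]$, and let $g\colon x\mapsto x+1$, $h\colon x\mapsto 5-x$. Then $gH$ is dual to $[1,2]$ and $hH$ to $[4,5]$; the three hyperplanes are pairwise disjoint and $gH$ separates $H$ from $hH$, so the hypotheses hold. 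Yet $hg^{-1}\colon x\mapsto 6-x$ is an involution, hence not loxodromic: it sends the halfspace $(3/2,\infty)$ of $gH$ not containing $H$ onto the halfspace $(-\infty,9/2)$ of $hH$ that \emph{does} contain $H$ and $gH$. Passing to an even power does not rescue this ($f^2=\mathrm{id}$ here), and there is no reason $g$ and $h$ should lie in an orientation-preserving finite-index subgroup.

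The fix --- and the paper's actual one-line argument --- is to let the loxodromic witness vary over $\{g,h,hg^{-1}\}$ rather than force it to be $hg^{-1}$. Write $\mathfrak{h},\mathfrak{h}^*$ for the two halfspaces of $H$ and consider the three properly nested halfspaces $A_0\subsetneq A_1\subsetneq A_2$, where $A_0$ is the halfspace of $H$ not containing $gH$, $A_1$ is the halfspace of $gH$ containing $H$, and $A_2$ is the halfspace of $hH$ containing $H$ and $gH$. Each $A_i$ is the image of $\mathfrak{h}$ or of $\mathfrak{h}^*$ under $\mathrm{id}$, $g$, $h$ respectively, so by pigeonhole two of the three carry the same ``sign'' $\epsilon$: if $A_0$ and $A_1$ do, then $\mathfrak{h}^\epsilon\subsetneq g(\mathfrak{h}^\epsilon)$ and $g$ is loxodromic; if $A_0$ and $A_2$ do, then $h$ is; if $A_1$ and $A_2$ do, then $g(\mathfrak{h}^\epsilon)\subsetneq h(\mathfrak{h}^\epsilon)=hg^{-1}\bigl(g(\mathfrak{h}^\epsilon)\bigr)$ and $hg^{-1}$ is. In the example above the agreeing pair is $(A_0,A_1)$ and the loxodromic element is $g$, not $hg^{-1}$. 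With this correction your iteration argument goes through verbatim and coincides with the paper's proof.
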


Indeed, the assumption implies that once given a cubical orientation on the median graph $\mathcal{G}$, there exist two positive halfspaces or two negative halfspaces of these hyperplanes such that one is included in the other one providing a loxodromic element; if for instance $gH^+\subset H^+$ then $g$ is loxodromic.

Finding three parallele hyperplanes in the same orbit when the median graph is of finite dimension can be easily done if there exists an orbit of hyperplanes containing enough hyperplanes compared to the dimension of the median graph. For instance in the case of trees, any family of three hyperplanes has this property, but already in dimension $2$ we need more: in the standard Cayley graph of $\Z^2$ (see Figure \ref{fig:Cay_Z2}), any family of at least $5$ hyperplanes has the desired property, but not any family of $4$ hyperplanes.

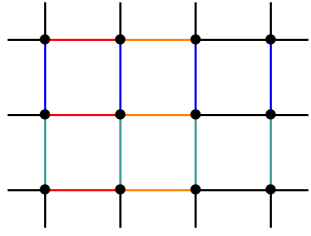
\begin{figure}
	\begin{center}
\begin{tikzpicture}
	\foreach \y in {0,1,2} \draw[thick,red] (0,\y)--(1,\y);
	\foreach \y in {0,1,2} \draw[thick,orange] (1,\y)--(2,\y);
	\foreach \y in {0,1,2} \draw[thick] (2,\y)--(3.5,\y);
	\foreach \y in {0,1,2} \draw[thick] (-0.5,\y)--(0,\y);
	\foreach \x in {0,1,2,3} \draw[thick,teal!80] (\x,0)--(\x,1);
	\foreach \x in {0,1,2,3} \draw[thick,blue] (\x,1)--(\x,2);
	\foreach \x in {0,1,2,3} \draw[thick] (\x,-0.5)--(\x,0);
	\foreach \x in {0,1,2,3} \draw[thick] (\x,2.5)--(\x,2);
	\foreach \y in {0,1,2} \foreach \x in {0,1,2,3} \draw(\x,\y)node{$\bullet$};
\end{tikzpicture}
\caption{The family of hyperplanes red, orange, blue and green does not contain three pairwise disjoint hyperplanes. \label{fig:Cay_Z2}}
	\end{center}
\end{figure}

\begin{lemma}[{\cite[Lemma 5.2]{Sageev-ends_of_groups}}]\label{fact_hyperplane_dim}
	Let $\mathcal{G}$ be a median graph of dimension $d$ and $S$ be a finite set of hyperplanes of $\mathcal{G}$. If the cardinality of $S$ satisfies $\lvert S\rvert \geq d+d(d+1)$ then there exists in $S$ three hyperplanes that are pairwise disjoint. 
\end{lemma}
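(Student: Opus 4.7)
The plan is to argue by contradiction: assume $S$ contains no three pairwise disjoint hyperplanes, and derive $|S|\le d(d+1)$, contradicting the hypothesis $|S|\ge d+d(d+1)$.

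The first step is a local bound on the ``disjoint neighbourhood'' of any hyperplane. Fix $H \in S$ and set $D_H := \{H' \in S\setminus\{H\} \mid H' \text{ is disjoint from } H\}$. For any two distinct $H', H''\in D_H$, the triple $\{H, H', H''\}$ is not allowed to be pairwise disjoint, so $H'$ and $H''$ must be transverse. Thus $D_H$ is a pairwise transverse family, and by Proposition \ref{prop:cubes_hyperplanes} it generates a cube of dimension $|D_H|$ in $\mathcal{G}$, forcing $|D_H|\le d$.

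Next, I would repackage this graph-theoretically. Let $\Gamma$ be the graph with vertex set $S$ whose edges join disjoint hyperplanes. The previous step says $\Gamma$ has maximum degree at most $d$, so a greedy colouring yields $\chi(\Gamma)\le d+1$. On the other hand, an independent set of $\Gamma$ is precisely a pairwise transverse family in $S$, and Proposition \ref{prop:cubes_hyperplanes} again bounds its size by $d$, giving $\alpha(\Gamma)\le d$. Since the colour classes of any proper colouring partition $S$ into independent sets,
\[ |S| = |V(\Gamma)| \le \chi(\Gamma)\cdot \alpha(\Gamma) \le (d+1)\cdot d = d(d+1), \]
contradicting $|S|\ge d+d(d+1)$.

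I do not foresee a serious obstacle: the lemma is essentially a Ramsey-type bound $R(3,d+1)\le d(d+2)$ for the transverse-versus-disjoint dichotomy, and Proposition \ref{prop:cubes_hyperplanes} converts the geometric content into elementary graph theory. The only point requiring care is the very first step, where the absence of a disjoint triple is used \emph{twice} — once to conclude that $D_H$ is pairwise transverse, once to apply the cube-dimension bound — but both invocations are immediate from the definitions. An alternative (perhaps closer to the original proof) would be to iterate the local bound by repeatedly choosing hyperplanes $H_1, H_2,\dots$ and removing those disjoint from each chosen one; this would give a similar numerical constant but is arguably messier than the colouring argument sketched above.
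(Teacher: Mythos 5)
Your argument is correct. Note that the survey does not actually prove Lemma \ref{fact_hyperplane_dim}; it only cites Sageev, so there is no in-paper proof to compare against. Your two uses of Proposition \ref{prop:cubes_hyperplanes} are both legitimate: the key dichotomy (any two distinct hyperplanes are either transverse or disjoint) is exactly the paper's convention, so $D_H$ pairwise transverse forces $\lvert D_H\rvert\le d$, and likewise any independent set of $\Gamma$ has size at most $d$. The colouring step then gives $\lvert S\rvert\le d(d+1)$, which contradicts $\lvert S\rvert\ge d+d(d+1)$ whenever $d\ge 1$ (the case $d=0$ is degenerate for the original statement as well, since then there are no hyperplanes at all). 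Your packaging via $\chi(\Gamma)\cdot\alpha(\Gamma)$ is a slightly slicker organization of the standard argument: Sageev's original proof is essentially the ``iterate the local bound'' variant you mention at the end (take a maximal pairwise-transverse subfamily $T$, $\lvert T\rvert\le d$, and observe that every remaining hyperplane is disjoint from some member of $T$ while each member of $T$ has at most $d$ hyperplanes disjoint from it, giving $\lvert S\rvert\le d+d^2$); both routes yield a constant at least as good as the one in the statement, so nothing is lost.
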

 
\begin{proof}[Idea of proof of Theorem \ref{thm:purely_elliptic_finite_case}]
Consider $g_1,\dots g_n\in G$ a symmetric generating set of $G$. Assume by contradiction that its orbits are unbounded. Let $v$ be a vertex of $\mathcal{G}$.
For $1\leq i\leq n$, denote by $S_i$ the set of hyperplanes separating $v$ and $g_iv$, and by $S$ the union of the $S_i$'s: $S=\bigcup_{1\leq i\leq n}S_i$. Note that $S$ is finite. By assumption on the action of $G$, its orbits are unbounded so in particular there exists an element $g\in G$ satisfying $N:=\dist(v,gv)\geq  \lvert S\rvert \left(d+d(d+1)\right)$. Denote by $\mathcal{K}:=\{K_1,\dots,K_N\}$ the set of hyperplanes separating $v$ and $gv$. Writing $g$ as a product of generators $g=g_{i_1}\dots g_{i_k}$ for some $1\leq i_1,\dots,i_k\leq n$, the union of the following geodesics $[v,g_{i_1}v]\cup[g_{i_1}v, g_{i_1}g_{i_2}v]\cup \dots \cup [g_{i_1}\dots g_{i_{k-1}}v,gv]$ is a path joining $v$ and $gv$. Hence any hyperplane $K_j$ crosses at least one of the above geodesics, meaning, by construction, that they are in the orbit of a hyperplane of $S$. By assumption on $N$ and by the pigeonhole principle, there exist a hyperplane $J$ of $S$ and a subfamily of $\mathcal{K}$ of size at least $d+d(d+1)$ whose all hyperplanes are in the orbit of $J$. By Lemma \ref{fact_hyperplane_dim}, this implies that in the orbit of $J$ there exists 3 hyperplanes that are pairwise disjoint. We get the contradiction using Lemma \ref{fact:loxodromic} who provides a loxodromic element in $G$.
\end{proof}

\subsubsection{Case of locally finite dimensional median graphs}\label{subsec:purely_elliptic_loc_finite_dim}
In the infinite-dimensional case, the previous proof fails because we can not use Lemma \ref{fact_hyperplane_dim} anymore. 
Nevertheless, with the restriction that the dimension is locally finite, the same result holds \cite{GLU_Neretin}. We present here an idea of the proof.

The following lemma allows us to first restrict the action of $\mathcal{G}$ on the convex hull of the orbit of a vertex $x_0\in \mathcal{G}$. It is well known, and it is contained in the proof of \cite[Theorem~5.1]{Sageev-ends_of_groups} (see also \cite{GLU_Neretin} or \cite{Genevois_book_median} for instance).

\begin{lemma}\label{lem:ConvexHullOrbit}
	Let $G$ be a finitely generated group acting on a median graph $\mathcal{G}$. For every vertex $x_0 \in \mathcal{G}^0$, the convex hull of the orbit $G \cdot x_0$ is a $G$-invariant convex subgraph on which $G$ acts with finitely many orbits of hyperplanes.
\end{lemma}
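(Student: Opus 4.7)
The plan is to treat the two assertions separately. For the $G$-invariance of $C := \conv(G \cdot x_0)$, I would simply note that any isometry of $\mathcal{G}$ sends convex subgraphs to convex subgraphs, so for every $g \in G$ one has $g \cdot C = \conv(g \cdot G \cdot x_0) = \conv(G \cdot x_0) = C$. For the finiteness of the orbits of hyperplanes, I would fix a finite symmetric generating set $g_1, \dots, g_n$ of $G$, let $S_i$ denote the (finite) set of hyperplanes of $\mathcal{G}$ separating $x_0$ from $g_i x_0$, and set $S := \bigcup_{i=1}^n S_i$. The goal is to show every hyperplane of $C$ lies in the $G$-orbit of some element of $S$, which bounds the number of $G$-orbits of hyperplanes by $|S|$.

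The first key step is to observe that every hyperplane $H$ of $C$ separates two vertices of the orbit $G \cdot x_0$. Indeed, the halfspaces $H^+, H^-$ delimited by $H$ in $C$ are convex in $C$, and since $C$ is convex in $\mathcal{G}$ they are convex in $\mathcal{G}$ as well. If one of them, say $H^+$, were disjoint from $G \cdot x_0$, then Proposition \ref{prop_convex_hull_hyperplanes} would force $C = \conv(G \cdot x_0)$ to be contained in $H^-$, contradicting $H^+ \neq \emptyset$. Hence $H$ separates some $g x_0$ from some $h x_0$, and equivalently the hyperplane $g^{-1} H$ separates $x_0$ from $g^{-1} h x_0$.

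The second step is the relay-path trick already used in the proof of Theorem \ref{thm:purely_elliptic_finite_case}. Writing $g^{-1} h = g_{i_1} \cdots g_{i_k}$ as a product of generators, the concatenation
\[ [x_0, g_{i_1} x_0] \cup [g_{i_1} x_0, g_{i_1} g_{i_2} x_0] \cup \cdots \cup [g_{i_1} \cdots g_{i_{k-1}} x_0, g^{-1} h x_0] \]
is a path from $x_0$ to $g^{-1} h x_0$. By Theorem \ref{theorem:combinatorial_geodesic}, the hyperplane $g^{-1} H$ crosses one of these segments, say the $j$-th; then $(g_{i_1} \cdots g_{i_{j-1}})^{-1} g^{-1} H$ lies in $S_{i_j} \subset S$, so $H$ is in the $G$-orbit of an element of $S$, as desired. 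I do not expect any serious obstacle here: the argument is a direct adaptation of the pigeonhole/relay technique already deployed in the paper, and the only delicate point is the first step, namely the translation from hyperplanes of $C$ to hyperplanes of $\mathcal{G}$ separating two orbit points.
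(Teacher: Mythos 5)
Your argument is correct and is precisely the standard one the paper points to by citing the proof of Sageev's Theorem 5.1 (the paper itself gives no proof of this lemma, and the relay-path trick you use is the same one it deploys for Theorem \ref{thm:purely_elliptic_finite_case}). The only point worth spelling out is the identification, in your first step, of hyperplanes of the convex subgraph with the ambient hyperplanes of $\mathcal{G}$ crossing it (so that Proposition \ref{prop_convex_hull_hyperplanes} applies to a genuine halfspace of $\mathcal{G}$); this is standard for convex subgraphs and does not affect the validity of the proof.
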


Recall that the distance $\dist_{\infty}$ has been introduced in Subsection \ref{subsec:cc} for CAT(0) cube complexes, but we can define it for median graphs using cube completion. Another equivalent way to define it for median graphs, using Proposition \ref{prop:cubes_hyperplanes}, is as follows. Given a median graph $\mathcal{G}$ and two vertices $x,y \in \mathcal{G}^0$, $\dist_{\infty}(x,y)$ is the maximal number of pairwise disjoint hyperplanes that separate $x$ and $y$.

The main difficulty to deal with, in the infinite-dimensional case, is that the dimension of cubes being not uniformly bounded, it could happen that $G$ has an unbounded orbit but that the orbit of a point is bounded with respect to the distance $\dist_{\infty}$. Indeed, in this case, it is not always possible to find three pairwise disjoint hyperplanes; preventing us to use Lemma \ref{fact:loxodromic} to build easily a loxodromic element.

\begin{proof}[Idea of proof of Theorem \ref{thm:purely_elliptic_locally_finite_dim}.]
	As a consequence of Lemma \ref{lem:ConvexHullOrbit}, we can assume that $\mathcal{G}$ coincides with the convex hull of the orbit $G \cdot x_0$ for some $x_0 \in \mathcal{G}^0$.

	 $\mathcal{G}$ has to be bounded with respect to $\dist_{\infty}$. Indeed, this can be proved by contradiction, using that, by Lemma \ref{lem:ConvexHullOrbit}, the number of orbits of hyperplanes is finite and then using the pigeonhole principle and Lemma \ref{fact:loxodromic} to build a loxodromic element.

	As a consequence, given a halfspace $H_1$ delimited by a hyperplane $H$, we can define the \emph{depth} of $H_1$ by 
	\[p(H_1):= \max \left\{ d_\infty(x,N(H)) \mid x \in H_1 \right\}.\]
	A hyperplane is \emph{balanced} if the two halfspaces it delimits have the same depth, and it is \emph{unbalanced} otherwise. If $H$ is an unbalanced hyperplane, we call \emph{larger halfspace} (respectively \emph{thinner halfspace}) the halfspace that has the larger (respectively the thinner) depth and we denote them respectively by $H^{\ell}$ and $H^t$. If $H$ is a balanced hyperplane, we denote by $p(H)$ the common depth of the two halfspaces it delimits. For instance in Figure \ref{fig:halfspace}, the red hyperplane is unbalanced, the blue halfspace has depth $0$ and it is the thinner one, whereas the halfspace green has depth $1$ and is the larger one. In Figure \ref{fig:cc}, the blue hyperplane is balanced and its depth is $0$, whereas the red, the brown and the black ones are unbalanced.

	 It is not so hard to see that the intersection of two larger halfspaces is never empty, and that two balanced hyperplane are always transverse. 
	 
	 If $\mathcal{G}$ contains only balanced hyperplanes, they are pairwise transverse and so by Proposition \ref{prop:cubes_hyperplanes} it must be a cube, possibly infinite dimensional. But we know by assumption that $\mathcal{G}$ does not contain an infinite-dimensional cube, so we conclude that $G$ stabilises a finite cube, and that its orbits are bounded, as desired.
	
	Otherwise, we consider the intersection of all the larger halfspaces:
   \[	\mathcal{C}:=\bigcap\limits_{\text{$H$ unbalanced}} H^{\ell}.\]
   For instance in Figure \ref{fig:cc}, the intersection of all the larger halfspace is the edge $[v,x_3]$.
	If $\mathcal{C}$ is non-empty, then it defines a $G$-invariant convex subcomplex as a consequence of the convexity of halfspaces. Moreover, it contains only pairwise transverse hyperplanes, so we conclude as previously that $\mathcal{C}$ is a finite cube, and that the orbits of $G$ are bounded.

	The final step is to prove that $\mathcal{C}$ can not be empty. We do it by contradiction. By Helly property for median graphs (Proposition \ref{prop:Helly}) it implies in particular that the set of unbalanced hyperplanes is infinite. Using the following lemma, it remains to construct a geodesic ray. 
	
	\begin{lemma}\label{fact:RayDiamInfty}
		Let $\mathcal{G}$ be a median graph and $\rho$ a geodesic ray. If $\mathcal{G}$ does not contain an infinite cube, then $\rho$ has infinite diameter with respect to $d_\infty$.
	\end{lemma}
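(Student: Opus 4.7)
I propose to prove the contrapositive: if $d_\infty(\rho(0),\rho(n))\le N$ for every $n$, then $\mathcal{G}$ contains an infinite cube. By Theorem \ref{theorem:combinatorial_geodesic} the ray $\rho$ crosses an infinite sequence of pairwise distinct hyperplanes $H_1,H_2,\ldots$ in the order they are met, and the bound translates into the statement that no $N+1$ of the $H_i$ are pairwise disjoint. My first step would be to apply the infinite Ramsey theorem to the two-colouring of unordered pairs $\{H_i,H_j\}$ by ``transverse'' versus ``disjoint''. An infinite monochromatic disjoint subfamily is ruled out by the bound $N$, so one extracts an infinite pairwise transverse subfamily $K_1,K_2,\ldots$, still crossed by $\rho$.

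The task is then to promote this data to a genuine infinite cube. I would first treat the model case $N=1$: here every pair $\{H_i,H_j\}$ is automatically transverse (otherwise two disjoint ones would already yield $d_\infty\ge 2$), so Proposition \ref{prop:cubes_hyperplanes} supplies, for each $n$, an $n$-cube $Q_n$ with coordinate hyperplanes $H_1,\ldots,H_n$ having $\rho(0)$ and $\rho(n)$ as opposite corners (these are exactly the $n$ hyperplanes separating those two vertices on the geodesic $\rho$). The cube $Q_n$ then coincides with the face of $Q_{n+1}$ on the $\rho(0)$-side of $H_{n+1}$, so $Q_1\subset Q_2\subset\cdots$ are nested, and their union is an infinite cube with corner $\rho(0)$ and coordinate hyperplanes $\{H_i\}_{i\ge 1}$.

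For general $N$, I would adapt this by working with the transverse family $\{K_j\}$ and pinning down a consistent ``$\rho(0)$-corner'' in each $n$-cube sitting inside the convex intersection $\bigcap_{j=1}^n\Nb(K_j)$ via gate projections. Letting $g_n$ denote the gate of $\rho(0)$ in that intersection, the bounded-$d_\infty$ hypothesis should force the sequence $(g_n)$ to stabilise at a vertex incident to infinitely many pairwise transverse edges, which would be the corner of the desired infinite cube.

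The main obstacle is precisely this promotion step: Example \ref{ex:pairwise_transverse_pas_cube} shows that an infinite pairwise transverse family of hyperplanes does not by itself produce an infinite cube, so one must genuinely exploit that the $K_j$ come from a single geodesic ray of bounded $d_\infty$. Concretely, one must verify that the gates $g_n$ (or equivalently the candidate corners of the finite cubes $Q_n$) eventually stabilise and give compatible adjacencies across every $K_j$; this is the place where the hypothesis $d_\infty\le N$ enters essentially, beyond its role in the Ramsey extraction.
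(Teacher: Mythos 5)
The paper does not actually prove Lemma \ref{fact:RayDiamInfty}: it is quoted as a black box from \cite{GLU_Neretin} inside the sketch of proof of Theorem \ref{thm:purely_elliptic_locally_finite_dim}, so your proposal must be judged on its own. The Ramsey extraction is correct, and your $N=1$ model case is complete (there all crossed hyperplanes are pairwise transverse and the intervals $[\rho(0),\rho(n)]$ are nested cubes). The gap is exactly where you place it, but it is worse than a missing verification: the mechanism you propose fails. For an arbitrary infinite pairwise transverse subfamily $\{K_j\}$ handed to you by Ramsey, the gates of $\rho(0)$ in $\bigcap_{j\le n}\Nb(K_j)$ need not stabilise. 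Concretely, let $\Omega=\{A_i,B_i : i\ge 1\}$ and let $\mathcal{G}$ be the graph on the finite subsets $S\subseteq\Omega$ satisfying $B_i\in S\Rightarrow A_i\in S$, two subsets being adjacent when they differ by one element; this is a median graph (it is median-closed and isometrically embedded in the infinite cube on $\Omega$), its only disjoint pairs of hyperplanes are the pairs $\{A_i,B_i\}$, and the ray $\emptyset,\{A_1\},\{A_1,B_1\},\{A_1,B_1,A_2\},\dots$ is geodesic with $d_\infty$-diameter $2$. The $B_j$ form an infinite pairwise transverse subfamily of the hyperplanes it crosses, yet $\Nb(B_j)=\{S : A_j\in S\}$, so the gate of $\emptyset$ in $\bigcap_{j\le n}\Nb(B_j)$ is $\{A_1,\dots,A_n\}$, at distance $n$: the gates run away. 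The infinite cube does exist (it is spanned at $\emptyset$ by the $A_j$), but your procedure, fed the $B_j$, does not find it; the Ramsey step has discarded the information needed to locate it.

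A correct argument must retain the order in which $\rho$ meets the hyperplanes and which of them touch which vertices of $\rho$. Writing $H_j$ for the hyperplane dual to $[\rho(j-1),\rho(j)]$, one checks that for $j>n$ one has $\rho(n)\in\Nb(H_j)$ if and only if no $H_{j'}$ with $n<j'<j$ is disjoint from $H_j$. If for every $n$ only finitely many $j>n$ satisfy this, choose $0=T_0<T_1<\dots<T_{N+1}$ with $T_{k+1}$ exceeding all such $j$ for $n=T_k$; starting from any $j_{N+1}>T_{N+1}$ and descending, one extracts $T_k<j_k<j_{k+1}$ with $H_{j_k}$ disjoint from $H_{j_{k+1}}$, and since disjoint hyperplanes crossed by a geodesic are nested towards $\rho(\infty)$, this chain is pairwise disjoint, contradicting $d_\infty\le N$. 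Hence some $\rho(n_0)$ lies in $\Nb(H_j)$ for infinitely many $j>n_0$; any two such $H_j$ are automatically transverse (a disjoint pair would expel $\rho(n_0)$ from the carrier of the later one), and finitely many pairwise transverse hyperplanes with edges at a common vertex span a cube cornered there by flagness of links, so these cubes nest into an infinite cube at $\rho(n_0)$. This pigeonhole along the ray, not a gate argument after Ramsey, is where the hypothesis $d_\infty\le N$ genuinely enters.
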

	
We do it as follows.
	We construct a sequence of vertices $(v_i)_{i \geq 0}$ and a sequence of unbalanced hyperplanes $(H_i)_{i\geq 1}$ in the following way.
	\begin{itemize}
		\item We fix an arbitrary vertex $v_0 \in \mathcal{G}^0$.
		\item If $v_0,\ldots, v_i$ and $H_1,\ldots, H_i$ are defined, we fix an unbalanced hyperplane $H_{i+1}$ satisfying $v_i \in H_{i+1}^{t}$ and we define the vertex $v_{i+1}$ as the projection of $v_i$ onto $H_1^{\ell} \cap \cdots \cap H_{i+1}^{\ell}$.
	\end{itemize}
	Note that the projection is well defined because the intersection $H_1^{\ell} \cap \cdots \cap H_{i+1}^{\ell}$ is non-empty by the Helly property for median graphs (Proposition \ref{prop:Helly}). Observe that, by construction, we have $v_i \in \bigcap_{k=1}^i H_k^{\ell}$ for every $i \geq 1$, hence by assumption, it is always possible to find a hyperplane $H_{i+1}$ such that $v_i\in H_{i+1}^t$.

	Let us illustrate this construction in the case of Figure \ref{fig:cc}. Note that we can not get an infinite sequence as our cube complex is finite. Nevertheless it illustrates how the construction works. If we start for instance with the vertex $u$, we can choose as first hyperplane the red one (the black one would have been also a possible choice). Then the projection is the vertex $u_1$. Then the second hyperplane has to be the black one and the vertex $v_2$ is $x_3$. And then we can not continue because we are indeed in the intersection of all the larger halfspaces of the median graph.

For every $i \geq 0$, fix a geodesic $[v_i,v_{i+1}]$ between $v_i,v_{i+1}$. The last step in order to achieve the proof is to show that the concatenation $[v_0,v_1] \cup [v_1, v_2] \cup \cdots$ defines a geodesic ray.
\end{proof}

\subsubsection{Case of infinite-dimensional median graphs not locally of finite dimension}
\label{subsec:purely_elliptic_counterex}

In the case of infinite-dimensional median graphs that are not locally of finite dimension, the answer to Question \ref{question:purely_elliptic} is no. Indeed, there exist several counterexamples to this question. 

 For instance, consider the Grigorchuk group. It is a famous example of a finitely generated infinite torsion group. Hence it acts on any median graph purely elliptically by Proposition \ref{prop:fixedpoint}.
It is known that Grigorchuk groups have Schreier graphs with 2 ends (\cite{Grigorchuk_Krylyuk_Shreiergraph}), and \cite{Sageev-ends_of_groups} proved that if a finitely generated group has a Schreier graph with more than one end, one can construct an unbounded action on a median graph. 
 More recently, \cite{Schneeberger_CCC_Grigorchuk} proved that many Grigorchuk groups act properly on a median graph.
 
Another counterexample can be found in \cite{Osajda_Cubization}. Recall that the free Burnside group $\Burn(m,n)$ is defined by the presentation
 \[ \Burn(m,n)= \left\langle s_1,s_2,...,s_m \mid w(s_1,s_2,...,s_m)^n\right\rangle \]
 where $w$ runs over all words in $s_1, s_2, \dots , s_m$.
 
 \begin{theorem}[\cite{Osajda_Cubization}]
 	If the free Burnside group $\Burn(m,n)$ is infinite then, for every integer $k>1$, the free Burnside group $\Burn(m, kn)$ acts without bounded orbits on a median graph.
 \end{theorem}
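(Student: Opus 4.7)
The plan is to exhibit a wall-space structure on the Cayley graph $\Gamma$ of $\Burn(m,kn)$ invariant under left multiplication, apply the Chatterji--Niblo/Sageev cubization procedure to produce a median graph $\mathcal{G}$ carrying a $\Burn(m,kn)$-action by isometries, and then verify that the resulting orbits are unbounded. The key observation driving the construction is that each defining relator $w^{kn}$ of $\Burn(m,kn)$ factors as $(w^n)^k$, i.e.\ as a $k$-fold repetition of the shorter word $w^n$ (which is trivial in $\Burn(m,n)$). Under the hypothesis that $\Burn(m,n)$ is infinite, this multiplicative structure endows the presentation $\langle s_1,\ldots,s_m \mid w^{kn}\rangle$ with a \emph{graphical small cancellation condition} strong enough to fuel the rest of the argument.

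With such a small cancellation condition available, walls are obtained by bisecting each relator cycle in $\Gamma$ into two ``antipodal arcs'' and taking the $\Burn(m,kn)$-orbit of these bisections. The small cancellation property ensures that distinct bisection arcs glue coherently into global wall partitions, and that any two vertices of $\Gamma$ are separated by only finitely many walls, as required for a wall space. The Chatterji--Niblo/Sageev construction then yields a median graph $\mathcal{G}$ whose vertices are the coherent choices of halfspaces, on which $\Burn(m,kn)$ acts isometrically by left multiplication, with $\dist(v,g\cdot v)$ equal to the number of walls separating $v$ from $g\cdot v$.

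To conclude, one must verify that some orbit is unbounded. A counting argument within the small-cancellation machinery shows that any Cayley-graph geodesic of length $L$ is crossed by $\Omega(L)$ walls, and the canonical surjection $\Burn(m,kn) \twoheadrightarrow \Burn(m,n)$---available because $w^{kn}=(w^n)^k$ is automatically trivial once $w^n$ is---forces $\Burn(m,kn)$ to be infinite as soon as $\Burn(m,n)$ is, so arbitrarily long Cayley-graph geodesics exist. The main technical obstacle is precisely the construction of the walls themselves: infinite Burnside groups have neither hyperbolic nor CAT(0) geometry, nor evident codimension-one subgroups from which walls could be built. The graphical small cancellation framework of \cite{Osajda_Cubization} bypasses this difficulty by cutting the relators themselves into halves, and the factor $k>1$ is indispensable here: it provides exactly the combinatorial slack needed to make the relators $w^{kn}$ amenable to bisection---an approach that would collapse for $k=1$, and hence cannot be invoked to build an analogous action of $\Burn(m,n)$ itself directly.
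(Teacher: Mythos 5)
There is a genuine gap, and it sits at the very first step. Your argument rests on the claim that the presentation $\langle s_1,\dots,s_m \mid w^{kn}\rangle$ of $\Burn(m,kn)$ satisfies a graphical small cancellation condition because each relator factors as $(w^n)^k$. No such condition holds, and none is established by that factorization: the family of all relators $\{w^{kn}\}$ contains pairs with arbitrarily long common pieces (already $w^{kn}$ and $(w^2)^{kn}=w^{2kn}$ overlap along the whole of $w^{kn}$, and any two words sharing a long subword produce relators violating every metric condition $C'(\lambda)$). Free Burnside groups are precisely the examples for which one-shot small cancellation fails and an infinitely iterated, graded theory (Novikov--Adian, Ol'shanskii, Ivanov) is required. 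Two further symptoms confirm the approach cannot work as stated: the hypothesis that $\Burn(m,n)$ is infinite is only used in your sketch to deduce $\Burn(m,kn)$ is infinite (a triviality), whereas it must enter the construction essentially; and bisecting a relator cycle labelled by the proper power $w^{kn}$ into two antipodal arcs is not compatible with the $\Z/kn\Z$ rotational symmetry of that cycle, which is exactly the situation Wise's hypergraph construction must exclude or treat separately.

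The actual proof in \cite{Osajda_Cubization} takes a completely different route, and it is worth recording because it explains where each hypothesis is used. One starts from the Cayley graph $\Gamma$ of $\Burn(m,n)$, which is infinite by assumption, and passes to its $\Z/k\Z$-homology cover $\widetilde{\Gamma}$. The preimage in $\widetilde{\Gamma}$ of each edge of $\Gamma$ defines a wall, giving a wall space (equivalently, a median graph via the Chatterji--Niblo/Sageev duality) on which the wall pseudo-distance dominates the distance in $\Gamma$, hence is unbounded. A loop of $\Gamma$ lifts to a loop of $\widetilde{\Gamma}$ if and only if its mod-$k$ homology class vanishes; since every word $w$ satisfies $w^n=1$ in $\Burn(m,n)$, the path labelled $w^{kn}$ traverses the loop $w^n$ exactly $k$ times and therefore lifts to a loop. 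Consequently the lifted generators satisfy all relations $w^{kn}=1$, so $\Burn(m,kn)$ acts on $\widetilde{\Gamma}$ preserving the walls, and the induced action on the associated median graph has unbounded orbits. This is where the infiniteness of $\Burn(m,n)$ (to have an unbounded base graph) and the factor $k>1$ (to have a nontrivial homology cover through which the relators lift) genuinely intervene; your proposal would need to be rebuilt around such a covering or an equivalent device rather than around a small cancellation condition on the Burnside presentation.
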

 
 Note that knowing if $\Burn(m,n)$ is finite or not is known as the Burnside problem. Many results have been proven in this direction. Let us mention for instance, the result from
 \cite{Adjan_Novikov_unbounded_Burnside_problem}
and from \cite{Ivanov_Burnside_groups} that $\Burn(m,n)$ is infinite when $m>1$, $n\geq 248$ and $n$ is divisible by $29$ if $n$ is even.

Wreath products is a good family to find counterexamples to Question \ref{question:purely_elliptic}. See for instance \cite[Corollary 16.1.2(i)]{Genevois_book_median} or the appendix of \cite{Osajda_Cubization} by Mikaël Pichot. Let $H$ be an infinite finitely generated torsion group (for instance a infinite free Burnside group) and consider the wreath product $\Z/2\Z\  \wr \ H$. It is a finitely generated torsion group and it acts on $\bigoplus_H \Z/2\Z$, which can be seen as the 1-skeleton of an infinite cube and so a median graph. It can be shown that it acts with unbounded orbits (see \cite[Fact 16.1.1]{Genevois_book_median}).

\medskip

\subsubsection{Related questions}
Instead of adding restrictions in order to solve positively Question \ref{question:purely_elliptic}, one can also ask which groups that act purely elliptically on a median graph, have bounded orbits?

An instance of such groups are the ones having the \emph{FW property}, i.e., they are the ones such that every action on a median graph admits bounded orbits. Several equivalent definitions of this property exist (see for instance \cite{Cornulier_wallings}). For instance, groups with \emph{FH property}, i.e. groups such that every isometric action on a Hilbert space has a fixed point, have the FW property. By a theorem of Delorme and Guichardet, property FH is equivalent to the Kazhdan's Property T in the case of countable groups. Such groups are for instance: special linear groups $\SL(n,\Z)$ of dimension $n>2$, but also simple real Lie groups of real rank at least $2$, or more generally simple algebraic groups of rank at least $2$ over a local field.
Nevertheless, groups with property FH are not the only examples of groups with property FW: for instance, 
the groups $\SL(2,\Z[\sqrt{k}])$, where $k$ is a non-square positive integer, or $\SL(2,\mathcal{O}_K)$ where $\mathcal{O}_K$ is the ring of integers of a number field $K$ with $[K:\Q]\geq 2$; see for instance \cite[Example 6.A.8]{Cornulier_wallings}.

 Refinements of the FW property have been investigated by \cite{Genevois_fixedpt_property_median}, where he introduced the property $FW_n$ that denotes the fixed-point property for median graphs of dimension $n$. He constructed groups satisfying the $FW_n$ property but not the $FW_{n+1}$ one. 

Finally, let us mention that  \cite{Haettel_Osajda_purely_ellitpic} have investigated Question \ref{question:purely_elliptic} extending the scope of the actions considered; from median graphs to complexes. They conjectured that every purely elliptic action of a finitely generated group on a finite-dimensional nonpositively curved complex has bounded orbit; for reasonable notions of dimension and nonpositively curvature. They proved their conjecture for a large class of complexes including for instance all infinite families of Euclidean buildings, Helly complexes, uniformly locally finite Gromov hyperbolic graphs.

\section{Cremona groups}\label{Section_Cremona_groups}

The aim of this section is to introduce Cremona groups as well as all material of algebraic geometry needed 
in order to understand the constructions of the median graphs. In Subsection \ref{Subsection_State_art}, we do a brief state of the art to understand in which context these constructions have been developed.

\subsection{Birational geometry}\label{Subsection_bir_geometry}
This subsection is aimed at non-algebraic geometers and will allow us to settle the notation. This material is standard and this part is largely inspired by \cite{Lamy_book}. It can be also found in any book which introduces algebraic geometry, like for instance \cite{Shafarevich_book_1} and \cite{Shafarevich_book_2}.

We make the choice to introduce everything over an algebraically closed field and not to speak about schemes in order to introduce the least possible amount of notions. Hence we fix here once and for all $\kk$ to be an algebraically closed field.

\medskip
\subsubsection{Objects}
The objects studied in algebraic are the algebraic varieties. So let us introduce them first.

Recall that the \emph{affine space} of dimension $n$ is the set of all $n$-tuples of elements in $\kk$, it is denoted by $\A^n$:
\[ \A^n:=\{(a_1,\dots,a_n)\mid a_i\in \kk\},\]
and that the\emph{ projective space} $\PP^n$ is the quotient of the space $\A^{n+1}\setminus \{(0,\dots,0)\}$ by homotheties: $(a_0,\dots, a_n)\sim (b_0,\dots, b_n)$ if and only if there exists $\lambda \in \kk^*$ such that $(a_0,\dots, a_n)=\lambda (b_0,\dots, b_n)$. Such a class will be denoted by $[a_0:\dots: a_n]\in \PP^n$.

A \emph{projective variety} (respectively an \emph{affine variety}) is a subset $X$ of the projective space $\PP^n$ (respectively $Y$ of the affine space $\A^n$) defined as the common zero locus of a finite collection of homogeneous polynomials in $n+1$ variables over $\kk$, $P_i\in\kk[X_0,\dots,X_n]$ for $1\leq i \leq k$ (respectively of a finite collection of polynomials in $n$ variables over $\kk$, $Q_i\in\kk[X_1,\dots,X_n]$ for $1\leq i \leq k$): 
\begin{align*}
X&=\{[a_0:\dots: a_n]\in \PP^n \mid P_i(a_0,\dots,a_n)=0 \text{ for all } 1\leq i \leq k \},\\
Y&=\{(a_1,\dots, a_n)\in \A^n \mid Q_i(a_1,\dots,a_n)=0 \text{ for all } 1\leq i \leq k \}.
\end{align*}

\begin{example}\label{ex_Segre_embedding} $\PP^2\times \PP^1$ is a projective variety via the Segre-embedding:
\[\begin{array}{ccc}
	\PP^2\times \PP^1& \hookrightarrow &\PP^5 \\
	\left([x_0:x_1:x_2], [t_0:t_1]\right)&\mapsto& [x_0t_0:x_0t_1:x_1t_0:x_1t_1:x_2t_0:x_2t_1].
\end{array}\]
Denoting by $[X_0:\dots :X_5]$ the homogeneous coordinates on $\PP^5$, the image of this embedding, called the Segre threefold is given by  the intersection:
\[\{X_0X_3-X_1X_2=0\}\cap \{X_0X_5-X_4X_1=0\}\cap \{X_2X_5-X_3X_4=0\}.\]
\end{example}

\begin{example}\label{ex_hirzebruch_surfaces}
The \emph{ Hirzebruch surface of index $n\geq 0$}, denoted by $\F_n$, is:
\[\F_n =\{\left([x_0:x_1:x_2], [t_0,t_1]\right)\in \PP^2\times \PP^1\mid x_0t_1^n=x_1t_0^n \}.\] It is a projective surface via the Segre-embedding $\PP^2\times \PP^1 \hookrightarrow \PP^5$ of Example \ref{ex_Segre_embedding}.
\end{example}

We can define a topology on a projective variety $X$ (respectively on an affine variety $Y$), called the \emph{Zariski topology}, by taking as closed sets the projective subvarieties of $X$ (respectively the affine subvarieties of $Y$). A variety is \emph{irreducible} if it is not the union of two proper closed subsets. The \emph{dimension} of an irreducible variety $X$ is the supremum of the length of chains $Y_0\subsetneq Y_1\subsetneq \dots \subsetneq Y_n$ where each $Y_i$ is an irreducible subvariety of $X$. Note that the projective space $\PP^n$ is an irreducible projective variety of dimension $n$ and that $\A^n$ is an irreducible affine variety of dimension $n$.

Consider $\PP^n$ with coordinates $[x_0:\dots:x_n]$. Then the \emph{standard open cover} of $\PP^n$ is given by $U_i=\PP^n\setminus \{x_i=0\}$, for $0\leq i\leq n$. Note that each $U_i$ is homeomorphic to $\A^n$ using deshomogenization:
\[\begin{array}{cccc}
	& U_i &\rightarrow & \A^n \\
	& [x_0:x_1:\dots : x_n] & \mapsto & (\frac{x_0}{x_i},\dots,\frac{x_{i-1}}{x_i},\frac{x_{i+1}}{x_i},\dots, \frac{x_n}{x_i})\\
\end{array}\]
and homogenization:
\[\begin{array}{cccc}
	& \A^n &\rightarrow & U_i \\
	& (t_1, \dots, t_n)& \mapsto &  [t_1:\dots : t_{i-1} :1 : t_{i+1}: \dots  t_n]. \\
\end{array}\]
Note that restricting to the $U_i$'s, any projective variety admits an affine open cover.

Let $X\subset \PP^n$ be an irreducible projective variety. We consider the ideal associated to it:
\[I(X):=\left\{ P\in \kk[X_0,\dots,X_n]\mid P(X)=0\right\}.\] Let $ \{P_i\}_{1\leq i\leq k}$ be a minimal generating set of $I(X)$. The variety $X$ is \emph{smooth} if for any point $p\in X$, the rank of the Jacobian $\left(\frac{\partial P_i}{\partial X_j}(p) \right)$ is equal to the codimension $n-\dim X$ of $X$.

A \emph{quasi-projective variety}, is an open subset of a projective variety. When not mentioned otherwise, in all what follows a \emph{variety} will be a quasi-projective variety. Both affine and projective varieties are examples of quasi-projective varieties. On the other hand, $\PP^2\times \A^1$ is a quasi-projective variety that is neither projective nor affine. 
\medskip 

\subsubsection{Maps}
Let us define now morphisms between varieties, they are given locally by polynomials. Let us be more precise.
Let $V\subset \A^n$ be an affine variety. A function $f:V\rightarrow \kk$ is \emph{regular} if there exists a polynomial $P\in\kk[X_1,\dots,X_n]$ such that $f(x)=P(x)$ for all $x\in V$. 

A map $f: U\subset \A^m\rightarrow V\subset \A^n$ between affine varieties is a \emph{morphism} if and only if $f=(f_1,\dots,f_n)$ with $f_i$'s regular for each $i$.
A map $f: X \rightarrow Y$ between varieties is a \emph{morphism} if locally at each point of $X$ it restricts as a morphism between affine open sets. In this case, we say that $X$ dominates $Y$. 
A morphism $f: X\rightarrow Y$ is an \emph{isomorphism} if there exists a morphism $g: Y\rightarrow X$ such that $gf=\id_X$ and $fg=\id_Y$. 
\begin{example}\label{ex_blow_up}
Consider the Hirzebruch surface of index $1$:
\[\F_1 =\{\left([x_0:x_1:x_2], [t_0,t_1]\right)\in \PP^2\times \PP^1\mid x_0t_1=x_1t_0 \}.\] 
The projection $\pi : \F_1\rightarrow \PP^2$ on the first factor is a morphism. Indeed, in the chart $\{x_2=1\}\times \{t_0=1\}$ we can write $\pi$ locally as:
\[\begin{array}{ccccc}
	 \A^2 &\hookrightarrow& \F_1 &\rightarrow &\A^2 \\
	(x_0,t_1)& \mapsto &  \left([x_0:x_0t_1:1],[1:t_1]\right) & \mapsto& (x_0,x_0t_1)\\
\end{array}.\] 
\end{example}

The map of Example \ref{ex_blow_up} is called \emph{the blow-up of $[0:0:1]$ in $\PP^2$}. The blow-up of $[0:0:1]$ in $\PP^2$ is unique up to automorphisms of $\PP^2$ fixing the point $[0:0:1]$. Note that 
replacing $\PP^2$ by $\A^2$ in $\F_1$, we obtain the blow-up of the origin of the affine plane, which can be written locally in the same way as above: $(x_0,t_1)\mapsto (x_0,x_0t_1)$. Hence, it is possible to blow-up any point of any smooth quasi-projective variety.

The blow-up of $[0:0:1]$ is not an isomorphism as the preimage of the origin is $\PP^1$:  $\pi^{-1}(0,0)=\{[0:0:1]\}\times\PP^1$. We will call this preimage \emph{the exceptional divisor of the blow-up} and we denote it by $E_{[0:0:1]}$. We refer to the nice cover of the book \cite{Shafarevich_book_1} to have a picture of a blow-up. 
Note that there exists a bijection between the points on the exceptional divisors and the lines passing through the point $[0:0:1]$. The preimages of these lines do not intersect anymore in $\F_1$.

We can also blow-up points in variety of higher dimension. For instance, the blow-up of $\PP^n$ at the point $p=[0:\dots:0:1]$ is defined as the variety \[Y=\{\left([x_0:\dots:x_n], [t_0:\dots: t_{n-1}]\right)\in\PP^n\times \PP^{n-1} \mid x_it_j-x_jt_i=0 ,\  0\leq i<j\leq n-1 \}.\]


 Isomorphisms are very rigid and so the classification of varieties up to isomorphisms is really too complicated. Hence, in this context, the good notion to consider is the one of birational maps, i.e., maps that are given locally by quotients of polynomials.

Let $X$ and $Y$ be varieties. A \emph{rational map} $X\dashrightarrow Y$ is an equivalence class of pairs $(U,g)$, where $U\subset X$ is an open dense set and $g\colon U\to Y$ a morphism. Two pairs $(U, g_1)$ and $(V, g_2)$ are equivalent if $g_1|_{U\cap V}=g_2|_{U\cap V}$. By abuse of notation, we denote a rational map represented by $(U,g)$ just by $g$. 
The \emph{indeterminacy locus} of a rational map $f\colon X \dashrightarrow Y $ is the closed subset $\Ind(f)\subset X $ consisting of all the points of $X $, where $f$ is not defined, i.e., the points $p\in X $ such that there exists no representative $(U , g)$ of $f$ such that $U $ contains $p$. Let $f: X\dashrightarrow Y$ be a rational map and $V\subset X$ be an irreducible subvariety of $X$ not contained in $\Ind(f)$. \emph{The strict transform of $V$ by $f$} is $f(V):=\overline{f(V\setminus \Ind(f))}$; it is a well-defined irreducible subvariety of $Y$.

For instance, the inverse of the blow-up seen in Example \ref{ex_blow_up} $\pi^{-1} : \PP^2 \dashrightarrow \F_1$ is a rational map given by the equivalence class $\left(\PP^2\setminus \{[0:0:1]\}, \pi^{-1}\right)$ and \[\begin{array}{ccc}
	\PP^2\setminus \{[0:0:1]\} &\hookrightarrow& \F_1\\
	\left[x_0:x_1:x_2\right] & \mapsto &  \left([x_0:x_1:x_2],[x_0:x_1]\right)\\
\end{array}.\] Its indeterminacy locus is the point $\Ind(\pi^{-1})=\{[0:0:1]\}$. The strict transform of the line $\{x_1=0\}\subset \PP^2$ intersects $E_{[0:0:1]}$ in the point $[1:0]$.

A rational map represented by $(U, g)$ is \emph{dominant}, if the image of $g$ contains an open dense set of $Y$. If $f\colon Y\dashrightarrow Z$ and $g\colon X\dashrightarrow Y$ are dominant rational maps, we can compose $f$ and $g$ in the obvious way and obtain a dominant rational map $fg\colon X\dashrightarrow Z$. A \emph{birational map} from $X$ to $Y$ is a rational dominant map $f :X\dashrightarrow Y$ that admits an inverse which is a dominant rational map $f^{-1}\colon Y\dashrightarrow X$, i.e. such that $ff^{-1}=\id_Y$ and $f^{-1}f=\id_X$. Another way to say it is that there exist $U\subset X$ and $ V\subset Y$ two dense open subsets such that the map restricts to an isomorphism between $X$ and $Y$, i.e. outside a finite number of subvarieties of codimension at least one.

\begin{remark}Let $f :X\dashrightarrow Y$ be a birational map. There exists a \emph{unique maximal} pair of open subsets $U_{\max}\subset X$ and $ V_{\max}\subset Y$ such that $f$ restricts as an isomorphism.
	We can construct them as follows:
	\begin{align*}
		U_{\max}&=\{x \in X\setminus \Ind(f) \mid f(x)\in Y\setminus \Ind(f^{-1})\}\\
		V_{\max}&= \{y \in Y\setminus \Ind(f^{-1}) \mid f^{-1}(y)\in X\setminus \Ind(f)\}.
	\end{align*}
\end{remark}

\begin{remark}
	The set of birational maps from $X$ to $Y$ is in one to one correspondence with the set of field isomorphisms between the function fields $\kk(X)$ of $X$ and $\kk(Y)$ of $Y$.
\end{remark}

The \emph{exceptional locus} of a birational map $f\colon X \dashrightarrow Y $, denoted by $\Exc(f)$, is the closed subset of points of $X $ where $f$ is not a local isomorphism, i.e., the points that are not contained in any open set $U \subset X $ such that the restriction of $f$ to $U $ induces an isomorphism to the image.
More precisely, $\Exc(f)=X\setminus U_{\max}$ and  $\Exc(f^{-1})=Y\setminus V_{\max}$.

For instance, the blow-up of the point $[0:0:1]$ seen in Example \ref{ex_blow_up}, is a birational morphism and it induces an isomorphism between $\F_1\setminus E_{[0:0:1]} $ and $\PP^2\setminus \{[0:0:1]\}$.

If there exists a birational map between two varieties $X$ and $Y$, they are said to be \emph{birationally equivalent}. A variety birationally equivalent to $\PP^n$ is called a \emph{rational variety}.
Given a variety $X$, the set of birational transformations from $X$ to itself with the composition form a group, denoted by $\Bir(X)$. The \emph{Cremona group of rank $n$} is the group of birational transformations of the projective space $\PP^n$.

A subgroup $G\subset \Bir(X)$ is \emph{regularizable} if there exist a variety $Y$ and a birational map $\varphi:Y \dashrightarrow X$ such that $\varphi^{-1} G\varphi$ is a subgroup of the group of automorphisms of $Y$. If the variety $Y$ can be chosen projective, $G$ is called \emph{projectively regularizable}.

\subsection{Examples of Cremona transformations}\label{Subsection_Examples_Cremona}
Un element $f\in\Bir(\PP^n)$ of the Cremona group can be written as follows: 
\[\begin{array}{cccc}
	f : & \PP^n &\dashrightarrow & \PP^n\\
	& [x_0:x_1:\dots : x_n] & \dashmapsto &[f_0(x_0,\dots,x_n):f_1(x_0,\dots,x_n):\dots: f_n(x_0,\dots,x_n)]
\end{array}\]
where $f_i\in\kk[x_0,\dots,x_n]$ are homogeneous polynomials of the same degree without common factor; with an inverse of the same form. 
We call \emph{degree of  $f$}, the degree of the polynomials $f_i$. The indeterminacy locus of $f$
 is the common zero locus of the $f_i$'s $\Ind(f)=\bigcap_{i=0}^n \{f_i=0\}$. It is a subvariety of codimension greater or equal than $2$. 

 Remark that locally, for instance in the chart $U_n=\{x_n\neq 0\}$, $f$ can be written as quotients of polynomials \[(x_0,\dots ,x_{n-1}) \dashmapsto (\frac{f_0(x_0,\dots,x_{n-1},1)}{f_n(x_0,\dots,x_{n-1},1)},\dots, \frac{f_{n-1}(x_0,\dots,x_{n-1},1)}{f_n(x_0,\dots,x_{n-1},1)}).\]

In rank $1$, the Cremona group is isomorphic to $\PGL_2(\kk)$. In higher ranks they are far more complicated.

Here are a few examples of applications and subgroups contained in Cremona groups.

\begin{example}
	The \emph{group of automorphisms of $\PP^n$}, denoted by $\Aut(\PP^n_{\kk})$, consists in the birational transformations of degree $1$; this group is isomorphic to the projective linear group $\PGL_{n+1}(\kk)$. Writting the linear polynomials $f_i$ as follows gives the isomorphism: 
	\[f_i=\sum_{j=0}^{n}a_{i,j}x_j, \text{ for } 0\leq i\leq n \text{ and }  a_{i,j}\in \kk \leftrightsquigarrow \left(a_{i,j}\right)_{0\leq i,j\leq n}\in \PGL_{n+1}(\kk).\]
	Note that the indeterminacy locus and the exceptional locus of such maps are empty.
\end{example}

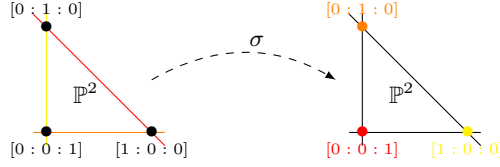
\begin{figure}
	\begin{center}
		\begin{tikzpicture}[scale=0.7]
			\draw[color=orange] (-0.25,0)--(2.25,0);
			\draw[color=yellow] (0,-0.25)--(0,2.25);
			\draw[color=red] (-0.25,2.25)--(2.25,-0.25);
			\draw (0,0) node{$\bullet$} node[below] {\tiny{$[0:0:1]$}};
			\draw (0,2) node {$\bullet$} node[above] {\tiny{$[0:1:0]$}};
			\draw (2,0) node {$\bullet$} node[below] {\tiny{$[1:0:0]$}};
			\draw (0.75,0.75) node{\small{$\PP^2$}};
			\draw[->,>=latex, dashed] (2,1) to[bend left] (5.5,1);
			\draw (4,1.75) node {\small{$\sigma$}};
			\begin{scope}[xshift=6cm]
				\draw (-0.25,0)--(2.25,0);
				\draw (0,-0.25)--(0,2.25);
				\draw (-0.25,2.25)--(2.25,-0.25);
				\draw[color=red] (0,0) node{$\bullet$} node[below] {\tiny{$[0:0:1]$}};
				\draw[color=orange] (0,2) node {$\bullet$} node[above] {\tiny{$[0:1:0]$}};
				\draw[color=yellow] (2,0) node {$\bullet$} node[below] {\tiny{$[1:0:0]$}};
				\draw (0.75,0.75) node{\small{$\PP^2$}};
			\end{scope}
		\end{tikzpicture}
		\caption{The standard quadratic plane Cremona involution \label{fig:sigma}}
	\end{center}
\end{figure}

\begin{example}\label{ex:standard} The \emph{standard quadratic Cremona involution} in $\Bir(\PP^2)$ (see Figure \ref{fig:sigma}):  \[\sigma: [x_0:x_1:x_2] \dashmapsto [x_1x_2:x_0x_2:x_0x_1]=[\tfrac{1}{x_0}:\tfrac{1}{x_1}:\tfrac{1}{x_2}].\]
	This birational transformation is of degree $2$ and contracts the lines $\{x_0=0\}$, $\{x_1=0\}$ and $\{x_2=0\}$ respectively to the points $[1:0:0]$, $[0:1:0]$ and $[0:0:1] $. Its exceptional locus is $\Exc(\sigma)=\cup_{i=0}^2\{x_i=0\}$ and its inderminacy locus is $\Ind(\sigma)=\{[0:0:1], [0:1:0],[1:0:0]\}$. Locally, in the chart $U_2=\{x_2\neq 0\}$ it can be written $(x_0,x_1)\dashmapsto (\frac{1}{x_0},\frac{1}{x_1})$. We easily see that it is an involution.
\end{example}

\begin{example}\label{ex:monomial}The \emph{subgroup of monomial transformations} is given by the following embedding of $\GL_n(\Z)$ into the Cremona group of rank $n$, written locally:
 \[(x_0,\dots,x_{n-1}) \dashmapsto (\prod_{j=0}^{n-1}x_j^{a_0,j},\dots, \prod_{j=0}^{n-1}x_j^{a_{n-1},j}) \] where $\left(a_{i,j}\right)_{0\leq i,j\leq n-1} \in \GL_n(\Z)$.
The standard quadratic Cremona involution seen in Example \ref{ex:standard} is a monomial transformation in the rank $2$ case. 
\end{example}

	\begin{example}\label{ex_automorphisms_affine}
	The \emph{group of polynomial automorphisms} $\Aut(\A^n_{\kk})$ consists in transformations having the following form and such that the inverse has also the following form:
	\[(x_0,\dots,x_{n-1}) \mapsto (\varphi_0,\dots,\varphi_{n-1})\text{  with }\varphi_i\in\kk[x_0,\dots,x_{n-1}].\]
	
	The degree of such transformations is the maximum of the degrees of the $\varphi_i$'s. When not empty, the exceptional locus is the hypersurface $\{x_n=0\}$.

	Already, when $n=2$ this group is huge and contains a non-abelian free group of rank at least $2$. 
	More precisely, consider $b: (x_0,x_1)\mapsto (-x_0+x_1^2,x_1)$, $a_\lambda : (x_0,x_1) \mapsto (\lambda x_0+x_1,x_0)$ where $\lambda\in\kk$, $a_\infty =\id$ and $g_\lambda = a_\lambda ba_\lambda^{-1}$. Then the subgroup generated by the involutions $g_\lambda$ is a free product of $\Z/2\Z$ parametrized by $\PP^1_{\kk} = \kk\cup \{\infty\}$ (see for instance \cite{Lamy_Lonjou}).
	\end{example}

Another important example of subgroup worth to mention in dimension $2$ is called the Jonquières group.

\begin{example}\label{ex_Jonquières}
The Jonquières group: $\PGL_2(\kk(x_1))\rtimes \PGL_2(\kk)$ is a subgroup of the Cremona group of rank $2$. 
Locally, such transformations are of the form: 
\[(x_0, x_1) \dashmapsto \left(  \frac{\alpha(x_1)x_0+\beta(x_1)}{\gamma(x_1)x_0+\delta(x_1)},\frac{ax_1+b}{cx_1+d} \right)\]
where $\begin{pmatrix}
\alpha & \beta\\
\gamma & \delta
\end{pmatrix}\in \PGL_2(\kk(x_1))$ and $\begin{pmatrix}
a &b\\
c &d
\end{pmatrix} \in \PGL_2(\kk)$.
These transformations preserve the pencil of lines passing through the point $[1:0:0]$. Note that the standard quadratic Cremona involution is also an example of a Jonquières transformation.
\end{example}

\subsection{Zariski theorem: an important tool in birational geometry of surfaces}
Blow-ups of points are fundamental examples of birational transformations of surfaces as any birational transformation between surfaces can be decomposed by a sequence of inverse of blow-ups followed by a sequence of blow-ups (see Theorem \ref{thm:Zariski}). Note that by convention, an isomorphism is the composition of $0$ blow-ups. This is not true anymore in higher dimension.

As blow-ups induce isomorphisms outside the exceptional divisors produced, 
it is useful to be able to identify points outside the exceptional divisors with their image. More precisely, the \emph{bubble space} of a given surface $S$ is the set $\mathcal{B}_S$ of triples $(t,T,\pi)$, where $\pi\colon T\to S$ is a birational morphism from a surface $T$ and $t$ is a point on $T$; two triples $(t,T,\pi)$ and $(t', T',\pi')$ are identified if $\pi^{-1}\pi'$ is a local isomorphism around~$t'$ mapping~$t'$ to~$t$. The points in $\mathcal{B}_S$ contained in~$S$ are called \emph{proper} points. Often in this survey, we will speak about points instead of elements of the bubble space through misuse of language.

\begin{theorem}[Zariski's theorem]\label{thm:Zariski}
	Let $f:S\dashrightarrow S'$ be a birational map between smooth projective surfaces. Then there exists a surface $W$ and compositions of blow-ups $\pi_1:W\rightarrow S$, $\pi_2:W \rightarrow S'$ such that $f=\pi_2\pi_1^{-1}$.
	\begin{center}
		\begin{tikzpicture}[baseline= (a).base]
			\node[scale=.75] (a) at (0,0){
				\begin{tikzcd}[ampersand replacement=\&]
					\& W\arrow{dl}[above left]{\pi_1} \arrow{dr}{\pi_2} \& \\
					S\arrow[dashrightarrow]{rr}{f} \& \& S'
			\end{tikzcd}};
		\end{tikzpicture}
	\end{center}
\end{theorem}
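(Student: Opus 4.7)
My plan is to break the statement into two independent claims:
\textbf{(i)} \emph{Resolution of indeterminacies}: there exists a composition of blow-ups $\pi_1 \colon W \to S$ such that the rational map $g := f \circ \pi_1 \colon W \dashrightarrow S'$ is in fact a morphism; and
\textbf{(ii)} \emph{Factorization of birational morphisms}: any birational morphism $g \colon W \to S'$ between smooth projective surfaces factors as a composition of blow-ups. Combining (i) and (ii) gives $f = \pi_2 \circ \pi_1^{-1}$ with $\pi_1, \pi_2$ compositions of blow-ups.

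For (i), I use that on a smooth surface the indeterminacy locus $\Ind(f)$ has codimension $\geq 2$, hence is a finite set of points. Blowing them up produces a surface $S_1 \to S$ and a lifted rational map $f_1 \colon S_1 \dashrightarrow S'$. Either $f_1$ is a morphism, or I iterate on the (again finitely many) new indeterminacy points. To prove that the process terminates, I fix a projective embedding $S' \hookrightarrow \PP^N$ and pull back the hyperplane linear system via $f$, obtaining a linear system on $S$ whose base locus is exactly $\Ind(f)$. A standard intersection-theoretic computation shows that blowing up a base point strictly decreases a non-negative integral invariant attached to the linear system (for instance a weighted sum of base-point multiplicities), so the process halts after finitely many steps and the final lift is a morphism.

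For (ii), I argue by induction on the number of irreducible curves contracted by $g$. If $g$ is an isomorphism there is nothing to do. Otherwise, since $W$ and $S'$ are smooth projective surfaces, one shows, via the fact that the intersection form on the components of a fiber contracted to a single point is negative definite together with the adjunction formula, that at least one irreducible exceptional component $E$ satisfies $E \cong \PP^1$ and $E^2 = -1$, i.e., is a \emph{$(-1)$-curve}. Castelnuovo's contractibility theorem then produces a smooth projective surface $W'$ and a blow-up morphism $\varepsilon \colon W \to W'$ contracting $E$, and the universal property of blow-ups ensures that $g$ factors as $g = g' \circ \varepsilon$ with $g' \colon W' \to S'$ a birational morphism having strictly fewer exceptional components. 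Induction concludes.

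The main obstacle is the appeal to Castelnuovo's contractibility criterion in step (ii); this is the substantial classical input specific to dimension two, and it is precisely what makes the statement fail in higher dimension. Showing that every irreducible exceptional component is a $(-1)$-curve also requires a careful intersection-theoretic argument using the negative-definiteness of the intersection form on contracted fibers. Termination in step (i) is of a different flavor, more combinatorial, and follows from controlling how multiplicities at base points behave under a single blow-up. Once these two ingredients are in hand, the proof assembles cleanly into the desired two-step factorization.
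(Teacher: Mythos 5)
Your proposal is correct and follows essentially the same route as the paper: first resolve the indeterminacy points of $f$ by successive blow-ups to obtain a birational morphism $W\to S'$, then invoke the factorization of birational morphisms between smooth projective surfaces into blow-ups (the paper's Theorem \ref{thm_morphism_blowup}, cited to \cite{Shafarevich_book_1} and \cite{Lamy_book}). The only difference is one of detail: where the paper cites the factorization theorem, you sketch a proof of it via negative-definiteness, adjunction, and Castelnuovo contractibility, which is a standard and valid way to supply that classical input.
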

Note that in the above theorem, we allow $\pi_1$ and $\pi_2$ to be the composition of zero blow-up.
The proof of Zariski's theorem is made of two steps. The first one consists in blowing up the indeterminacy points of $f$ and repeat this process until we obtain a birational morphism $W\rightarrow S'$. The second step uses the  following well known result (a proof can be found for instance in \cite{Shafarevich_book_1} or \cite{Lamy_book}):

\begin{theorem}\label{thm_morphism_blowup}
Any birational morphism $\sigma : W \rightarrow S$ is a composition of finitely many blow-ups of points belonging to $\mathcal{B}_S$.
\end{theorem}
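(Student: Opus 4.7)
The plan is to prove the statement by induction on the non-negative integer $\rho(W)-\rho(S)$, where $\rho$ denotes the Picard number. In the base case $\rho(W)=\rho(S)$, the morphism $\sigma$ has empty exceptional locus and is therefore an isomorphism, which we regard as the composition of zero blow-ups. For the inductive step, the target is to locate a $(-1)$-curve inside $\Exc(\sigma)$, contract it using Castelnuovo's contractibility criterion, and invoke the induction hypothesis on the resulting morphism.

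More precisely, Zariski's main theorem guarantees that $\Exc(\sigma)$, when non-empty, is a pure one-dimensional subvariety, a finite union of irreducible curves $E_1,\dots,E_n$, each sent by $\sigma$ to a single point of $S$. Granting the existence of some $E_i$ which is a $(-1)$-curve, Castelnuovo's criterion furnishes a smooth projective surface $W'$, a point $q\in W'$, and a birational morphism $p\colon W\to W'$ realising $W$ as the blow-up of $W'$ at $q$ with exceptional divisor $E_i$. Since $\sigma$ collapses $E_i$ to a single smooth point of $S$, a rigidity argument (the universal property of blowing down) provides a unique factorisation $\sigma=\sigma'\circ p$ with $\sigma'\colon W'\to S$ a birational morphism. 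The Picard number then drops by one, $\rho(W')-\rho(S)=\rho(W)-\rho(S)-1$, so the induction hypothesis expresses $\sigma'$ as a composition of blow-ups of points of $\mathcal{B}_S$. Since $q\in W'$ defines, via $\sigma'$, an element of $\mathcal{B}_S$, appending $p$ at the beginning completes the desired decomposition of $\sigma$.

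The main obstacle is therefore exhibiting a $(-1)$-curve among the $E_i$. The two tools I would combine are the adjunction formula $E_i\cdot(K_W+E_i)=2g(E_i)-2\geq -2$ and the classical fact that the intersection matrix $(E_i\cdot E_j)_{i,j}$ restricted to the exceptional locus is negative definite (an application of the Hodge index theorem, exploiting that $\sigma^*H\cdot E_j=0$ for every ample class $H$ on $S$). Writing the relative canonical divisor $K_W-\sigma^*K_S=\sum a_iE_i$ as a non-trivial effective combination (effectivity being a standard discrepancy calculation, using the smoothness of both surfaces) and selecting a suitably extremal component $E_j$, one deduces $K_W\cdot E_j<0$ and $E_j^2<0$; adjunction then forces $E_j^2=-1$ and $g(E_j)=0$, which is precisely the definition of a $(-1)$-curve. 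This is the delicate step where one really uses that $S$ is a smooth surface rather than a higher-dimensional variety, and it is what causes the analogous statement to fail in dimension at least three.
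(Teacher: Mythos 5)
Your argument is correct, but it is worth saying up front that the paper does not actually prove Theorem \ref{thm_morphism_blowup}: it is quoted as a classical fact with references to Shafarevich and to Lamy's book. The proof in those references (and in most textbooks) is the ``top--down'' one: if $\sigma$ is not an isomorphism, one exhibits a fundamental point $p\in S$ of $\sigma^{-1}$, shows via the universal property of the blow-up (or a direct local computation) that $\sigma$ factors through the blow-up of $S$ at $p$, and inducts on the number of irreducible curves contracted by $\sigma$. Your route is the ``bottom--up'' dual: you produce a $(-1)$-curve inside $\Exc(\sigma)$ using negative definiteness of the exceptional intersection matrix, effectivity of $K_W-\sigma^*K_S$, and adjunction, then contract it by Castelnuovo and induct on $\rho(W)-\rho(S)$. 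All the steps go through over an algebraically closed field of any characteristic, and the rigidity argument extending $\sigma'$ across $q$ is correctly handled (the fibre $p^{-1}(q)=E_j$ is contracted by $\sigma$, so $\sigma'$ extends by normality/Zariski's main theorem). The trade-off is that your proof leans on heavier machinery --- the Hodge index theorem and Castelnuovo's contractibility criterion --- where the classical proof needs essentially nothing beyond the universal property of blowing up; in exchange, your argument is the one that exposes why the statement is a two-dimensional phenomenon, exactly as you say at the end. Two points deserve care. First, Castelnuovo's criterion must be proved independently of the factorization theorem (it is, via Riemann--Roch), otherwise the argument would be circular. Second, the effectivity of $K_W-\sigma^*K_S$ with strictly positive coefficients along every component of $\Exc(\sigma)$ is most often derived \emph{from} the blow-up factorization; to avoid circularity you must obtain it directly, e.g.\ as the vanishing divisor of the Jacobian determinant of $\sigma$, which vanishes along each contracted curve because $\sigma$ fails to be quasi-finite there. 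Neither point is a gap, but both should be made explicit in a complete write-up.
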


Let us do the first step of the proof on the example of the standard quadratic Cremona involution \ref{ex:standard}.

\begin{figure}[h!]
	\begin{center}
		\begin{tikzpicture}[scale=0.7]
			\begin{scope}[xshift=7cm, yshift=4cm]
				\draw[color=orange] (0.25,4)--(2.25,4);
				\draw[color=teal!80] (0.65,3.75)--(-0.1,5.25);
				\draw[color=yellow] (-0.1,4.75)--(0.65,6.25);
				\draw[color=red] (2,6.25)--(2.75,4.75);
				\draw[color=green!50] (0.25,6)--(2.25,6);
				\draw[color=blue!70] (2, 3.75)-- (2.75,5.25) ;		
				\draw[->,>=latex] (-0.75, 5) to (-1.75,4.5);
				\draw[color=blue!70] (-1, 4.5) node[below] {\small{$\pi_{[1:0:0]}$}};
				\draw[->,>=latex] (3.25, 5) to (4.25,4.5);
				\draw[color=orange] (3.75, 4.5) node[below] {\small{$\pi_{[0:1:0]}$}};
				\draw (1.25,5) node{\small{$S_{\sigma}$}};
			\end{scope}

			\begin{scope}[xshift=2.5cm, yshift=3.5cm]
				\draw[color=orange] (0.25,4)--(2.25,4);
				\draw[color=teal!80] (0.65,3.75)--(-0.1,5.25);
				\draw[color=yellow] (-0.1,4.75)--(0.65,6.25);
				\draw[color=red] (2,6.25)--(2,3.75);
				\draw[color=green!50] (0.25,6)--(2.25,6);
				\draw[color=blue!70] (2,4) node {$\bullet$} node[below] {\tiny{$[1:0:0]$}};		
				\draw[->,>=latex] (0.5,3.5) to (0,2.5);
				\draw[color=teal!80] (-1, 3.5) node[below right] {\small{$\pi_{[0:0:1]}$}};
			\end{scope}
			
			\begin{scope}[xshift=11.5cm, yshift=3.5cm]
				\draw[color=green!50] (0.25,4)--(2.25,4);
				\draw[color=red] (0.65,3.75)--(-0.1,5.25);
				\draw[color=blue!70] (-0.1,4.75)--(0.65,6.25);
				\draw[color=yellow] (2,6.25)--(2,3.75);
				\draw[color=teal!80] (0.25,6)--(2.25,6);
				\draw[color=orange] (0.55,6) node {$\bullet$} node[above] {\tiny{$[0:1:0]$}};		
				\draw[->,>=latex] (1.3,3.5) to (2,2.5);
				\draw[color=red] (1.5, 3) node[left] {\small{$\pi_{[0:0:1]}$}};
			\end{scope}
			
			\begin{scope}[xshift=14cm]
				\draw[color=green!50] (-0.25,4)--(2.25,4);
				\draw[color=blue!70] (0,3.75)--(0,6.25);
				\draw[color=yellow] (2,6.25)--(2,3.75);
				\draw[color=teal!80] (-0.25,6)--(2.25,6);
				\draw[color=red] (0,4) node{$\bullet$} node[below] {\tiny{$[0:0:1]$}};
				\draw[color=orange] (0,6) node {$\bullet$} node[above right] {\tiny{$[0:1:0]$}};		
				\draw[->,>=latex] (1.5,3) to (1.5,2);
				\draw[color=yellow] (1.5, 2.5) node[right] {\small{$\pi_{[1:0:0]}$}};
				\draw[color=green!50] (9.75,0)--(12.25,0);
			\end{scope}

			\draw[color=orange] (-0.25,4)--(2.25,4);
			\draw[color=yellow] (0,3.75)--(0,6.25);
			\draw[color=red] (2,6.25)--(2,3.75);
			\draw[color=green!50] (-0.25,6)--(2.25,6);
			\draw[color=teal!80] (0,4) node{$\bullet$} node[below] {\tiny{$[0:0:1]$}};
			\draw[color=blue!70] (2,4) node {$\bullet$} node[below] {\tiny{$[1:0:0]$}};		
			\draw[->,>=latex] (1.5,3) to (1.5,2);
			\draw[color=green!50] (1.5, 2.5) node[right] {\small{$\pi_{[0:1:0]}$}};

			\draw[color=orange] (-0.25,0)--(2.25,0);
			\draw[color=yellow] (0,-0.25)--(0,2.25);
			\draw[color=red] (-0.25,2.25)--(2.25,-0.25);
			\draw[color=red] (1.5, 1) node[rotate=-45] {\tiny{$\{z=0\}$}};
			\draw[color=yellow] (-0.25,1) node[rotate=90] {\tiny{$\{x=0\}$}};
				\draw[color=orange] (1,-0.25) node {\tiny{$\{y=0\}$}};
			\draw[color=teal!80] (0,0) node{$\bullet$} node[below left] {\tiny{$[0:0:1]$}};
			\draw[color=green!50] (0,2) node {$\bullet$} node[above] {\tiny{$[0:1:0]$}};
			\draw[color=blue!70] (2,0) node {$\bullet$} node[below right] {\tiny{$[1:0:0]$}};
			\draw (0.7,0.65) node{\small{$\PP^2$}};
			\draw[->,>=latex, dashed] (2,1) to (13.3,1);
			\draw (8,1.5) node {\small{$\sigma$}};
			
			\begin{scope}[xshift=14cm]
				\draw[color=green!50] (-0.25,0)--(2.25,0);
				\draw[color=blue!70] (0,-0.25)--(0,2.25);
				\draw[color=teal!80] (-0.25,2.25)--(2.25,-0.25);
				\draw[color=teal!80] (1.5, 1) node[rotate=-45] {\tiny{$\{z=0\}$}};
					\draw[color=blue!70] (-0.25,1) node[rotate=90] {\tiny{$\{x=0\}$}};
					\draw[color=green!50] (1,-0.25) node {\tiny{$\{y=0\}$}};
				\draw[color=red] (0,0) node{$\bullet$} node[below left] {\tiny{$[0:0:1]$}};
				\draw[color=orange] (0,2) node {$\bullet$} node[above] {\tiny{$[0:1:0]$}};
				\draw[color=yellow] (2,0) node {$\bullet$} node[below right] {\tiny{$[1:0:0]$}};
				\draw (0.7,0.65) node{\small{$\PP^2$}};
			\end{scope}
		\end{tikzpicture}
		\caption{Minimal resolution of $\sigma: (x_0,x_1) \dashmapsto (\frac{1}{x_0},\frac{1}{x_1})$. \label{fig:res_sigma}}
	\end{center}
\end{figure}
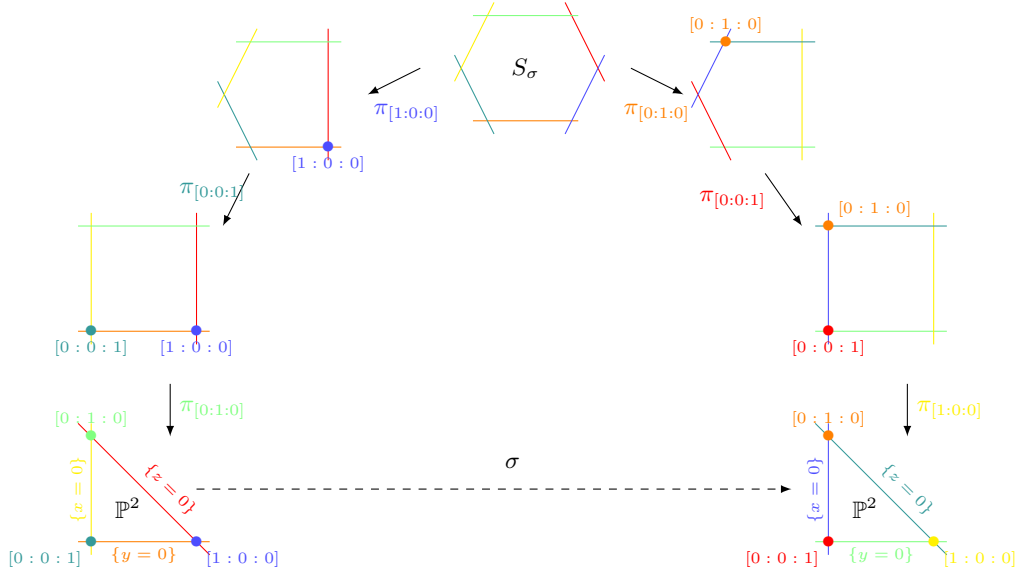

\begin{example}\label{Ex_sigma}
	Consider the standard quadratic involution seen in Example\ref{ex:standard}.
	Let us blow up the indeterminacy points until we get a morphism (see Figure~\ref{fig:res_sigma}). The birational transformation $\sigma$ has three indeterminacy points: $[0:0:1]$, $[0:1:0]$ and $[1:0:0]$. We blow-up the point $[0:1:0]$ then $\sigma\pi_{[0:1:0]}$ has two indeterminacy points: the points $[0:0:1]$ and $[1:0:0]$, using the abuse of notation mentioned above. We blow-up the point $[0:0:1]$, then $\sigma\pi_{[0:1:0]}\pi_{[0:0:1]}$ has a unique indeterminacy point $[1:0:0]$. We blow-up this point and we obtain that $\sigma\pi_{[0:1:0]}\pi_{[0:0:1]}\pi_{[1:0:0]}: S_{\sigma}\rightarrow \PP^2$ has no indeterminacy points and so it is a morphism. Note that the order of the different blow-ups does not matter as blow-ups are defined locally and the three points blown up belong to $\PP^2$. Take $\pi_1:=\pi_{[0:1:0]}\pi_{[0:0:1]}\pi_{[1:0:0]}$, $W=S_\sigma$ and $\pi_2:=\sigma\pi_{[0:1:0]}\pi_{[0:0:1]}\pi_{[1:0:0]}$.
	
	The second step of Zariski's theorem consists in proving that $\pi_2$ is a composition of blow-ups.
\end{example}

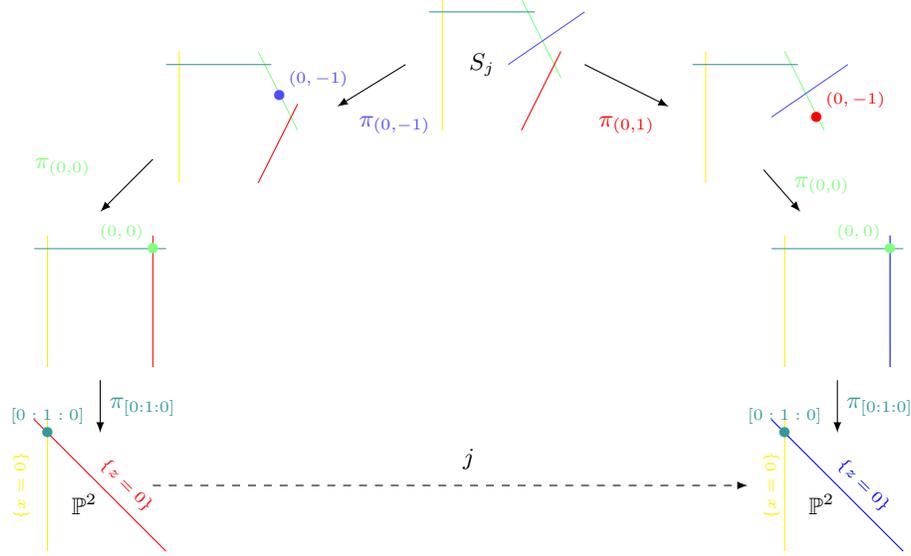
\begin{figure}
	\begin{center}
		\begin{tikzpicture}[scale=0.7]
			\begin{scope}[xshift=7cm, yshift=4cm]
				\draw[color=yellow] (0.5,3.75)--(0.5,6.25);
				\draw[color=green!50] (2,6.25)--(2.75,4.75);
				\draw[color=red] (2, 3.75)-- (2.75,5.25) ;		
				\draw[color=teal!80] (0.25,6)--(2.25,6);
				\draw[->,>=latex] (-0.2, 5) to (-1.5,4.2);
				\draw[color=blue!70] (1.75, 5)-- (3.2,6) ;		
				\draw[color=blue!70] (-0.4, 4.2) node[below] {\small{$\pi_{(0,-1)}$}};
				\draw (1.25,5) node{\small{$S_{j}$}};
				\draw[->,>=latex] (3.2, 5) to (4.8,4.2);
				\draw[color=red] (4, 4.2) node[below] {\small{$\pi_{(0,1)}$}};
			\end{scope}

			\begin{scope}[xshift=2cm, yshift=3cm]
				\draw[color=yellow] (0.5,3.75)--(0.5,6.25);
				\draw[color=green!50] (2,6.25)--(2.75,4.75);
				\draw[color=red] (2, 3.75)-- (2.75,5.25) ;		
				\draw[color=teal!80] (0.25,6)--(2.25,6);
				\draw[->,>=latex] (0, 4.2) to (-1,3.2);
				\draw[color=green!50] (-1, 3.7) node[above left] {\small{$\pi_{(0,0)}$}};
				\draw[color=blue!70] (2.4,5.4) node {$\bullet$} node[above right] {\tiny{$(0,-1)$}};		
			\end{scope}

			\draw[color=yellow] (0,3.25)--(0,5.75);
			\draw[color=red] (2,5.75)--(2,3.25);
			\draw[color=teal!80] (-0.25,5.5)--(2.25,5.5);
			\draw[color=green!50] (2,5.5) node{$\bullet$} node[above left] {\tiny{$(0,0)$}};
			\draw[->,>=latex] (1,3) to (1,2);
			\draw[color=teal!80] (1, 2.5) node[right] {\small{$\pi_{[0:1:0]}$}};

			\draw[color=yellow] (0,-0.25)--(0,2.25);
			\draw[color=yellow] (-0.5,1) node[rotate=90] {\tiny{$\{x=0\}$}};
			\draw[color=red] (-0.25,2.25)--(2.25,-0.25);
			\draw[color=red] (1.5, 1) node[rotate=-45] {\tiny{$\{z=0\}$}};
			\draw[color=teal!80] (0,2) node {$\bullet$} node[above] {\tiny{$[0:1:0]$}};
			\draw (0.7,0.65) node{\small{$\PP^2$}};
			\draw[->,>=latex, dashed] (2,1) to (13.3,1);
			\draw (8,1.5) node {\small{$j$}};

			\begin{scope}[xshift=14cm]
				\draw[color=yellow] (0,-0.25)--(0,2.25);
				\draw[color=yellow] (-0.25,1) node[rotate=90] {\tiny{$\{x=0\}$}};
				\draw[color=blue] (-0.25,2.25)--(2.25,-0.25);
				\draw[color=blue] (1.5, 1) node[rotate=-45] {\tiny{$\{z=0\}$}};
				\draw[color=teal!80]  (0,2) node {$\bullet$} node[above] {\tiny{$[0:1:0]$}};
				\draw (0.7,0.65) node{\small{$\PP^2$}};
			\end{scope}

			\begin{scope}[xshift=12cm, yshift=3cm]
				\draw[color=yellow] (0.5,3.75)--(0.5,6.25);
				\draw[color=green!50] (2,6.25)--(2.75,4.75);
				\draw[color=blue!70] (1.75, 5)-- (3.2,6) ;		
				\draw[color=teal!80] (0.25,6)--(2.25,6);
				\draw[->,>=latex] (1.6, 4) to (2.3,3.2);
				\draw[color=green!50] (2, 3.7) node[right] {\small{$\pi_{(0,0)}$}};
				\draw[color=red] (2.6,5) node {$\bullet$} node[above right] {\tiny{$(0,-1)$}};		
			\end{scope}

			\begin{scope}[xshift=14cm]
				\draw[color=yellow] (0,3.25)--(0,5.75);
				\draw[color=blue] (2,5.75)--(2,3.25);
				\draw[color=teal!80] (-0.25,5.5)--(2.25,5.5);
				\draw[color=green!50] (2,5.5) node{$\bullet$} node[above left] {\tiny{$(0,0)$}};
				\draw[->,>=latex] (1,3) to (1,2);
				\draw[color=teal!80] (1, 2.5) node[right] {\small{$\pi_{[0:1:0]}$}};
			\end{scope}
		\end{tikzpicture}
		\caption{Minimal resolution of $j : (x_0,x_1) \mapsto (x_0, x_0^2+x_1)$.\label{fig_resol_j}}
	\end{center}
\end{figure}

\begin{example}\label{Ex_j}
Consider the following Cremona transformation: \[j:=(x_0,x_1)\mapsto (x_0,x_0^2+x_1). \]Note that it is both a polynomial automorphism of $\A^2$ and a Jonquières transformation. 
Its unique indeterminacy point is $\{[0:1:0]\}$.
The unique indeterminacy point of the composition $j\pi_{[0:1:0]}$ belongs to 
the exceptional divisor $E_{[0:1:0]}$, and, in the chart $\{x_1=1\}\times \{t_0=1\}$, it is the point $(0,0)$.
Again, $j\pi_{[0:1:0]}\pi_{(0,0)}$ has a unique indeterminacy point, which belongs to the exceptional divisor $E_{(0,0)}$ and, in the chart $\{t_0=1\}$, it is the point $(0,-1)$. Blowing up this last point, we obtain that $\pi_1:=j\pi_{[0:1:0]}\pi_{(0,0)}\pi_{(0,-1)}S_j \rightarrow \PP^2$ is a morphism.
 The resolution of $j$ is given by Figure \ref{fig_resol_j}. Note that, contrary to the case of the standard quadratic Cremona involution, in the  case of $j$, the blow-ups composing $\pi_1$ do not commute.
\end{example}

\begin{remark}
In the Zariski theorem, $W$ can be chosen minimal and it is called the \emph{minimal resolution} of $f$.
	\begin{center}
		\begin{tikzpicture}[baseline= (a).base]
			\node[scale=.65] (a) at (0,0){
				\begin{tikzcd}[ampersand replacement=\&]
					\& W'\arrow[red]{d} \arrow[bend right]{ddl}[above left]{\pi_1'} \arrow[bend left]{ddr}[above right]{\pi_2'}\& \\
					\& W\arrow{dl}[above left]{\pi_1} \arrow{dr}{\pi_2} \& \\
					S\arrow[dashrightarrow]{rr}{f} \& \& S'
			\end{tikzcd}};
		\end{tikzpicture}
	\end{center}
\end{remark}

\begin{definition}
	The points of $\mathcal{B}_S$ blown up by $\pi_1$ in the minimal resolution of $f$ are called the \emph{base-points} of $f$, and they are denoted by $\Bs(f)$.
\end{definition}

\begin{remark}\label{rmk_basepoints_infinitely_near}
	Note that for any $f\in \Bir(S)$, $\Ind(f)\subset \Bs(f)$. In the case of the standard quadratic plane involution, it is an equality (see Example \ref{Ex_sigma}), while it is a strict inclusion for the birational transformation $j:(x,y)\mapsto (x,x^2+y)$ of Example~\ref{Ex_j}.
\end{remark}

\begin{remark}\label{rmk:bs_pt_f_invf}
	The points blown up by $\pi_2$ in  Theorem \ref{thm:Zariski} are the base points of $f^{-1}$. Hence if $S'=S$ then $\lvert \Bs(f)\rvert =\lvert\Bs(f^{-1})\rvert$.
\end{remark}

Over any field, Zariski theorem \ref{thm:Zariski} still holds by replacing points with closed points (in the context of the scheme theory), see for instance \cite[\href{https://stacks.math.columbia.edu/tag/0C5Q}{Tag 0C5Q}]{Stack_project}. Proposition~\ref{prop_catalog_blowup_cc} needs to be slightly adapted to this context. 
 Note that by \cite{Lin_Shinder_Zimmermann}, over perfect fields, the birational morphisms $\pi_1$ and $\pi_2$ of Theorem \ref{thm:Zariski}
blow up the same number of closed points. It is unknown if the situation is similar over non-perfect fields.

\begin{remark}\label{rmk_bspt_subadditive}
	Let $f:S\dashrightarrow S'$ and $g: S' \dashrightarrow S''$, then, we have \[\lvert \Bs(gf)\rvert \leq \lvert\Bs(f)\rvert+\lvert\Bs(g)\rvert.\] 
	Let $W$ and $W'$ be the respective minimal resolution of $f$ and $g$, and $Z$ the one of $\sigma_1^{-1}\pi_2$:
		\begin{center}
		\begin{tikzpicture}[baseline= (a).base]
			\node[scale=.65] (a) at (0,0){
				\begin{tikzcd}[ampersand replacement=\&]
					\&\& Z\arrow{dl}[above left]{\sigma_1'} \arrow{dr}{\pi_2'}  \&\& \\
					\& W\arrow{dl}[above left]{\pi_1} \arrow{dr}{\pi_2} \& \& W'  \arrow{dl}[above left]{\sigma_1} \arrow{dr}{\sigma_2} \\
					S\arrow[dashrightarrow]{rr}{f} \& \& S'\arrow[dashrightarrow]{rr}{g} \&\& S''
			\end{tikzcd}};
		\end{tikzpicture}
	\end{center}
Then $Z$ is a resolution of $gf$ but not always the minimal one. For instance, when $\Bs(f^{-1})\cap \Bs(g)\neq \emptyset$, $Z$ is not the minimal one. Notice that it is not the only possibility see for instance \cite[Example 4.16]{Lamy_book}.
\end{remark}

\subsection{State of the art from a geometric group theoretic point of view}\label{Subsection_State_art}
The Cremona group can be studied from many different points of view making it very interesting: algebraic, dynamical, topological, geometrical... Nevertheless in this small state of the art, we will be mainly interested in its geometric group theoretic features in order to explain the context in which actions on median graphs have been constructed. It is not exhaustive at all. There exists several surveys and books in the literature about Cremona groups (see for instance \cite{Cantat_review}, \cite{Deserti_subgroup_Cremona}, \cite{Lamy_book} \cite{Zimmermann_survey_normal_subgroups}).

Cremona introduced the Cremona group of rank $2$ in 1863-1865 \cite{Cremona}. 
The first theorem on the structure of this group has been partially\footnote{Indeed, the proof contained a mistake} proved by Noether and then completely by Castelnuovo. They gave a nice generating set. 

\begin{theorem}[{\cite{Noether}, \cite{Castelnuovo}}]
	Over an algebraically closed field $\kk$, the Cremona group of rank $2$ is generated by the automorphisms group of $\PP^2$ and the standard quadratic Cremona involution
	$	\sigma \colon
	(x,y)  \mapsto (\frac{1}{x},\frac{1}{y})$.
\end{theorem}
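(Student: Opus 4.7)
The plan is to induct on the degree $d$ of $f\in\Bir(\PP^2)$. For $d=1$ one has $f\in\PGL_3(\kk)=\Aut(\PP^2)$ and there is nothing to prove. For $d\geq 2$, the goal is to produce $\alpha\in\Aut(\PP^2)$ such that $\sigma\alpha f$ has degree strictly less than $d$; strong induction on the degree then finishes the argument.

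First, using Zariski's theorem \ref{thm:Zariski} I would resolve $f$ and extract the finite set of (possibly infinitely near) base points $p_1,\ldots,p_r\in\Bs(f)$ together with the multiplicities $m_1\geq m_2\geq\cdots\geq m_r\geq 1$ of the linear system $\mathcal{L}$ of plane curves of degree $d$ defining $f$. The hypothesis that $f$ is birational translates via intersection theory on the resolution and adjunction (applied to the image system, which is a pencil of lines, of self-intersection $1$ and geometric genus $0$) into the two \emph{Noether equalities}
\[\sum_{i=1}^r m_i^2 = d^2-1, \qquad \sum_{i=1}^r m_i = 3(d-1).\]
A short elementary manipulation using the ordering $m_1\geq m_2\geq\cdots$ then yields \emph{Noether's inequality} $m_1+m_2+m_3\geq d+1$.

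The natural reduction step is as follows. Pick $\alpha\in\Aut(\PP^2)$ sending $p_1,p_2,p_3$ to the three coordinate points, which are exactly $\Ind(\sigma)$ with simple multiplicity (cf.\ Example~\ref{Ex_sigma}). A direct computation of the strict transform then gives $\deg(\sigma\alpha f)=2d-(m_1+m_2+m_3)<d$, and the inductive hypothesis applies to $\sigma\alpha f$.

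The main obstacle is that $p_1,p_2,p_3$ are in general \emph{infinitely near}, so cannot simultaneously be realised as coordinate points of $\PP^2$ by any automorphism of $\PP^2$ (in the sense that $\alpha$ acts on the bubble space but cannot collapse an infinitely-near-tower into three independent proper points). The standard way out is Castelnuovo's trick: one first composes with a carefully chosen de Jonquières transformation — a Jonquières map preserving the pencil of lines through $p_1$, cf.\ Example~\ref{ex_Jonquières} — so that the three highest-multiplicity base points of the composition become proper and in general position, and then one applies the reduction above. For this to prove the theorem as stated, one must separately verify that every de Jonquières transformation already lies in the subgroup $\langle\Aut(\PP^2),\sigma\rangle$; this is elementary, using that $\sigma$ itself is a Jonquières involution and that the Jonquières group is generated by $\Aut(\PP^2)$ together with monomial involutions of this form. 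Combining the Noether-inequality reduction with this auxiliary generation result closes the induction and proves the theorem.
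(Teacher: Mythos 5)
The paper does not prove this theorem: it is stated with citations to Noether and Castelnuovo only (and the footnote recording that Noether's own proof was incomplete), so there is no in-paper argument to compare against. Your proposal reproduces the standard classical strategy, and its skeleton is correct: the two homaloidal equalities $\sum m_i^2=d^2-1$ and $\sum m_i=3(d-1)$ do follow from the self-intersection and genus conditions on the linear system, Noether's inequality $m_1+m_2+m_3\geq d+1$ follows from them by the usual ordering argument, and precomposing with a quadratic map based at three non-collinear proper points of multiplicities $m_1,m_2,m_3$ does drop the degree to $2d-(m_1+m_2+m_3)<d$. You also correctly identify the genuine obstruction, namely that the three points of largest multiplicity may be infinitely near (collinearity of three \emph{proper} such points is excluded by B\'ezout, since the line would meet a general member of the net in more than $d$ points).

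The weak point is that the resolution of this obstruction is compressed into one sentence, and that sentence is not literally correct: there is in general no single de Jonqui\`eres transformation after which the three highest-multiplicity base points of the composition become proper and in general position. The actual Castelnuovo-style argument is a more delicate induction on a compound complexity --- typically the degree together with a measure of how far the large-multiplicity base points are from being proper --- in which intermediate compositions may be taken with quadratic or Jonqui\`eres maps based partly at points that are \emph{not} base points of $f$, and the degree is not monotonically decreasing at every step. This bookkeeping is precisely where Noether's original proof broke down, so it cannot be treated as a routine remark. Likewise, the auxiliary claim that every de Jonqui\`eres map lies in $\langle\Aut(\PP^2),\sigma\rangle$ is true and elementary, but it requires factoring the degenerate quadratic involutions (those with one or two infinitely near base points) into $\sigma$ and elements of $\PGL_3(\kk)$, which is a small but necessary computation you should not omit. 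In short: correct architecture, but the step you label ``Castelnuovo's trick'' is the entire content of the theorem and needs to be carried out, not invoked.
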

Note that even when the field is finite the Cremona group is never finitely generated (see for instance \cite[Proposition 3.6]{Cantat_review}); if it would be the case you could construct a finitely generated field that is algebraically closed, which is impossible. Another generating set, which is often used, is the set of  automorphisms of $\PP^2$ and the de Jonquières maps.

A breakthrough in the study of the Cremona group of rank $2$ was done in the years 2010 by constructing an action of the Cremona group of rank $2$ on a hyperbolic space (see \cite{Cantat_groupes_birat} and \cite{Cantat-Lamy}). 
This space is an analogous in infinite dimension of the hyperboloid model of the hyperbolic plane (see Figure \ref{fig_hyperboloid}): \[\Hh^2=\{(x,y,z)\in\R^3\mid x^2-y^2-z^2=1\text{ and }x>0\}.\]

\begin{figure}
	\begin{center}
	\def\svgwidth{50pt}
	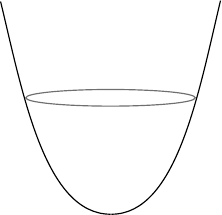
\end{center}
\caption{Hyperboloid model of the hyperbolic plane. \label{fig_hyperboloid}}
\end{figure}
Let us briefly give an idea of this construction, even if it is not an object of interest in this survey (for more details see for instance \cite{Cantat-Lamy}, \cite{Lonjou_non_simplicity} or \cite{Lamy_book}).
Consider a smooth projective surface $S$.
A \emph{divisor} $D$ on $S$, is a finite formal sum of irreducible curves of $S$ with integer coefficients: $D=\sum_{i=0}^{n}a_iC_i$ where $a_i\in \Z$, $C_i$ irreducible curve of $S$.
We denote by $\Div(S)$ the group of divisors on $S$. There exists a well defined intersection theory on divisors (see for instance \cite{Fulton}). Two divisors are said \emph{numerically equivalent} if they have the same intersection number with any irreducible curve of $S$.
The \emph{Néron-Severi group} of $S$, denoted by $\NS(S)$, is the group of divisors up to numerical equivalence, tensorised by $\R$. It is a finitely generated free abelian group (see for instance \cite{Shafarevich_book_1}). Its rank, called the \emph{Picard rank of the surface} $S$, is denoted by $\rho(S)$. For instance $\NS(\PP^2)\simeq \R$, is generated by the class of the line.
Consider a birational morphism $\sigma : S' \rightarrow S$. Let $D\in\Div(S)$ given by the local equations $\{g_\alpha\}$ on $S$. We define the \emph{pull-back} of $D$ by $\sigma$ as the divisor on $S'$, denoted by $\sigma^*(D)$, defined locally by the equations $\{g_\alpha \sigma\}$ on $S'$. By Theorem \ref{thm_morphism_blowup}, $\sigma$ is the composition of blow-ups of points in $\mathcal{B}_S$. Let us denote them by $p_1,\dots, p_n$. Then
\begin{equation}\label{eq_NS}
\NS(S')=\sigma^*(\NS(S))\oplus \left(\underset{1\leq i\leq n}{\oplus}\R\sigma^*(E_i)\right).
\end{equation} 
For instance, if you blow-up two points $p,q\in \PP^2$, then the Néron-Severi group of the surface obtained is isomorphic to $\R^3$ and it is generated by the class of the line and by the classes of the exceptional divisors $E_p $ and $E_q$.
Now, we consider the \emph{Picard-Manin space}, $\mathcal{Z}(S)$, which is the $\ell^2$-completion of the inductive limit of the Néron-Severi groups of the surfaces dominating $S$:\[\mathcal{Z}(S):=\{d + \sum_{p\in \mathcal{B}_S}\lambda_pe_p\mid \lambda_p\in \R, \  d\in \NS(S), \ \sum_{p\in \mathcal{B}_S}\lambda_p^2<\infty\},\] where $e_p$ denotes the class in the inductive limit of the exceptional divisor obtained by blowing up the point $p\in \mathcal{B}_S$. More precisely, denote by $(p,T,\pi)$ a representative of the point $p$, and $\pi_p:T'\rightarrow T$ the blow-up of the point $p$, then in each surface $W$ dominating $T'$: $\rho:W \rightarrow T'$, $e_p$ represents the divisor $\rho^*(E_p)$. The intersection form on divisors induces a well defined intersection form on classes of the Picard-Manin space, which is a bilinear form of signature $(1,\infty)$. Let $a\in \NS(S)$ be any ample class, i.e, the intersection number between $a$ and the class of any irreducible curve of $S$ is positive and the self-intersection of $a$ is positive. Taking the classes of the Picard-Manin space of self-intersection $1$ that intersect $a$ positively gives a hyperbloid as announced:
\[\mathbb{H}^\infty(S):=\{c\in \mathcal{Z}(S)\mid c\cdot c=1 \text{ and } c\cdot a >0\}.\]
Note that $\mathbb{H}^\infty (S)$ does not depend on the choice of $a$.
Equipped with the distance $\dist(c_1,c_2):=\cosh^{-1}(c_1\cdot c_2)$, it is a complete CAT(0) metric space with constant curvature $-1$, which is also Gromov-hyperbolic. 

As a consequence of \eqref{eq_NS}, a birational morphism $\sigma : S'\rightarrow S$ induces a canonical bijection of the Picard-Manin spaces $\mathcal{Z}(S)$ and $\mathcal{Z}(S')$ preserving the intersection form; and so it induces an isometry between the hyperbolic spaces $\mathbb{H}^\infty(S)$ and $\mathbb{H}^\infty(S')$ denoted by $\sigma_{\#}$.
As a consequence, the Cremona group acts on $\mathbb{H}^\infty(\PP^2)$ as follows: consider the minimal resolution of $f=\pi_2\pi_1^{-1}$ given by the Zariski theorem, then $f_\#=(\pi_2)_{\#}(\pi_1^{-1})_{\#}$.

The isometries of the action of the Cremona group on this hyperbolic space are very well understood. Indeed, let $f\in \Bir(\PP^2)$ and consider the sequence of degrees of the iterates of $f$: $\{\deg(f^n)\mid n\in\N\}$. The isometry $f$ is respectively elliptic, parabolic and loxodromic if and only if this sequence is respectively bounded, respectively grows linearly or quadratically, and respectively grows exponentially (\cite{Cantat_groupes_birat},\cite{Diller_Favre_Dynamics_bimeromorphic}).

Using this action, two important theorems about the structure of the Cremona group of rank $2$ have been proven making it having some features of both hyperbolic and linear groups.

It was a long standing open question (since Enriques) to know whether the Cremona group was a simple group or not. It was first solved by Cantat and Lamy over algebraically closed field, then by the author over any field.

\begin{theorem}[{\cite{Cantat-Lamy}, \cite{Lonjou_non_simplicity}, \cite{Shepherd_Barron}}]
	The Cremona group of rank $2$ is not a simple group over any field.
\end{theorem}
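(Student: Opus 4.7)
The plan is to exploit the isometric action of $\Bir(\PP^2)$ on the infinite-dimensional hyperbolic space $\Hh^\infty(\PP^2)$ introduced above, which is Gromov-hyperbolic and CAT($-1$), and to run a small cancellation argument in the spirit of Gromov and Delzant. The first step is to produce a loxodromic element $g\in\Bir(\PP^2)$, which by the dynamical characterization recalled in the excerpt amounts to exhibiting a transformation whose degree sequence $\deg(g^n)$ grows exponentially; natural candidates arise from sufficiently general Jonquières elements of large degree or from compositions involving a Hénon-type automorphism of $\A^2$.

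The second step is to refine $g$ so as to make it \emph{tight}: its translation axis $A_g\subset\Hh^\infty(\PP^2)$ should admit only finitely many translates $hA_g$ that fellow-travel $A_g$ for longer than some fixed constant, and any $h\in\Bir(\PP^2)$ preserving $A_g$ setwise should satisfy $hgh^{-1}=g^{\pm 1}$. This is a WPD-type property. Following Cantat--Lamy, one converts the geometric fellow-travelling condition into a rigidity statement about the base-points of the iterates $g^n$ versus those of $hg^nh^{-1}$, using Zariski's theorem~\ref{thm:Zariski} together with the subadditivity of base-point counts (Remark~\ref{rmk_bspt_subadditive}) to show that abundant overlap would force $h$ to essentially preserve the base-point configurations of arbitrarily high powers of $g$.

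Granted tightness, an infinite-dimensional Gromov-type small cancellation theorem applies: for $n$ sufficiently large, the normal closure $\langle\langle g^n\rangle\rangle$ is free, freely generated by a set of conjugates of $g^n$, and every non-trivial element is loxodromic with translation length at least $n\cdot\ell(g)$ on $\Hh^\infty(\PP^2)$. In particular no element of $\PGL_3(\kk)\subset\Bir(\PP^2)$, which acts elliptically because it fixes the class of a line, can belong to $\langle\langle g^n\rangle\rangle$; hence the latter is a proper non-trivial normal subgroup, proving non-simplicity over an algebraically closed field.

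The main obstacle I expect is the verification of tightness: the space $\Hh^\infty(\PP^2)$ is not locally compact, so the classical proper-action small cancellation toolbox does not transfer, and one must extract just enough effective properness along $A_g$ from the algebraic rigidity of base-points. Extending the result to arbitrary, and in particular non-perfect or non-algebraically-closed, fields, which is the contribution of Lonjou and of Shepherd-Barron, requires producing a Galois-equivariant tight element, either by working on a twisted form of the Picard--Manin hyperboloid or by replacing $\Hh^\infty(\PP^2)$ with a combinatorial model such as a median graph, on which a small cancellation argument compatible with the arithmetic of base-points over the base field can still be carried out.
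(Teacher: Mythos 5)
Your outline matches the paper's own account of this theorem: the survey does not prove it but describes exactly the two routes you sketch, namely small cancellation for the isometric action on the Gromov-hyperbolic space $\Hh^\infty(\PP^2)$ via \emph{tight} elements (Cantat--Lamy, algebraically closed fields) and via \emph{WPD} elements together with the Dahmani--Guirardel--Osin machinery (Lonjou, arbitrary fields), with Shepherd--Barron extending the tight criterion to various fields. One caution: you describe tightness as ``a WPD-type property'' and build a finiteness-of-fellow-travelling-translates clause into it, but the paper explicitly stresses that tight and WPD are inequivalent and neither implies the other --- tightness requires that any $h$ preserving the axis satisfy $hgh^{-1}=g^{\pm 1}$ with \emph{no} finiteness condition on the axis stabilizer, whereas WPD requires finiteness of coarse stabilizers with no normalization condition --- so the hybrid condition you state is not Cantat--Lamy's criterion, although proving it would of course suffice.
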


Both proofs use small cancellation theory. The article of Cantat-Lamy is in two part. They first elaborate a criterion, called ``tight'', ensuring that, in a group acting by isometries on a Gromov hyperbolic space, the normal subgroup generated by some power of a tight element is a proper normal subgroup. In the second part of their article they find such elements in the Cremona group of rank $2$ over algebraically closed fields. 

In \cite{Lonjou_non_simplicity}, the author exhibits a family of ``Weakly Properly Discontinuous elements'' in the Cremona group of rank $2$ (over any field) and uses the machinery of \cite{DGO} ensuring that groups containing WPD elements have proper normal subgroups. Moreover, this does not show only that the Cremona group of rank $2$ is not simple, but also that it has many quotients (it has the SQ universal property) and that it is part of the family of acylindrically hyperbolic groups.

Note that the notions tight and WPD are related to the small cancellation theory. Even if they have some similitude, these notions are not equivalent; even more, none of these two notions implies the other one (see for instance \cite[Examples 5.10 and 5.11]{Lamy_Lonjou}). Indeed, if an element $g$ of a group is tight (when acting on a geodesic metric space $X$) then the stabilizer of its axis has to be the normalizer of the subgroup generated by $g$; but there is no finiteness condition on this normalizer. For instance, it could exist infinitely many elements fixing pointwise the axis of $g$. This is not allowed for WPD elements. On the other hand, the stabilizer of the axis of a WPD element $f$ does not have to normalize the subgroup generated by $f$.

In \cite{Shepherd_Barron}, Shepherd-Barron proved also the non-simplicity of the Cremona group of rank $2$ for various fields using the tight criterion of Cantat-Lamy. He also classifies the elements in the group having this property.
In \cite{Cantat_Guirardel_Lonjou}, all elements of infinite order such that, up to some power, the normal subgroup generated by them is a proper subgroup, are classified; note that they are not all loxodromic.

The second important result is about the classification of the subgroups of the Cremona groups.
We say that a group $G$ satisfies the \emph{Tits alternative} (respectively \emph{the Tits alternative for finitely generated subgroups}) if any subgroup (respectively any finitely generated subgroup) contains either a non-abelian free subgroup, or a solvable group of finite index. This alternative has been proved by Tits for linear groups (up to restricting to finitely generated subgroup when the field is of positive characteristic). Note that over the algebraic closure of a finite field, the linear group (of any dimension) does not contain any non-abelian free subgroup (any element is of finite order) and it does not contain any solvable subgroup of finite index. As a consequence the above restriction, when working over fields of positive characteristic is necessary.
\begin{theorem}[{\cite{Cantat_groupes_birat}, \cite{Urech_Tits}, \cite{Lamy_book}}]
	The Cremona group of rank $2$ satisfies the Tits alternative if the characteristic of the field is $0$ and the Tits alternative for finitely generated subgroups otherwise.
\end{theorem}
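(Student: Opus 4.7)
The plan is to run the standard Tits-alternative dichotomy through the action of $\Bir(\PP^2)$ on the hyperbolic space $\Hh^\infty(\PP^2)$ recalled above. Let $G\subset\Bir(\PP^2)$ be a subgroup (finitely generated if $\mathrm{char}(\kk)>0$). I would split the analysis according to the isometry types appearing in $G$: either $G$ contains two \emph{independent} loxodromic elements (i.e.\ loxodromics with disjoint pairs of fixed points on $\partial\Hh^\infty$), or it does not. In the first case, the first step is to play ping-pong on $\Hh^\infty(\PP^2)$ with sufficiently high powers of two such independent loxodromics; this is essentially a Gromov-hyperbolic ping-pong, and it produces a non-abelian free subgroup of $G$, so the first branch of the alternative holds.

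If $G$ contains no two independent loxodromic elements, then all loxodromic elements of $G$ share a common pair of endpoints on $\partial\Hh^\infty$, so a subgroup $G_0$ of index at most $2$ in $G$ fixes a point in $\Hh^\infty\cup\partial\Hh^\infty$ and acts on this boundary data. This leads to three sub-cases, corresponding to whether $G_0$ is (i) elliptic (bounded orbits in $\Hh^\infty$), (ii) parabolic (unique fixed point at infinity, no loxodromics), or (iii) loxodromic with a common axis. The key input here, used in all three sub-cases, is the translation between orbit growth on $\Hh^\infty$ and degree growth of Cremona transformations described after \eqref{eq_NS}: bounded orbits correspond to subgroups of bounded degree, parabolic behaviour corresponds to linear or quadratic degree growth together with preservation of a pencil of rational curves, and a common axis corresponds to a common invariant pencil plus a virtually cyclic loxodromic part.

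The second step is then to replace each of these geometric statements by an algebraic one via a regularisation / invariant-fibration theorem. For (i), a bounded-degree subgroup is projectively regularisable on some smooth projective rational surface $Y$, so $G_0$ embeds in $\Aut(Y)$; the connected component $\Aut(Y)^0$ is a linear algebraic group over $\kk$ and the component group acts faithfully on $\NS(Y)\simeq\Z^{\rho(Y)}$, so $G_0$ is (virtually) linear and Tits' original theorem applies with the usual characteristic caveat. For (ii) and (iii), $G_0$ preserves a pencil of rational curves and therefore, after conjugation, sits inside the Jonqui\`eres group $\PGL_2(\kk(x_1))\rtimes \PGL_2(\kk)$ of Example \ref{ex_Jonquières}; the Tits alternative for this group follows from Tits' theorem applied to $\PGL_2$ over the field $\kk(x_1)$ (in characteristic $p$, restricted to finitely generated subgroups), combined with the fact that the elliptic kernel of the action on the base $\PP^1$ is again virtually linear on each fibre.

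The hard part is the first step of the second paragraph: showing that, in the absence of two independent loxodromic elements, the subgroup is forced into an algebraic or Jonqui\`eres-type structure. Geometrically it is clean, but translating ``bounded $\Hh^\infty$-orbit'' into ``projectively regularisable on a smooth rational surface'' and ``fixes a unique point of $\partial\Hh^\infty$'' into ``preserves an invariant pencil of rational curves'' are non-trivial regularisation statements, which form the technical core of \cite{Cantat_groupes_birat} and \cite{Urech_Tits}; this is where the restriction to finitely generated subgroups over fields of positive characteristic really bites, both in the Jordan-type bounds used for elliptic subgroups and in the use of Tits' theorem for $\PGL_2(\kk(x_1))$.
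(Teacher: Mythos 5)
The paper does not actually prove this theorem: it is a survey statement, and the only ``proof'' given is an attribution (Cantat for finitely generated subgroups in characteristic $0$, Urech for arbitrary subgroups, Lamy--Urech for positive characteristic) together with the remark that the restriction to finitely generated subgroups in characteristic $p$ is forced because $\Bir(\PP^2_\kk)$ contains linear groups over $\kk$. So there is no in-paper argument to compare against; your proposal has to be judged against the strategy of the cited works, and at that level your overall architecture --- ping-pong on $\Hh^\infty(\PP^2)$ when two independent loxodromics exist, otherwise Gromov's classification of elementary actions followed by regularisation or an invariant fibration --- is indeed the route taken in \cite{Cantat_groupes_birat} and \cite{Urech_Tits}.

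There is, however, a concrete gap in your treatment of the parabolic case. A parabolic isometry of $\Hh^\infty$ corresponds to degree growth that is \emph{either} linear \emph{or} quadratic, and these two sub-cases are genuinely different: linear growth (de Jonqui\`eres twists) yields an invariant pencil of \emph{rational} curves and hence a conjugate inside $\PGL_2(\kk(x_1))\rtimes\PGL_2(\kk)$, but quadratic growth (Halphen twists) yields an invariant pencil of \emph{genus-one} curves, and the group must then be regularised as a subgroup of the automorphism group of a Halphen surface --- precisely the example the survey gives of a projectively regularisable but unbounded subgroup. Your plan funnels all of case (ii) into the Jonqui\`eres group, which fails for the Halphen case. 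A second, smaller issue: in the lineal case (iii) the common endpoints of the shared axis are in general not represented by classes of curves, so there is no ``common invariant pencil''; what one actually gets is a homomorphism to $\Z$ with elliptic kernel, and the kernel still has to be processed through the bounded-orbit machinery. Finally, note that even granting these fixes, the passage from ``every element is elliptic'' to ``bounded orbit or Jonqui\`eres'' (the purely elliptic unbounded case) is exactly the delicate point where finite generation enters in characteristic $0$ as well, and where Urech's extension to arbitrary subgroups requires new input; your plan acknowledges this but does not supply it, so as written it is an outline of the known proof rather than a proof.
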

Cantat proved that the Cremona group of rank $2$ over a field of characteristic $0$ satisfies the Tits alternative for finitely generated subgroups. Then it has been improved by Urech for any subgroups. Recently, Lamy and Urech proved that in positive characteristic the Cremona group of rank $2$ satisfies the Tits alternative for finitely generated subgroups (see \cite{Lamy_book} for more details). Note that the Cremona group of rank $2$ over a field of positive characteristic $\kk$ does not satisfy the Tits alternative (for any subgroup) as it contains some linear groups over $\kk$.

Even if the Cremona group of rank $2$ is now quite well understood, some questions remain open, like for instance Question \ref{question_loc_elliptic}. The recent constructions on median graphs \cite{GLU_Neretin} allowed to answer this question positively for Cremona groups of rank $2$ over finite fields (see Subsection \ref{subsubsection_open_reg}) but it is still open over arbitrary fields.

Cremona groups are not linear over $\C$ (see \cite{Cornulier_linear}). A natural question asked by Cantat is the following: are finitely generated subgroups of the Cremona group of rank $2$ residually finite? Recall that a group $G$ is residually finite if for any non-identity element $g\in G$ there exists a group homomorphism from the group to a finite group that does not send $g$ to the identity element.

In higher dimension, the situation is drastically different. Cremona groups are far more complicated and less understood. For instance no nice family of generators is known. Even worse, we do know that it can not be like in dimension 2.
\begin{theorem}[{\cite{Pan_generateurs}}]
For $n\geq 3$, $\Bir(\PP^n_{\C})$ is not generated by $\Aut(\PP^n)$ and finitely many elements, or more generally by any set of elements of $\Bir(\PP^n)$ of bounded degree.
\end{theorem}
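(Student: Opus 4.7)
The plan is to introduce a birational invariant on $\Bir(\PP^n)$ that is bounded on every subgroup generated by transformations of bounded degree, but attains arbitrarily large values on $\Bir(\PP^n)$ itself when $n\ge 3$. For $f\in\Bir(\PP^n)$, set
\[ p(f) := \max\bigl\{\, p_g(\widetilde E) \,:\, E \text{ irreducible hypersurface component of } \Exc(f) \,\bigr\}, \]
with the convention $p(f)=0$ if $\Exc(f)=\emptyset$, where $p_g(\widetilde E)$ denotes the geometric genus of a smooth projective model $\widetilde E$ of $E$. Since $p_g$ is a birational invariant of smooth projective varieties, this quantity is well defined, and in particular $p$ vanishes identically on $\Aut(\PP^n)$.

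The crucial composition estimate is $p(fg)\le \max\bigl(p(f),p(g)\bigr)$. To see it, let $E$ be an irreducible hypersurface component of $\Exc(fg)$. Either $g(E)$ has dimension $<n-1$, in which case $E$ is already a component of $\Exc(g)$; or $F:=g(E)$ is an irreducible hypersurface, in which case the restriction $g|_E:E\dashrightarrow F$ is a birational equivalence (its inverse being $g^{-1}|_F$) and, because $(fg)(E)=f(F)$ has dimension $<n-1$, the hypersurface $F$ must be a component of $\Exc(f)$; thus $E$ is birational to a component of $\Exc(f)$. Combining this with a degree control — the ramification divisor of $f$, which contains $\Exc(f)$, is cut out by the Jacobian determinant of a homogeneous representative of $f$ and so has total degree at most $(n+1)(\deg f-1)$, and the arithmetic genus (hence the geometric genus of any desingularization) of a hypersurface of degree $D$ in $\PP^n$ is bounded by an explicit function $\Phi(D,n)$ — yields that any $f$ lying in the subgroup of $\Bir(\PP^n)$ generated by $\Aut(\PP^n)$ together with transformations of degree $\le d$ satisfies $p(f)\le \Phi((n+1)(d-1),n)$.

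The final step, and the main obstacle, is to exhibit for every integer $N$ an element $h_N\in\Bir(\PP^n)$ with $p(h_N)\ge N$, using essentially $n\ge 3$. Following Pan, for each homogeneous polynomial $P$ defining a smooth hypersurface $H_P\subset\PP^n$ one constructs an explicit birational involution $f_P\in\Bir(\PP^n)$ whose exceptional divisor contains a component birational to $H_P$; since a smooth hypersurface of degree $d$ in $\PP^n$ has geometric genus $\binom{d-1}{n}$, which is unbounded in $d$ precisely when $n\ge 3$, letting $\deg P\to\infty$ supplies the required sequence. The dimension hypothesis is indispensable: for $n=2$, Theorem~\ref{thm_morphism_blowup} forces every irreducible curve contracted by the resolution morphism $\pi_2$ of Theorem~\ref{thm:Zariski} to be an exceptional $\PP^1$, so every component of every $\Exc(f)$ is rational, $p\equiv 0$ on $\Bir(\PP^2)$, and the argument collapses — consistent with Noether–Castelnuovo. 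The genuinely delicate part of Pan's work is checking that $H_P$ is actually contracted by $f_P$ (rather than mapped isomorphically or swapped with another hypersurface of the same birational type, which would not make $p$ grow); this reduces to a local analysis at a generic point of $H_P$ and to a careful choice of the linear system defining $f_P$.
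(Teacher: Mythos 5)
The survey only states this theorem with a citation to Pan and gives no proof, so I am assessing your argument on its own terms. The architecture is the right one — attach to each $f$ a quantity computed from its contracted hypersurfaces, prove $p(fg)\le\max\bigl(p(f),p(g)\bigr)$, bound it in terms of $\deg f$ via the Jacobian, and exhibit elements on which it is unbounded — and your composition estimate and degree control are correct for \emph{any} birational invariant of the contracted divisors. The fatal problem is the invariant you chose. Every irreducible hypersurface $E$ contracted by a birational map between smooth projective varieties is birationally \emph{ruled}: resolving the map by Hironaka, the strict transform of $E$ becomes an exceptional divisor of a birational morphism onto a smooth target, and such divisors are birational to $Z\times\PP^1$ for some $(n-2)$-dimensional $Z$ (the classical Abhyankar--Zariski ruledness statement — the very same mechanism you invoke to explain why every contracted curve in $\PP^2$ is rational, except that it operates in every dimension). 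Consequently $p_g(\widetilde E)=h^0(\omega_{\widetilde E})=h^0(\omega_{\widetilde Z})\cdot h^0(\omega_{\PP^1})=0$ for every contracted $E$, so your invariant $p$ vanishes identically on $\Bir(\PP^n)$ and the elements $h_N$ with $p(h_N)\ge N$ do not exist. In particular no birational self-map of $\PP^n$ can contract a hypersurface birational to a \emph{smooth} hypersurface $H_P\subset\PP^n$ of degree $\ge n+1$ (such an $H_P$ has $p_g=\binom{\deg P-1}{n}>0$, hence is not ruled), so the property you attribute to Pan's involutions cannot hold.

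The theorem is of course true, and your proof can be repaired by replacing $p_g$ with an invariant that does not die on ruled varieties. Pan's de Jonqui\`eres-type involutions contract a hypersurface birational to $H\times\PP^1$ with $H$ a hypersurface of large degree in $\PP^{n-1}$; the quantity that grows is an invariant of the base $H$ of the ruling, not of $E$ itself. For $n=3$ one may take $p(f)$ to be the maximal irregularity $q=h^{1,0}$ of a resolution of a contracted surface (equal to the genus of the curve $H$ when $E$ is birational to $H\times\PP^1$); in general one records the birational class of the contracted hypersurfaces, which is essentially the homomorphism to $\Z[\Div(\PP^n)/_\approx]$ of Theorem~\ref{thm_morphism_Z} discussed in Subsection~\ref{Subsection_morphisms_Z}. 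With that substitution your composition estimate and the bound $\deg E\le(n+1)(\deg f-1)$ (which controls the genus of $H$ through a general curve section of $E$) go through unchanged, and the hypothesis $n\ge 3$ enters because only then does $\PP^{n-1}$ contain hypersurfaces of unbounded genus, respectively of unbounded birational type, to feed into the construction.
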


Some attempts to generalize the construction of the hyperbolic space for Cremona groups in higher ranks have been done (see for instance \cite{Dang_Favre}) but it seems not possible to obtain a geometric space with non positive curvature.

The median graphs constructed in \cite{Lonjou_Urech_cube_complexe} and that we will see in Section \ref{Section_median_graphs_higher_ranks} are the first geometric constructions allowing to study Cremona groups of higher ranks from a geometric group theoretic point of view.

Several recent progresses have been done in the understanding of Cremona groups of higher ranks, like for instance, the non-simplicity of the Cremona group in higher rank over any subfield of $\C$.
\begin{theorem}[{\cite{Blanc_Lamy_Zimmermann_quotients}}]Let $\kk$ be a subfield of $\C$ and $n\geq 3$, then $\Bir_{\kk}(\PP^n)$ is not a simple group.
\end{theorem}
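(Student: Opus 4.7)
The plan is to follow the Sarkisov-program approach rather than a median-geometric one, since (as discussed in the survey) the higher-rank median-type constructions of~\cite{Lonjou_Urech_cube_complexe} do not yet produce a hyperbolic feature on which to mount a standard quotient construction. The starting point is that $\Bir_{\kk}(\PP^n)$ admits a presentation with generators the \emph{Sarkisov links} between Mori fiber spaces birational to $\PP^n$, and relators the \emph{elementary Sarkisov relations} coming from running the Minimal Model Program on rank-two fibrations. This is the higher-dimensional analogue of the Noether--Castelnuovo presentation of Section~\ref{Section_Cremona_groups}, and it reduces the problem to controlling the combinatorics of Sarkisov links.

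Next I would fix a large indexing set $I$ of birational equivalence classes of smooth non-rational $\kk$-varieties of dimension at most $n-1$ (for instance, non-rational hypersurfaces of large degree, whose irrationality can be detected by specialisation arguments \`a la Koll\'ar--Voisin). Here the hypothesis $n\geq 3$ is essential, since in dimension $1$ everything is rational, and in dimension $2$ the centers of Sarkisov links are not rich enough to distinguish classes. Using $I$ I would define the target group $T := \ast_{[X]\in I}\, \Z/2\Z$ and a map $\Phi$ on the set of generating Sarkisov links of $\Bir_{\kk}(\PP^n)$: every Sarkisov link of Type~II whose base-locus lies in a class $[X]\in I$ is sent to the corresponding generator of $T$, and every other generating link is sent to the identity.

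The hard part -- and the heart of the proof -- is to check that $\Phi$ descends to a group homomorphism, i.e.\ that every elementary Sarkisov relation maps to the identity of $T$. This amounts to a case analysis of all such relations, showing that the Type~II links whose centers lie in $I$ either do not occur at all, or occur in pairs with \emph{birational} centers, so that the two contributions cancel as a repeated order-two generator inside the free product $T$. In dimension $n\geq 3$ the underlying combinatorics is genuinely more delicate than in the surface case, because the base of a Mori fiber space can itself be higher-dimensional, and this is where the technical bulk of~\cite{Blanc_Lamy_Zimmermann_quotients} lies. Once well-definedness is established, one concludes by exhibiting an explicit birational involution of $\PP^n$ with non-rational center, whose image under $\Phi$ is a non-trivial generator of $T$, while on the other hand $\Aut(\PP^n)\subset \ker(\Phi)$ by construction; hence $\ker(\Phi)$ is a proper non-trivial normal subgroup.
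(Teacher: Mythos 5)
The survey does not prove this theorem; it only cites \cite{Blanc_Lamy_Zimmermann_quotients} and remarks that the proof goes through the Minimal Model Program and the factorization into Sarkisov links. So there is no in-paper argument to compare against, and your proposal has to stand on its own. What you have written is a faithful outline of the architecture of the Blanc--Lamy--Zimmermann proof: generation by Sarkisov links (Hacon--McKernan), relations generated by elementary relations coming from rank-$3$ fibrations (Kaloghiros), a map $\Phi$ to a free product of copies of $\Z/2\Z$ that kills everything except certain type~II links, and non-rationality results of Koll\'ar--Voisin type to produce the index set and to certify non-triviality. That is the right strategy, and it is genuinely different from the median-geometric constructions of this survey (whose closest analogue, the homomorphism to $\Z[\Div(\PP^n)/_\approx]$ of Subsection~5.2, tracks exceptional divisors rather than Sarkisov links and lands in a free \emph{abelian} group).

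As a proof, however, the proposal is only a scaffold: both load-bearing steps are deferred. First, the well-definedness of $\Phi$ is not a routine case analysis that one can wave at. It requires the correct invariant attached to a type~II link, which in \cite{Blanc_Lamy_Zimmermann_quotients} is an equivalence class of \emph{marked conic bundles} (the fibration structure together with the center and its discriminant cover), not merely the birational class of the base locus as you propose; with plain birational equivalence of centers the two links occurring in an elementary relation need not define the same generator, and the cancellation $gg=1$ fails. Moreover, in a free product two order-two generators only cancel if every link between them maps to the identity, so one must actually classify which links can appear in an elementary relation alongside a ``large'' type~II conic bundle link --- this is the technical heart of the cited paper and is entirely absent here. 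Second, non-triviality requires exhibiting a Sarkisov link whose associated cover is provably irrational (via Koll\'ar's degeneration to characteristic $p$ for $n\geq 4$, and genus bounds on discriminant curves for $n=3$), and showing that uncountably many pairwise inequivalent such classes exist; ``an explicit birational involution with non-rational center'' is named but not produced. So the proposal correctly identifies the route but does not close either of the two gaps on which the theorem actually rests.
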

It has been obtained using advanced results of algebraic geometry, and using the Minimal Model Program and the factorization into Sarkisov links.
They also proved that in higher dimension for subfields of $\CC$, the Cremona group can not be generated by automorphisms of $\PP^n$ and higher dimensional analogue of de Jonquières transformations.

An exciting progress would be to know whether Cremona groups of higher ranks satisfy the Tits alternative.

\section{Median graphs for groups of birational transformations of surfaces}\label{Section_Median_graphs_surfaces}
In this section, we focus on surfaces, that will be for us (except said otherwise), irreducible smooth projective varieties of dimension $2$ over an algebraically closed field. We refer to the original article \cite{Lonjou_Urech_cube_complexe} for a larger scope: projective regular surfaces $S$ over arbitrary fields $\kk$. 

We will present and study two constructions of actions of groups of birationl transformations of surfaces on median graphs (the blow-up graphs in Subsection~\ref{Subsection_blow-up graph} and the rational blow-up graph in Subsection~\ref{Subsection_rational_blow_up graph}).

\subsection{Action of the Cremona group of rank 2 on the blow-up graph}\label{Subsection_blow-up graph}
We first introduce the construction of the blow-up graph and prove that it is a median graph. There is a natural action of Cremona groups of rank $2$ on it. We present a nice dictionary between the median objects (hyperplanes, distance, minimizing set etc) and birational notions and survey which results can be deduced from this action.

\medskip
\subsubsection{Construction of the blow-up graph}
This graph and its cube completion have been constructed and studied in \cite{Lonjou_Urech_cube_complexe} in terms of CAT(0) cube complexes.

Let $S$ be a smooth projective surface. 
A \emph{marked surface} $(T ,\varphi)$ is a pair where $T $ is a smooth projective surface over $\kk$ and $\varphi\colon T \dashrightarrow S $ is a birational map. Two marked surfaces $(T ,\varphi)$ and $(T ',\varphi')$ are equivalent if the map $\varphi'^{-1}\varphi\colon T \to T '$
\begin{center}
			\begin{tikzcd}[ampersand replacement=\&]
				T\arrow[dashrightarrow]{dr}[below left]{\varphi} \arrow{rr}[above]{\sim} \&\& T'\arrow[dashrightarrow]{dl}[below right]{\varphi'}  \\
				\& S\&
		\end{tikzcd}
\end{center}
is an isomorphism. Such a class, will be denoted by $[(T ,\varphi)]$. For instance, consider an automorphism $a$ of $S$ then the marked surfaces $(S,\id)$ and $(S, a)$ are equivalent.

\begin{definition}[\cite{Lonjou_Urech_cube_complexe}]The \emph{blow-up graph} associated to $S$ and denoted by $\Cb(S)$ is the graph whose vertices are equivalent classes of marked surfaces $[(T,\varphi)]$, and where two vertices $[(T_1,\varphi_1)]$ and $[(T_2,\varphi_2)]$ are connected by an edge if $\varphi_2^{-1}\varphi_1$ is the blow-up of a point of $T_2$ or the inverse of the blow-up of a point of $T_1$. 
\end{definition}

We can define a \emph{height} $h$ on the set of vertices as the Picard rank of the surface given by a representative:
$h([(T,\varphi)]):=\rho(T)$. It is well defined as isomorphic surfaces have the same Picard rank. 
Note that in the case $S=\PP^2$, $h([(T,\varphi)])=\lvert \Bs(\varphi^{-1}) \rvert - \lvert\Bs(\varphi) \rvert +1$.
As the height does not depend on the marking but only on the surface, we will sometimes do a slight abuse of notation denoting by $h(T)$ the height of a vertex.

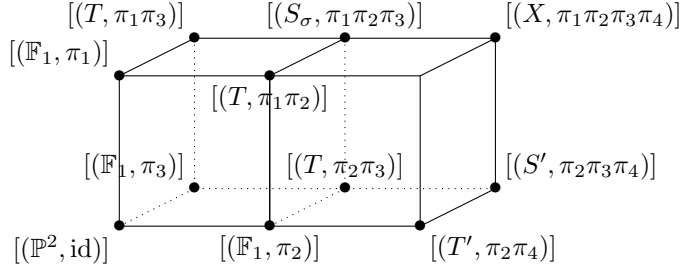
\begin{figure}
	\begin{tikzpicture}
	\draw (0,0) -- (2,0) -- (2,2) -- (0,2)-- (0,0);
	\draw (2,0) -- (4,0) -- (4,2) -- (2,2)-- (2,0);
	\draw (3,2.5) -- (1,2.5);
	\draw[dotted]  (1,2.5)--(1,0.5) -- (3,0.5);
	\draw[dotted]  (3,2.5)--(3,0.5) -- (5,0.5);
	\draw[dotted] (2,0) -- (3,0.5); 
	\draw (4,0) -- (5,0.5)--(5,2.5)--(3,2.5); 
	\draw (2,2) -- (3,2.5);
	\draw (0,2) -- (1,2.5);
	\draw (4,2)--(5,2.5);
	\draw[dotted] (0,0) -- (1,0.5);
	\draw (0,0) node {$\bullet$} node[below left] {$[(\PP^2,\id)]$};
	\draw (2,0) node {$\bullet$} node[below] {$[(\F_1,\pi_{2})]$} ;
	\draw(2,2) node {$\bullet$} node[below] {$[(T,\pi_{1}\pi_{2})]$} ;
	\draw (0,2) node {$\bullet$} node[above left] {$[(\F_1,\pi_{1})]$};
	\draw (1,0.5) node {$\bullet$} node[above left] {$[(\F_1,\pi_{3})]$};
	\draw (3,0.5) node {$\bullet$} node[above] {$[(T,\pi_{2}\pi_{3})]$} ;
	\draw(3,2.5) node {$\bullet$} node[above] {$[(S_\sigma,\pi_{1}\pi_{2}\pi_{3})]$};
	\draw (1,2.5) node {$\bullet$} node[above left] {$[(T,\pi_{1}\pi_{3})]$} ;
	\draw (4,0) node {$\bullet$} node[below right] {$[(T',\pi_{2}\pi_{4})]$} ;
	\draw (5,2.5) node {$\bullet$} node[above right] {$[(X,\pi_{1}\pi_2\pi_{3}\pi_4)]$} ;
	\draw (5,0.5) node {$\bullet$} node[above right] {$[(S',\pi_2\pi_{3}\pi_4)]$} ;
\end{tikzpicture}
\caption{The subgraph of $\Cb(\PP^2)$ generated by the blow-up of three points $p_1,p_2,p_3$ of $\PP^2$ and by the blow-up of $p_4\in E_{p_2}$. \label{fig:example_cube}}
\end{figure}

	\begin{example}\label{ex_cubes}
			Consider $p_1,p_2,p_3$ three distinct points of $\PP^2$. For $1\leq i\leq 3$, denote by $\pi_i$ the blow-up map of the point $p_i$ and by $E_i$ the exceptional divisor obtained.  
			Let $p_4$ be a point on $E_2$. We denote by $\pi_4$ the blow-up map of $p_4$ and by $E_4$ the exceptional divisor obtained. Let us denote respectively by $T$ and $S_\sigma$ the surfaces (up to isomorphism) obtained from $\PP^2$ by blowing up respectively two and three points in $\PP^2$, and by $T'$ and $S'$ the surfaces obtained from respectively $T$ and $S$ by blowing up a point on one of the exceptional divisors.

	As the blow-ups of two distinct points of the same surface commute, the different possible orders to blow-up the points $p_1,p_2,p_3$ span a cube of dimension $3$. Blowing up the fourth points $p_4$ does not span a cube of dimension $4$ because the points $p_2$ and $p_4$ can not belong to a common surface (see Figure \ref{fig:example_cube}).
	\end{example}

As illustrated in Example \ref{ex_cubes}, blowing up a family of $n$ points in a surface $W$ generates a cube of dimension $n$. And indeed, it is the only way to have cubes. More precisely:

\begin{fact}\label{cc_blowup_cubes}
In the blow-up graph, $n$ distinct vertices $[(T_1,\varphi_1)],\dots, [(T_{2^n},\varphi_{2^n})]$ span a cube of dimension $n$ if and only if there exists $1\leq r\leq 2^n$ such that for any $1\leq j\leq 2^n$: 
\begin{itemize}
	\item there exists $n$ distinct points $p_1,\dots, p_n$ in $T_r$,
	\item $\varphi_{r}^{-1}\varphi_{j}: S_j\rightarrow T_r$ is the blow-up of a subfamily of the set $\{p_1,\dots, p_n\}$.
\end{itemize}
\end{fact}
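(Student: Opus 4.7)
The plan is to prove the two implications separately, with the forward direction being routine and the converse proceeding by induction on $n$ after handling the crucial $n=2$ case.

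For the converse (``if'') direction, suppose we are given $n$ distinct points $p_1,\ldots,p_n \in T_r$. For each subset $I \subseteq \{1,\ldots,n\}$, I would form the marked surface $(T_I, \varphi_I)$ obtained by successively blowing up the points in $\{p_i\}_{i \in I}$; since the $p_i$ are distinct (hence blow-ups commute), $(T_I, \varphi_I)$ is well-defined up to isomorphism compatible with the marking. These $2^n$ marked surfaces are pairwise non-equivalent (compare heights, or compare the set of curves contracted by $\varphi_r^{-1} \varphi_I$), and two of them differ by exactly one index iff their underlying marked surfaces are joined by an edge of $\Cb(S)$. This gives the combinatorial structure of an $n$-cube.

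For the forward direction, I would induct on $n$. The base case $n=1$ is just the definition of an edge. The decisive case is $n=2$: a $4$-cycle $v_1-v_2-v_3-v_4-v_1$. Because each edge changes the height $h$ by $\pm 1$ and the total change around the cycle is $0$, two edges go ``up'' and two go ``down''. A short case analysis (using that if two oriented edges both go down into the same vertex $v$, then $v$ has two distinct points blown up on it, and the other two corners of the square coincide up to equivalence) shows there is a unique minimum vertex $v_r = [(T_r, \varphi_r)]$ and a unique maximum $v_s$, with the two edges at $v_r$ going up and the two edges at $v_s$ going down. The two edges from $v_r$ correspond to blow-ups of points $p_1, p_2 \in T_r$; these points must be distinct, for otherwise the two neighbors of $v_r$ would be equivalent marked surfaces, contradicting the distinctness of the four vertices. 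Finally, $v_s$ is necessarily the marked surface obtained by blowing up \emph{both} $p_1$ and $p_2$ on $T_r$: Zariski's theorem (Theorem~\ref{thm:Zariski}) applied to the map from each neighbor of $v_s$ to $v_r$ identifies the base points, and the ``if'' direction then gives that the simultaneous blow-up produces a vertex at height $h(T_r)+2$ adjacent to both of $v_r$'s neighbors; by uniqueness of such a vertex in the cycle, this must be $v_s$.

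For the inductive step, suppose every $(n-1)$-cube arises by blowing up $n-1$ distinct points in a common ``bottom'' marked surface, and consider an $n$-cube $C$. Pick a vertex $v_r$ of $C$ minimizing the height $h$; then $v_r$ has exactly $n$ incident edges inside $C$, each corresponding to the blow-up of a point $p_i \in T_r$. Applying the $n=2$ analysis to each of the $\binom{n}{2}$ squares of $C$ containing $v_r$ shows that $p_1,\ldots,p_n$ are pairwise distinct, and that the opposite corner of the square containing $p_i,p_j$ is the joint blow-up. The induction hypothesis, applied to each facet of $C$ through $v_r$, together with the uniqueness of cube completions in the blow-up graph, then identifies every vertex of $C$ with the marked surface obtained by blowing up a subfamily of $\{p_1,\ldots,p_n\}$, as required.

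The main obstacle is the $n=2$ analysis, specifically showing that the opposite corner of a $4$-cycle really is the simultaneous blow-up of two distinct points of a common bottom surface, rather than some more exotic configuration (e.g.\ involving infinitely near points); this is where Zariski's theorem and the identification of base points on bubble spaces are essential, and everything else in the argument reduces cleanly to this local combinatorial computation.
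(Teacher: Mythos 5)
The paper does not actually prove this statement: it is recorded as a Fact, with only the ``if'' direction illustrated beforehand by Example \ref{ex_cubes}, and it is then invoked in the proof that the blow-up graph is median. So your proposal supplies an argument where the paper gives none, and it is essentially sound. The ``if'' direction via commuting blow-ups of distinct proper points is exactly the content of Example \ref{ex_cubes}, and for the converse you correctly isolate the $n=2$ case as the crux and identify the right tool, namely Zariski's theorem \ref{thm:Zariski} together with uniqueness of minimal resolutions: it both rules out a height-alternating $4$-cycle and forces the top corner of a square to be the simultaneous blow-up of two distinct points of the bottom surface.

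Two points should be tightened. First, your parenthetical description of the alternating case (``two oriented edges go down into the same vertex\dots the other two corners coincide'') is garbled as written; the claim you need, and which is true, is that if the $4$-cycle had two height-minima $v_1,v_3$, then each of the two height-maxima would be a minimal resolution of the non-trivial birational map $\varphi_3^{-1}\varphi_1$ (a single blow-up on each side resolving a non-isomorphism is automatically minimal), so by uniqueness of minimal resolutions they would represent the same vertex, contradicting distinctness. Second, and more substantively, in the inductive step the facets through the bottom vertex $v_r$ cover every vertex of the $n$-cube except the one antipodal to $v_r$, and your appeal to ``uniqueness of cube completions in the blow-up graph'' is not available at this stage: that uniqueness is a consequence of the graph being median, which the paper establishes only afterwards and by using this very Fact, so the argument as phrased is circular. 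The gap closes with tools you already have: take any square containing the antipodal vertex and two of its neighbours (already identified by the facet argument as blow-ups of $(n-1)$-element subfamilies, meeting in the blow-up of an $(n-2)$-element subfamily), and run your $n=2$ analysis on that square to pin the antipodal vertex down as the blow-up of the full family $\{p_1,\dots,p_n\}$.
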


		The surface $S$ contains infinitely many points, so blowing up an arbitrary large sequence of points, we immediately see the following properties of this graph.
	\begin{proposition}[\cite{Lonjou_Urech_cube_complexe}]
		The blow-up graph is: 
		\begin{itemize}
			\item not locally compact,
			\item infinite dimensional with infinite cubes,
			 \item cubically oriented.
		\end{itemize}
	\end{proposition}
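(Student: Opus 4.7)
The plan is to establish each of the three properties by exploiting the height function $h$ (which assigns $\rho(T)$ to the class $[(T,\varphi)]$) together with the characterization of cubes recorded in Fact~\ref{cc_blowup_cubes}.

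For non-local compactness, I would fix any vertex $v=[(T,\varphi)]$. Since $T$ is a smooth projective surface over an algebraically closed field, it contains infinitely many (closed) points; e.g.\ any irreducible curve in $T$ already supplies infinitely many. For each such point $p\in T$, the blow-up $\pi_p : T_p \to T$ gives a marked surface $[(T_p,\varphi\pi_p)]$ adjacent to $v$. Two distinct points $p\neq q$ of $T$ produce non-equivalent marked surfaces (the isomorphism $T_p\to T_q$ over $T$ would have to send $E_p$ to $E_q$, which is impossible since $\pi_p(E_p)=p\neq q=\pi_q(E_q)$). Hence $v$ has infinite valence.

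For the existence of an infinite cube, I would pick a sequence of pairwise distinct points $(p_i)_{i\in\mathbb{N}}$ of $T$. Since distinct points of $T$ can be blown up in any order (the blow-ups commute, as the construction is local), to each finite subset $F\subset\mathbb{N}$ we can associate the marked surface $(T_F,\varphi_F)$ obtained by blowing up the points $\{p_i \mid i\in F\}$. Mapping the finitely supported sequence $\mathbf{1}_F\in\{0,1\}^{\mathbb{N}}$ to the vertex $v_F:=[(T_F,\varphi_F)]$ defines a graph morphism into $\Cb(S)$: sequences $\mathbf{1}_F,\mathbf{1}_{F'}$ differ in a single coordinate iff $F$ and $F'$ differ by one point iff $\varphi_{F'}^{-1}\varphi_F$ (or its inverse) is a single blow-up. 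The $v_F$'s are pairwise distinct, since the marking records exactly which points have been blown up. By Fact~\ref{cc_blowup_cubes}, the image is an infinite cube, so $\Cb(S)$ is infinite dimensional.

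For the cubical orientation, I would orient every edge from the endpoint of smaller height to the endpoint of larger height. Each edge corresponds to a blow-up in one direction and its inverse in the other, so the height jumps by exactly $1$ and the orientation is well defined. To check the $4$-cycle condition, take any $4$-cycle. By Fact~\ref{cc_blowup_cubes} it comes from two distinct points $p,q$ of a common surface $T_r$, and its four vertices are $[(T_r,\varphi)]$, $[(T_r^{p},\varphi\pi_p)]$, $[(T_r^{q},\varphi\pi_q)]$, $[(T_r^{p,q},\varphi\pi_p\pi_q)]$, of heights $\rho(T_r), \rho(T_r)+1, \rho(T_r)+1, \rho(T_r)+2$. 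The two opposite ``blow up $p$'' edges both point from the vertex where $p$ has not yet been blown up to the vertex where it has, and similarly for the ``blow up $q$'' edges; so opposite edges are parallel.

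The only non-routine point is making sure that in each item the marked-surface bookkeeping is respected: for non-local compactness one must rule out identifications between $[(T_p,\varphi\pi_p)]$ and $[(T_q,\varphi\pi_q)]$, and for the infinite cube one needs the finite-subset assignment to be injective. Both are immediate from the fact that the markings remember the collection of proper points blown up in~$\mathcal{B}_S$.
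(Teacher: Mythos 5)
Your proof is correct and takes essentially the same route as the paper, which simply observes that any marked surface contains infinitely many points, so that blowing up arbitrarily large families of distinct points immediately yields the infinite valence and the infinite cubes, and which then fixes exactly your orientation by the Picard rank $\rho(T_2)=\rho(T_1)+1$. You have only made explicit the bookkeeping the paper leaves implicit (non-equivalence of the markings for distinct centres, and the $4$-cycle check via Fact~\ref{cc_blowup_cubes}).
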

We fix once and for all the following cubical orientation: from $[(T_1, \varphi_1)]$ to $[(T_2,\varphi_2)]$ if $\rho(T_2)=\rho(T_1)+1$.

\begin{theorem}[{\cite{Lonjou_Urech_cube_complexe}}]
	The blow-up graph is a median graph.
\end{theorem}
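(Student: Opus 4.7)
The plan is to construct the median of any three vertices explicitly, reducing to a finite Boolean sub-cube via a common dominating surface produced by Zariski's theorem. Given three vertices $v_i = [(T_i, \varphi_i)]$ for $i = 1,2,3$, I would iterate Theorem \ref{thm:Zariski} to obtain a single smooth projective surface $W$ and a birational morphism $\pi\colon W \to S$ factoring as $\pi = \varphi_i \tau_i$ with $\tau_i\colon W \to T_i$ a morphism. By Theorem \ref{thm_morphism_blowup}, $\pi$ is a composition of blow-ups of a finite set $Q \subset \mathcal{B}_S$. Let $A_i \subseteq Q$ be the subset of bubble points contracted by $\tau_i$, equivalently $Q \setminus A_i = \Bs(\varphi_i^{-1})$.

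For each subset $B \subseteq Q$, let $T_B$ be the surface obtained from $S$ by blowing up the points $Q \setminus B$, endowed with its canonical blow-down marking $\psi_B \colon T_B \to S$, and set $v_B := [(T_B, \psi_B)]$. Then $v_{A_i} = v_i$, and by Fact \ref{cc_blowup_cubes}, the assignment $B \mapsto v_B$ embeds the Boolean cube on $Q$ into $\Cb(S)$ as a $|Q|$-dimensional sub-cube, with cube-edges corresponding to symmetric differences of size one. Setting $M := (A_1 \cap A_2) \cup (A_1 \cap A_3) \cup (A_2 \cap A_3)$, a direct computation in Boolean cubes gives $|A_i \triangle A_j| = |A_i \triangle M| + |M \triangle A_j|$ for all $i \neq j$, so $v_M$ is a median of the $v_i$ inside the sub-cube.

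To upgrade this into a median in $\Cb(S)$ I would establish the distance formula
\[
d_{\Cb(S)}(v, w) = \lvert \Bs(\psi^{-1}\varphi) \rvert + \lvert \Bs(\varphi^{-1}\psi) \rvert
\]
for any two vertices $v = [(T, \varphi)]$ and $w = [(T', \psi)]$. The upper bound is immediate from the minimal resolution of $\psi^{-1}\varphi$ (blow up the base points of $\psi^{-1}\varphi$, then blow down those of its inverse). The lower bound follows by induction on the length of a path, using Remark \ref{rmk_bspt_subadditive} to control how much the base-point count can drop at each edge traversal. Specialized to the sub-cube this yields $d_{\Cb(S)}(v_B, v_{B'}) = |B \triangle B'|$, and the median computation above then applies globally, showing that $v_M$ is indeed a median of $v_1, v_2, v_3$ in $\Cb(S)$.

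For uniqueness, suppose $w = [(T',\psi)]$ is any median. I would enlarge $W$ to a surface $W'$ dominating both $W$ and $T'$ compatibly with the markings, which extends $Q$ to some $Q' \supseteq Q$ and extends each $A_i$ to a subset $A'_i \subseteq Q'$ built from $A_i$ together with the extra points blown up on the way from $W'$ to $T_i$. Then $w = v_{B'}$ for some $B' \subseteq Q'$, and the median property applied inside the larger sub-cube forces $B'$ to be the majority subset of the $A'_i$, which gives back $v_M$. The main obstacle I expect is precisely the distance formula above: the upper bound is straightforward, but the lower bound requires careful bookkeeping to rule out shortcuts through marked surfaces lying outside the chosen sub-cube. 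Once this is secured, the median construction and its uniqueness follow formally from Boolean-cube combinatorics.
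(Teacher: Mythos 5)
Your strategy of exhibiting the median directly is legitimate in principle, but the reduction to a single Boolean cube fails at its first step, and this is a fatal gap rather than a bookkeeping issue. The identification $v_{A_i}=v_i$, i.e. the claim that $[(T_i,\varphi_i)]$ equals $[(T_{A_i},\psi_{A_i})]$ with $\psi_{A_i}$ the blow-down marking, forces $\varphi_i$ to be a birational morphism, which a general vertex of $\Cb(S)$ is not. Concretely, take $S=\PP^2$, $v_1=[(\PP^2,\id)]$ and $v_2=[(\PP^2,\sigma)]$ with $\sigma$ the standard quadratic involution: a common resolution is $W=S_\sigma$ with $Q=\Bs(\sigma^{-1})$ (three points), your recipe gives $Q\setminus A_2=\Bs(\sigma^{-1})=Q$, hence $A_2=\emptyset$ and $v_{A_2}=[(S_\sigma,\pi_2)]$, a vertex of height $4$, whereas $v_2$ has height $1$. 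More structurally, your construction would place all three $v_i$ in a common cube of $\Cb(S)$, which is impossible in general: $[(\PP^2,\id)]$ and $[(\PP^2,\sigma)]$ are at distance $6$ and both of minimal height, so a common cube would require a vertex dominated by both, of height $1-3<1$; Figure \ref{Figure_blowup_complex} shows the convex hull is instead two $3$-cubes glued at a single vertex. A second, independent problem is that even when all $\varphi_i$ are morphisms, the Boolean cube on $Q$ does not embed once $Q$ contains infinitely near points, since a set $Q\setminus B$ that is not downward closed for the infinitely-near order cannot be blown up; this is precisely Example \ref{ex_cubes} and Figure \ref{fig:example_cube}. The distance formula you single out as the main obstacle is true (it is Lemma \ref{lemma:comb_geodesic}) and your sketch for it is reasonable, but it does not repair the median construction.

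For comparison, the paper does not verify the median axiom at all: it invokes Theorem \ref{thm:MedianVsCC} together with Gromov's criterion (Theorem \ref{thm_ccCAT((0))}) and checks that the cube completion is simply connected (a homotopy argument pushing any cycle onto a dominating vertex furnished by repeated applications of Zariski's theorem, by induction on the minimal height along the cycle) and that links of vertices are flag (pairwise compatible blow-ups and contractions at a marked surface assemble into a cube, via Fact \ref{cc_blowup_cubes}). If you want to keep a direct median argument, the median has to be located hyperplane by hyperplane, as the vertex lying on the majority side of every hyperplane separating some pair of the $v_i$, using the birational description of halfspaces in Proposition \ref{prop:halfspace}; it cannot be read off inside one cube containing the three vertices.
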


\begin{proof}
	Using Theorem \ref{thm:MedianVsCC} and Theorem \ref{thm_ccCAT((0))} it remains to prove that the cube completion is simply connected and that links of vertices are flag simplical complexes.
	
	 \textbf{Simply-connectness.}
		First, it is connected by Zariski theorem \ref{thm:Zariski}: consider two vertices represented by $(T_1 ,\varphi_1)$ and $(T_2,\varphi_2)$, decomposing $\varphi_2^{-1} \varphi_1$ into a sequence of blow-ups followed by a sequence of contractions, yields a path between these two vertices.

	Let now $\gamma$ be a cycle in $\Cb(S )$ passing through the vertices $v_1,\dots, v_n$ represented respectively by $(T_{1},\varphi_1),\dots, (T_{n}, \varphi_n)$, which can be assumed to be without backtrack.
	By repeated applications of Zariski theorem \ref{thm:Zariski}, there exists a marked surface $(W,\psi)$ dominating all $T_i$s, meaning that $\varphi_i^{-1}\psi: W \rightarrow T_i$ is a
	morphism for all $i$, and in particular can be decomposed as compositions of blow-up of points. Hence, the vertex $w=[(W,\psi)]$ dominates all the vertices $v_i$, i.e., for any $1\leq i\leq n$, there exists a sequence of edges connecting $w=[(W,\psi)]$ to $v_i$, all oriented in the same way from $w$ to $v_i$. 

		The goal now is to show that $\gamma$ is freely homotopic to $w$. For this, we define $h_{\text{min}}(\gamma)\coloneqq\min_{1\leq i\leq n}(h(v_{i}))$, where $h(v_i)$ is the height of $v_i$, and $1\leq i_0\leq n$ the minimal index such that $h(v_{i_0})= h_{\text{min}}(\gamma)$. 
		
		By definition, there exist two distinct points $p$ and $q$ on $T_{i_0}$ such that $T_{i_0+1}$ is obtained by blowing up $p$ and $T_{i_0-1}$ by blowing up $q$. Let $\pi\colon T'_{i_0}\to T_{i_0}$ be the blow-up of $p$ and $q$ and let $v_{i_0}'$ be the vertex given by $(T_{i_0}', \varphi_{i_0}\pi)$. Since $v_{i_0}'$, $v_{i_0+1}$, $v_{i_0}$ and $v_{i_0-1}$ form a square, we can deform $\gamma$ by a homotopy such that it passes through $v_{i_0}'$ instead of $v_{i_0}$. Moreover, $v_{i_0}'$ is also dominated by $w$:
			\begin{center}
			\begin{tikzpicture}[baseline= (a).base]
				\node[scale=.6] (a) at (0,0){
					\begin{tikzcd}[ampersand replacement=\&]
							\& w \arrow{dl} \arrow{d} \arrow{dr}\& \\
						\vdots \arrow{dd}	\& \vdots \arrow{d}  \& \vdots \arrow{dd}\\
						\& v_{i_0}'\arrow{dl} \arrow{dr}\& \\
						v_{i_0-1} \arrow{dr} \& \& v_{i_0+1} \arrow{dl}\\
							\& v_{i_0}\& 
				\end{tikzcd}};
			\end{tikzpicture}
		\end{center}
	
		By induction on $(\rho_{\text{min}}(\gamma), i_0)$ with the lexicographical order, we conclude that in finitely many such steps $\gamma$ is homotopic to $w$, and that the blow-up graph is simply connected.

 \textbf{Links of vertices are flag.} 
			Let $v=[(T,\varphi)]$ be a vertex of $\Cb(S )^0$ and $v_1,\dots,v_n\in \Cb(S)^0$ adjacent vertices to $v$ that pairwise generate a square with $v$. Up to reordering, we can assume that there exists $1\leq k\leq n$ such that for $1\leq j\leq k$, the vertex $v_j$ corresponds to blowing up a point $p_j\in T$ and that for $k\leq j\leq n$, the vertex $v_j$ corresponds to the contraction of an exceptional divisor $E_j$ in $T$ into a point $p_j$.
			Since the vertices are pairwise adjacent, this means that for $1\leq j\leq k$ all the $p_i$'s are distinct, that for $k\leq j\leq n$, all $E_j$ are disjoint and for all $1\leq j_1\leq k$ and for all $j_2\leq k\leq n$, $p_{j_1}\notin E_{j_2}$. Hence, contracting all the $E_j$'s gives us a surface $(\tilde{T},\psi)$ with $n$ distinct points: the images of the $p_i$'s. By Fact \ref{cc_blowup_cubes}, we have the expected cube, and links of vertices are flag.

This ends the proof that the blow-up graph is a median graph.
\end{proof}

\subsubsection{Hyperplanes}
In the graph $\Cb(S )$ an edge is given by the blow-up of a point $p$ belonging to a marked surface $(W ,\varphi)$. We denote such an edge by $(W ,\varphi,p)$ and by $[(W ,\varphi,p)]$ its corresponding hyperplane. In this subsection, we give a birational interpretation of hyperplanes, distance and halfspaces.

The following birational characterization of the equivalence class of edges is an immediate consequence of the Zarisky theorem \ref{thm:Zariski}.
\begin{lemma}\cite{Lonjou_Urech_cube_complexe}\label{lemme:equivalence_edges-surface}
	Two edges $(W ,\varphi,p)$ and $(W',\varphi',q)$ correspond to the same hyperplane if and only if $\varphi'^{-1}\varphi$ induces a local isomorphism between a neighborhood of $p$ and a neighborhood of $q$ and $\varphi'^{-1}\varphi(p)=q$.
\end{lemma}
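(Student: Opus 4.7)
The implication $(\Leftarrow)$ proceeds by finding a common resolution and reducing to the case of a single blow-up, where Fact~\ref{cc_blowup_cubes} applies directly. Assume $\varphi'^{-1}\varphi$ induces a local isomorphism between neighborhoods of $p$ and $q$ sending $p$ to $q$. Apply Zariski's theorem~\ref{thm:Zariski} to $\varphi'^{-1}\varphi \colon W \dashrightarrow W'$ to obtain a smooth projective surface $Y$ with birational morphisms $\alpha \colon Y \to W$ and $\beta \colon Y \to W'$ satisfying $\varphi\alpha = \varphi'\beta$. The local-isomorphism hypothesis at $p$ (resp.~$q$) implies that $p$ is not blown up by $\alpha$ and $q$ is not blown up by $\beta$; hence there is a unique $\tilde{p} \in Y$ with $\alpha(\tilde p)=p$ and $\beta(\tilde p)=q$, and both $\alpha$ and $\beta$ are local isomorphisms at $\tilde p$.

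Next I show that $(W,\varphi,p)$ and $(Y,\varphi\alpha,\tilde p)$ represent the same hyperplane, by induction on the number $k$ of blow-ups composing $\alpha$ (which decomposes this way by Theorem~\ref{thm_morphism_blowup}). For $k=0$ the two edges are literally equal. For $k=1$, write $\alpha = \pi_{p_0}$ with $p_0 \in W$; since $\alpha$ is a local isomorphism at $\tilde p$ we have $p_0 \neq p$, so the four marked surfaces obtained by blowing up $p$ and $p_0$ in $W$ span a square in $\Cb(S)$ by Fact~\ref{cc_blowup_cubes}, and $(W,\varphi,p)$ and $(Y,\varphi\alpha,\tilde p)$ are its two opposite edges labelled by ``blowing up $p$''. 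For $k>1$, factor $\alpha = \alpha' \circ \alpha_k$ where $\alpha_k \colon Y \to W_{k-1}$ is a single blow-up, apply the $k=1$ case together with the inductive hypothesis to $(W_{k-1},\varphi\alpha',\alpha_k(\tilde p))$, and combine by transitivity of the equivalence relation. The analogous statement for $\beta$ gives $(W',\varphi',q) \sim (Y,\varphi'\beta,\tilde p)$, and because $\varphi\alpha = \varphi'\beta$ the triples $(Y,\varphi\alpha,\tilde p)$ and $(Y,\varphi'\beta,\tilde p)$ coincide, so all three edges lie in a single equivalence class.

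For the converse $(\Rightarrow)$, it suffices to show that the local-isomorphism condition is preserved when passing between opposite edges of a $4$-cycle, since it is clearly transitive (composition of local isomorphisms is a local isomorphism). Any $4$-cycle in the median graph $\Cb(S)$ is a square, and by Fact~\ref{cc_blowup_cubes} such a square arises from blowing up two distinct points $a,b$ of some surface $T$ with marking $\psi$. The pair of opposite edges labelled ``blowing up $a$'' are then $(T,\psi,a)$ and $(T_b,\psi\pi_b,a')$, where $\pi_b\colon T_b \to T$ blows up $b$ and $a' = \pi_b^{-1}(a)$; since $\pi_b$ is a local isomorphism near $a'$, the map $(\psi\pi_b)^{-1}\psi = \pi_b^{-1}$ is a local isomorphism at $a$ sending $a$ to $a'$, as required. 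Induction on the length of the chain realising the equivalence of edges then completes the proof.

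The only delicate point is the $(\Leftarrow)$ direction: one must keep careful track of how the base point lifts along each successive blow-up composing $\alpha$ and verify at each stage that the configuration genuinely fits the description of a square provided by Fact~\ref{cc_blowup_cubes}. Everything else is essentially bookkeeping with Zariski's theorem and the combinatorics of squares.
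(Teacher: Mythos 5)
Your proof is correct and follows exactly the route the paper indicates: it derives the lemma from Zariski's theorem by passing to a common resolution of $\varphi'^{-1}\varphi$ and tracking the marked point through successive squares of the blow-up graph, which is precisely what the survey compresses into ``an immediate consequence of the Zariski theorem''. The only detail to make explicit in the $(\Leftarrow)$ direction is that you should take the \emph{minimal} resolution (or at least one whose centres avoid $p$), since an arbitrary Zariski resolution could needlessly blow up $p$ and then contract the resulting curve via $\beta$; with that one word added, the argument is complete.
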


The following lemma describes the geodesics of the blow-up graph. It is an immediate consequence of the factorization of a birational map in inverse of blow-ups and blow-ups (see Zarisky theorem \ref{thm:Zariski}), and of the characterization of geodesics by hyperplanes Theorem~\ref{theorem:combinatorial_geodesic}.

\begin{lemma}\cite{Lonjou_Urech_cube_complexe}\label{lemma:comb_geodesic}
	Consider two vertices $[(T_1 ,\varphi_1)]$ and $[(T_2 ,\varphi_2)]$ of $\Cb$. There is a bijection between the set of all the geodesic paths joining these two vertices and the set of all the possible order to decompose $\varphi_2^{-1}\varphi_1$ as blow-ups and inverse of blow-ups of points.
	In particular, the combinatorial distance between these two vertices is equal to:
	\[ \dist \big([(T_1 ,\varphi_1)],[(T_2,\varphi_2)] \big)= \lvert\Bs(\varphi_2^{-1}\varphi_1)\rvert+  \lvert\Bs(\varphi_1^{-1}\varphi_2)\rvert.\] 
\end{lemma}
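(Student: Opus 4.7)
The plan is to combine Zariski's theorem (Theorem~\ref{thm:Zariski}) with the combinatorial characterization of geodesics via hyperplanes (Theorem~\ref{theorem:combinatorial_geodesic}) and the birational description of hyperplanes (Lemma~\ref{lemme:equivalence_edges-surface}). First I would apply Zariski's theorem to $f := \varphi_2^{-1}\varphi_1$ and fix a \emph{minimal} resolution $(W,\pi_1,\pi_2)$, so that $\pi_1 \colon W \to T_1$ is the composition of $\lvert \Bs(f) \rvert$ blow-ups at the points of $\Bs(f)$ and $\pi_2 \colon W \to T_2$ is the composition of $\lvert \Bs(f^{-1}) \rvert$ blow-ups at the points of $\Bs(f^{-1})$. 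Any linear extension of the infinitely-near partial order on $\Bs(f)$ turns $\pi_1^{-1}$ into an ordered sequence of blow-ups and hence produces an ascending path from $[(T_1,\varphi_1)]$ to $[(W,\varphi_1\pi_1)]=[(W,\varphi_2\pi_2)]$ of length $\lvert \Bs(f) \rvert$; symmetrically, a linear extension of the order on $\Bs(f^{-1})$ yields a descending path of length $\lvert \Bs(f^{-1}) \rvert$ down to $[(T_2,\varphi_2)]$. Concatenating, we get a candidate path realising the claimed distance.

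To check the candidate is geodesic, by Theorem~\ref{theorem:combinatorial_geodesic} it suffices to show that it crosses each hyperplane at most once. By Lemma~\ref{lemme:equivalence_edges-surface}, two edges lie in a common hyperplane if and only if the corresponding birational map is a local isomorphism identifying their two blown-up points. On the ascending half each edge is canonically labelled by a distinct element of $\Bs(f)$, and distinct base points yield distinct hyperplanes; the same holds for the descending half and $\Bs(f^{-1})$. The non-trivial point is that no ascending edge can be equivalent to a descending edge: such an equivalence would identify, on $W$, an exceptional divisor of $\pi_1$ with an exceptional divisor of $\pi_2$, contradicting the minimality of the resolution.

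It follows that $\dist(v_1,v_2) \leq \lvert \Bs(f) \rvert + \lvert \Bs(f^{-1}) \rvert$ and, moreover, the pairwise distinct hyperplanes just exhibited all separate $v_1$ from $v_2$, so Theorem~\ref{theorem:combinatorial_geodesic} forces equality, proving the distance formula. For the bijection, any path from $v_1$ to $v_2$ is in evident correspondence with a factorisation of $f$ into blow-ups and inverse blow-ups (each edge reads off as such an operation), and its length equals the number of factors. A path is geodesic exactly when its length is $\lvert \Bs(f) \rvert + \lvert \Bs(f^{-1}) \rvert$, in which case the combinatorial geodesic criterion forces it to blow up precisely the points of $\Bs(f)$ and contract precisely those of $\Bs(f^{-1})$, in some order. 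Conversely, every admissible interleaved order of these base points realises a geodesic: the squares of $\Cb(S)$ from Fact~\ref{cc_blowup_cubes} encode exactly the commutations between simultaneously blow-uppable (or contractible) points, so successive admissible reorderings traverse the graph through such squares.

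The main obstacle is the \emph{no shared hyperplane} step between the ascending and descending halves: one must invoke the minimality of the Zariski resolution in just the right form, because for a non-minimal resolution an exceptional divisor could be blown up on the way up and contracted on the way down, collapsing two edges into a single hyperplane and artificially shortening the path. Once this disjointness is established, the remainder of the argument is a bookkeeping exercise matching ordered crossings of hyperplanes to linearisations of the bubble-space partial orders on $\Bs(f)$ and $\Bs(f^{-1})$.
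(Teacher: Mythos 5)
Your proof is correct and follows exactly the route the paper indicates: it deduces the lemma from Zariski's theorem (Theorem~\ref{thm:Zariski}) together with the hyperplane characterization of geodesics (Theorem~\ref{theorem:combinatorial_geodesic}) and the edge-equivalence criterion (Lemma~\ref{lemme:equivalence_edges-surface}), with the minimality of the resolution supplying the needed disjointness of ascending and descending hyperplanes. The paper leaves these details as "an immediate consequence," and your write-up fills them in faithfully along the same lines.
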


\begin{figure}
	\begin{center}
		\begin{tikzpicture}
			\fill[opacity=0.5, color=red] (0,1)--(2,1)--(3,1.5)--(1,1.5);
			\draw[color=red] (3,1.5)--(4,2);
			\fill[opacity=0.5, color=orange!60] (1,2)--(2,2.5)--(2,0.5)--(1,0);
			\draw[color=orange!60] (2,2.5)--(2,4.5);
			\fill[opacity=0.5, color=yellow!60] (0.5,2.25)--(2.5,2.25)--(2.5,0.25)--(0.5,0.25);
			\draw[color=yellow!60] (2.5,2.25)--(4.5,2.25);
			\fill[opacity=0.5, color=blue!30] (3,3.5)--(5,3.5)--(6,4)--(4,4);
			\draw[color=blue!30] (3,3.5)--(1,3.5);
			\fill[opacity=0.5, color=green!40] (3.5,4.75)--(5.5,4.75)--(5.5,2.75)--(3.5,2.75);
			\draw[color=green!40] (3.5,2.75)--(3.5,0.75);	
			\fill[opacity=0.5, color=teal!50] (4,4.5)--(5,5)--(5,3)--(4,2.5);
			\draw[color=teal!50] (4,2.5)--(3,2);	
			\draw (0,0) -- (2,0) -- (2,2) -- (0,2)-- (0,0);
			\draw (3,0.5) -- (3,2.5) -- (1,2.5);
			\draw[dotted] (1,2.5)--(1,0.5) -- (3,0.5);
			\draw (2,0) -- (3,0.5); 
			\draw (2,2) -- (3,2.5);
			\draw (0,2) -- (1,2.5);
			\draw[dotted] (0,0) -- (1,0.5);
			\draw (0,0) node {$\bullet$} node[below left] {$(\PP^2,\id)$};
			\draw (2,0) node {$\bullet$} node[below right] {$(\F_1,\pi_q)$} ;
			\draw(2,2) node {$\bullet$} ;
			\draw (0,2) node {$\bullet$} node[above left] {$(\F_1,\pi_p)$};
			\draw (1,0.5) node {$\bullet$} node[ above left] {$(\F_1,\pi_r)$};
			\draw (3,0.5) node {$\bullet$} ;
			\draw(3,2.5) node {$\bullet$} node[ above left] {$S_{\sigma}$}; ;
			\draw (1,2.5) node {$\bullet$} ;
			\draw (3,0.5) -- (4,1)--(4,2);
			\draw[dotted] (4,2)--(4,3);
			\draw(5,2.5) node {$\bullet$} node[above right] {} ;
			\draw (4,1) node {$\bullet$} ;
			\draw (3,2.5) -- (5,2.5) -- (5,4.5) -- (3,4.5)-- (3,2.5);
			\draw (6,3) -- (6,5) -- (4,5);
			\draw[dotted] (4,5)--(4,3) -- (6,3);
			\draw (5,2.5) -- (6,3); 
			\draw (5,4.5) -- (6,5);
			\draw (3,4.5) -- (4,5);
			\draw[dotted] (3,2.5) -- (4,3);
			\draw (3,2.5) node {$\bullet$} node[below left] {};
			\draw (5,2.5) node {$\bullet$} ;
			\draw(5,4.5) node {$\bullet$} ;
			\draw (3,4.5) node {$\bullet$} node[above left] {};
			\draw (4,3) node {$\bullet$} ;
			\draw (6,3) node {$\bullet$} ;
			\draw(6,5) node {$\bullet$} node[above right] {$(\PP^2,\sigma)$} ;
			\draw (4,5) node {$\bullet$} ;
			\draw (1,2.5) -- (1,4.5)-- (3,4.5);
			\draw (2,2) -- (4,2)-- (5,2.5);
			\draw (4,2) node {$\bullet$} ;
			\draw (1,4.5) node {$\bullet$} ;
		\end{tikzpicture}
		\caption{The convex hull of the vertices $(\PP^2,\id)$ and $(\PP^2,\sigma)$ in $\Cb(\PP^2)$, where $\sigma$ is the standard quadratic involution.\label{Figure_blowup_complex}} 
	\end{center}
\end{figure}
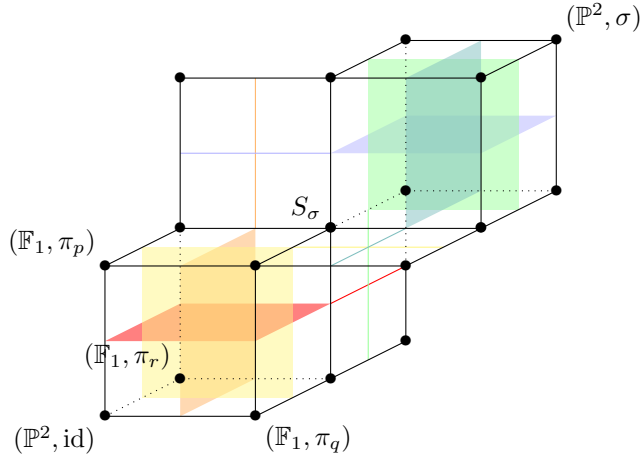

For instance, in Figure \ref{Figure_blowup_complex}, we see all the possible geodesics joining the vertices $[(\PP_{\kk}^2,\id)]$ and $[(\PP_{\kk}^2,\sigma)]$. Nevertheless, sometimes the choices are more restrictive like for instance between the vertices $[(\PP_{\kk}^2,\id)]$ and $[(\PP_{\kk}^2,j)]$ (see Figure \ref{Figure_blowup_complex_j})

There is an algebraic characterization of the halfspaces associated to a given hyperplane.

\begin{proposition}\cite{Lonjou_Urech_cube_complexe}\label{prop:halfspace}
	Consider a hyperplane $[(W ,\varphi,p)]$ in $\Cb(S )$. The set of vertices $(T ,\varphi_1)$ such that $p$ is a base point of $\varphi_1^{-1}\varphi$ determines one of the two halfspaces. 
\end{proposition}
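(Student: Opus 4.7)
The plan is to compute the number of hyperplanes separating an arbitrary vertex $v = [(T, \varphi_1)]$ from $w_0 = [(W, \varphi)]$ (the bottom endpoint of the edge $(W,\varphi,p)$), and to identify them explicitly. Setting $f = \varphi_1^{-1}\varphi \colon W \dashrightarrow T$, Lemma \ref{lemma:comb_geodesic} gives $\dist(v, w_0) = \lvert\Bs(f)\rvert + \lvert\Bs(f^{-1})\rvert$, which by Theorem \ref{theorem:combinatorial_geodesic} equals the number of hyperplanes separating $v$ from $w_0$. If I can exhibit exactly that many distinct separating hyperplanes in an explicit list, I must have found them all.

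The natural candidates are the hyperplanes $[(W, \varphi, q)]$ for each $q \in \Bs(f) \subset W$, together with the hyperplanes $[(T, \varphi_1, q')]$ for each $q' \in \Bs(f^{-1}) \subset T$. These are pairwise distinct by the equivalence criterion of Lemma \ref{lemme:equivalence_edges-surface}: identifying two hyperplanes within the first family would force $\id_W$ to send one base point to another distinct one, and identifying one hyperplane from each family would force $f$ to be a local isomorphism at some $q \in \Bs(f)$, contradicting the definition of base point.

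To show that each such hyperplane does separate $v$ from $w_0$, I construct a geodesic that crosses it. Let $Z$ be the minimal resolution of $f$, with $\mu_1 \colon Z \to W$ blowing up $\Bs(f)$ and $\mu_2 \colon Z \to T$ blowing up $\Bs(f^{-1})$. The associated geodesic from $v$ to $w_0$ of length $\lvert\Bs(f)\rvert + \lvert\Bs(f^{-1})\rvert$ passes through $[(Z, \varphi\mu_1)]$. For a chosen $q \in \Bs(f)$, I reorder the blow-ups composing $\mu_1$ (via Theorem \ref{thm_morphism_blowup}) so that $q$, being a \emph{proper} point of $W$ and thus having no prerequisites in the decomposition, appears first in $\mu_1$. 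Then the very last edge of the induced contraction sequence from $[(Z, \varphi\mu_1)]$ down to $w_0$ is exactly the edge $(W, \varphi, q)$, so this geodesic crosses $[(W, \varphi, q)]$. The argument for $[(T, \varphi_1, q')]$ is symmetric (schedule $q'$ first in $\mu_2$, so that the first edge of the geodesic becomes $(T, \varphi_1, q')$). Having produced $\lvert\Bs(f)\rvert + \lvert\Bs(f^{-1})\rvert$ distinct separating hyperplanes, the list is complete. In particular $[(W,\varphi,p)]$ separates $v$ from $w_0$ if and only if $p \in \Bs(\varphi_1^{-1}\varphi)$, which is the desired characterization of the halfspace opposite $w_0$ (i.e.\ the one containing $[(W', \varphi\pi_p)]$).

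The main obstacle I anticipate is justifying that $q \in \Bs(f) \cap W$ can indeed be placed first in a valid factorization of $\mu_1$ into blow-ups of proper points. This rests on Theorem \ref{thm_morphism_blowup}, on the fact that blow-ups of distinct proper points of a single surface commute, and on the observation that any base point infinitely near $q$ must be blown up strictly after $q$ itself; hence $q$ can always be moved to the head of the sequence without breaking validity.
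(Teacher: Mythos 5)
Your overall architecture is the natural one and its first half is fine: combining Lemma \ref{lemma:comb_geodesic} with Theorem \ref{theorem:combinatorial_geodesic} to reduce the statement to deciding which hyperplanes a geodesic from $[(T,\varphi_1)]$ to $[(W,\varphi)]$ crosses, and the reordering argument showing that a \emph{proper} base point $p\in\Bs(f)\cap W$ can be scheduled first in $\mu_1$, so that the edge $(W,\varphi,p)$ appears on a geodesic, correctly gives the implication ``$p\in\Bs(\varphi_1^{-1}\varphi)$ $\Rightarrow$ the hyperplane separates''. The gap is in the converse, which you derive from the claim that your explicit list of separating hyperplanes is complete. That list is built on the inclusions $\Bs(f)\subset W$ and $\Bs(f^{-1})\subset T$, which are false in general: base points live in the bubble space and may be infinitely near (see Remark \ref{rmk_basepoints_infinitely_near} and the map $j:(x,y)\mapsto(x,x^2+y)$ of Example \ref{Ex_j}, for which only one of the three base points is a proper point of $\PP^2$). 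For an infinitely near $q$ the symbol $(W,\varphi,q)$ is not an edge of $\Cb(S)$ at all, and your ``schedule $q$ first'' trick is unavailable since $q$ cannot be blown up before its parent. So your list contains only $\lvert\Ind(f)\rvert+\lvert\Ind(f^{-1})\rvert$ hyperplanes, which is strictly fewer than $\lvert\Bs(f)\rvert+\lvert\Bs(f^{-1})\rvert$ whenever infinitely near base points occur, and the counting step ``the list is complete'' collapses; with it goes the conclusion that $p\notin\Bs(f)$ forces non-separation.

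The repair is not hard, and in fact you can dispense with the enumeration altogether. Since a geodesic crosses each hyperplane at most once and the distance equals the number of separating hyperplanes, \emph{one} fixed geodesic already crosses exactly the separating hyperplanes. Take the geodesic through the minimal resolution $Z$ of $f$: its descending edges are of the form $(W_i,\varphi\pi_{\leq i},q_{i+1})$ with $q_{i+1}\in\Bs(f)$ proper on the intermediate surface $W_i$, and its ascending edges are of the form $(T_j,\varphi_1\rho_{\leq j},q'_{j+1})$ with $q'_{j+1}\in\Bs(f^{-1})$. By Lemma \ref{lemme:equivalence_edges-surface}, a descending edge lies in $[(W,\varphi,p)]$ iff $\pi_{\leq i}$ is a local isomorphism at $q_{i+1}$ sending it to $p$, i.e.\ iff $q_{i+1}$ is the proper point $p$ itself; and no ascending edge can lie in $[(W,\varphi,p)]$ because $\varphi^{-1}\varphi_1\rho_{\leq j}$ still has $q'_{j+1}$ as a base point (by minimality of the resolution) and hence is not a local isomorphism there. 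This gives both directions at once, with no need to count or to keep track of infinitely near points. (The survey itself only cites \cite{Lonjou_Urech_cube_complexe} for this proposition, so there is no in-text proof to compare against, but the argument above is the one its stated toolkit is designed for.)
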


For instance, let $f\in\Bir(S )$ and $p\in S $ be a base point of $f$. Then $[(S ,f^{-1})]$ belongs to the halfspace given by Proposition \ref{prop:halfspace}, while $[(S ,\id)]$ belongs to the other halfspace (see Figure \ref{Figure_blowup_complex} for an illustration in the case where $f=\sigma$ and $S=\PP^2$). 

\begin{figure}
	\begin{center}
		\begin{tikzpicture}
			\draw (0,0) -- (0,1.5) -- (0,3) ;
			\draw (3,3) --(3,1.5) --(3,0);
			\draw[red] (0,3) -- (1.5,4);
			\draw[red] (1.5,2) -- (3,3);
			\draw[blue] (0,3) -- (1.5,2);
			\draw[blue] (1.5,4) -- (3,3);
				\draw (0,0) node {$\bullet$} node[below] {$[(\PP^2,\id)]$};
			\draw (0,1.5) node {$\bullet$} node[left] {$[(\F_1,\pi_{[0:1:0]})]$} ;
				\draw (0,3) node {$\bullet$} node[left] {$[(\F_1,\pi_{[0:1:0]}\pi_{(0,0)})]$} ;
				\draw (1.5,2) node {$\bullet$};
					\draw (1.5,4) node {$\bullet$} node[above] {$(S_j,\pi_{[0:1:0]}\pi_{(0,0)}\pi_{(0,1)})$} ;
						\draw (3,0) node {$\bullet$} node[below right] {$[(\PP^2,j)]$};
					\draw (3,1.5) node {$\bullet$} ;
					\draw (3,3) node {$\bullet$};
		\end{tikzpicture}
	\end{center}
\caption{Convex hull of the vertices $[(\PP^2,\id)]$ and $[(\PP^2,j)]$ in the blow-up graph $\Cb(\PP^2)$ where $j:(x,y)\mapsto (x,x^2+y)$.\label{Figure_blowup_complex_j}}
\end{figure}
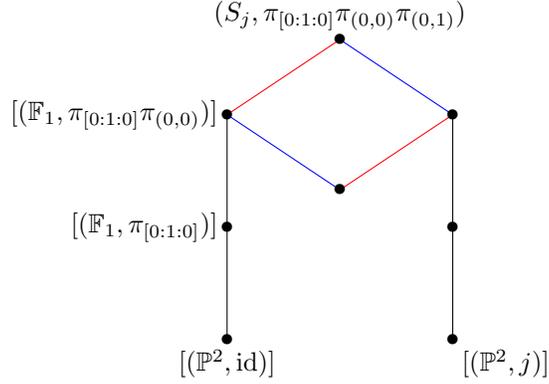

\medskip
\subsubsection{Action of the Cremona group on the blow-up graph}
The group of birational transformation $\Bir(S)$ acts faithfully on the set of marked surfaces by post-composition and preserves the equivalence class, hence it acts on the set of vertices of $\Cb(S)$: 
let $f\in \Bir(S)$ and $[(T,\varphi)]\in\Cb(S)^0$ be a vertex,  $f\bullet [(T,\varphi)] = [(T,f\varphi)]$. This gives us a faithful action by isometries on the blow-up graph. Note that this action preserves the height function and so the cubical orientation given previously. As a consequence, an element preserving a cube, fixes a vertex (the one with the smallest height).

As we will see in Proposition \ref{prop_catalog_blowup_cc}, this action encodes geometrically, and in a unified way, diverse birational notions. Let us first introduce them.

Let $f\in \Bir(S)$. The \emph{dynamical number of base points} of $f$ is defined as:\[
\mu(f)\coloneqq \lim_{n\to\infty}\frac{\lvert\Bs(f^n)\rvert}{n}.\]
As we have seen in Remark \ref{rmk_bspt_subadditive}, the number of base points is sub-additive. Hence, the limit always exists. This number has been introduced and studied in \cite{Blanc_Deserti}. The dynamical number of base points is invariant by conjugation, since conjugation by a birational transformation $g$ changes the number of base points at most by a constant only depending on $g$. 

Blanc and Déserti show that, over an algebraically closed field of characteristic zero, the dynamical number of base points characterizes regularizable elements: they are the birational transformations having dynamical number of base points equal to $0$. Note that they do not use the assumption on the characteristic of the field.
\begin{theorem}[\cite{Blanc_Deserti}]\label{Blanc_Deserti} 
	Let $S $ be a smooth projective surface and let $f\in~\Bir(S )$. Then 
	\begin{enumerate}
		\item $\mu(f)$ is an integer;
		\item there exists a smooth projective surface $T $ and a birational map $\varphi\colon T \dashrightarrow S $ such that $\varphi^{-1} f\varphi$ has exactly $\mu(f)$ base points;
		\item in particular, $\mu(f)=0$ if and only if $f$ is conjugate to an automorphism of a smooth projective surface.
	\end{enumerate}
\end{theorem}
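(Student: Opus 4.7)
The plan is to derive all three statements from the action of $\Bir(S)$ on the blow-up graph $\Cb(S)$, combined with Lemma~\ref{lemma:comb_geodesic} and Corollary~\ref{prop_action_semisimple_hag}. The key observation is that for the basepoint $x_0 = [(S, \id)]$, the distance formula gives
\[\dist(x_0, f^n \bullet x_0) = |\Bs(f^n)| + |\Bs(f^{-n})| = 2|\Bs(f^n)|,\]
the second equality being Remark~\ref{rmk:bs_pt_f_invf}. Hence $\mu(f)$ equals half the stable translation length of $f$ acting on $\Cb(S)$. Since the Picard-rank height $h$ is $\Bir(S)$-invariant, the action preserves the cubical orientation fixed on $\Cb(S)$; Corollary~\ref{prop_action_semisimple_hag} then shows that $f$ is either elliptic or loxodromic with integer translation length $\ell(f) \in \N$, and the identity $\dist(x, f^n \bullet x) = n\ell(f)$ for $x \in \Min(f)$, combined with a bounded comparison of orbits from $x$ and from $x_0$, yields $\mu(f) = \ell(f)/2$.

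Next I would pick a minimizing vertex $x = [(T, \varphi)] \in \Min(f)$ and apply Lemma~\ref{lemma:comb_geodesic} to the pair $x,\ f \bullet x = [(T, f\varphi)]$, obtaining
\[\ell(f) = |\Bs(\varphi^{-1} f \varphi)| + |\Bs(\varphi^{-1} f^{-1} \varphi)| = 2|\Bs(\varphi^{-1} f \varphi)|,\]
once more by Remark~\ref{rmk:bs_pt_f_invf} applied to the self-map $\varphi^{-1} f \varphi \in \Bir(T)$. Therefore $\mu(f) = |\Bs(\varphi^{-1} f \varphi)|$, which establishes both the integrality of $\mu(f)$ (statement (1)) and the existence of a conjugate realizing it (statement (2)).

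For (3), one implication is an immediate specialization of (2): if $\mu(f) = 0$ then $\varphi^{-1} f \varphi$ has empty base locus, hence empty indeterminacy locus by Remark~\ref{rmk_basepoints_infinitely_near}; the same applies to its inverse, so $\varphi^{-1} f \varphi$ is an automorphism of $T$. Conversely, if $f = \varphi \alpha \varphi^{-1}$ with $\alpha \in \Aut(T)$, then each $\alpha^n$ has no base points, and applying the sub-additivity of Remark~\ref{rmk_bspt_subadditive} twice bounds $|\Bs(f^n)|$ by the $n$-independent constant $|\Bs(\varphi)| + |\Bs(\varphi^{-1})|$, so $\mu(f) = 0$.

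The main subtlety I anticipate is justifying the identification $\mu(f) = \ell(f)/2$ cleanly: the limit defining $\mu(f)$ is based at $x_0$, whereas the equality $\dist(x, f^n \bullet x) = n\ell(f)$ of Corollary~\ref{prop_action_semisimple_hag} holds at a minimizer $x$, so a standard triangle-inequality comparison of the two orbits is required. Everything else — the classification of isometries, the explicit distance formula in terms of base points, and the translation from Picard rank to orientation — is supplied directly by the material developed earlier in the section.
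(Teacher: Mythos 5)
Your argument is correct and follows essentially the same route as the paper: the paper deduces this theorem from the identity $\ell(f)=2\mu(f)$ (Proposition \ref{prop_catalog_blowup_cc}\eqref{item_translation_length}), whose proof is exactly your two-sided triangle-inequality comparison between the orbit of $[(S,\id)]$ and the orbit of a minimizing vertex. Your extra step of evaluating the distance formula at a minimizer $[(T,\varphi)]$ via Lemma \ref{lemma:comb_geodesic} and Remark \ref{rmk:bs_pt_f_invf} to obtain integrality and the model realizing $\mu(f)$ is precisely the intended unpacking of the paper's phrase ``direct consequence''.
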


Consider an element of the Cremona group $f\in \Bir(\PP^2)$. Its \emph{dynamical degree} is defined as :\[\lambda(f)\coloneqq \lim_{n\to\infty} \deg(f^n)^{\frac{1}{n}}.\]As the sequence $\left(\deg(f^n)\right)$ is sub-multiplicative, the limit always exists, and it is invariant by conjugacy. 
It measures the complexity of the dynamics of $f$. For instance, if $\kk$ is the field of complex numbers, $\log(\lambda(f))$ provides an upper bound for the topological entropy of $f$ and it most of the time equal to it (see \cite{Bedford_Diller_Energy_measure} and \cite{Dinh_Sibony_bornesup_entropie}). 
Note that one can define the dynamical degree for any birational transformation of a projective surface but this requires a bit more work, so we will not do it here. It has been studied by Diller and Favre \cite{Diller_Favre_Dynamics_bimeromorphic}. For instance they show that the dynamical degree is an algebraic integer. More precisely, if it is not equal to $1$ then it has to be a Salem number (algebraic integer in $]1, \infty[$ whose other Galois conjugates lie in the closed unit disk, with at least one on the boundary) or a Pisot number (algebraic integer in $]1, \infty[$ whose other Galois conjugates lie in the open unit disk). A key notion in this context is the one of algebraic stability.

Any birational transformation $f\in \Bir(S)$ induces an endormorphism $f_*$ of the Néron-Severi group $\NS(S)$. When this endomorphism satisfies:
\[(f_*)^n=(f^n)_*,\] for all positive integers $n$, $f$ is \emph{algebraically stable} (see for instance \cite{Diller_Favre_Dynamics_bimeromorphic}). From a dynamical point of view, such maps are important as in this case the dynamical degree of $f$ equals the spectral radius of the endomorphism $f_*$, and dynamical degrees are usually complicated to compute.
Diller and Favre showed that every birational transformation of a projective surface admits an algebraically stable model.

\begin{theorem}[\cite{Diller_Favre_Dynamics_bimeromorphic}]\label{thm_Diller_Favre}
	Every birational transformation of a surface $S$ is conjugate to an algebraically stable transformation by a birational map $T\dashrightarrow S$. 
\end{theorem}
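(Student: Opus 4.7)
The plan is to leverage the isometric action of $\Bir(S)$ on the blow-up graph $\Cb(S)$: since post-composition preserves the underlying surface of a marked surface, the action preserves the height $h$ and hence the cubical orientation. By Corollary \ref{prop_action_semisimple_hag}, $f$ acts as either an elliptic or loxodromic isometry, and in each case I will extract an algebraically stable conjugate of $f$ from the minimizing set $\Min(f)$.

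If $f$ acts elliptically, it fixes some vertex $[(T,\varphi)]$, which by definition means $g := \varphi^{-1}f\varphi \in \Aut(T)$; automorphisms have empty indeterminacy locus, so $g_*$ is a genuine pullback on $\NS(T)$ and $(g_*)^n = (g^n)_*$ holds tautologically. Suppose instead that $f$ acts loxodromically; pick any vertex $x = [(T,\varphi)] \in \Min(f)$ and set $g := \varphi^{-1}f\varphi$. By Corollary \ref{prop_action_semisimple_hag}, $\dist(x,f^n(x)) = n\ell(f)$ for every $n \geq 0$, and by Lemma \ref{lemma:comb_geodesic} together with Remark \ref{rmk:bs_pt_f_invf},
\[ \dist(x,f^n(x)) = |\Bs(g^n)| + |\Bs(g^{-n})| = 2|\Bs(g^n)|, \]
so $|\Bs(g^n)| = n|\Bs(g)|$ for every $n \geq 1$. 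Unpacking Remark \ref{rmk_bspt_subadditive}, this chain of equalities forces $\Bs((g^n)^{-1}) \cap \Bs(g) = \emptyset$ at every $n \geq 1$: in the composition $g \cdot g^n = g^{n+1}$, no base point of $(g^n)^{-1}$ ever collides with a base point of $g$.

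The main obstacle is the last step: translating this absence of base-point cancellation into the cohomological identity $(g_*)^n = (g^n)_*$ on $\NS(T)$. Writing the minimal resolution $g = \pi_2 \pi_1^{-1}$, the induced endomorphism is $g_* = (\pi_2)_* \pi_1^*$, and $(g^{n+1})_*$ is read off the minimal resolution of $g^{n+1}$. The auxiliary diagram $Z$ of Remark \ref{rmk_bspt_subadditive} resolves $g^{n+1} = g \cdot g^n$, and it is actually the minimal resolution precisely when $\Bs((g^n)^{-1}) \cap \Bs(g) = \emptyset$; under this condition at every $n$, an induction on $n$ gives $(g_*)^{n+1} = g_* \circ (g^n)_* = (g^{n+1})_*$, whereas any cancellation produces an exceptional class on which $(g_*)^{n+1}$ and $(g^{n+1})_*$ disagree. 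I expect this careful bookkeeping of exceptional classes through composed resolutions to be the delicate part; the virtue of the median-graph framework is that it converts the original geometric problem into the purely combinatorial task of landing on $\Min(f)$.
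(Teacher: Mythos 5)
Your architecture is the same as the paper's: act on the blow-up graph, use Corollary \ref{prop_action_semisimple_hag} to land on a vertex $[(T,\varphi)]\in\Min(f)$, and combine $\dist(x,f^n(x))=n\ell(f)$ with Lemma \ref{lemma:comb_geodesic} to get $\lvert\Bs(g^n)\rvert=n\lvert\Bs(g)\rvert$ for $g=\varphi^{-1}f\varphi$ — this is exactly Proposition \ref{prop_catalog_blowup_cc}.\ref{item_alg_stability}, from which the paper declares the theorem a direct consequence. Where you diverge is the final translation into $(g_*)^n=(g^n)_*$: the paper outsources it entirely to \cite[Proposition 4.22]{Lamy_book} ($g$ is minimally algebraically stable iff $\lvert\Bs(g^n)\rvert=n\lvert\Bs(g)\rvert$ for all $n$), whereas you sketch a direct argument. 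Your forward implication is sound: equality in subadditivity at every step does force $\Bs((g^n)^{-1})\cap\Bs(g)=\emptyset$ (a common point would be blown up only once in the glued resolution $Z$ of Remark \ref{rmk_bspt_subadditive}, giving a resolution of $g^{n+1}$ with strictly fewer than $\lvert\Bs(g^n)\rvert+\lvert\Bs(g)\rvert$ blow-ups on the source side), and since the images of the curves contracted by $g^n$ lie in $\Ind(g^{-n})\subset\Bs((g^n)^{-1})$ while $\Ind(g)\subset\Bs(g)$, no contracted curve of $g^n$ lands on $\Ind(g)$ — which is precisely the Diller--Favre functoriality criterion for $(g^{n+1})_*=g_*(g^n)_*$. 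That criterion is the ``careful bookkeeping'' you defer; it is a standard lemma of \cite{Diller_Favre_Dynamics_bimeromorphic}, so your proof is complete modulo the same external input the paper uses, just packaged differently. One genuine inaccuracy: your claimed converse, that any base-point cancellation makes $(g_*)^{n+1}$ and $(g^{n+1})_*$ disagree, is false — an algebraically stable map carrying a redundant $(-1)$-curve satisfies $(g_*)^n=(g^n)_*$ yet has $\lvert\Bs(g^n)\rvert<n\lvert\Bs(g)\rvert$, which is exactly why the base-point count characterizes \emph{minimal} algebraic stability rather than algebraic stability. Since only the forward implication is needed, this does not affect the validity of your argument for the theorem.
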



Using the vocabulary from \cite{Lamy_book}, we say that a (-1)-curve $E$ on a surface $T$, i.e. a smooth rational curve with self-intersection $-1$, is \emph{redundant} for $f$ if $E\subset \Exc(f)$ but $E\cap \Ind(f)=\emptyset$. 
Contracting such a curve preserves algebraic stability (see \cite[Lemma 4.18]{Lamy_book}).
A birational transformation $f$ is \emph{minimally algebraically stable} if $f$ is algebraically stable and for any integers $n\geq 1$, there is no (-1)-curve $E$ that is simultaneously redundant for $f$ and for $f^{-n}$. Indeed, as a consequence of Theorem \ref{thm_Diller_Favre} and  \cite[Lemma 4.18]{Lamy_book}, any birational transformation can be conjugated to a minimally algebraically stable transformation.

The previous properties just seen are encoded geometrically as follows. Recall that the minimizing set of a birational transformation $f\in \Bir(S)$, denoted by $\Min(f)$ is the set of vertices realizing the translation length of $f$. Note that in \cite{Lonjou_Urech_cube_complexe}, the following proposition is for regular projective surfaces over any field. Moreover,  in \cite{Lonjou_Urech_cube_complexe} the authors called ``algebraically stable transformations''  the ones being minimally algebraically stable, and they gave only the implication of the point \ref{item_alg_stability} that allowed to reprove \ref{thm_Diller_Favre}. The geometric characterization of algebraically stable models have been done in \cite{Lamy_book}.
\begin{proposition}[\cite{Lonjou_Urech_cube_complexe}, \cite{Lamy_book}]\label{prop_catalog_blowup_cc}Let $S$ be a smooth projective surface over an algrebraically closed field. We have the following correspondences between geometric and birational notions.
	\begin{enumerate}
	\item\label{item_elliptic} Elements of $\Bir(S)$ inducing elliptic isometries of the blow-up graph $\Cb(S)$ correspond to projectively regularizable elements of $\Bir(S)$. 
	\item\label{item_distance} The distance between the vertex $[(S,\id)]$ and its image by an element $f$ of $\Bir(S)$ corresponds to twice the number of base-points of $f$:
\[	\dist\left([(S,\id)], [(S,f)]\right)=2 \lvert \Bs(f)\rvert. \]
	Moreover, there is a bijective correspondance between the set of all possible orders of decomposing $f^{-1}$ into composition of blow-ups (and inverse of blow-ups) of points and the set of geodesics between these two vertices.
	\item\label{item_translation_length} The translation length corresponds to twice the dynamical number of base-points:
	\[\ell(f)=2\mu(f).\]
	\item\label{item_alg_stability} The set of vertices of $\Min(f)$ of a birational transformation $f\in \Bir(S)$ corresponds to minimally algebraically stable models:
	$[(T,\varphi)]$ belongs to $\Min(f)$ if and only if $\varphi^{-1}f\varphi$ is minimally algebraically stable on $T$.
	\end{enumerate}
\end{proposition}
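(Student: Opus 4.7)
The plan is to handle the four statements in the order they are listed, each building on its predecessors. All four ultimately reduce to unpacking the action $f\bullet[(T,\varphi)] = [(T, f\varphi)]$ together with Lemma \ref{lemma:comb_geodesic} and the semisimplicity provided by Corollary \ref{prop_action_semisimple_hag}. For part \ref{item_elliptic}, the vertex $[(T,\varphi)]$ is fixed by $f$ precisely when $(T, f\varphi)\sim(T,\varphi)$, which by definition of the equivalence relation on marked surfaces means $\varphi^{-1} f\varphi \in \Aut(T)$; since vertices of $\Cb(S)$ correspond to smooth projective marked surfaces, elliptic fixed points and projective regularizations of $f$ are in bijection. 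Part \ref{item_distance} then follows directly from Lemma \ref{lemma:comb_geodesic}: one has $\dist([(S,\id)],[(S,f)]) = |\Bs(f^{-1})| + |\Bs(f)|$, and by Remark \ref{rmk:bs_pt_f_invf} applied with $S'=S$ the two terms are equal; the bijection with factorization orderings is already contained in that same lemma.

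For part \ref{item_translation_length}, the action of $\Bir(S)$ on $\Cb(S)$ preserves the height function, hence the fixed cubical orientation, so Corollary \ref{prop_action_semisimple_hag} applies and yields $\dist(x, f^n\cdot x) = n\ell(f)$ for every $x\in\Min(f)$. Fix such a vertex $x = [(T,\varphi)]$ and set $g := \varphi^{-1} f\varphi$, so that $f^n\cdot x = [(T, \varphi g^n)]$. Applying Lemma \ref{lemma:comb_geodesic} to this pair gives $\dist(x, f^n\cdot x) = |\Bs(g^{-n})| + |\Bs(g^n)| = 2|\Bs(g^n)|$. Thus $n\ell(f) = 2|\Bs(g^n)|$ for all $n$, and dividing by $n$ and letting $n\to\infty$ yields $\ell(f) = 2\mu(g) = 2\mu(f)$, using the conjugation-invariance of $\mu$.

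Parts \ref{item_distance} and \ref{item_translation_length} combined show that $[(T,\varphi)]\in\Min(f)$ if and only if $g := \varphi^{-1} f\varphi$ satisfies $|\Bs(g)| = \mu(g)$. Part \ref{item_alg_stability} therefore reduces to the intrinsic equivalence: a birational self-map $g$ of a smooth projective surface is minimally algebraically stable if and only if $|\Bs(g)| = \mu(g)$. For the forward direction I would invoke the standard description of how base points of iterates distribute along orbits under algebraic stability (see \cite{Lamy_book}); the minimality assumption rules out the cancellation phenomena produced by redundant $(-1)$-curves for some $g^{-k}$, yielding $|\Bs(g^n)| = n|\Bs(g)|$ for every $n\geq 1$ and hence $\mu(g) = |\Bs(g)|$. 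For the converse, the equality $|\Bs(g)| = \mu(g)$, combined with subadditivity (Remark \ref{rmk_bspt_subadditive}) and the resulting identity $\mu(g) = \inf_n |\Bs(g^n)|/n$, forces $|\Bs(g^n)| = n|\Bs(g)|$ for every $n$; one then argues that any failure of algebraic stability, or the presence of a redundant $(-1)$-curve for some $g^{-k}$, would strictly decrease $|\Bs(g^n)|$ for sufficiently large $n$, contradicting this equality.

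The main obstacle is part \ref{item_alg_stability}: the first three statements are essentially formal consequences of the blow-up-graph dictionary and of semisimplicity, but this last one rests on the careful bookkeeping of base points under iteration and under contraction of redundant $(-1)$-curves that is carried out in \cite{Lamy_book}. The delicate step is to show that every discrepancy between $|\Bs(g^n)|$ and $n|\Bs(g)|$ can be pinned on a specific redundant $(-1)$-curve, so that the equality $|\Bs(g)| = \mu(g)$ characterizes minimal algebraic stability on the nose rather than merely algebraic stability.
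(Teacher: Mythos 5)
Your proposal is correct and follows essentially the same route as the paper: parts (1) and (2) from the definitions and Lemma \ref{lemma:comb_geodesic}, part (3) by combining the distance formula with the semisimplicity of orientation-preserving isometries (you work at a minimizing vertex and use conjugation-invariance of $\mu$, where the paper sandwiches $2\lvert\Bs(f^n)\rvert$ between $n\ell(f)$ and $n\ell(f)+2K$ at the basepoint $[(S,\id)]$ --- a cosmetic difference), and part (4) by reducing to the characterization of minimal algebraic stability via $\lvert\Bs(g^n)\rvert=n\lvert\Bs(g)\rvert$, which the paper simply cites as \cite[Proposition 4.22]{Lamy_book} and which you correctly identify as the one genuinely non-formal input.
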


\begin{proof}[Idea of proof of Proposition \ref{prop_catalog_blowup_cc}]
The point \ref{item_elliptic} follows from the definition of the vertices of the blow-up graph, the point \ref{item_distance} has been seen above in Lemma~\ref{lemma:comb_geodesic}.

Let us focus on the point \ref{item_translation_length}. Consider $f\in\Bir(S)$ and $x\in \Min(f)$. By Proposition \ref{prop_action_semisimple_hag}, for any $n\in\N$ we have $\dist(x,f^n(x))=n\ell(f)$. Let $v=[(S , \id)]$. Then $\dist(v, f^n(v))=2\lvert\Bs(f^n)\rvert$ by Lemma \ref{lemma:comb_geodesic}. This implies that for any $n\in \N$, $n\ell(f)\leq 2\lvert\Bs(f^n)\rvert$. Taking the limit we obtain: $\ell(f)\leq 2\mu(f)$. 
	On the other hand, let $x\in \Min(f)$ and $K=\dist(v, x)$. Then for any $n\in \N$, $2\lvert\Bs(f^n)\rvert=\dist(v, f^n(v))\leq n
\ell(f)+2K$ and hence $2\mu(f)\leq \ell(f)$.
	
The point \ref{item_alg_stability} is a consequence of the fact that a birational transformation $g$ is minimally algebraically stable if and only if for all $n\geq 1$, $\lvert\Bs(g^n)\rvert=n\lvert\Bs(g)\rvert$ (\cite[Proposition 4.22]{Lamy_book}). 

Assume $[(T,\varphi)]$ belongs to $\Min(f)$, then using the characterization of the distance between two vertices in term of base-points Lemma \ref{lemma:comb_geodesic} and the fact that isometries are semi-simple Proposition \ref{prop_action_semisimple_hag}, we obtain that for all $n\geq 1$, $\lvert\Bs\left((\varphi^{-1} f\varphi)^n\right)\rvert=n\lvert\Bs\left((\varphi^{-1} f\varphi)^n\right)\rvert$ and so $\varphi^{-1} f\varphi$ is minimally algebraically stable on $T$. On the other hand, if $\varphi^{-1} f\varphi$ is minimally algebraically stable on $T$, then for all $n\geq 1$, $\lvert\Bs\left((\varphi^{-1} f\varphi)^n\right)\rvert=n\lvert\Bs\left((\varphi^{-1} f\varphi)^n\right)\rvert$ and so, by writing $x=[(T,\varphi)]$, $\dist(x,f^n(x))=n\dist (x,f(x))$ for all $n\geq 1$ implying that $\dist(x,f(x))$ is the translation length. This implies that $x\in \Min(f)$ as expected.
\end{proof}

Theorem \ref{Blanc_Deserti} is a direct consequence of the characterization of the dynamical number of base-points in term of translation length (Proposition \ref{prop_catalog_blowup_cc} \eqref{item_translation_length}), and Theorem \ref{thm_Diller_Favre} is a direct consequence of Proposition \ref{prop_catalog_blowup_cc} \eqref{item_alg_stability}.

\medskip
\subsubsection{Regularization results} 
By Proposition \ref{prop_catalog_blowup_cc}.\ref{item_elliptic}, stabilizers of vertices are projectively regularizable subgroups of $\Bir(S)$. Hence, this action allows to obtain straightforwardly and in an unified way some regularization results. 

\begin{remark}
Note that in the following regularization results, smoothness is assumed for surfaces, but up to desingularization, we can obtain the same results for any surface.
\end{remark}

For instance, it is immediate that groups with property FW are projectively regularizable. 

\begin{proposition}[{\cite[Theorem B]{Cantat_Cornulier_Commensurating}}]Let $S$ be a smooth projective surface.
Subgroups of $\Bir(S)$ having the FW property are projectively regularizable.
\end{proposition}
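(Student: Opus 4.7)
The plan is to apply the machinery set up in this subsection almost directly: consider the natural isometric action of $G\subset\Bir(S)$ on the blow-up graph $\Cb(S)$, then use the FW property to produce a bounded orbit, and finally use the cubical orientation to upgrade ``bounded orbit'' to ``fixed vertex'', whose birational meaning is precisely projective regularizability.

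More concretely, I would proceed as follows. First, recall that $\Cb(S)$ is a median graph on which $\Bir(S)$ acts faithfully by isometries via $f\bullet[(T,\varphi)]=[(T,f\varphi)]$, and that this action preserves the height function $h([(T,\varphi)])=\rho(T)$; hence it preserves the cubical orientation fixed earlier (the one pointing from lower to higher Picard rank). Second, since $G$ has property FW, by definition every isometric action of $G$ on a median graph has bounded orbits, so in particular the orbit $G\cdot[(S,\id)]$ is bounded in $\Cb(S)$.

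Third, I invoke Proposition \ref{prop:fixedpoint}: a group acting isometrically on a median graph with a bounded orbit preserves a cube, and if moreover the action preserves a cubical orientation the group fixes a vertex. Both hypotheses are satisfied, so $G$ fixes some vertex $v=[(T,\varphi)]\in\Cb(S)^0$. By the dictionary established in Proposition \ref{prop_catalog_blowup_cc}.\ref{item_elliptic}, the stabilizer of $v$ consists exactly of those $g\in\Bir(S)$ with $\varphi^{-1}g\varphi\in\Aut(T)$; applied to every $g\in G$ this yields $\varphi^{-1}G\varphi\subset\Aut(T)$. Since $T$ is a (smooth) projective surface, this is precisely the definition of $G$ being projectively regularizable.

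There is no real obstacle here: the argument is a direct assembly of three results already in hand (the construction and orientation-preserving action on $\Cb(S)$, the bounded-orbit-implies-fixed-vertex lemma in the oriented setting, and the identification of vertex stabilizers with projectively regularizable subgroups). The only subtle point worth flagging explicitly is the orientation-preservation, without which one would only obtain a preserved cube and hence a finite-index subgroup fixing a vertex; here orientation-preservation is free because $h$ is $\Bir(S)$-invariant.
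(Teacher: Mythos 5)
Your argument is correct and is exactly the one the paper uses: property FW gives bounded orbits on the median graph $\Cb(S)$, the orientation-preserving action then yields a fixed vertex by Proposition \ref{prop:fixedpoint}, and vertex stabilizers are projectively regularizable by Proposition \ref{prop_catalog_blowup_cc}.\ref{item_elliptic}. Your remark on why orientation-preservation is needed (and why it comes for free from the height function) matches the paper's reasoning as well.
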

Note that in \cite{Cantat_Cornulier_Commensurating}, they show a stronger result, namely that a subgroup $G\subset~\Bir(\PP^2)$ with the FW property is conjugated to a subgroup of the automorphism group of a \emph{minimal} surface (without $(-1)$-curve) using some classification/geometry of surfaces.

\medskip
As the action of $\Bir(S)$ on the blow-up graph $\mathcal{C}_b(S)$ preserves the cubical orientation, a subgroup of $\Bir(S)$ having bounded orbits fixes a vertex of $\mathcal{C}_b(S)$ and by definition is projectively regularizable. Hence, using Proposition \ref{prop_catalog_blowup_cc}, projectively regularizable subgroups are the ones having a uniform bound on the number of base-points of its elements. Moreover, the number of base points of an element $g\in\Bir(\PP^2 )$ is bounded by a constant depending only on the degree of $g$. As a consequence, the \emph{bounded subgroups} of $\Bir(\PP^2)$, i.e., the subgroups such that the degree of all its elements is uniformly bounded, are projectively regularizable.

\begin{proposition}\label{thm_reg_boundedorbit}
	Let $S$ be a smooth projective surface. A subgroup $G\subset \Bir(S)$ is projectively regularizable if and only if there exists a constant $K$ such that $\lvert\Bs(f)\rvert\leq K$ for all $f\in G$.
	
	In particular, bounded subgroups of $\Bir(\PP^2)$ are projectively regularizable.
\end{proposition}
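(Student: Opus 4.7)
The plan is to exploit the $\Bir(S)$-action on the blow-up graph $\Cb(S)$ and to read everything through the dictionary of Proposition~\ref{prop_catalog_blowup_cc}, setting $v_0 \coloneqq [(S,\id)]$.

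For the direct implication, suppose $G$ is projectively regularizable via some $\varphi\colon Y\dashrightarrow S$ with $\varphi^{-1} G \varphi \subset \Aut(Y)$. Replacing $Y$ by its minimal desingularization if necessary (to which automorphisms lift canonically in dimension two), I may assume $Y$ is smooth projective, and the vertex $v \coloneqq [(Y,\varphi)]$ is then $G$-fixed: $g\cdot v = [(Y,g\varphi)] = [(Y,\varphi)]$ since $\varphi^{-1}(g\varphi) = \varphi^{-1}g\varphi \in \Aut(Y)$. The triangle inequality, the fact that $\Bir(S)$ acts by isometries, and Proposition~\ref{prop_catalog_blowup_cc}\eqref{item_distance} then yield
\[
2\lvert \Bs(g)\rvert = \dist(v_0, g\cdot v_0) \leq \dist(v_0,v) + \dist(g\cdot v, g\cdot v_0) = 2\dist(v_0,v)
\]
for every $g\in G$, so one may take $K \coloneqq \dist(v_0,v)$.

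Conversely, assuming $\lvert \Bs(g)\rvert \leq K$ for all $g\in G$, Proposition~\ref{prop_catalog_blowup_cc}\eqref{item_distance} confines the orbit $G\cdot v_0$ to the ball of radius $2K$ around $v_0$. Because the $\Bir(S)$-action preserves the cubical orientation coming from the height function $h=\rho$, Proposition~\ref{prop:fixedpoint} produces a $G$-fixed vertex $[(Y,\varphi)]$; unpacking this exactly as in the first half, one gets $\varphi^{-1} g \varphi \in \Aut(Y)$ for every $g\in G$, so $G$ is projectively regularizable. For the ``in particular'' clause, I only need to recall that $\lvert\Bs(f)\rvert$ for a plane Cremona transformation $f$ of degree $d$ is controlled by $d$: Noether's equality $\sum_i m_i = 3(d-1)$ on the multiplicities of its base points forces $\lvert\Bs(f)\rvert \leq 3(d-1)$, so a bounded subgroup of $\Bir(\PP^2)$ automatically satisfies a uniform bound on $\lvert\Bs(g)\rvert$, and the first part applies. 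The only even mildly delicate step in the whole argument is the initial desingularization in the direct implication; otherwise everything is a transparent translation through Proposition~\ref{prop_catalog_blowup_cc} and the fixed-point property of Proposition~\ref{prop:fixedpoint}, and I expect no genuine obstacle.
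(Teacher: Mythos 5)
Your argument is correct and follows essentially the same route as the paper: translate $\lvert\Bs(\cdot)\rvert$ into distances on the blow-up graph via Proposition~\ref{prop_catalog_blowup_cc}\eqref{item_distance}, use that a fixed vertex gives bounded orbits (hence the bound $K$) in one direction, and use orientation-preservation plus Proposition~\ref{prop:fixedpoint} to get a fixed vertex from bounded orbits in the other. Your explicit appeal to Noether's equality to bound $\lvert\Bs(f)\rvert$ by the degree, and your remark on desingularizing $Y$, simply make precise two steps the paper leaves implicit.
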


Note that the there exists projectively regularizable subgroup of $\Bir(\PP^2)$ that are not bounded. For instance, group of automorphisms of Halphen surfaces (see for instance \cite{Lamy_book}).

Several classes of birational transformations are projectively regularizable. Let us introduce them. In a group $G$, an element $g\in G$ is: 
\begin{itemize}
	\item \emph{divisible}, if for every integer $n\geq 0$ there exists an element $f\in G$ such that $f^n=g$;
	\item  \emph{distorted}, if $\lim_{n\to\infty}\frac{|g^n|_{S}}{n}=0$ for some finitely generated subgroup $\Gamma\subset G$ containing $g$, where $|g^n|_{S}$ denotes the word length of $g^n$ in $\Gamma$ with respect to some finite set $S$ of generators of $\Gamma$. 
\end{itemize}
 Distorted elements in $\Bir(\PP^2 )$ for an algebraically closed field $\kk$ of characteristic $0$ have been classified in \cite{Blanc_Furter_length} and \cite{Cantat_Cornulier_Distortion}, and a description of divisible elements in $\Bir(\PP^2)$ can be found in \cite{Liendo_Regeta_Urech_affine_toric_var}. 

\begin{proposition}[\cite{Lonjou_Urech_cube_complexe}]\label{prop_divisible_distorded_2}
Divisible and distorted birational transformations of smooth projective surfaces are regularizable.
\end{proposition}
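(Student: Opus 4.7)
The plan is to show that both divisible and distorted elements of $\Bir(S)$ act elliptically on the blow-up graph $\Cb(S)$, so that by Proposition \ref{prop_catalog_blowup_cc}(\ref{item_elliptic}) they are projectively regularizable, hence a fortiori regularizable. Two ingredients are essential and already available. First, the action of $\Bir(S)$ preserves the height function, and therefore the cubical orientation, so Corollary \ref{prop_action_semisimple_hag} tells us that every element $f \in \Bir(S)$ is either elliptic or loxodromic, with $\ell(f^n) = n\ell(f)$ for all $n \in \N$. Second, Proposition \ref{prop_catalog_blowup_cc}(\ref{item_translation_length}) together with Theorem \ref{Blanc_Deserti} gives $\ell(f) = 2\mu(f) \in \N$, i.e.\ translation lengths are non-negative integers.

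For the divisible case, the argument I would run is purely arithmetic. Given a divisible $g \in \Bir(S)$, I choose for each $n \geq 1$ an element $f_n \in \Bir(S)$ with $f_n^n = g$, so that $\ell(g) = n\,\ell(f_n)$. The non-negative integer $\ell(g)$ is then divisible by every positive integer $n$, forcing $\ell(g) = 0$. Hence $g$ is elliptic and we are done.

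For the distorted case, the plan is to contradict loxodromicity using a standard \emph{geodesic word estimate}. Let $g \in \Bir(S)$ be distorted, contained in a finitely generated subgroup $\Gamma \subset \Bir(S)$ with finite generating set $\Sigma$ satisfying $|g^n|_\Sigma/n \to 0$. Fix a base vertex $x_0 \in \Cb(S)^0$ and set $M = \max_{s \in \Sigma} \dist(x_0, s \cdot x_0)$. Writing each $g^n$ as a word of minimal length in $\Sigma$ and using the triangle inequality along the corresponding orbit path in $\Cb(S)$ yields $\dist(x_0, g^n \cdot x_0) \leq M \,|g^n|_\Sigma$, so the distortion hypothesis gives $\dist(x_0, g^n \cdot x_0)/n \to 0$. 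On the other hand, if $g$ were loxodromic, picking $y \in \Min(g)$ would yield $\dist(x_0, g^n \cdot x_0) \geq n\,\ell(g) - 2\dist(x_0, y)$, forcing $\liminf_n \dist(x_0, g^n \cdot x_0)/n \geq \ell(g) > 0$, a contradiction. Thus $g$ is elliptic.

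There is no serious obstacle here: the real work was done upstream in constructing $\Cb(S)$, verifying orientation-preservation, and establishing the translation-length dictionary of Proposition \ref{prop_catalog_blowup_cc}. Given those tools, the proposition is essentially a textbook statement about elliptic-versus-loxodromic isometries of a cubically oriented median graph, combined with the integrality of $\ell$.
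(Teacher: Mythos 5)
Your proof is correct and follows essentially the same route as the paper: both use the orientation-preserving action on $\Cb(S)$, the semisimplicity statement of Corollary \ref{prop_action_semisimple_hag}, and the integrality/subadditivity of the length function $f\mapsto\dist([(S,\id)],[(S,f)])$ to force $\ell(g)=0$ and hence ellipticity. You merely spell out the two cases (divisibility of the integer $\ell(g)$ by every $n$, and the word-length estimate for distortion) that the paper's terser proof compresses into the single sentence that $L$ restricted to $\langle g\rangle$ must be bounded.
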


\begin{proof}
Let $S$ be a surface, let $g\in\Bir(S)$ be a divisible or distorted element and define the length function $L(f):=\dist([(S,\id)],[(S,f)])$ on $\Bir(S)$. It is subadditive, takes integral values, and the map $n\mapsto L(f^n)$ grows asymptotically like $n\ell(f)$ by Corollary \ref{prop_action_semisimple_hag}. Since $g$ is  divisible or distorted, then the restriction of $L$ to $<g>$ has to be bounded, hence $g$ has a bounded orbit on $\Cb(S)$ and so $g$ is projectively regularizable.
\end{proof}

\begin{proposition}[\cite{Lonjou_Urech_cube_complexe}] Let $S$ be a smooth projective surface, $G\subset \Bir(S )$ be a subgroup and $H\subset G$ a subgroup of finite index. Then $G$ is projectively regularizable if and only if $H$ is.
\end{proposition}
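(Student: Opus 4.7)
The forward direction is essentially tautological: if $\varphi : Y \dashrightarrow S$ conjugates $G$ into $\Aut(Y)$ with $Y$ projective, then it conjugates the subgroup $H$ into $\Aut(Y)$ as well, so $H$ is projectively regularizable. The interesting direction is the converse, and the plan is to run the argument geometrically on the blow-up graph $\Cb(S)$.

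Assume $H$ is projectively regularizable. By Proposition \ref{prop_catalog_blowup_cc}\eqref{item_elliptic}, this is equivalent to $H$ fixing a vertex $v \in \Cb(S)^0$. Because $H$ has finite index in $G$, choose a finite set of coset representatives $g_1, \dots, g_n$ with $G = \bigsqcup_{i=1}^n g_i H$. Then the orbit $G \cdot v = \{g_1 \cdot v, \dots, g_n \cdot v\}$ is finite, and in particular bounded in the graph metric. Hence $G$ acts on $\Cb(S)$ with a bounded orbit.

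Now invoke Proposition \ref{prop:fixedpoint}: a group acting isometrically on a median graph with a bounded orbit preserves a cube, and if the action additionally preserves a cubical orientation, it fixes a vertex. The action of $\Bir(S)$ on $\Cb(S)$ preserves the height-induced cubical orientation (edges oriented from lower to higher Picard rank), so the hypotheses are satisfied. Therefore $G$ fixes a vertex of $\Cb(S)$, and applying Proposition \ref{prop_catalog_blowup_cc}\eqref{item_elliptic} once more we conclude that $G$ is projectively regularizable.

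There is essentially no obstacle here: the entire argument is a direct application of two previously established facts, namely the dictionary between vertex stabilizers and projective regularizability and the existence of a fixed vertex under orientation-preserving actions with bounded orbits. The finite index hypothesis is used solely to turn the fixed vertex of $H$ into a finite, hence bounded, $G$-orbit.
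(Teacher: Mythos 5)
Your argument is correct and is essentially the paper's own proof: the paper likewise observes that a vertex fixed by $H$ has finite (hence bounded) $G$-orbit by finite index, and then concludes via the bounded-orbit/orientation-preserving fixed-vertex criterion that $G$ fixes a vertex and is therefore projectively regularizable. The only difference is that you spell out the invocation of Proposition \ref{prop:fixedpoint} and the cosets explicitly, which the paper leaves implicit.
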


The above proposition comes from the fact that, if $H$ is projectively regularizable, then the orbit of $G$ on any vertex fixed by $H$ is bounded and so $G$ is projectively regularizable.

\medskip
\subsubsection{Open question}\label{subsubsection_open_reg}

We are interested in the following open question (see Question \ref{question_loc_elliptic}):	
	Consider a finitely generated subgroup $G$ of the Cremona group of rank $2$ such that each of its elements is projectively regularizable. Does it imply that \emph{$G$ is projectively regularizable}?
In term of the action of the Cremona group on the blow-up graph, this question can be reformulated as follows:

\begin{question}\label{question_loc_elliptic_cube}
	Let $G$ be a finitely generated subgroup of $\Bir(S)$ acting purely elliptically on the blow-up graph. Does it implies that $G$ fixes a vertex? (or equivalently that $G$ has bounded orbits?)
\end{question}

This question has to be related to the general question about finitely generated groups acting purely elliptically on median graphs (see Question \ref{question:purely_elliptic}). As we have already seen in Section \ref{Subsection_purely_elliptic}, when the median graph is not locally of finite dimension (which is the case of the blow-up graph), there exist counterexamples, so there is no direct answer using actions on median graphs so far. Nevertheless, this action seems to be a good starting tool to answer this question. 

Note that, also in this context, it is necessary to require the subgroup to be finitely generated, otherwise, there exist counterexamples. For instance, the subgroup $G=\{(x,y)\mapsto(x,y+P(x))\mid P\in \kk[X]\}$ of the Cremona group contains only elliptic elements. Indeed, for any $g\in G$, $g^n(x,y)=(x,y+nP(x))$, so $\deg(g^n)=\deg(g)=\deg(P)$ if $\deg(P)\geq1$ and $g$ is projectively regularizable by Proposition \ref{thm_reg_boundedorbit}. Nevertheless, for any $n\in \N^*$, the element $(x,y)\mapsto (x,y+x^n)\in G$ has $2n-1$ base-points so there is no uniform bound on the number of base-points of elements of $G$ implying by Proposition \ref{thm_reg_boundedorbit} that $G$ is not projectively regularizable.

\subsection{The rational blow-up graph over finite fields}\label{Subsection_rational_blow_up graph}
In \cite{GLU_Neretin}, the authors show that over a finite field the Cremona group of rank $2$ embedds into a Neretin group. They construct a median graph for the Neretin group, inspired by the construction of the blow-up graph. Using this action, they answer positively Question \ref{question_loc_elliptic} for Cremona groups of rank $2$ over finite fields.

\begin{theorem}[{\cite{GLU_Neretin}}]\label{thm_reg_finite_field}
	Let $\F$ be a finite field and $S$ be a projective regular surface defined over $\F$. Let $G$ be a finitely generated subgroup of $\Bir(S)$ such that for any $g\in G$, $g$ is projectively regularizable. Then $G$ is protectively regularizable.
\end{theorem}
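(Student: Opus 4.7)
The plan is to mimic the geometric strategy that the blow-up graph gives for general $S$ (Proposition~\ref{prop_catalog_blowup_cc} and Proposition~\ref{thm_reg_boundedorbit}), but replace $\Cb(S)$ by a variant which, in this finite-field setting, is locally of finite dimension, so that Theorem~\ref{thm:purely_elliptic_locally_finite_dim} becomes applicable.

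First I would set up the rational blow-up graph $\Cb_{rat}(S)$ promised by Subsection~\ref{Subsection_rational_blow_up graph}: a median graph whose vertices are (equivalence classes of) marked regular projective $\F$-surfaces, edges come from blowing up a single closed point, height is the Picard rank, and such that $\Bir(S)$ still acts by height-preserving isometries. The essential point, and the main obstacle, is to organize the construction so that $\Cb_{rat}(S)$ contains no infinite cube. In the standard $\Cb(S)$, an infinite cube arises whenever one has an infinite family of pairwise distinct points on a single surface that can be blown up independently; to kill this, I would exploit the finiteness of $\F$ (for instance through the embedding $\Bir(S)\hookrightarrow$ Neretin group and the tree-like combinatorics it brings in, or by grouping closed points via their residue-degree and Galois orbits), in order to guarantee that at every vertex only finitely many ``independent directions'' of blow-ups survive. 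Once this is achieved, local finite-dimensionality follows.

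The second step is to transfer the dictionary of Proposition~\ref{prop_catalog_blowup_cc}(1) to $\Cb_{rat}(S)$: an element $g\in\Bir(S)$ is projectively regularizable if and only if it fixes a vertex of $\Cb_{rat}(S)$. The forward direction is immediate from the definition of vertices as marked surfaces; the reverse direction uses that the stabilizer of a vertex $[(T,\varphi)]$ is exactly $\varphi\Aut(T)\varphi^{-1}$. Combined with the cubical orientation inherited from the height, a bounded-orbit statement will upgrade to a fixed-vertex statement via Proposition~\ref{prop:fixedpoint}.

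With those ingredients in place, the proof is a three-line application of median geometry: the hypothesis that every $g\in G$ is projectively regularizable translates, via Step 2, into the statement that $G$ acts purely elliptically on $\Cb_{rat}(S)$; since $G$ is finitely generated and $\Cb_{rat}(S)$ is locally of finite dimension (Step 1), Theorem~\ref{thm:purely_elliptic_locally_finite_dim} provides a bounded $G$-orbit; by Proposition~\ref{prop:fixedpoint} together with the cubical orientation, $G$ fixes a vertex $[(T,\varphi)]$; and by Step 2 this exactly says that $\varphi^{-1}G\varphi\subset\Aut(T)$, i.e.\ $G$ is projectively regularizable. The whole content of the theorem is therefore concentrated in Step 1, where the finiteness of $\F$ must be leveraged to prevent infinite cubes—everything else is a direct transposition of the arguments already carried out for the blow-up graph.
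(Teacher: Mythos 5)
Your overall strategy (build a locally finite-dimensional variant of the blow-up graph, then quote Theorem \ref{thm:purely_elliptic_locally_finite_dim} and Proposition \ref{prop:fixedpoint}) is the right one, and your final ``three-line'' step matches the paper's. But Step 1, which you correctly identify as carrying all the content, has a genuine gap as written. The paper's rational blow-up graph $\Cbk(S)$ takes as vertices marked surfaces all of whose base-points (and those of the inverse marking) are \emph{rational} points, with edges given by blow-ups of rational points; local compactness then follows because a surface over a finite field has only finitely many rational points. If instead you allow blow-ups of arbitrary closed points, as your proposal suggests, the graph is not locally of finite dimension: a surface over $\F$ still has infinitely many closed points (of unbounded residue degree), blowing up any finite subset of them spans a cube, so infinite cubes appear at every vertex, and no grouping by residue degree or Galois orbit removes them. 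Conversely, once you restrict to rational points, your claim that ``$\Bir(S)$ still acts by height-preserving isometries'' is false --- the paper states explicitly that the whole Cremona group does not act on $\Cbk(S)$, since $f\bullet[(T,\varphi)]=[(T,f\varphi)]$ need not be a vertex when $f$ has non-rational base-points --- and with it your Step 2 dictionary collapses.

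The missing idea is a base change. The paper fixes a finite symmetric generating set of $G$, chooses a finite extension $L/\F$ over which all base-points of the generators (hence, by subadditivity of base loci under composition, of every element of $G$) are defined, and lets $G$ act on $\mathcal{C}_{b,L}(S_L)$, which is locally of finite dimension because $L$ is again finite. This uses the finite generation hypothesis a second time, beyond its role in Theorem \ref{thm:purely_elliptic_locally_finite_dim}. One then needs Theorem \ref{prop_reg_field_extension} twice: once to see that each $g\in G$, being projectively regularizable over $\F$, is projectively regularizable over $L$ and hence elliptic on $\mathcal{C}_{b,L}(S_L)$; and once at the end to descend the fixed vertex, i.e.\ regularizability over $L$, back to regularizability over $\F$. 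Neither the extension $L$ nor this descent step appears in your proposal, and without them the argument does not close.
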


In this survey, we present how this result can be reproved by constructing a new median graph called \emph{the rational blow-up graph}. Note that this construction is different from the one of \cite{GLU_Neretin}, even when reformulating the construction to study only the Cremona group of rank $2$ over finite fields (instead of the all Neretin group). 
As we will see, the rational blow-up graph is a convex subgraph of the blow-up graph, making him immediately median. Moreover, it is locally finite dimensional so this allows to apply Theorem~\ref{thm:purely_elliptic_locally_finite_dim} in order to answer positively Question \ref{question_loc_elliptic} for Cremona groups of rank $2$ over finite fields.
Nevertheless, as we will see, the all Cremona group of rank $2$ over a finite field does not act on it, but only some subgroups, which is sufficient for our purpose.

\medskip

\subsubsection{Comments on the field}
In this section, we consider projective surfaces defined over a perfect field $\kk$, denoted by $S_{\kk}$. Let $\bar{\kk}$ be an algebraic closure of $\bar{k}$. In this context, $S_{\kk}$ is the pair $(S_{\bar{\kk}},\Gal(\bar{\kk}/\kk))$, where the Galois group $\Gal(\bar{\kk}/\kk)$ acts on the second factor of $S_{\bar{\kk}}=S \times_{\Spec(\kk)}\Spec(\bar{\kk})$.

In other words, the main problem, when working over non-algebraically closed field, is that the notion of points changes. Indeed, there is an action of the Galois group $\Gal(\bar{\kk}/\kk)$ on the set of points of $S_{\bar{\kk}}$. The orbit of such a point will be called a closed point of $S_{\kk}$. For instance, over $\R$, the Galois group $\Gal(\C/\R)$ acts by complex conjugation on the points of $\PP^2_{\C}$. The points $[0:1:i]$ and $[0:1:-i]$ are in the same Galois-orbit and form the closed point $\left\{y^2+z^2=x=0\right\}$ of $\PP^2_{\R}$. Once $\kk$ is fixed, the set of \emph{rational points} of $S$, denoted by $S(\kk)$, is the set of orbits of size $1$. For instance, the set of rational points of $\PP^2$ is $\PP^2(\kk)=\{[a:b:c]\mid a,b,c \in \kk\}$.

Notice that, when working over a finite field, a surface has infinitely many points but finitely many rational ones. As a consequence, the blow-up graph associated to a surface over a finite field is still not locally of finite dimension.

\begin{theorem}[{\cite[Theorem 1.3]{Lonjou_Urech_cube_complexe}}]\label{prop_reg_field_extension}
	Let $S$ be a surface over a perfect field $\kk$ and let $G\subset\Bir(S)$ be a subgroup defined over $\kk$. $G$ is projectively regularizable over $\kk$ if and only if it is projectively regularizable over its algebraic closure $\bar{\kk}$.
\end{theorem}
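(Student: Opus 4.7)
The forward direction is immediate: given a regularization by a $\kk$-variety $Y$ with $\kk$-birational $\varphi \colon Y \dashrightarrow S$, base changing to $\bar{\kk}$ yields a regularization of $G$ over $\bar{\kk}$. For the converse, suppose $G$ is regularizable over $\bar{\kk}$; by Proposition~\ref{prop_catalog_blowup_cc}(\ref{item_elliptic}) applied to $G \subset \Bir(S_{\bar{\kk}})$, the common fixed set $\Fix(G)$ in the blow-up graph $\Cb(S_{\bar{\kk}})$ is non-empty. The plan is to produce in $\Fix(G)$ a vertex that is simultaneously fixed by $\Gal(\bar{\kk}/\kk)$, since such a vertex will correspond to a marked surface defined over $\kk$.

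Set $\Gamma := \Gal(\bar{\kk}/\kk)$ and let it act on $\Cb(S_{\bar{\kk}})$ by $\tau \cdot [(T,\varphi)] = [(\tau(T),\tau(\varphi))]$. Galois conjugation sends blow-ups to blow-ups, so this is an action by isometries; it preserves the Picard rank, and hence the height function and the cubical orientation. Because every element of $G$ is $\kk$-defined, the $\Gamma$-action and the $G$-action commute on $\Cb(S_{\bar{\kk}})$, so $\Fix(G)$ is $\Gamma$-invariant; as $\Fix(G)$ is a convex (hence median) subgraph, $\Gamma$ acts on it by orientation-preserving isometries. Now any vertex $v = [(T,\varphi)] \in \Fix(G)$ is defined by finitely many polynomials whose coefficients lie in some finite extension $\kk' \subset \bar{\kk}$ of $\kk$, so the $\Gamma$-orbit of $v$ factors through the finite quotient $\Gal(\kk'/\kk)$ and is in particular bounded in $\Fix(G)$. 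Proposition~\ref{prop:fixedpoint}, applied to this orientation-preserving action of $\Gamma$ on $\Fix(G)$, then produces a vertex $v_0 = [(T_0,\varphi_0)] \in \Fix(G)$ fixed by all of $\Gamma$.

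Finally, the $\Gamma$-invariance of $v_0$ provides, for every $\tau \in \Gamma$, an isomorphism $\sigma_\tau := \tau(\varphi_0)^{-1}\circ\varphi_0 \colon T_0 \to \tau(T_0)$, and a direct computation shows these maps satisfy the cocycle identity $\sigma_{\tau_1\tau_2} = \tau_1(\sigma_{\tau_2})\circ\sigma_{\tau_1}$. Standard Galois descent for quasi-projective varieties then yields a smooth surface $T$ and a birational map $\varphi \colon T \dashrightarrow S$, both defined over $\kk$, representing $v_0$; combined with $\varphi^{-1}G\varphi \subset \Aut_{\bar{\kk}}(T_{\bar{\kk}})$ and the $\kk$-invariance of $G$, this gives $\varphi^{-1}G\varphi \subset \Aut_\kk(T)$, as required. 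The main obstacle to making this rigorous is the Galois descent step: one must verify that the cocycle extracted from the equivalence data of marked surfaces genuinely descends to a $\kk$-form, which in the end relies on the fact that the $\sigma_\tau$ are canonically determined by $\varphi_0$, together with effective descent for quasi-projective varieties over perfect fields.
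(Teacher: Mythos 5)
The survey states this result without proof, citing \cite[Theorem 1.3]{Lonjou_Urech_cube_complexe}, so there is no in-paper argument to compare against; judged on its own terms, your strategy --- make $\Gal(\bar{\kk}/\kk)$ act on $\Cb(S_{\bar{\kk}})$ commuting with $G$, find a simultaneously fixed vertex, and conclude by Galois descent --- is the natural one given the machinery of Section \ref{Section_Median_graphs_surfaces}, and the forward direction, the commutation of the two actions, the finiteness of the $\Gamma$-orbit of a vertex, and the cocycle computation are all correct.

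There is, however, one step that fails as written: you apply Proposition \ref{prop:fixedpoint} to the action of $\Gamma$ on $\Fix(G)$ after asserting that $\Fix(G)$ is ``a convex (hence median) subgraph.'' Fixed-vertex sets of groups of graph automorphisms of a median graph are median-closed but not convex in general, even when the group preserves the cubical orientation: in a single $4$-cycle the swap of the two middle vertices preserves a cubical orientation and fixes the two opposite corners but no interior vertex of either geodesic between them. Without convexity, $\Fix(G)$ with its induced metric need not be a median graph, so the proposition does not directly apply to it. The repair is immediate and you should make it explicit: since $G$ and $\Gamma$ commute, the orbit of any $G$-fixed vertex $v$ under the group $\langle G,\Gamma\rangle$ is exactly $\Gamma\cdot v$ (each $\gamma v$ is again $G$-fixed because $g\gamma v=\gamma gv=\gamma v$), which is finite; so $\langle G,\Gamma\rangle$ acts on all of $\Cb(S_{\bar{\kk}})$ with a bounded orbit, preserving the height function and hence the cubical orientation, and Proposition \ref{prop:fixedpoint} applied to this combined action yields a vertex fixed by both $G$ and $\Gamma$. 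As for the descent step you flag at the end: it does go through here precisely because the vertices of $\Cb$ are marked \emph{projective} surfaces, for which Galois descent along the finite Galois subextension over which everything is defined is effective; the compatibility $\varphi_0=\tau(\varphi_0)\circ\sigma_\tau$ built into your cocycle is what makes $\varphi_0$ descend together with $T_0$, and a $\kk$-birational self-map whose base change to $\bar{\kk}$ is an automorphism is an automorphism over $\kk$ since indeterminacy and exceptional loci are compatible with base change. With these two points addressed the argument is complete.
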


\medskip

\subsubsection{Construction}The construction of the rational blow-up graph is general and does not required the field to be finite, so let $\kk$ be any field.
A \emph{rational marked surface} $(T ,\varphi)$ is a pair where $T$ is a regular projective surface over $\kk$ and $\varphi\colon T\dashrightarrow S $ is a birational map such that all its base-points and the ones of its inverse are rational: $\Bs(\varphi)(\kk)=\Bs(\varphi)$ and $\Bs(\varphi^{-1})(\kk)=\Bs(\varphi^{-1})$. As before, two rational marked surfaces $(T ,\varphi)$ and $(T ',\varphi')$ are equivalent if the map $\varphi'^{-1}\varphi\colon T \to T '$
is an isomorphism. Such a class, will be denoted by $[(T ,\varphi)]$.

\begin{definition}The \emph{rational blow-up graph} associated to $S$ and denoted by $\Cbk(S)$ is the graph whose vertices are equivalent classes of rational marked surfaces $[(T,\varphi)]$, and where two vertices $[(T_1,\varphi_1)]$ and $[(T_2,\varphi_2)]$ are connected by an edge if $\varphi_2^{-1}\varphi_1$ is the blow-up of a rational point of $T_2$ or the inverse of the blow-up of a rational point of $T_1$. 
\end{definition}

Recall that an exceptional divisor is isomorphic to $\PP^1$, and that $\PP^1$ contains at least three rational points. As a consequence, if $\pi:T_2 \rightarrow T_1$ is the blow-up of a rational point of $T_1$, $T_2$ contains strictly more rational base points than $T_1$. This implies, that the rational blow-up graph is infinite dimensional. On the other hand, it the field is finite, any surface contains only a finite number of rational base-points and so the graph is locally compact and so locally of finite dimension.
Moreover, as consequence of Lemma \ref{lemma:comb_geodesic} and by construction of the rational blow-up graph, the later graph is a convex subgraph of the blow-up graph so it is also a median graph.

\begin{proposition}
	The rational blow-up graph $\Cbk(S)$ is an oriented and infinite dimensional median graph. Moreover, if the field is finite, it is locally compact, hence locally of finite dimension.
\end{proposition}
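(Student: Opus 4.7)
The plan is to realise $\Cbk(S)$ as a convex subgraph of the blow-up graph $\Cb(S)$, so that the median property and the cubical orientation transfer for free, and then verify the remaining dimension and local-compactness assertions by counting rational points.

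To show convexity, take two vertices $v_i = [(T_i,\varphi_i)]$ in $\Cbk(S)$. By Lemma~\ref{lemma:comb_geodesic}, every geodesic from $v_1$ to $v_2$ in $\Cb(S)$ corresponds to an ordering of the blow-ups appearing in a minimal resolution of $\varphi_2^{-1}\varphi_1$; the intermediate vertices are precisely the marked surfaces obtained by performing a prefix of those blow-ups from $T_1$ together with a suffix of contractions toward $T_2$. The key claim is that all base points involved are $\kk$-rational: they arise in the bubble spaces of $T_1$ and $T_2$ as base points of $\varphi_2^{-1}\varphi_1$ and $\varphi_1^{-1}\varphi_2$, and these can be traced through the common minimal resolution $W$ back to the base points of $\varphi_1,\varphi_1^{-1},\varphi_2,\varphi_2^{-1}$, all of which are rational by hypothesis. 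Granting this, every intermediate vertex is a rational marked surface, so $\Cbk(S)$ is a convex subgraph of $\Cb(S)$. Convex subgraphs of median graphs are themselves median (this follows from the Helly property, Proposition~\ref{prop:Helly}, together with the characterisation of convex hulls in Proposition~\ref{prop_convex_hull_hyperplanes}), and the cubical orientation of $\Cb(S)$ given by the height function restricts to one on $\Cbk(S)$.

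For infinite-dimensionality, recall from Fact~\ref{cc_blowup_cubes} that an $n$-cube in the blow-up graph arises from $n$ distinct points on a single surface. Starting from any vertex and blowing up a rational point, the new exceptional divisor is isomorphic to $\PP^1$ and hence contains at least three rational points; iterating this process produces surfaces carrying arbitrarily many pairwise distinct rational points, and therefore $n$-cubes in $\Cbk(S)$ for every $n$.

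Finally, assume $\kk$ is finite. Neighbours of a vertex $[(T,\varphi)]$ in $\Cbk(S)$ are either blow-ups of rational points of $T$, bounded in number by $\lvert T(\kk) \rvert < \infty$, or contractions of a $\kk$-rational $(-1)$-curve to a rational point. The latter correspond to a subset of the rational bubble-space data recorded by the marking $\varphi$, which is finite since $T$ differs from $S$ by finitely many blow-ups. Thus every vertex has finite degree and $\Cbk(S)$ is locally compact; as a locally compact cube complex it is a fortiori locally of finite dimension. The hardest part of the plan is the convexity statement, i.e.\ verifying that all base points produced along a minimal resolution of $\varphi_2^{-1}\varphi_1$ are rational; this should be carried out by a careful analysis of the common resolution of $\varphi_1$ and $\varphi_2$, tracking how base points behave under composition.
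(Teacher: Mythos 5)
Your route is the same as the paper's: the paper also obtains the median property and the cubical orientation by exhibiting $\Cbk(S)$ as a convex subgraph of $\Cb(S)$ via Lemma~\ref{lemma:comb_geodesic} (the intermediate vertices of any geodesic only involve blowing up or contracting rational base-points of $\varphi_2^{-1}\varphi_1$), deduces infinite dimensionality from the fact that every exceptional divisor is a $\PP^1$ and so carries at least three rational points, and deduces local compactness over a finite field from the finiteness of the set of rational points. Your treatment of the convexity step is in fact more detailed than the paper's, and you correctly isolate the rationality of $\Bs(\varphi_2^{-1}\varphi_1)$ as the point requiring a base-point-tracking argument.

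The one step that does not work as written is your count of the \emph{upward} neighbours of a vertex $[(T,\varphi)]$. An upward edge corresponds to contracting a $(-1)$-curve of $T$ onto a rational point, and such curves are not ``a subset of the rational bubble-space data recorded by the marking $\varphi$'': already for $T$ the blow-up of two rational points $p,q\in\PP^2$ with $\varphi$ the blow-down, the strict transform of the line through $p$ and $q$ is a contractible $(-1)$-curve that is not an exceptional divisor of $\varphi$, and contracting it does give a rational marked surface adjacent to $[(T,\varphi)]$. So finiteness of the blow-up data of $\varphi$ does not bound the number of upward edges; what has to be bounded is the set of $\kk$-rational contractible $(-1)$-curves on $T$, which is a different and more delicate statement (the paper's own one-line justification likewise only addresses the downward edges). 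Note that what the application (Theorem~\ref{thm_reg_finite_field} via Theorem~\ref{thm:purely_elliptic_locally_finite_dim}) actually needs is local finite dimensionality, and that can be secured without settling this: by Fact~\ref{cc_blowup_cubes} a cube through $[(T,\varphi)]$ is built from pairwise disjoint contractible $(-1)$-curves of $T$ together with rational points, and pairwise disjoint $(-1)$-curves span a negative definite, hence free, sublattice of $\NS(T)$, so the dimension of any cube through this vertex is bounded in terms of $\rho(T)$ and $\lvert T(\kk)\rvert$. If you want to keep the local compactness assertion itself, you need an actual finiteness argument for the $\kk$-rational $(-1)$-curves of $T$, not just for the marking data.
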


\begin{proof}[Proof of Theorem \ref{thm_reg_finite_field}]
	Fix a symmetric finite generating set $\{g_i\}_{1\leq i \leq n}$ of $G$. 
	Let $L$ be a finite field extension of $\F$ such that all the base-points of the $g_i$'s, and hence the base-points of all elements in $G$ are defined over $L$. We now consider the action of $\Bir(S_L)$ on the median graph $\mathcal{C}_{b,L}(S_L)$ that is locally of finite dimension because $L$ is finite. By definition, every element in $G$ is regularizable over $\F$ and so over $L$ by Theorem \ref{prop_reg_field_extension}. Hence they are elliptic. Theorem \ref{thm:purely_elliptic_locally_finite_dim} implies that $G$ fixes a vertex in $\mathcal{C}_{b,k}(S_L)$, meaning that $G$ is regularizable over $L$. Again by Theorem~\ref{prop_reg_field_extension}, $G$ is regularizable over $\F$.
\end{proof}

\subsection{The Jonquieres graph}\label{Subsection_Jonquieres}
An element of $\Bir(\PP^2)$ is \emph{algebraic} if its degree is bounded under iteration. Recall that a subgroup $G\subset\Bir(\PP^2)$ is \emph{bounded} if the set of all the degrees of the elements in $G$ is bounded. 
This subsection is dedicated to the following result, which answer positively a question of Favre \cite{favrebourbaki} and Cantat \cite{Cantat_groupes_birat}:

\begin{theorem}[\cite{Lonjou_Przytycki_Urech}]\label{thm:main}
	Finitely generated subgroups of $\Bir(\PP^2)$ containing only algebraic elements are bounded.
\end{theorem}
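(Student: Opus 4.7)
The plan is to exhibit a median graph on which $\Bir(\PP^2)$ acts with cubical orientation such that (i) algebraic elements act as elliptic isometries and (ii) the graph is locally of finite dimension, so that Theorem \ref{thm:purely_elliptic_locally_finite_dim} closes the argument. The first half is already promising on the blow-up graph: if $f\in\Bir(\PP^2)$ is algebraic, then $\deg(f^n)$ is bounded, and since the number of base-points of a plane birational transformation is controlled by its degree, $\lvert\Bs(f^n)\rvert$ is also bounded. Thus $\mu(f)=0$, and by Proposition \ref{prop_catalog_blowup_cc}\eqref{item_translation_length} the translation length of $f$ on $\Cb(\PP^2)$ vanishes; combined with Corollary \ref{prop_action_semisimple_hag}, $f$ acts elliptically on $\Cb(\PP^2)$.

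The trouble is that $\Cb(\PP^2)$ is not locally of finite dimension, so Theorem \ref{thm:purely_elliptic_locally_finite_dim} cannot be applied directly. The plan is to construct a Jonqui\`eres graph $\mathcal{J}$ whose vertices encode marked rational surfaces together with a distinguished $\PP^1$-fibration (equivalently, a pencil of rational curves), and whose edges correspond to the blow-ups and contractions compatible with such a fibration --- essentially, the elementary transformations between Hirzebruch-type fibered surfaces. Verifying that $\mathcal{J}$ is median is natural (for instance by realising it as a convex subgraph of a suitable enlargement of $\Cb(\PP^2)$, or directly via Theorem \ref{thm:MedianVsCC}), and cubical orientation is provided by the Picard rank, as for $\Cb(\PP^2)$. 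The crucial gain over the blow-up graph is that $\mathcal{J}$ should be locally of finite dimension, because only finitely many fibered elementary moves can be performed simultaneously at any given fibered surface.

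The key step --- and the main obstacle --- is to show that every algebraic element of $\Bir(\PP^2)$ acts elliptically on $\mathcal{J}$. Here one would invoke the classification of algebraic elements of infinite order in $\Bir(\PP^2)$ going back to Enriques, combined with Theorem \ref{Blanc_Deserti} of Blanc--D\'eserti: up to conjugation, such an element either preserves a pencil of rational curves, or sits inside $\Aut(S)$ for a minimal rational surface $S$. In each case one aims to extract a fibered model in $\mathcal{J}$ providing a bounded orbit for the element, hence ellipticity via Proposition \ref{prop:fixedpoint} and the cubical orientation. This is the delicate part: individual projective regularisability on $\Cb(\PP^2)$ does not automatically transfer to ellipticity on $\mathcal{J}$, and a surface-geometric case analysis is required to select compatible fibrations and to control how the elementary transformations behave under the action.

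Once every element of the finitely generated subgroup $G$ is shown to be elliptic on $\mathcal{J}$, Theorem \ref{thm:purely_elliptic_locally_finite_dim} produces a bounded $G$-orbit, and the cubical orientation upgrades this via Proposition \ref{prop:fixedpoint} to a global fixed vertex $[(S,\pi,\varphi)]$. Conjugating by $\varphi$ then places $G$ inside the group of $\pi$-fibered automorphisms of $S$ --- a linear algebraic group whose elements, when transported back to $\Bir(\PP^2)$, have degrees uniformly bounded by a constant depending only on $\varphi$. Hence $G$ is bounded, completing the argument.
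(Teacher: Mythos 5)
Your opening reduction (algebraic $\Rightarrow$ bounded degrees $\Rightarrow$ bounded $\lvert\Bs(f^n)\rvert$ $\Rightarrow$ $\mu(f)=0$ $\Rightarrow$ elliptic on $\Cb(\PP^2)$) is correct, and your instinct to pass to a fibered graph is the right one, but the plan then breaks at two points. First, you try to make \emph{every} algebraic element of $\Bir(\PP^2)$ act elliptically on the Jonqui\`eres graph $\Jonq$, but $\Bir(\PP^2)$ does not act on $\Jonq$ at all: only the Jonqui\`eres subgroup (the stabilizer of the fibration $\pi_0$) does. The missing ingredient is the reduction the paper takes as its first step, due to Cantat: a finitely generated subgroup consisting of algebraic elements is either already bounded or conjugate into the Jonqui\`eres group. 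Without that dichotomy there is no action to speak of, and no amount of case analysis on individual elements repairs this, since the relevant fibrations for different elements of $G$ need not be compatible.

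Second, and more seriously, your ``crucial gain'' is false: $\Jonq$ is \emph{not} locally of finite dimension. By the structure theorem it is the restricted product $\bigoplus_{p\in\PP^1}(X_p,x_p)$ of infinitely many nontrivial trees, one for each fiber; already at the vertex $[(\F_0,\id)]$ one may blow up one point in each of arbitrarily many distinct fibers, and these moves pairwise commute, so the vertex lies in cubes of unbounded dimension. Hence Theorem \ref{thm:purely_elliptic_locally_finite_dim} does not apply, and indeed the paper does not use it here. The actual argument exploits the restricted product decomposition directly: one shows the action of the Jonqui\`eres group on $\bigoplus_{p}(X_p,x_p)$ is \emph{decent} (finite orbits and purely elliptic finitely generated subgroups both yield global fixed points), and the input that replaces finite local dimension is the dynamics of $\PGL_2(\kk)$ on $\PP^1$, which controls how the group permutes the factors $X_p$. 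Your final step (a fixed vertex gives a fibered automorphism group, hence uniformly bounded degrees) is fine, but the route to that fixed vertex needs to go through the product structure, not through Theorem \ref{thm:purely_elliptic_locally_finite_dim}.
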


Note that algebraic elements of $\Bir(\PP^2)$ correspond to the elliptic isometries of the action of $\Bir(\PP^2)$ on the hyperboloid $\Hh^\infty$ introduced in Subsection \ref{Subsection_State_art}. Hence, theorem \ref{thm:main} can be rephrased as follows: finitely generated subgroups of $\Bir(\PP^2)$ acting by elliptic isometries on $\Hh^\infty$ have a fixed point in $\Hh^\infty$.

Being algebraic or bounded is invariant under field extensions so we can assume the field to be algebraically closed. 
The first step to prove Theorem~\ref{thm:main} is due to Cantat, who showed in \cite{Cantat_groupes_birat} that a finitely generated subgroup $G\subset\Bir(\PP^2)$ consisting of algebraic elements is either bounded or is conjugated to a Jonquières subgroup. It is therefore enough to show Theorem~\ref{thm:main} for finitely generated subgroups of the Jonquières subgroup.
The main ingredients are the action of the Jonquières group on a median graph, called \emph{the Jonquières graph} and the study of the dynamics of $\PGL_2(\kk)$ on $\PP^1$.

\medskip

Let us construct the Jonquières graph.
Here, a \emph{rational fibration} on a surface $S$ is a morphism $\pi\colon S\to C$, where $C$ is a 
curve, such that all the fibres are isomorphic to $\PP^1$. Let $\pi\colon S\to C$ and $\pi'\colon S'\to C$ be rational fibrations. A \emph{conic bundle} is a composition with a rational fibration $\pi\colon S\to C$ of a sequence of blow-ups $\tilde S\to S$, such that we blow up in total at most one point~$s\in S$ in each fibre.

\begin{definition}
Let $\F_0=\PP^1\times\PP^1$ with the rational fibration $\pi_0\colon \F_0\to\PP^1$ onto the first factor. The \emph{Jonqui\`eres graph} $\Jonq$ is the subgraph of the blow-up graph $\Cb(\F_0)$ induced on the set of vertices represented by marked surfaces $[(S,\varphi)]$ such that, for $\pi=\pi_0\varphi$, the rational map
$\pi\colon S\dashrightarrow\PP^1$ is a conic bundle. 
\end{definition}

	Note that the Jonquières graph $\Jonq$ is not a convex subgraph of the blow-up graph $\Cb(\F_0)$. Consider the minimal resolution of $j:(x,y)\mapsto (x,x^2+y)$ (see Example \ref{fig_resol_j}). The vertex $[(S_j,\pi_2)]$ is not a vertex of the Jonquières graph because more than one point has been blown up in the same fiber. 
Nevertheless, the graph $\Jonq$ is isometrically embedded in the graph $\mathcal \Cb(\F_0)$.

For any $p\in \PP^1$, consider the subgraph $X_p$ of $\Jonq$ induced on the vertices represented by the marked surfaces $[(T,\varphi)]$, where $\varphi\colon T\dashrightarrow \F_0$ induces an isomorphism between $T\setminus \pi^{-1}(p)$ and $\F_0\setminus \pi_0^{-1}(p)$. Fix $x_p=[(\mathbb{F}_0, \id)] $ in each $X_p$, and consider the family of pointed sets $\{(X_p,x_p)\}_{p\in \PP^1}$. Its \emph{restricted product } $\bigoplus_{p\in \PP^1}(X_p,x_p)$ is the set of sections $\{y_p\}_{p\in \PP^1}$ with $y_p\in X_p$ such that all but finitely many $y_p$ are equal to $x_p$.

The coordinate $y_p$ of a point $y\in  \bigoplus_{p\in \PP^1} (X_p,x_p)=\mathcal{J}$ should be thought of as the ``marked fibre''
in the surface corresponding to $y$ over the point $p$. Thus modifying the coordinate $y_p$ of $y$ corresponds to performing an alternating sequence of blow-ups of points and blow-downs of $-1$ curves in the fibre over $p$ of the surface corresponding to $y$.

\begin{theorem}[\cite{Lonjou_Przytycki_Urech}]\label{thm_jonq_median}
	For any $p\in \PP^1$, the graph $X_p$ is a tree and the graph $\Jonq$ is isomorphic to the restricted product: $\bigoplus_{p\in \PP^1} (X_p,x_p)$; hence it is a median graph.
\end{theorem}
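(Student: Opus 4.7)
The plan is to prove the two parts separately and conclude the median property from the fact that a restricted product of median graphs is automatically median (compute the median componentwise; it has finite support). I would start by establishing a bipartite structure on $X_p$. A vertex $[(T,\varphi)] \in X_p$ is of \emph{type A} when the fibre of $\pi = \pi_0\varphi$ over $p$ is irreducible and of \emph{type B} when it has two components. Any edge of $X_p$ is a single blow-up at a point of the fibre over $p$, and flips the type. A type-A vertex has a neighbour for each point of its fibre over $p$; a type-B vertex has exactly two neighbours, obtained by contracting one of the two $(-1)$-curves in its fibre, since any further blow-up in the fibre would produce three components and leave $\Jonq$. Connectedness of $X_p$ is then immediate from Zariski's theorem~\ref{thm:Zariski}: $\varphi^{-1}$ has all base points over $p$, and can be factored as a sequence of elementary transformations over $p$ (each a length-$2$ path in $X_p$), possibly followed by one additional blow-up at the end.

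The main obstacle is acyclicity. I would tackle it by identifying the type-A vertices of $X_p$ with $\PGL_2(K_p)/\PGL_2(\mathcal{O}_p)$, where $\mathcal{O}_p \subset K_p$ are the local ring of $\PP^1$ at $p$ and its fraction field. Indeed, a type-A vertex restricts near $p$ to a $\PP^1$-bundle over $\Spec(\mathcal{O}_p)$ with a birational identification to the trivial bundle: since $\mathcal{O}_p$ is a DVR this bundle is trivial, and the identification is well defined modulo $\PGL_2(\mathcal{O}_p)$. Two type-A vertices share a common type-B neighbour exactly when they differ by an elementary transformation over $p$, which is the operation defining an edge of the Bruhat--Tits tree of $\PGL_2(K_p)$. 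Hence $X_p$ is the edge-subdivision of a Bruhat--Tits tree, so it is a tree.

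For the restricted-product identification, I define $\Phi : \Jonq \to \bigoplus_{p\in\PP^1}(X_p,x_p)$ as follows. Given $[(T,\varphi)] \in \Jonq$, take the minimal resolution $W \to \F_0$, $W \to T$ of $\varphi^{-1}$ provided by Zariski's theorem~\ref{thm:Zariski}; each base point blown up on either side lies over a unique $p \in \PP^1$, and only finitely many $p$ are involved. For each such $p$, performing only the blow-ups and contractions lying over $p$ produces a marked surface $(T_p,\varphi_p)$ with $[(T_p,\varphi_p)] \in X_p$; the crucial input here is that blow-ups over distinct fibres of $\pi_0$ commute, so any factorization of $\varphi^{-1}$ can be rearranged to group the over-$p$ operations together. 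This makes $\Phi$ well-defined and bijective, and it is a graph isomorphism because an edge of $\Jonq$ is a single blow-up over one $p$, changing only the $p$-coordinate of $\Phi$ and doing so by an edge of $X_p$. Combined with the first two paragraphs, this gives $\Jonq \cong \bigoplus_{p\in\PP^1}(X_p,x_p)$ as median graphs.
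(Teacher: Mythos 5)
Your strategy is sound, and since this survey only states Theorem \ref{thm_jonq_median} with a pointer to \cite{Lonjou_Przytycki_Urech} and reproduces no proof, there is no in-text argument to compare against; I assess your proposal on its own terms. The bipartite analysis of $X_p$ (type A: irreducible fibre over $p$; type B: two $(-1)$-components), the count of neighbours of each type, the connectedness via factorization into elementary transformations over $p$, and the reduction of the median property to componentwise medians in a restricted product of trees are all correct. Identifying the type-A vertices with $\PGL_2(K_p)/\PGL_2(\mathcal{O}_p)$ and $X_p$ with the subdivision of the lattice-class tree is a classical and structurally illuminating way to get acyclicity (it is the correspondence between models of a ruled surface over the DVR $\mathcal{O}_p$ and lattice classes in $K_p^2$); note that Serre's tree of lattice classes is a tree for any discretely valued field, completeness not being required, so localizing $\PP^1$ at $p$ rather than completing causes no trouble. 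This route buys more than bare acyclicity: it exhibits $X_p$ as the subdivision of a homogeneous tree with a recognizable $\PGL_2$-action, which is exactly the structure the downstream dynamical argument exploits.

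Three points should be tightened before this counts as a complete proof. First, to conclude that $X_p$ is the \emph{edge-subdivision} of the Bruhat--Tits tree you must also show that a type-B vertex is determined by its unordered pair of type-A neighbours, equivalently that the centre of the elementary transformation on the fibre of $\PP(L)$ corresponds bijectively to the adjacent class $[L']$ with $\mathfrak{m}L \subsetneq L' \subsetneq L$; otherwise two type-A vertices could a priori share two distinct type-B neighbours, creating a $4$-cycle in $X_p$ even though the collapsed graph is a tree. Second, the lattice-class correspondence needs its injectivity and surjectivity spelled out: injectivity amounts to the fact that a birational map over $\PP^1$ which is an isomorphism both over $\Spec(\mathcal{O}_p)$ and away from the fibre over $p$ is a global isomorphism, and surjectivity to extending a lattice to a rank-$2$ bundle on $\PP^1$ trivialized away from $p$. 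Third, well-definedness of $\Phi$ follows from your commutation argument applied to the canonical resolution of Theorem~\ref{thm:Zariski}, but surjectivity requires glueing the local models $(T_p,\varphi_p)$ back into a single projective conic bundle; this is unproblematic here because everything is proper over $\PP^1$, but it is the same glueing issue the survey flags elsewhere and deserves a sentence. None of these is a fatal gap; they are checkable refinements of a correct outline.
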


The Jonqui\`eres group acts on the vertex set of $\Jonq$ by $f\bullet[(S, \varphi)]=[(S, f\varphi)]$. Moreover, the isomorphism of Theorem \ref{thm_jonq_median} induces an action of the Jonquières group on $\bigoplus_{p\in \PP^1} (X_p,x_p)$. Using the dynamics of $\PGL_2(\kk)$ on $\PP^1$, we show that this action is \emph{decent}, i.e.,  each subgroup of the Jonquières group with a finite orbit fixes a point of $\bigoplus_{p\in \PP^1} (X_p,x_p)$, and each finitely generated subgroup of the Jonquières group acting purely elliptically fixes a point of $\bigoplus_{p\in \PP^1} (X_p,x_p)$. Hence, using again Theorem \ref{thm_jonq_median}, this means that each finitely generated subgroup of the Jonquières group acting purely elliptically is conjugated to a subgroup of an automorphism of a projective smooth surface, and so it is bounded.

\section{Median graph for groups of birational transformations of varieties of any dimension}\label{Section_median_graphs_higher_ranks}
In this section, we assume varieties to be over an algebraically closed field (only for technical reasons as before).
We introduce the first and, up to the knowledge of the author, the unique geometrical spaces with  non-positive curvature known today in order to study from a geometric group theoretic point of view groups of birational transformations of varieties of arbitrary dimension. This section is taken from \cite{Lonjou_Urech_cube_complexe} and translated in term of median graphs instead of CAT(0) cube complexes.

 It is very tempting to try to adapt the construction of the blow-up graph to varieties of higher dimensions requiring vertices to be marked \emph{projective} varieties of the given dimension.
  But sadly this is not possible. One of the reason is that the group of monomial birational transformations of $\PP^3$, which is isomorphic to $\GL_3(\Z)$, has the property FW.  Hence, it would act on a median graph preserving a cubical orientation, so it should fixes a vertex. However, it is known that it is not conjugated to any subgroup of the automorphism group of a projective variety. This can be seen, for instance, by considering the degree sequence of the monomial transformation $(x,y,z)\dashmapsto (yx^{-1},zx^{-1},x)$, which does not satisfy any linear recurrence and is therefore not conjugate to an automorphism of a regular projective threefold (see \cite{Hasselblatt_Propp} or \cite{Cantat_Zeghib} for details).

As a consequence, it is necessary to leave the projective world. Hence, a natural generalization is to consider varieties as quasi-projective varieties over algebraically closed fields. This works to build the median graph $\CC^0(X)$ on which the group of birational transformations of a variety $X$ acts on. In \cite{Lonjou_Urech_cube_complexe}, the authors also build several median spaces for different subgroups of groups of birational transformations. For these later constructions, it is not clear anymore if considering only quasi-projective varieties, the graphs constructed are median. In order to prove that the links are flag, they need to be able to glue varieties along open dense subsets and stay in the same class of varieties. Hence in this section varieties are more general than quasi-projective varieties. To be precise they are integral and separated schemes of finite type over $\kk$.

\subsection{Actions of groups of isomorphisms in codimension $\ell$ on median graphs.}\label{Subsection_median_graphs_groups_isom_codim_l}

Consider two varieties $A$ and $B$. A birational map $f:A\dashrightarrow B$ is called \emph{isomorphism in codimension $\ell$} if $\Exc(f)$ and $\Exc(f^{-1})$ have codimension $>\ell$. Note that isomorphisms in codimension $1$ are usually called \emph{pseudo-isomorphisms}, and birational transformations are isomorphisms in codimension $0$.
For any $1\leq k\leq \dim(X )$ we denote by $\Exc^k(f)$ the set of irreducible components of the exceptional locus of $f$ of codimension $k$.

The following example is an easy but fundamental example for the construction of the median graph that we will construct in Section \ref{subsection:ccn}.
\begin{example}\label{ex_remov_subv}
	If $D\subset X$ is an irreducible subvariety of codimension $\ell+1$ then $\iota: X\setminus D \hookrightarrow X$ is an isomorphism in codimension $\ell$, with $\Exc(\iota)=\emptyset$ and, $\Exc(\iota^{-1})=D=\Exc^{\ell+1}(\iota^{-1})$ has a unique irreducible component of codimension $\ell+1$.
\end{example}

When we fix a variety $X$, isomorphisms in codimension $\ell$ from $X$ to itself form a subgroup of the group of birational transformations of $X$, which is denoted by $\Psaut^\ell(X)$. When $\ell$ is $0$ then it is the whole group $\Bir(X)$, on the other hand, when $\ell=\dim X$, then it is the group of automorphisms of $X$. 
A subgroup $G\subset \Bir(X)$ is called \emph{pseudo-regularizable in codimension $\ell$} if there exist a variety $Y$ and a birational map $\varphi:Y \dashrightarrow \PP^n$ such that \[\varphi^{-1}G\varphi\subset \Psaut^{\ell}(Y).\] If moreover, $\varphi$ is an isomorphism in codimension $\ell-1$, we say that $G$ is pseudo-regularizable in codimension $\ell$ by an isomorphism in codimension $\ell -1$.

\medskip
\subsubsection{Construction of $\mathcal{C}^\ell(X)$}\label{subsection:ccn}

Let $X$ be a variety over a field $\kk$ and $\ell$ be a non-negative integer. A $\ell$-marked variety is a pair $(Y ,\varphi)$, where $Y$ is a variety over $\kk$ and $\varphi\colon Y \dashrightarrow X$ is an isomorphism
in codimension $\ell$. Two $\ell$-marked varieties $(Y ,\varphi)$ and $(Y' ,\varphi')$ are equivalent if $\varphi'^{-1}\varphi : Y \dashrightarrow Y'$ \begin{center}
	\begin{tikzcd}[ampersand replacement=\&]
		Y\arrow[dashrightarrow]{dr}[below left]{\varphi} \arrow[dashrightarrow]{rr} \&\& Y'\arrow[dashrightarrow]{dl}[below right]{\varphi'}  \\
		\& X\&
	\end{tikzcd}
\end{center} is an isomorphism in codimension $\ell+1$. Such a class will be denoted by $[(Y ,\varphi)]$.

For instance, let $L= \{x=0\}\cap\{y=0\}$ be a line in $\PP^3$. The 1-marked varieties $(\PP^3,\id)$ and $(\PP^3\setminus L, \id_{\mid\PP^3\setminus L})$ are equivalent.

\begin{definition}The \emph{graph $\CC^{\ell}(X)$} associated to $X$ is the graph whose vertices are equivalent classes of $\ell$-marked varieties $[(Y,\varphi)]$, and where two vertices $v_1$ and $v_2$ are connected by an edge if $v_1$ can be represented by a couple $(Y ,\varphi)$ such that there exists an irreducible subvariety $D \subset Y $ of codimension $\ell+1$ and $v_2$ is represented by $(Y \setminus D , \varphi|_{Y \setminus D })$. 
\end{definition}

Note that we can put an orientation on the edges by saying that the edge from the above definition is oriented from $v_1$ to $v_2$. We denote this edge by $(Y,\varphi, D)$. Recall that we say that the vertex $v_2$ is dominated by $v_1$ if there exists a sequence of edges connecting $v_1$ to $v_2$ that are all oriented in the same direction from $v_1$ to $v_2$. 

\begin{example}
Consider two distinct points $p_1$ and $p_2$ in $\PP^3$. Denote respectively by $\pi_i:V_i \rightarrow \PP^3$ the blow-up of the point $p_i$, and $E_i$ the exceptional divisor obtained. In the graph $\CC^0(\PP^3)$, we have an edge from $[(V_i,\pi_i)]$ to $[(V_i\setminus E_i, {\pi_i}_{\mid V_i\setminus E_i})]$ which is the same vertex as $[(\PP^3\setminus \{p_i\}, \id_{\mid\PP^3\setminus \{p_i\}})]$ which is the same vertex as $[(\PP^3,\id)]$. Denote by $V_{1,2}$ the variety obtained by blowing up both $p_i$'s. Then from this variety, removing the $E_i$'s as well as the hypersurface at infinity $H:=\{x_3=0\}$ span a cube because up to pseudo-isomorphisms, these maps commute (see for instance Figure \ref{fig_ex_ccP3}). 
\end{example}

\begin{figure}
	\begin{center}
	\begin{tikzpicture}
		\draw (0,0) -- (2,0) -- (2,2) -- (0,2)-- (0,0);
		\draw (3,0.5) -- (3,2.5) -- (1,2.5);
		\draw[dotted]  (1,2.5)--(1,0.5) -- (3,0.5);
		\draw (2,0) -- (3,0.5); 
		\draw (2,2) -- (3,2.5);
		\draw (0,2) -- (1,2.5);
		\draw[dotted] (0,0) -- (1,0.5);
		\draw (0,0) node {$\bullet$} node[below] {$[(\PP^3,\id)]$};
		\draw (2,0) node {$\bullet$} node[below right] {$[(\A^3,\id_{\mid \A^3})]$} ;
		\draw(2,2) node {$\bullet$} node[below right] {$[(V_1\setminus H,{\pi_1}_{\mid V_1\setminus H})]$} ;
		\draw (0,2) node {$\bullet$};
		\draw (-0.5,2.2) node[left] {$[(V_1,\pi_1)]$};
		\draw (1,0.5) node {$\bullet$} node[above] {$[(V_2,\pi_2)]$};
		\draw (3,0.5) node {$\bullet$} node[right] {$[(V_2\setminus H,{\pi_2}_{\mid V_2\setminus H})]$};
		\draw(3,2.5) node {$\bullet$} node[right] {$[(V_{1,2}\setminus H,{\pi_2\pi_1}_{\mid V_{1,2}\setminus H})]$} ;
		\draw (1,2.5) node {$\bullet$} node[above left] {$[(V_{1,2},\pi_2\pi_1)]$};
	\end{tikzpicture}
\end{center}
\caption{A cube of dimension three in $\CC^0(\PP^3)$ spanned by the blow-ups of two points $p_1$ and $p_2$ in $\PP^3$ and by removing the hypersurface $H=\{x_3=0\}$ at infinity.\label{fig_ex_ccP3}}
\end{figure}
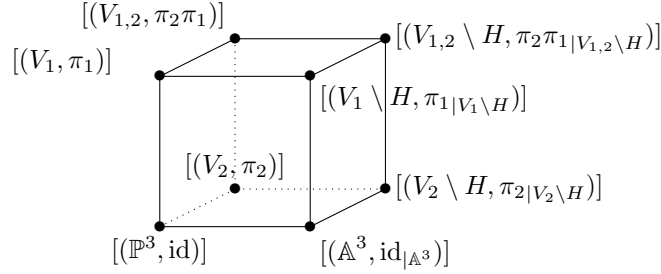

As illustrated in the example, removing a family of $n$ distinct subvarieties of codimension $\ell+1$ in a variety $Y$ generates a cube of dimension $n$. And indeed, it is the only way to have cubes. More precisely:

\begin{fact}\label{ccn_cubes}
	In the graph $\CC^{\ell}(X)$, $n$ distinct vertices $[(Y_1,\varphi_1)],\dots, [(Y_{2^n},\varphi_{2^n})]$ span a cube of dimension $n$ if and only if there exists $1\leq r\leq 2^n$ such that for any $1\leq j\leq 2^n$: 
	\begin{itemize}
			\item there exist $n$ distinct irreducible subvarieties $D_{1},\dots, D_{n}\subset Y_{r}$ of codimension $\ell+1$,
			\item for each $1\leq j\leq 2^n$ different of $r$, there exists $1\leq m\leq n$ such that $Y_{j}=Y_{r}\setminus \{D_{i_1}\cup\dots \cup D_{i_m}\}$ for some $1\leq i_1,\dots, i_m\leq n$,
			\item the $\varphi_j$ equals the restriction of $\varphi_r$ to $Y_{j}$.
		\end{itemize}
\end{fact}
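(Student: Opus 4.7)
The plan is to treat the two directions separately; the ``if'' direction is essentially a construction, while the ``only if'' direction is structural and requires one gluing argument.

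For the implication (\(\Leftarrow\)), start with $(Y_r,\varphi_r)$ and the distinct irreducible subvarieties $D_1,\dots,D_n\subset Y_r$ of codimension $\ell+1$. For every subset $I\subseteq\{1,\dots,n\}$ set
\[ v_I:=\bigl[\bigl(Y_r\setminus \textstyle\bigcup_{i\in I}D_i,\ \varphi_r|_{Y_r\setminus \cup_{i\in I}D_i}\bigr)\bigr]. \]
I would then check: (i) the $2^n$ vertices $v_I$ are pairwise distinct, because if $I\neq J$ and $j\in J\setminus I$, then the transition map contains $D_j$ in its exceptional locus and so is not an isomorphism in codimension $\ell+1$; (ii) whenever $J=I\sqcup\{i\}$ the pair $(v_I,v_J)$ is joined by an edge directly from the definition. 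This produces the required combinatorial $n$-cube.

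For the implication (\(\Rightarrow\)), I would exploit the natural orientation on $\CC^\ell(X)$: every edge points from $(Y,\varphi)$ to $(Y\setminus D,\varphi|_{Y\setminus D})$. A quick inspection of a $4$-cycle in $\CC^\ell(X)$ shows that opposite edges carry the same orientation, so this gives a cubical orientation on the entire graph. Inside an $n$-cube $Q$ this produces a unique source vertex $v_r$ all of whose $n$ cube-edges are outgoing; this is the candidate for the vertex labelled by $r$ in the statement. Each outgoing edge is of the form (some representative of $v_r$, some irreducible subvariety of codimension $\ell+1$), and distinctness of the $n$ edges forces the $n$ subvarieties to be distinct.

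The main obstacle is this: a priori each of the $n$ outgoing edges is realized by a \emph{different} representative of $v_r$, so one has $n$ varieties $Y_r^{(1)},\dots,Y_r^{(n)}$, each carrying a single distinguished irreducible subvariety $D_k\subset Y_r^{(k)}$ of codimension $\ell+1$, together with mutual pseudo-isomorphisms in codimension $\ell+1$. The plan is to glue these $Y_r^{(k)}$ along the common open dense subset where all transition maps are isomorphisms, and then glue back each $D_k$ using its unique extension from $Y_r^{(k)}$; since the paper does not require varieties to be quasi-projective, this gluing produces a legitimate representative $(Y_r,\varphi_r)$ of $v_r$ containing all of $D_1,\dots,D_n$ as irreducible subvarieties of codimension $\ell+1$. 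Once such a common representative is in hand, the analysis becomes purely set-theoretic: unlike blow-ups, the operations ``remove $D_i$'' commute on the nose, so $(Y_r\setminus D_i)\setminus D_j=Y_r\setminus(D_i\cup D_j)$, and tracing the $2^n$ vertices of $Q$ from $v_r$ along all subsets of edge-directions identifies each $v_I$ with $[(Y_r\setminus\bigcup_{i\in I}D_i,\varphi_r|)]$, giving the desired description. The gluing step is the only delicate point; every other verification is direct.
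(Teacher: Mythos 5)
The paper states this Fact without proof (it appears as an observation following the example of Figure \ref{fig_ex_ccP3}), so there is no official argument to measure yours against; your architecture --- an explicit construction of the $2^n$ vertices $v_I$ for the ``if'' direction, and the source vertex of the cubical orientation for the ``only if'' direction --- is the natural one and matches how the paper handles cubes elsewhere. The ``if'' direction as you describe it is fine.

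The step you flag as the main obstacle, however, is where the plan should be changed: the gluing of the $n$ representatives $Y_r^{(1)},\dots,Y_r^{(n)}$ is both unnecessary and genuinely risky. It is unnecessary because the transition map between any two representatives of $v_r$ is, by definition of the equivalence, an isomorphism in codimension $\ell+1$, so its exceptional locus has codimension strictly greater than $\ell+1$ and cannot contain any of the $D_k$, which have codimension exactly $\ell+1$. Hence each $D_k$ has an irreducible strict transform of codimension $\ell+1$ inside any \emph{single} chosen representative $(Y_r,\varphi_r)$, and removing that strict transform yields the same adjacent vertex: the restricted transition map between $Y_r^{(k)}\setminus D_k$ and $Y_r\setminus D_k'$ is still an isomorphism in codimension $\ell+1$, since its extra exceptional locus is confined to $\Exc(\varphi_r^{-1}\varphi^{(k)})$. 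This is precisely the observation that opens the paper's proof of Lemma \ref{lemma_v_dominates_cube}: for any edge at $v$ and \emph{any} representative $(Y,\varphi)$ of $v$, the other endpoint is represented by $(Y\setminus D,\varphi|_{Y\setminus D})$ for some irreducible $D\subset Y$ of codimension $\ell+1$. The gluing is risky because, as the paper notes in Subsubsection \ref{subsubsection_Cornulier_Cantat}, gluing varieties along dense open subsets need not produce a separated scheme; that difficulty is real, but it only arises for \emph{ascending} edges (in the proof that links are flag), whereas here all $n$ edges descend from the source $v_r$. If you replace the gluing by this strict-transform observation, the remainder of your plan (commutativity of removals, induction over the faces of the cube using the uniqueness of the fourth vertex of a square in a median graph) goes through.
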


The variety $X$ contains infinitely many irreducible subvarieties of codimension $\ell+1$, so removing an arbitrary large sequence of such subvarieties, we immediately see the following properties of these graphs.
\begin{proposition}
	The graph $\CC^{\ell}(X)$ is: 
	\begin{itemize}
		\item not locally compact,
		\item infinite dimensional with infinite cubes,
		\item cubically oriented.
	\end{itemize}
\end{proposition}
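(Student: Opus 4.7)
All three claims reduce, via Fact \ref{ccn_cubes}, to the abundance of codimension $\ell+1$ irreducible subvarieties in $X$.

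\textbf{Not locally compact and infinite cubes.} My plan is to fix the base vertex $v_0 = [(X,\id)]$ and build an infinite cube rooted at it. Assuming $\dim X \geq \ell+1$ (otherwise the graph is a point and every assertion is vacuous), I would choose a countable family $\{D_i\}_{i\in\N}$ of pairwise distinct irreducible subvarieties of codimension $\ell+1$ in $X$. For every finite $J\subset\N$, the pair $(X\setminus\bigcup_{i\in J}D_i,\,\id)$ represents a vertex $v_J$, and by Fact \ref{ccn_cubes} the vertices $\{v_J\}_{J \text{ finite}}$, together with the edges corresponding to removing one $D_i$ at a time, span an infinite cube rooted at $v_0$. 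This simultaneously yields infinitely many neighbors of $v_0$ (so non-local compactness), cubes of every finite dimension (infinite dimensionality), and an infinite cube.

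\textbf{Cubical orientation.} I would declare every edge $(Y,\varphi,D)$ to go from $[(Y,\varphi)]$ towards $[(Y\setminus D,\varphi|_{Y\setminus D})]$. Two things need to be checked. First, well-definedness: suppose the same edge were represented by a second triple $(Y',\varphi',D')$ with the opposite source/target assignment. Writing $\alpha := \varphi'^{-1}\varphi : Y\dashrightarrow Y'$, one of the two equivalences would force $\alpha$ to factor (up to codimension $>\ell+1$) through the open immersion $Y'\setminus D'\hookrightarrow Y'$, so that $\alpha^{-1}(D')$ has codimension $\geq \ell+2$ in $Y$; the other would realize $\alpha$ as the inclusion $Y\setminus D\hookrightarrow Y$ precomposed with an isomorphism in codimension $\ell+1$ from $Y\setminus D$ onto $Y'$, forcing $\alpha^{-1}(D')$ to have codimension exactly $\ell+1$ in $Y$. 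The contradiction rules out the reversed representative. Second, parallelism on $4$-cycles: by Fact \ref{ccn_cubes} every $4$-cycle is the boundary of a $2$-cube arising from two distinct codimension $\ell+1$ subvarieties $D_1,D_2\subset Y$; its four edges split into two opposite pairs, each pair corresponding to ``remove $D_1$'' or ``remove $D_2$''. Since my orientation depends only on which subvariety is being removed, opposite edges point in parallel directions, so the orientation is cubical.

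\textbf{Main obstacle.} The only non-routine step will be the well-definedness of the orientation, where the codimension bookkeeping in the composition of rational maps has to be carried out carefully. Everything else reduces, after invoking Fact \ref{ccn_cubes}, to the elementary observation that $X$ contains infinitely many irreducible subvarieties of codimension $\ell+1$.
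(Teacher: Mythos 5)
Your proof is correct and follows the same route as the paper, which disposes of all three items in a single sentence by observing that $X$ contains infinitely many irreducible subvarieties of codimension $\ell+1$, so that removing arbitrarily large families of them produces arbitrarily large (indeed infinite) cubes at any vertex. The well-definedness of the orientation, which you rightly single out as the only non-routine point, is not spelled out in the paper at all (the orientation is simply asserted when the edges $(Y,\varphi,D)$ are introduced); your codimension-counting idea is the right one, and can be phrased more cleanly by noting that $\lvert\Exc^{\ell+1}(\varphi_2^{-1}\varphi_1)\rvert$ depends only on the vertices $[(Y_i,\varphi_i)]$ and not on their representatives, and along any edge equals $0$ in one direction and $1$ in the other, which pins down the direction intrinsically.
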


\begin{theorem}[\cite{Lonjou_Urech_cube_complexe}]\label{thm_cl_median}
	The graphs $\mathcal{C}^{\ell}(X)$ are median graphs for $0\leq \ell\leq \dim(X)$.
\end{theorem}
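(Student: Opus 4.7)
My plan is to verify the combinatorial criterion furnished by Theorems \ref{thm:MedianVsCC} and \ref{thm_ccCAT((0))}: taking the cubes of $\mathcal{C}^\ell(X)$ to be those described in Fact \ref{ccn_cubes}, I need to show that the resulting cube completion is connected, simply connected, and has flag links at every vertex. The cube description itself will follow from the obvious commutativity of removing two distinct codim-$(\ell+1)$ subvarieties from a common variety and an induction showing that $n$ pairwise compatible such removals generate an $n$-cube.

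For connectedness, given two vertices $v_1=[(Y_1,\varphi_1)]$ and $v_2=[(Y_2,\varphi_2)]$, I will use that the transition map $\varphi_2^{-1}\varphi_1$ is an isomorphism in codimension $\ell$, so its exceptional locus and that of its inverse have only finitely many irreducible components of codimension exactly $\ell+1$. Removing those components successively from $Y_1$ and from $Y_2$ yields opens that are isomorphic in codimension $\ell+1$, hence a common vertex joined to both $v_1$ and $v_2$ by oriented paths.

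For simple connectedness I plan to follow the blueprint of the blow-up graph proof: given a cycle $\gamma$ through $v_1,\dots,v_n$, I will build a vertex $w$ \emph{dominating} every $v_i$---that is, a variety $W$ in which each $Y_i$ sits as an open subset whose complement is a finite union of codim-$(\ell+1)$ irreducible subvarieties. Granted such $w$, I will contract $\gamma$ by iteratively picking a locally lowest vertex $v_{i_0}$ of $\gamma$, replacing it by the upstairs corner of a square formed with its two neighbours, and continuing by induction until the cycle collapses to $w$. The flag-link property is then a direct application of Fact \ref{ccn_cubes}: given $v=[(Y,\varphi)]$ and pairwise-square neighbours $v_1,\dots,v_n$, those $v_i$ dominated by $v$ correspond to pairwise distinct codim-$(\ell+1)$ subvarieties of $Y$, while those dominating $v$ correspond to codim-$(\ell+1)$ subvarieties in varieties dominating $v$; collecting all these data in a common dominating variety produces the desired $n$-cube.

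The main obstacle will be the construction of the dominating vertex $w$ in the simple-connectedness argument. Unlike the blow-up graph setting, where Zariski's theorem delivers a common smooth projective refinement through successive blow-ups, here I must glue the finite family $(Y_i,\varphi_i)$ along the birational identifications $\varphi_j^{-1}\varphi_i$ and verify that the output is a genuine variety---integral, separated, of finite type over $\kk$---in which each $Y_i$ embeds as an open subvariety with complement a finite union of codim-$(\ell+1)$ irreducible subvarieties. Separatedness is the delicate point: gluing opens along birational identifications can easily create doubled loci, and ruling this out is precisely the reason the broader category of integral separated schemes of finite type (rather than only quasi-projective varieties) is invoked in this section.
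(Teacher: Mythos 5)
Your overall frame (verify simple connectedness and flag links via Theorems \ref{thm:MedianVsCC} and \ref{thm_ccCAT((0))}) and your connectedness argument (remove the codimension-$(\ell+1)$ exceptional components of $\varphi_2^{-1}\varphi_1$ and of its inverse to reach a common vertex) match the paper, the latter being exactly Lemma \ref{fact_intersection}. But your simple-connectedness argument runs in the opposite direction from the paper's, and that reversal is where a genuine gap opens up. You propose to build a vertex $w$ \emph{dominating} every vertex of the cycle --- a variety $W$ obtained by gluing the $Y_i$ along the birational identifications --- and then to push each locally lowest vertex up to the ``upstairs corner'' of a square. Both ingredients (the global $W$, and each upstairs corner, which is itself a gluing of $Y_{i_0-1}$ and $Y_{i_0+1}$ along $Y_{i_0}$) require showing that a scheme glued from varieties along dense open subsets is still a variety, in particular separated; you correctly identify this as the main obstacle, but you do not resolve it, and it is not a routine verification --- gluing along open immersions readily produces non-separated schemes, and controlling this is precisely the technical reason the paper enlarges the category of varieties at all. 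As written, the central construction of your homotopy argument is therefore missing.

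The paper avoids this entirely by going downward: Lemma \ref{fact_intersection} produces a vertex $v$ that is \emph{dominated by} all the $v_i$ (an open subset common to all the $Y_i$, obtained by mere removal of subvarieties, so trivially a variety), one inducts on the maximal number $c(v_i)$ of irreducible components of $Y_i\setminus\varphi_i^{-1}\varphi(Y)$, and the vertex realizing the maximum dominates both of its neighbours, so Lemma \ref{lemma_v_dominates_cube} supplies the needed square \emph{below} it with no gluing whatsoever. This downward substitute for Zariski's theorem is exactly the point the paper emphasizes as the structural difference with the blow-up graph, and your proposal --- a direct transplant of the blow-up graph proof, where the upstairs corner always exists because one can always blow up two distinct points --- misses it. (Your flag-link sketch does face the same gluing issue for ascending edges, but there the paper also defers the details to \cite{Lonjou_Urech_cube_complexe}, so that part is at a comparable level of completeness.) To repair your proof, either supply the separatedness argument for the gluings you need, or replace the dominating vertex by the dominated one and invert the induction as in the paper.
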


A key point to notice is the following: for any family of vertices, we can find explicitly a vertex that it is dominated by all of them. For instance, consider two vertices $v_i:=[(Y_{i},\varphi_i)]$ for $1\leq i\leq 2$ then, denoting by $U_1:=Y_1\setminus \Exc^{\ell}(\varphi_2^{-1}\varphi_1)$ and by $U_2:= Y_2\setminus \Exc^{\ell}(\varphi_1^{-1}\varphi_2)$, the vertex $[(U_1,{\varphi_1}_{\mid U_1})]=[U_2,{\varphi_2}_{\mid U_2}]$ is dominated by both $v_1$ and $v_2$.

\begin{lemma}\label{fact_intersection}
	Let $X $ be a variety and let $[(Y_{1},\varphi_1)],\dots, [(Y_{n},\varphi_n)]$ be a finite set of vertices in $\CC^{\ell}(X )$. Let $U_{i}\subset Y_{i}$ be open dense subsets with complements of codimension $>\ell$ such that for $2\leq i\leq n$, $\varphi_i^{-1}\varphi_1$ induces an isomorphism in codimension $\ell +1$ between $U_{1}$ and $U_{i}$. 
	Then the vertex $[(U_{1},\varphi_1|_{U_{1}})]$ is dominated by all the vertices $[(Y_{i},\varphi_i)]$. Indeed, $[(U_{i},\varphi_i|_{U_{i}})]$ is dominated by $[(Y_{i},\varphi_i)]$,
	and by construction, all the vertices $[(U_{i},\varphi_i|_{U_{i}})]$ coincide.
\end{lemma}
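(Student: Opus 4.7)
The plan is to unpack the statement into three verifications: that each $[(U_i,\varphi_i|_{U_i})]$ is a legitimate vertex of $\CC^\ell(X)$, that it is dominated by $[(Y_i,\varphi_i)]$, and that the classes $[(U_i,\varphi_i|_{U_i})]$ for varying $i$ all coincide. The last point is immediate from the hypothesis, the first is routine, and the main content is the second, which I would deduce by peeling off the irreducible components of $Y_i\setminus U_i$ one at a time.

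First, to see that $[(U_i,\varphi_i|_{U_i})]$ is a vertex, observe that the open immersion $\iota_i:U_i\hookrightarrow Y_i$ has $\Exc(\iota_i)=\emptyset$ and $\Exc(\iota_i^{-1})=Y_i\setminus U_i$ of codimension $>\ell$ by hypothesis; since $\varphi_i$ is an isomorphism in codimension $\ell$, the composition $\varphi_i\circ\iota_i=\varphi_i|_{U_i}$ is also an isomorphism in codimension $\ell$ (exceptional loci of the composition are contained in the union of pre-images of the individual exceptional loci), so $(U_i,\varphi_i|_{U_i})$ is an $\ell$-marked variety.

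For the coincidence $[(U_i,\varphi_i|_{U_i})]=[(U_1,\varphi_1|_{U_1})]$, I simply appeal to the hypothesis that $\varphi_i^{-1}\varphi_1$ induces an isomorphism in codimension $\ell+1$ between $U_1$ and $U_i$: this is exactly the defining equivalence relation on $\ell$-marked varieties.

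The bulk of the work is the domination statement. Let $Y_i\setminus U_i=D^{(1)}\cup\cdots\cup D^{(m)}\cup E^{(1)}\cup\cdots\cup E^{(k)}$ be the decomposition into distinct irreducible components, where the $D^{(j)}$ have codimension exactly $\ell+1$ and the $E^{(j)}$ have codimension $>\ell+1$. I would first remove all the $E^{(j)}$ at once: the inclusion $Y_i\setminus\bigcup_j E^{(j)}\hookrightarrow Y_i$ is an isomorphism in codimension $\ell+1$, so the corresponding $\ell$-marked varieties are equivalent and represent the same vertex of $\CC^\ell(X)$. Then I would remove the codimension-$(\ell{+}1)$ components sequentially: at step $j$, set $Z_j:=Y_i\setminus\bigcup_{s\le j-1}D^{(s)}\cup\bigcup_s E^{(s)}$, and note that since the $D^{(s)}$ are distinct irreducible components of $Y_i\setminus U_i$ none is contained in the union of the others, so $D^{(j)}\cap Z_j$ remains an irreducible closed subvariety of $Z_j$ of codimension $\ell+1$. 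By the definition of edges in $\CC^\ell(X)$, removing $D^{(j)}\cap Z_j$ yields an oriented edge from $[(Z_j,\varphi_i|_{Z_j})]$ to $[(Z_{j+1},\varphi_i|_{Z_{j+1}})]$. After $m$ steps I reach $[(U_i,\varphi_i|_{U_i})]$, exhibiting a directed edge-path from $[(Y_i,\varphi_i)]$ to $[(U_i,\varphi_i|_{U_i})]$, i.e., the claimed domination. Combining with the coincidence of the $[(U_i,\varphi_i|_{U_i})]$ proves the lemma.

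The only subtle point to check carefully is that removing one component does not inadvertently change the irreducibility or codimension of the remaining components; I would handle this by noting that each $D^{(j)}\cap Z_j$ is open and dense in $D^{(j)}$ (since $D^{(j)}$ is irreducible and not contained in the closed set being removed), hence irreducible of the same codimension. Everything else is bookkeeping.
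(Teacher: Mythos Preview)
Your proof is correct and follows exactly the approach the paper sketches in the ``Indeed'' clause of the lemma statement itself: the paper does not give a separate proof, merely noting that each $[(U_i,\varphi_i|_{U_i})]$ is dominated by $[(Y_i,\varphi_i)]$ and that these classes coincide by construction. You have carefully unpacked both of these assertions---in particular the domination via sequential removal of codimension-$(\ell{+}1)$ components---which is precisely what the paper leaves implicit.
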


Note that this was not the case for the blow-up graph $\Cb(S)$ of a surface $S$. Indeed, any vertex $[(S',\varphi)]$ for which the surface $S'$ is minimal, i.e., does not contain any (-1)-curve, does not dominate any other vertices. 

It is still an open question whether the strong factorization theorem (Zariski theorem \ref{thm:Zariski}) remains valid or not in higher dimension. Zariski theorem was a key tool for the construction and the study of the blow-up graph in the surface case. Lemma \ref{fact_intersection} will be its replacement in the construction and the study of the graphs $\mathcal{C}^\ell(X)$.

The geometry of the graphs $\mathcal{C}^\ell(X)$ are really different from the one of the blow-up graph as we can see with the next lemma.

\begin{lemma}\label{lemma_v_dominates_cube}
	Let $v, v_1,\dots, v_n$ be vertices in  $\CC^{\ell}(X)$ such that $v$ dominates $v_1,\dots, v_n$. Then there exists a cube containing $v,v_1,\dots, v_n$ all of whose vertices are dominated by $v$.
\end{lemma}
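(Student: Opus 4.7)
The plan is to represent each $v_i$, and indeed every vertex in the cube we want to build, as arising from removing a family of irreducible codimension-$(\ell+1)$ subvarieties from a single fixed model of $v$, and then invoke Fact~\ref{ccn_cubes}.

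Fix a representative $v = [(Y,\varphi)]$. The first step is to prove the following claim by induction on the length of a downward edge-sequence from $v$ to $v_i$: there exist finitely many irreducible codimension-$(\ell+1)$ subvarieties $D^{(i)}_1,\dots,D^{(i)}_{k_i}\subset Y$ with
\[v_i = \bigl[\bigl(Y\setminus \textstyle\bigcup_j D^{(i)}_j,\ \varphi|_{Y\setminus\bigcup_j D^{(i)}_j}\bigr)\bigr].\]
The inductive step requires transporting a new edge (which on some representative of the current vertex removes an irreducible subvariety of codimension $\ell+1$) back to $Y$. The key point is that equivalence of $\ell$-marked varieties is realized by isomorphisms in codimension $\ell+1$, whose exceptional loci have codimension $\geq \ell+2$; such maps therefore induce a bijection between the irreducible subvarieties of codimension exactly $\ell+1$ of their source and target, so the subvariety being removed pulls back to a bona fide irreducible codimension-$(\ell+1)$ subvariety of $Y$.

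Granting the claim, let $\{D_1,\dots,D_m\}\subset Y$ be the union over $i$ of the subvarieties $D^{(i)}_j$, viewed as a set of distinct irreducible codimension-$(\ell+1)$ subvarieties of $Y$. For each subset $T\subseteq\{1,\dots,m\}$ set
\[v_T := \bigl[\bigl(Y\setminus \textstyle\bigcup_{t\in T} D_t,\ \varphi|_{Y\setminus\bigcup_{t\in T} D_t}\bigr)\bigr].\]
Then $v_\emptyset = v$, each $v_i$ equals some $v_{T_i}$, and every $v_T$ is dominated by $v$ by construction. Moreover, the $2^m$ vertices $v_T$ are pairwise distinct: for $T\neq S$ the birational map induced by the identity between $Y\setminus\bigcup_{t\in T}D_t$ and $Y\setminus\bigcup_{s\in S}D_s$ has exceptional locus $\bigcup_{s\in S\setminus T}D_s$ (or its analogue on the other side), which has codimension exactly $\ell+1$, so the map is not an isomorphism in codimension $\ell+1$.

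Applying Fact~\ref{ccn_cubes} with $r$ chosen so that the representative is $(Y,\varphi)$ and with the $n$ subvarieties being $D_1,\dots,D_m$, the family $\{v_T\}_{T\subseteq\{1,\dots,m\}}$ spans an $m$-dimensional cube in $\CC^\ell(X)$, which contains $v$ and all the $v_i$, and all of whose vertices are dominated by $v$. The main obstacle is the inductive step in the first paragraph: one must rule out the possibility that, somewhere along a dominance chain, passing to an equivalent representative could absorb or create a codimension-$(\ell+1)$ subvariety in an unexpected way; this is precisely prevented by the codimension bound on the exceptional locus of an isomorphism in codimension $\ell+1$.
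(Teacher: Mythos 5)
Your proof is correct and follows essentially the same route as the paper's: transport each downward edge back to a fixed representative $(Y,\varphi)$ of $v$ (using that equivalences are isomorphisms in codimension $\ell+1$, hence do not disturb codimension-$(\ell+1)$ subvarieties), represent each $v_i$ as $Y$ minus a finite union of irreducible codimension-$(\ell+1)$ subvarieties, and conclude via the cube criterion. The paper's argument is just a terser version of yours; your added verification that the $2^m$ vertices are pairwise distinct is a detail the paper leaves implicit.
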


\begin{proof}
	Let us observe that for any edge connecting a vertex $v$ with vertex $v'$, and for any representative $(Y,\varphi)$ of $v$ there exists an irreducible subvariety $D \subset Y $ of codimension $\ell+1$ and $v'$ is represented by $(Y \setminus D , \varphi|_{Y \setminus D })$. Because $v$ dominates the $v_i$ we can therefore find by induction a representative of the $v_i$ of the form $(Y \setminus D^i, \varphi|_{Y \setminus D^i })$ where $D^i$ is a finite union of irreducible subvarieties of codimension $\ell+1$ of $Y$ for each $i$. This shows that $v,v_1,\dots,v_n$ form a cube whose vertices are dominated by $v$.
\end{proof}

For instance, consider the standard quadratic Cremona involution $\sigma\in \Bir(\PP^2)$ seen in Example \ref{ex:standard}: $\sigma : [x_0:x_1:x_2] \dashmapsto [x_1x_2:x_0x_2:x_0x_1]$. For $0\leq i\leq 2$, denoting  by $L_i=\{x_i=0\}$ the coordinate lines, the vertex $[(\PP^2\setminus L_0\cup L_1\cup L_2,\id_{\mid\PP^2\setminus L_0\cup L_1\cup L_2})]$ of the graph $\CC^0(\PP^2)$ is dominated by both $[(\PP^2,\id)]$ and $[(\PP^2,\sigma)]$.

\begin{proof}[Idea of proof of Theorem \ref{thm_cl_median}]
	Using Theorem \ref{thm:MedianVsCC} and Theorem \ref{thm_ccCAT((0))}, it remains to prove that the cube completion of $\mathcal{C}^{\ell}(X)$ is simply connected and that links of vertices are flag simplical complexes. We will not discuss here about the flagness of the links of vertices but an interested reader can find it in \cite{Lonjou_Urech_cube_complexe}.
	
	By Lemma \ref{fact_intersection}, the graph $\mathcal{C}^{\ell}(X)$ is connected. 
		Let now $\gamma$ be a cycle in $\CC^{\ell}(X )$ passing through the vertices $v_1,\dots, v_n$, respectively represented by $[(Y_{i},\varphi_i)]$, which can be assumed to be without backtrack. Denote by $v=[(Y,\varphi)]$ the vertex from Lemma~\ref{fact_intersection} that is dominated by all the $v_i$, i.e., $\varphi_i^{-1}\varphi\colon Y\to Y_{i}$ is an {open immersion}. 
		The goal now is to show that $\gamma$ is homotopic to $v$. 
	For this we define $c_{\max}(\gamma):=\max_{1\leq i\leq n}(c(v_i)$, where $c(v_i)$ is the number of irreducible components of pure codimension $\ell+1$ of $Y_{i}\setminus \varphi_i^{-1}\varphi(Y)$. Note that $c(v_i)$ does not depend on the representative of $v_i$ and that $c(v_i)=0$ if and only if $v_i=v$. Denote by $1\leq i_0\leq n$ the maximal index such that $c(v_{i_0})=c_{\max}(\gamma)$.
	
	By definition, $v_{i_0}$ dominates $v_{i_0-1}$, $v_{i_0+1}$ and $v$ so by Lemma \ref{lemma_v_dominates_cube} there exists a cube containing $v_{i_0}$, $v_{i_0-1}$, $v_{i_0+1}$ and $v$. Moreover, this cube contains a square spanned by $v_{i_0}$, $v_{i_0-1}$ and $v_{i_0+1}$. We denote by $v_{i_0}'$ the fourth vertex. Since they form a square, we can deform $\gamma$ by a homotopy so that it passes through $v_{i_0}'$ instead of $v_{i_0}$. By the definition of our cube, $v_{i_0}'$ still dominates $v$ and $c(v_{i_0}')<c(v_{i_0})$. By induction on $(c_{\max}(\gamma),i_0)$ with the lexicographical order, we conclude that in finitely many such steps $\gamma$ is homotopic to $v$, and that the graph $\mathcal{C}^{\ell}(X)$  is simply connected.
\end{proof}

\begin{remark}\label{rmk_onecube}
In the case $\ell=0$, when the variety is normal, the median graph $\CC^0(X )$ is indeed median for a stupid reason; it is a single (infinite) cube. This comes from the fact that the convex hull of any finite set of vertices in $\CC^0(X )$ is a single cube. But this is not the case anymore if $\ell>0$ (see for instance \cite[Remark 4.8]{Lonjou_Urech_cube_complexe}).
\end{remark}

\subsubsection{Hyperplanes}
In the graph $\CC^{\ell}(X)$, an edge is given by removing an irreducible subvariety $D$ of codimension $\ell+1$ of a $\ell$-marked variety $(Y,\varphi)$. We denote such an edge by $(Y,\varphi, D)$ and by $[(Y,\varphi, D)]$ its corresponding hyperplane. In this subsection, we give a birational interpretation of hyperplanes, distance and halfspaces.

The following birational characterization of the equivalence of edges is an immediate consequence of the definition of Lemma \ref{fact_intersection}.

\begin{lemma}[\cite{Lonjou_Urech_cube_complexe}]\label{lemma_same_hyperplane}
	Two edges $(Y,\varphi, D)$ and $(Y',\varphi', D')$ correspond to the same hyperplane if and only if $D'$ is not contained in the exceptional locus of $\varphi^{-1}\varphi'$ and $D$ is the strict transform $\varphi^{-1}\varphi'(D')$ of $D'$ under $\varphi^{-1}\varphi'$.
\end{lemma}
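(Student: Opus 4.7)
The plan is to prove both implications by first analyzing opposite edges of a single $4$-cycle and then chaining. By Fact~\ref{ccn_cubes}, any square in $\CC^\ell(X)$ has a ``top'' vertex $[(Z,\psi)]$ and two distinct irreducible codimension-$(\ell+1)$ subvarieties $D_1,D_2\subset Z$, so that the edge $(Z,\psi,D_1)$ has as its opposite edge $(Z\setminus D_2,\psi|_{Z\setminus D_2},D_1\setminus D_2)$. The birational map relating these two marked varieties is the open immersion $\iota\colon Z\setminus D_2\hookrightarrow Z$, whose exceptional locus is empty, and the strict transform of $D_1\setminus D_2$ under $\iota$ equals $\overline{D_1\setminus D_2}=D_1$ by irreducibility of $D_1$ and $D_1\neq D_2$. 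Thus opposite edges of a single $4$-cycle satisfy the stated condition.

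For the forward direction, an equivalence chain of edges produces a sequence of strict-transform relations, and the crucial point is that this relation is stable under composition of birational maps. More precisely, if $D$ is the strict transform of $D''$ under $\psi_1$ with $D''\not\subset\Exc(\psi_1)$, and $D''$ is the strict transform of $D'$ under $\psi_2$ with $D'\not\subset\Exc(\psi_2)$, then on a dense open subset of $D'$ on which both $\psi_2$ and $\psi_1\psi_2$ are local isomorphisms one obtains $D'\not\subset\Exc(\psi_1\psi_2)$ and that $D$ is the strict transform of $D'$ under $\psi_1\psi_2$. An induction along the chain of squares gives the implication.

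For the converse, set $\psi=\varphi^{-1}\varphi'\colon Y'\dashrightarrow Y$, and let $U'_{\max}\subset Y'$ and $V_{\max}=\psi(U'_{\max})\subset Y$ be the maximal open subsets on which $\psi$ restricts to an isomorphism; both complements have codimension $>\ell$. The assumption $D'\not\subset\Exc(\psi)$ implies that $\psi|_{U'_{\max}}$ maps $D'\cap U'_{\max}$ isomorphically onto $D\cap V_{\max}$, which is dense open in $D$; in particular $D\not\subset\Exc(\psi^{-1})$. Let $E'_1,\dots,E'_n$ and $F_1,\dots,F_m$ be the codimension-$(\ell+1)$ irreducible components of $Y'\setminus U'_{\max}$ and $Y\setminus V_{\max}$ respectively; by the previous observation, none of them equal $D'$ or $D$.

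By Fact~\ref{ccn_cubes}, the $m+1$ distinct subvarieties $D,F_1,\dots,F_m\subset Y$ generate an $(m+1)$-cube with top $[(Y,\varphi)]$; inside this cube, iterated application of the single-$4$-cycle argument shows that $(Y,\varphi,D)$ and $(Y\setminus\bigcup_i F_i,\varphi|,D\cap(Y\setminus\bigcup_i F_i))$ determine the same hyperplane. Symmetrically, $(Y',\varphi',D')$ and $(Y'\setminus\bigcup_j E'_j,\varphi'|,D'\cap(Y'\setminus\bigcup_j E'_j))$ determine the same hyperplane. Finally, $Y\setminus\bigcup_i F_i$ differs from $V_{\max}$, and $Y'\setminus\bigcup_j E'_j$ from $U'_{\max}$, only by closed subsets of codimension $>\ell+1$; hence $\psi$ induces an isomorphism in codimension $\ell+1$ between them, matching $D'\cap(Y'\setminus\bigcup_j E'_j)$ with $D\cap(Y\setminus\bigcup_i F_i)$, so the two ``small'' edges represent the same oriented edge of $\CC^\ell(X)$. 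The main technical obstacle is precisely this final matching step, where one must carefully track that all codimension $>\ell+1$ discrepancies are absorbed by the vertex-equivalence relation defining $\CC^\ell(X)$.
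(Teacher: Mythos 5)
Your argument is correct and follows the same route the paper indicates: the paper dismisses this lemma as an immediate consequence of the definition of hyperplanes together with Lemma~\ref{fact_intersection}, and your converse direction is precisely an explicit unwinding of that common-refinement idea (passing to $U'_{\max}$ and $V_{\max}$ and absorbing the codimension $>\ell+1$ discrepancies into the vertex equivalence), while your forward direction supplies the routine transitivity of the strict-transform relation. The only elision is that in a chain of opposite-edges-in-a-$4$-cycle relations one must also note that the condition is symmetric and independent of the chosen representative of each intermediate edge, but both follow from the same one-square computation you already give.
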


The following lemma describes the geodesics of the graph $\CC^{\ell}(X)$. It is an immediate consequence of Lemma \ref{fact_intersection} that allows us to find a vertex dominated by any family of vertices, of Lemma \ref{lemma_same_hyperplane} that characterizes the hyperplanes of this graph, and of the characterization of geodesics by hyperplanes Theorem~\ref{theorem:combinatorial_geodesic}.
\begin{lemma}[\cite{Lonjou_Urech_cube_complexe}]\label{distancel}
	Let $v_1=[(Y_1,\varphi_1)]$ and $v_2=[(Y_2,\varphi_2)]$ be two vertices in $\CC^{\ell}(X)$.
	There is a bijection between the set of all the geodesic paths joining these two vertices and the set of all the possible order to remove the irreducible components of pure codimension $\ell+1$ of the exceptional locus of $\varphi_2^{-1}\varphi_1$ from $Y_1$ and adding the irreducible components of pure codimension $\ell+1$ of the exceptional locus of $\varphi_1^{-1}\varphi_2$. In particular, the distance between $v_1$ and $v_2$ equals the sum \[
	{\dist(v_1,v_2)=}\lvert\Exc^{\ell+1}(\varphi_1^{-1}\varphi_2)\rvert+\lvert\Exc^{\ell+1}(\varphi_2^{-1}\varphi_1)\rvert.
	\]
\end{lemma}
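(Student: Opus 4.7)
The plan is to build an explicit geodesic between $v_1$ and $v_2$ using the dominating vertex given by Lemma \ref{fact_intersection}, and then identify the hyperplanes separating $v_1$ and $v_2$ in order to invoke Theorem \ref{theorem:combinatorial_geodesic}. First, set $U_1 := Y_1 \setminus \Exc^{\ell+1}(\varphi_2^{-1}\varphi_1)$ and $U_2 := Y_2 \setminus \Exc^{\ell+1}(\varphi_1^{-1}\varphi_2)$. By construction $\varphi_2^{-1}\varphi_1$ restricts to an isomorphism in codimension $\ell+1$ between $U_1$ and $U_2$, so by Lemma \ref{fact_intersection} the vertex $w := [(U_1, \varphi_1|_{U_1})] = [(U_2, \varphi_2|_{U_2})]$ is well defined and dominated by both $v_1$ and $v_2$.

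Let $D_1,\dots,D_p$ be the irreducible components of pure codimension $\ell+1$ in $\Exc(\varphi_2^{-1}\varphi_1)$ and $D_1',\dots,D_q'$ those in $\Exc(\varphi_1^{-1}\varphi_2)$. Removing these subvarieties one by one from $Y_1$, in any order, produces an oriented path from $v_1$ to $w$ of length $p$; symmetrically, reading edges backwards we obtain an oriented path from $w$ to $v_2$ of length $q$. Concatenating these two paths yields a path from $v_1$ to $v_2$ of length $p+q$, and for every choice of ordering of the $D_i$'s and of the $D_j'$'s we get a distinct such path. Thus $\dist(v_1,v_2) \leq p+q$, and the counting gives the claimed number of candidate geodesics.

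To promote this inequality to an equality and to the bijection, I will use hyperplanes. By Lemma \ref{lemma_same_hyperplane}, each divisor $D_i$ defines a hyperplane $H_i$ whose opposite halfspaces are characterized by whether $D_i$ (or its strict transform) has been removed or not; symmetrically each $D_j'$ defines a hyperplane $H_j'$. The vertex $v_1$ lies on the ``not-removed'' side of each $H_i$ and on the ``removed'' side of each $H_j'$ (because in $Y_1$ the strict transform $\varphi_1^{-1}\varphi_2(D_j')$ does not appear as a divisor, being contracted), while $v_2$ lies on the opposite sides. Hence the $p+q$ hyperplanes $H_1,\dots,H_p,H_1',\dots,H_q'$ all separate $v_1$ from $v_2$, so by Theorem \ref{theorem:combinatorial_geodesic} we have $\dist(v_1,v_2) \geq p+q$, yielding the distance formula.

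Finally, the same theorem asserts that any geodesic crosses each separating hyperplane exactly once, and by Lemma \ref{lemma_same_hyperplane} crossing $H_i$ corresponds to removing the strict transform of $D_i$ and crossing $H_j'$ corresponds to adding back the strict transform of $D_j'$. The key point to verify is that no geodesic can mix the removals and additions in a more exotic way than by reordering the $D_i$'s and $D_j'$'s: this follows because at any intermediate vertex the set of removed divisors among $\{D_i\}$ and added divisors among $\{D_j'\}$ is determined by which separating hyperplanes have already been crossed, and an edge corresponds to crossing exactly one such hyperplane. This gives the announced bijection between geodesics and orderings; the main subtlety, which is the step I expect to require the most care, is checking that no extra hyperplanes can be crossed by a geodesic (no superfluous divisors are removed or added along the way), which is exactly Theorem \ref{theorem:combinatorial_geodesic} combined with the fact that the hyperplanes separating $v_1$ from $v_2$ are precisely the $H_i$'s and $H_j'$'s listed above.
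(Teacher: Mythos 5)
Your proof is correct and follows exactly the route the paper indicates: build the dominated vertex via Lemma \ref{fact_intersection} to get the upper bound, identify the separating hyperplanes via Lemma \ref{lemma_same_hyperplane} and the halfspace description, and conclude with Theorem \ref{theorem:combinatorial_geodesic}. You have simply filled in the details that the paper leaves as "an immediate consequence" of those three ingredients.
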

Note that working over algebraically closed fields ensure that if $f$ is an automorphism in codimension $\ell$ of a variety $Y$ then the number of irreducible components in the exceptional locus of codimension $\ell+1$ of $f$ and $f^{-1}$ are equals: $\lvert \Exc^{\ell+1}(f)\rvert =\lvert \Exc^{\ell+1}(f^{-1})\rvert$. 

\medskip

There is an algebraic characterization of halfspaces associated to a given hyperplane.

\begin{lemma}[\cite{Lonjou_Urech_cube_complexe}]\label{prop:halfspace_Cl}
	Consider a hyperplane $[(Y,\varphi, D)]$ in $\CC^\ell(X)$. The set of vertices $[(V,\psi)]$ such that $D$ is contained in the exceptional locus of $\psi^{-1}\varphi$ determines one of the two halfspaces.
\end{lemma}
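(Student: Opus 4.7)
The plan is to show that the set
\[A:=\{[(V,\psi)] \mid D\subset \Exc(\psi^{-1}\varphi)\}\]
coincides with the halfspace of the hyperplane $H:=[(Y,\varphi,D)]$ containing the vertex $v_2:=[(Y\setminus D,\varphi|_{Y\setminus D})]$, whilst its complement is the halfspace containing $v_1:=[(Y,\varphi)]$.

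First, I would verify well-definedness of the defining condition. Under a change of vertex representative $(V,\psi)\sim(V',\psi')$, the birational maps $\psi^{-1}\varphi$ and $(\psi')^{-1}\varphi$ differ by post-composition with an isomorphism in codimension $\ell+1$, so their codimension-$(\ell+1)$ exceptional components agree; since $D$ has codimension exactly $\ell+1$, the membership $D\subset\Exc(\psi^{-1}\varphi)$ depends only on the vertex class. Under a change of edge representative for $H$, Lemma~\ref{lemma_same_hyperplane} identifies the corresponding subvariety $D'$ with the strict transform $(\varphi')^{-1}\varphi(D)$, and a direct verification then yields $D\subset\Exc(\psi^{-1}\varphi)\iff D'\subset\Exc(\psi^{-1}\varphi')$.

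Next I would record the baseline check. At $v_1=[(Y,\varphi)]$ the map $\varphi^{-1}\varphi=\id_Y$ has empty exceptional locus, so $v_1\notin A$. At $v_2$, the map $(\varphi|_{Y\setminus D})^{-1}\varphi\colon Y\dashrightarrow Y\setminus D$ is an isomorphism precisely on $Y\setminus D$, so its exceptional locus is exactly $D$, and $v_2\in A$. For the main identification, I would use the combinatorial characterization of halfspaces: by Theorem~\ref{theorem:combinatorial_geodesic} combined with Lemma~\ref{distancel}, the hyperplanes separating $v_1$ from an arbitrary vertex $v=[(V,\psi)]$ correspond bijectively to the irreducible codimension-$(\ell+1)$ components of $\Exc(\varphi^{-1}\psi)\cup\Exc(\psi^{-1}\varphi)$, the ones living inside $Y$ being exactly the components of $\Exc^{\ell+1}(\psi^{-1}\varphi)$. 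Since $D$ is irreducible of codimension $\ell+1$, a dimension count rules out its being strictly contained in a component of codimension $>\ell+1$, hence $D\subset\Exc(\psi^{-1}\varphi)$ forces $D$ to itself be an irreducible component of $\Exc^{\ell+1}(\psi^{-1}\varphi)$. Consequently, $v\in A$ if and only if $H$ separates $v_1$ and $v$, i.e.\ if and only if $v$ lies in the halfspace of $v_2$.

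The main obstacle I expect is the careful bookkeeping of well-definedness under both changes of representative, particularly checking how the strict-transform identification from Lemma~\ref{lemma_same_hyperplane} interacts with the exceptional locus of $\psi^{-1}\varphi$. Once that is in place, the identification $A=H^+$ follows directly from the combinatorial distance formula of Lemma~\ref{distancel}.
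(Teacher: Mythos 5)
The survey states this lemma without proof (it is quoted from \cite{Lonjou_Urech_cube_complexe}), so there is no in-text argument to compare against; judged on its own terms, your proof is correct and uses exactly the machinery the survey sets up for this purpose: Lemma~\ref{lemma_same_hyperplane} to control representatives of edges, Example~\ref{ex_remov_subv} for the baseline vertices $v_1$ and $v_2$, and Lemma~\ref{distancel} together with Theorem~\ref{theorem:combinatorial_geodesic} to identify the separating hyperplanes with the codimension-$(\ell+1)$ components of the two exceptional loci. The key observation that a hyperplane from the ``ascending'' part of the geodesic can never be represented in the form $(Y,\varphi,E)$ with $E\subset Y$ (because Lemma~\ref{lemma_same_hyperplane} requires the subvariety not to lie in the relevant exceptional locus) is implicitly what makes your equivalence $v\in A \iff H \text{ separates } v_1 \text{ and } v$ work, and it holds. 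One small wording slip: since $\psi^{-1}\varphi$ is an isomorphism in codimension $\ell$, every component of its exceptional locus has codimension \emph{at least} $\ell+1$, so the dimension count should rule out $D$ being \emph{strictly} contained in a component of codimension $<\ell+1$ (there are none); a component of codimension $>\ell+1$ cannot contain $D$ for the trivial reason that it has smaller dimension. The conclusion that $D\subset\Exc(\psi^{-1}\varphi)$ forces $D$ to be an irreducible component of $\Exc^{\ell+1}(\psi^{-1}\varphi)$ is nevertheless correct.
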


\begin{remark}
	If $S$ is a projective surface, then the blow-up graph is a subgraph of $\CC^{0}(S)$. The vertices of the blow-up graph can be identified with the vertices in $\CC^0(S)$ of the form $[(S',\varphi)]$, where $S'$ is a projective surface. Indeed, the equivalence relation on the marked surfaces of the blow-up graph is a subrelation of the one put on the 0-marked varieties of the graph $\CC^0(S)$.
	However, this injection is not an isometric embedding. For instance, if we consider $j=(x,y)\mapsto (x,x^2+y)$ from Remark \ref{rmk_basepoints_infinitely_near}, in the blow-up graph $\Cb(\PP^2)$, the distance between the vertices $[(\PP^2,\id)]$ and $[(\PP^2,j)]$ is $6$ by Lemma \ref{lemma:comb_geodesic} as $j$ has three base-points (see Figure \ref{fig_resol_j}). But in the graph $\CC^{0}(\PP^2)$ it has distance $2$ because $\Exc(j)=\Exc(j^{-1})=\{z=0\}$, hence by Lemma \ref{distancel}, the path given by the vertices $[(\PP^2,\id)]$, $[(\A_z^2,\id_{\A_z^2})]$ and $[(\PP^2,j)]$ is geodesic.
\end{remark}

\subsubsection{Action of groups of automorphisms in codimension $\ell$}
For any $\ell$, the group of automorphisms in codimension $\ell$, $\Psaut^\ell(X)$, acts faithfully on the set of $\ell$-marked varieties by post-composition and preserves the equivalence class, hence it acts on the set of vertices of $\CC^\ell(X)$: let $f\in\Psaut^\ell(X)$ and $[(A,\varphi)]\in \CC^\ell(X)^0 $, $f\bullet [(Y,\varphi)] =[(Y, f\varphi)]$. This gives us a faithful action by isometries on the graph $\CC^\ell(X)$. Note that this action preserves the cubical orientation given. As a consequence, an element preserving a cube, fixes a vertex.

\medskip
As we will see in Proposition \ref{prop_catalog_ccn}, this action encodes geometrically, and in a unified way, diverse birational notions. 

\medskip

Let $f\in\Psaut^{\ell}(X)$, the \emph{dynamical number of the $(\ell+1)$-exceptional locus} of $f$ is defined as:
\[
\nu^{\ell+1}(f)\coloneqq \lim_{n\to\infty}\frac{\lvert \Exc^{\ell+1}(f^n)\rvert}{n}.
\] This limit always exists, since the function $f\mapsto \lvert \Exc^{\ell+1}(f)\rvert$ is subadditive on $\Psaut^{\ell}(X)$. The dynamical number of the $(\ell+1)$-exceptional locus is invariant under conjugacy in $\Psaut^{\ell}(X)$, since conjugation by a birational transformation $g$ changes the number of irreducible components of $\Exc^{\ell+1}(f)$ at most by a constant only depending on $g$ and $g^{-1}$.
This number has been introduced in \cite{Lonjou_Urech_cube_complexe} and is inspired by the dynamical number of base points from the surface case. Nevertheless, it is a weaker notion in the sense that if $f\in\Bir(S )$, where $S $ is a projective surface, then $\nu^{1}(f)\leq \mu(f)$. For instance, the Hénon transformation $h\colon \PP^2\dashrightarrow\PP^2$ defined with respect to affine coordinates by $h(x,y)=(y,y^2+x)$ satisfies $\nu^{1}(f)=0$, whereas $\mu(f)=3$. 

In the same way as the blow-up graph gives a geometrical interpretation of the dynamical number of base points, the graph $\CC^{\ell}(X )$ gives a geometrical interpretation of the dynamical number of the $(\ell+1)$-exceptional locus of elements of $\Psaut^{\ell}(X )$. The following proposition is an analogous of Proposition \ref{prop_catalog_blowup_cc} that was made in the context of the blow-up graph.

\begin{proposition}[\cite{Lonjou_Urech_cube_complexe}]\label{prop_catalog_ccn}We have the following correspondences between geometric and birational notions.
	\begin{enumerate}
		\item\label{item_ellipticn} Element of $\Psaut^\ell(X)$ inducing elliptic isometries of the graph $\CC^{\ell}(X )$ correspond to pseudo-regularizable elements in codimension $\ell+1$ of $\Psaut^\ell(X)$ by isomorphisms in codimension $\ell$.
		\item\label{item_distance_n} The distance between the vertex $[(X,\id)]$ and its image by an element $f$ of $\Psaut^{\ell}(X )$ corresponds to twice the number of irreducible components of the exceptional locus of $f$ of codimension $\ell+1$:
		\[	\dist\left([(X,\id)], [(X,f)]\right)=2 \lvert \Exc^{\ell+1}(f)\rvert. \]
		\item\label{item_translation_length_n} The translation length corresponds to twice the dynamical number of the ($\ell+1$)-exceptional locus of $f$:
		\[\ell(f)=2\nu^{\ell+1}(f).\]
	\end{enumerate}
\end{proposition}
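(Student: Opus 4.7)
The strategy is to follow closely the proof of Proposition \ref{prop_catalog_blowup_cc}, replacing the role of base-points of birational surface maps by that of codimension-$(\ell+1)$ irreducible components of the exceptional locus, and using Lemma \ref{distancel} in place of Lemma \ref{lemma:comb_geodesic}.

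\emph{Part (1)} is a direct unwrapping of definitions. An element $f \in \Psaut^\ell(X)$ fixes a vertex $[(Y,\varphi)]$ if and only if $[(Y, f\varphi)] = [(Y,\varphi)]$, which by the definition of the equivalence relation on $\ell$-marked varieties means that $\varphi^{-1} f \varphi : Y \dashrightarrow Y$ is an isomorphism in codimension $\ell+1$, i.e.\ $\varphi^{-1} f \varphi \in \Psaut^{\ell+1}(Y)$. Since $\varphi$ is itself an isomorphism in codimension $\ell$, this is exactly the assertion that $f$ is pseudo-regularizable in codimension $\ell+1$ by an isomorphism in codimension $\ell$.

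\emph{Part (2)} is immediate from Lemma \ref{distancel} applied to the two vertices $[(X,\id)]$ and $[(X,f)]$, which gives
\[
\dist\!\big([(X,\id)],\,[(X,f)]\big) = \lvert \Exc^{\ell+1}(f)\rvert + \lvert \Exc^{\ell+1}(f^{-1})\rvert.
\]
Over an algebraically closed field the two counts on the right coincide (as noted just after Lemma \ref{distancel}), yielding the claimed formula $\dist\!\big([(X,\id)],[(X,f)]\big) = 2\lvert \Exc^{\ell+1}(f)\rvert$.

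\emph{Part (3)} is the only step requiring a genuine argument, and it proceeds exactly as in Proposition \ref{prop_catalog_blowup_cc}. Since $\Psaut^\ell(X)$ acts on $\mathcal{C}^\ell(X)$ preserving the cubical orientation (given by the function $[(Y,\varphi)]\mapsto h([(Y,\varphi)])$), Corollary \ref{prop_action_semisimple_hag} ensures that $f$ is either elliptic or loxodromic and that $\dist(x,f^n(x)) = n\ell(f)$ for every $x \in \Min(f)$ and every $n \in \N$. Setting $v = [(X,\id)]$, $K = \dist(v,x)$ and using part (2) together with the triangle inequality, we obtain in both directions (using that $f^n$ is an isometry)
\[
n\ell(f) = \dist(x,f^n(x)) \leq 2K + 2\lvert \Exc^{\ell+1}(f^n)\rvert,
\]
\[
2\lvert \Exc^{\ell+1}(f^n)\rvert = \dist(v,f^n(v)) \leq 2K + n\ell(f).
\]
Dividing by $n$ and passing to the limit gives $\ell(f) = 2\nu^{\ell+1}(f)$.

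There is essentially no hard step here: the construction of $\mathcal{C}^\ell(X)$ has been engineered precisely so that distance reads off the cardinality of codimension-$(\ell+1)$ exceptional components, and the existence of a cubical orientation upgrades the general Haglund--Genevois classification of isometries to the clean dichotomy of Corollary \ref{prop_action_semisimple_hag}. The one input worth pointing out explicitly is the symmetry $\lvert \Exc^{\ell+1}(f)\rvert = \lvert \Exc^{\ell+1}(f^{-1})\rvert$ used in part (2), which is granted by the algebraic closure assumption on $\kk$.
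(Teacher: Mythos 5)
Your proof is correct and follows essentially the same route as the paper: parts (1) and (2) by unwinding the definitions and Lemma \ref{distancel} (with the symmetry of exceptional components over an algebraically closed field), and part (3) by the two-sided comparison of $\dist(v,f^n(v))$ with $n\ell(f)$ via Corollary \ref{prop_action_semisimple_hag}. The only cosmetic differences are that the paper gets the inequality $n\ell(f)\leq 2\lvert\Exc^{\ell+1}(f^n)\rvert$ directly from the minimality of the translation length rather than via the triangle inequality with the extra $2K$, and that the cubical orientation on $\CC^{\ell}(X)$ is the one defined by the direction of removing a subvariety rather than by the height function $h$ (which was defined only for the blow-up graph); neither affects the argument.
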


\begin{proof}[Idea of proof of Proposition \ref{prop_catalog_ccn}]
	The point \ref{item_ellipticn} follows from the definition of the vertices of the graph $\CC^{\ell}(X )$, the point \ref{item_distance_n} has been seen above in Lemma \ref{distancel}.

	Let us focus on the point \ref{item_translation_length_n}. Consider $f\in\Psaut^{\ell}(X )$ and $x\in \Min(f)$. By Proposition \ref{prop_action_semisimple_hag}, for any $n\in\N$ we have $\dist(x,f^n(x))=n\ell(f)$. Let $v=[(X , \id)]$. Then $\dist(v, f^n(v))=2\lvert \Exc^{\ell+1}(f^n)\rvert$ by Lemma \ref{distancel}. This implies that for any $n\in \N$, $n\ell(f)\leq 2\lvert \Exc^{\ell+1}(f^n)\rvert$. Taking the limit we obtain: $\ell(f)\leq 2\nu^{\ell+1}(f)$.

	On the other hand, let $x\in \Min(f)$ and $K=\dist(v, x)$. Then for any $n\in \N$, $2\lvert \Exc^{\ell+1}(f^n)\rvert=\dist(v, f^n(v))\leq n\ell(f)+2K$ and hence $2\nu^{\ell+1}(f)\leq \ell(f)$.
\end{proof}

Point \ref{item_translation_length_n} of Proposition \ref{prop_catalog_ccn} implies the following theorem, which can be seen as an analogue to Theorem~\ref{Blanc_Deserti}:

\begin{theorem}[\cite{Lonjou_Urech_cube_complexe}]\label{prop_nu_0}
	Let $f\in \Psaut^{\ell}(X )$. Then 
	\begin{enumerate}
		\item $\nu^{\ell+1}(f)$ is an integer; in particular the sequence $\left(\Exc^{\ell+1}(f^n)\right)_{n\in\N}$ is either bounded or grows asymptotically linearly in $n$;
		\item there exists a variety $Y $ and an isomorphism in codimension $\ell$ $\varphi\colon Y \dashrightarrow X $ such that $\varphi^{-1} f\varphi$ has exactly $\nu^{\ell+1}(f)$ irreducible components in its exceptional locus of codimension $\ell+1$;
		\item in particular, $\nu^{\ell+1}(f)=0$ if and only if $f$ is pseudo-regularizable in codimension $\ell+1$ by an isomorphism in codimension $\ell$.
	\end{enumerate}
\end{theorem}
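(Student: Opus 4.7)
The plan is to deduce Theorem \ref{prop_nu_0} from the geometric dictionary of Proposition \ref{prop_catalog_ccn} together with the semisimplicity statement of Corollary \ref{prop_action_semisimple_hag} applied to the action of $\Psaut^{\ell}(X)$ on $\CC^{\ell}(X)$.

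\textbf{Step 1 (integrality).} By Proposition \ref{prop_catalog_ccn}.\ref{item_translation_length_n} one has $2\nu^{\ell+1}(f) = \ell(f)$, and $\ell(f)$ is a non-negative integer as it is the infimum of a set of combinatorial distances in a graph. To upgrade this to $\nu^{\ell+1}(f) \in \N$, I would use Lemma \ref{distancel}, combined with the observation (valid over algebraically closed fields) that $|\Exc^{\ell+1}(g)| = |\Exc^{\ell+1}(g^{-1})|$ for any birational map $g$ that is an isomorphism in codimension $\ell$, to conclude that \emph{every} distance between two vertices of $\CC^{\ell}(X)$ is even. Hence $\ell(f) \in 2\N$ and $\nu^{\ell+1}(f) = \ell(f)/2 \in \N$. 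The dichotomy in (1) follows by noting that if $\nu^{\ell+1}(f)\geq 1$ then the sequence $|\Exc^{\ell+1}(f^n)|$ is sandwiched between $n\nu^{\ell+1}(f)-K$ (from semisimplicity and Lemma \ref{distancel}) and $n|\Exc^{\ell+1}(f)|$ (from subadditivity), hence grows linearly; the case $\nu^{\ell+1}(f)=0$ gives boundedness as a consequence of (3).

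\textbf{Step 2 (realization of the minimum).} The action of $\Psaut^{\ell}(X)$ on $\CC^{\ell}(X)$ preserves the cubical orientation, so Corollary \ref{prop_action_semisimple_hag} guarantees that $f$ is either elliptic or loxodromic and, in either case, that $\Min(f) \neq \emptyset$. I would pick any vertex $x = [(Y,\varphi)] \in \Min(f)$, so that $\dist(x, f \bullet x) = \ell(f) = 2\nu^{\ell+1}(f)$. On the other hand, writing $f\bullet x = [(Y, f\varphi)]$ and applying Lemma \ref{distancel} to this pair gives $\dist(x, f\bullet x) = |\Exc^{\ell+1}(\varphi^{-1}f\varphi)| + |\Exc^{\ell+1}(\varphi^{-1}f^{-1}\varphi)| = 2|\Exc^{\ell+1}(\varphi^{-1}f\varphi)|$ by the same equality as in Step 1. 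Comparing the two expressions yields $|\Exc^{\ell+1}(\varphi^{-1}f\varphi)| = \nu^{\ell+1}(f)$, proving (2).

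\textbf{Step 3 (regularization criterion).} The ``only if'' direction of (3) is obtained by reading (2) with $\nu^{\ell+1}(f)=0$: the conjugate $\varphi^{-1}f\varphi$ has empty codimension-$(\ell+1)$ exceptional locus, so it lies in $\Psaut^{\ell+1}(Y)$, and $\varphi$ is an isomorphism in codimension $\ell$. Conversely, given such a $\varphi$, every iterate $(\varphi^{-1}f\varphi)^n = \varphi^{-1}f^n\varphi$ lies in $\Psaut^{\ell+1}(Y)$, so $|\Exc^{\ell+1}(f^n)|$ is bounded by a constant depending only on $\varphi$ (the codimension-$(\ell+1)$ components of $\Exc(f^n)$ not coming from $\varphi$ or $\varphi^{-1}$ would have to be exceptional for $\varphi^{-1}f^n\varphi$); dividing by $n$ and taking limits gives $\nu^{\ell+1}(f)=0$, and simultaneously closes the boundedness claim left open in (1). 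The only delicate point in the whole argument is the parity observation that forces $\ell(f)$ to be even; everything else is a direct transcription of the geometric dictionary in Proposition \ref{prop_catalog_ccn}.
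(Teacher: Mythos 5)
Your proof is correct and follows exactly the route the paper takes: the survey derives this theorem directly from the translation-length formula $\ell(f)=2\nu^{\ell+1}(f)$ of Proposition \ref{prop_catalog_ccn}, which is itself established via the semisimplicity of the action (Corollary \ref{prop_action_semisimple_hag}) and the distance formula of Lemma \ref{distancel}, just as you do. Your parity observation (using $\lvert\Exc^{\ell+1}(g)\rvert=\lvert\Exc^{\ell+1}(g^{-1})\rvert$ over algebraically closed fields, which the paper records right after Lemma \ref{distancel}) correctly supplies the one detail needed to upgrade $\ell(f)\in\N$ to $\nu^{\ell+1}(f)\in\N$.
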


\medskip

\subsubsection{(Pseudo)-regularization results} 
By Proposition \ref{prop_catalog_ccn}\ref{item_ellipticn}, stabilizers of vertices are pseudo-regularizable subgroups in codimension $\ell +1$ by an isomorphism in codimension $\ell$. Hence, this action allows to obtain in an unified way some regularizable and pseudo-regularizable results.

For instance, using Proposition \ref{prop_catalog_ccn} \ref{item_distance_n}, bounded actions can be described as follows:
\begin{proposition}[\cite{Lonjou_Urech_cube_complexe}]\label{boundedcomp}
Let $G\subset\Psaut^{\ell}(X )$ for some $0\leq \ell\leq \dim(X )-1$. The subgroup $G$ is pseudo-regularizable in codimension $\ell+1$ by an isomorphism in codimension $\ell$ if and only if $\{\lvert\Exc^{\ell+1}(g)\rvert\mid g\in G\}$ is uniformly bounded.
\end{proposition}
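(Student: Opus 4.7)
The plan is to deduce Proposition \ref{boundedcomp} as a direct consequence of the action of $\Psaut^{\ell}(X)$ on the median graph $\CC^{\ell}(X)$, combining the dictionary of Proposition \ref{prop_catalog_ccn} with the fixed-point principle of Proposition \ref{prop:fixedpoint}. Recall that this action preserves the cubical orientation given by the height function, so by Proposition \ref{prop:fixedpoint}, a subgroup has a bounded orbit if and only if it fixes a vertex.

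For the direction ``pseudo-regularizable $\Rightarrow$ uniformly bounded'', suppose there exist a variety $Y$ and an isomorphism in codimension $\ell$ $\varphi : Y \dashrightarrow X$ such that $\varphi^{-1}G\varphi \subset \Psaut^{\ell+1}(Y)$. Set $v = [(Y,\varphi)] \in \CC^{\ell}(X)^{0}$. Then for every $g \in G$, $g \bullet v = [(Y, g\varphi)] = [(Y,\varphi)] = v$ since $\varphi^{-1}g\varphi$ is an isomorphism in codimension $\ell+1$, so $G$ fixes $v$. Setting $v_0 = [(X,\id)]$ and $K = \dist(v_0,v)$, the triangle inequality yields $\dist(v_0, g \bullet v_0) \leq 2K$ for every $g \in G$. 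By Proposition \ref{prop_catalog_ccn}\ref{item_distance_n} this says $2\lvert\Exc^{\ell+1}(g)\rvert \leq 2K$, giving the desired uniform bound.

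For the converse direction, suppose $\lvert\Exc^{\ell+1}(g)\rvert \leq K$ for every $g \in G$. Again by Proposition \ref{prop_catalog_ccn}\ref{item_distance_n}, the orbit $G \bullet v_0$ satisfies $\dist(v_0, g \bullet v_0) \leq 2K$ for every $g \in G$, so it is bounded. Since the action of $\Psaut^{\ell}(X) \supset G$ preserves the cubical orientation on $\CC^{\ell}(X)$, Proposition \ref{prop:fixedpoint} ensures the existence of a vertex $v = [(Y,\varphi)]$ fixed by the whole group $G$. By the very definition of the action, $g \bullet [(Y,\varphi)] = [(Y, g\varphi)] = [(Y,\varphi)]$ means that $\varphi^{-1}g\varphi : Y \dashrightarrow Y$ lies in $\Psaut^{\ell+1}(Y)$, so $\varphi^{-1}G\varphi \subset \Psaut^{\ell+1}(Y)$, and $\varphi$ is by construction an isomorphism in codimension $\ell$. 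This realizes $G$ as pseudo-regularizable in codimension $\ell+1$ by an isomorphism in codimension $\ell$, as required.

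No step here presents a real obstacle: the work has been done in setting up the median graph $\CC^{\ell}(X)$, in proving it carries a $\Psaut^{\ell}(X)$-equivariant cubical orientation, and in translating the two birational notions (pseudo-regularizability and cardinality of the $(\ell{+}1)$-codimensional exceptional locus) into geometric data (fixed vertices and combinatorial distance). The mild subtlety is simply to note that Proposition \ref{prop_catalog_ccn}\ref{item_ellipticn} applies at the level of a single group: a vertex fixed by $G$ gives a single marked variety $(Y,\varphi)$ conjugating \emph{all} of $G$ simultaneously into $\Psaut^{\ell+1}(Y)$, not just each element separately into possibly different conjugates.
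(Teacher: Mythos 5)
Your proof is correct and follows exactly the route the paper intends: the paper states this proposition as an immediate consequence of Proposition \ref{prop_catalog_ccn}\ref{item_distance_n} together with the orientation-preserving action on $\CC^{\ell}(X)$ and the fixed-point principle of Proposition \ref{prop:fixedpoint}, which is precisely the argument you give (and the same one the paper spells out for the surface analogue, Proposition \ref{thm_reg_boundedorbit}). Your closing remark about a single fixed vertex conjugating all of $G$ simultaneously is exactly the right point to emphasize.
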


The construction of all these median graphs (for $\ell>0$) and not only $\mathcal{C}^0(X)$ have been indeed the key point to show regularizable results instead of only pseudo-regularizable results. The following theorem has been one of the motivations of these constructions. 

\begin{theorem}[\cite{Lonjou_Urech_cube_complexe}]\label{thm_regularization}
	Let $X$ be a complete variety.
	\begin{itemize}
		\item If a subgroup $G\subset\Bir(X)$ has property FW, then $G$ is regularizable.
		\item If an element $g\in\Bir(X)$ is divisible or distorted, then $\langle g\rangle$ is regularizable.
	\end{itemize} 
\end{theorem}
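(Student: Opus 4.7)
The plan is to prove both assertions by a uniform induction on the codimension $\ell$ from $0$ to $d := \dim X$, using at each step the action of isomorphisms in codimension $\ell$ on the median graph $\mathcal{C}^\ell$. Writing $H$ for the subgroup in question ($H = G$ in the first part, $H = \langle g \rangle$ in the second), at each step I will construct a birational map $\varphi_\ell \colon Y_\ell \dashrightarrow X$ with $H_\ell := \varphi_\ell^{-1} H \varphi_\ell \subset \Psaut^\ell(Y_\ell)$. After $d$ steps the inclusion becomes $\varphi_d^{-1} H \varphi_d \subset \Psaut^d(Y_d) = \Aut(Y_d)$, which is the desired regularization; completeness of $X$ is what ensures the intermediate varieties remain well-behaved and that $\Psaut^d = \Aut$ at the top. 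The inductive step is uniform: $H_\ell$ acts on $\mathcal{C}^\ell(Y_\ell)$ preserving the cubical orientation, and by Proposition~\ref{prop:fixedpoint} it suffices to exhibit a bounded orbit; a fixed vertex $[(Y_{\ell+1}, \psi)]$ then gives, via Proposition~\ref{prop_catalog_ccn}(\ref{item_ellipticn}), the inclusion $\psi^{-1} H_\ell \psi \subset \Psaut^{\ell+1}(Y_{\ell+1})$, and I set $\varphi_{\ell+1} := \varphi_\ell \psi$.

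For the first assertion this is immediate: property FW is an abstract group-theoretic invariant preserved under isomorphism, so $G_\ell$ inherits it from $G$ and by definition has bounded orbits on every isometric action on a median graph, in particular on $\mathcal{C}^\ell(Y_\ell)$. For the second assertion, set $g_\ell := \varphi_\ell^{-1} g \varphi_\ell$. By Proposition~\ref{prop_catalog_ccn}(\ref{item_translation_length_n}) and Theorem~\ref{prop_nu_0}(1), the translation length of $g_\ell$ on $\mathcal{C}^\ell(Y_\ell)$ equals $2\nu^{\ell+1}(g_\ell)$, a nonnegative even integer, and by Corollary~\ref{prop_action_semisimple_hag} the element $g_\ell$ is elliptic precisely when this integer vanishes. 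For distortion I will use that the function $h \mapsto |\Exc^{\ell+1}(h)|$ is subadditive on all of $\Bir(Y_\ell)$ (any codimension-$(\ell+1)$ component of $\Exc(fh)$ either lies in $\Exc(h)$ or is the strict transform by $h^{-1}$ of such a component of $\Exc(f)$): the sublinearity of the word length of $g_\ell^n$ in the conjugate $\Gamma_\ell := \varphi_\ell^{-1} \Gamma \varphi_\ell$ of the distorting subgroup then forces $|\Exc^{\ell+1}(g_\ell^n)|/n \to 0$, hence $\nu^{\ell+1}(g_\ell) = 0$.

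For divisibility, the crucial observation will be that the $n$-th roots in $\Bir(Y_\ell)$ of elements of $\Psaut^\ell(Y_\ell)$ automatically lie in $\Psaut^\ell(Y_\ell)$: if some $h \in \Bir(Y_\ell)$ with $h^n \in \Psaut^\ell(Y_\ell)$ had a codimension-$(\ell+1)$ component $D$ of $\Exc(h)$, then $h$ would fail to be a local isomorphism along $D$, whence $\dim h(D) < \dim D$ and inductively $\dim h^k(D) < \dim D$ for every $k \geq 1$; in particular $D$ would be contracted by $h^n$, contradicting $h^n \in \Psaut^\ell$, with the symmetric argument for $h^{-1}$. Hence the $n$-th roots $h_n$ of $g_\ell$ in $\Bir(Y_\ell)$ actually act on $\mathcal{C}^\ell(Y_\ell)$, and Corollary~\ref{prop_action_semisimple_hag} yields the identity of nonnegative integers ``translation length of $g_\ell = h_n^n$ equals $n$ times the translation length of $h_n$''; valid for every $n$, this forces the translation length of $g_\ell$ to be divisible by every positive integer, hence zero.

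The main obstacle is the careful bookkeeping of exceptional divisors in both ingredients above: the subadditivity of $|\Exc^{\ell+1}|$ on $\Bir(Y_\ell)$ and the persistence of codimension-$(\ell+1)$ exceptional components under positive iteration. These are purely dimension-counting statements of birational geometry, independent of the median geometry, but they must be established uniformly across all the intermediate varieties $Y_\ell$ and they are precisely what is needed to feed the inductive engine from one codimension to the next.
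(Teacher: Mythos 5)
Your overall architecture is exactly the paper's: induct on the codimension $\ell$, let the conjugated group act on $\CC^\ell(Y_\ell)$ preserving the cubical orientation, produce a bounded orbit, extract a fixed vertex, and descend one codimension at a time until $\Psaut^d(Y_d)=\Aut(Y_d)$. The FW case is immediate, as you say, and your treatment of the distorted case is sound; indeed you correctly notice a point the survey's proof passes over in silence, namely that for $\ell\geq 1$ the distorting subgroup $\Gamma_\ell$ need not lie in $\Psaut^\ell(Y_\ell)$, so one cannot simply invoke the translation length on $\CC^\ell(Y_\ell)$ and must instead argue with the subadditive function $f\mapsto \lvert\Exc^{\ell+1}(f)\rvert$ on all of $\Bir(Y_\ell)$.

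The divisible case, however, contains a genuine gap: your ``crucial observation'' that an $n$-th root in $\Bir(Y_\ell)$ of an element of $\Psaut^\ell(Y_\ell)$ automatically lies in $\Psaut^\ell(Y_\ell)$ is false. The standard Cremona involution $\sigma\colon[x_0:x_1:x_2:x_3]\dashmapsto[x_1x_2x_3:x_0x_2x_3:x_0x_1x_3:x_0x_1x_2]$ of $\PP^3$ squares to the identity, which lies in $\Psaut^\ell(\PP^3)$ for every $\ell$, yet $\sigma$ contracts the four coordinate hyperplanes and so is not even an isomorphism in codimension $1$. Your argument breaks precisely at ``inductively $\dim h^k(D)<\dim D$'': the image $h(D)$ may land inside $\Ind(h)$, and the strict transform of $D$ under $h^k$ is computed for $h^k$ as a single birational map, not as an iterate of strict transforms, so contraction by $h$ does not propagate to contraction by $h^n$ (in the example $\sigma$ contracts each hyperplane to a point of $\Ind(\sigma)$, and $\sigma^2$ restores it). There is also a bookkeeping slip: membership in $\Psaut^\ell$ concerns components of $\Exc(h)$ of codimension at most $\ell$, not $\ell+1$, and for components of codimension at least $2$ the implication ``contained in $\Exc(h)$, hence contracted'' fails in any case. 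The upshot is that for $\ell\geq 1$ the roots $h_n$ need not act on $\CC^\ell(Y_\ell)$, so you have no integrality for their stable displacement and the identity $\ell(g_\ell)=n\,\ell(h_{n,\ell})$ is unavailable; this step requires a different idea. (To be fair, the survey's own proof only spells out the level-$0$ step, where all of $\Bir(X)$ does act on $\CC^0(X)$ and the argument is unproblematic, and is silent about how divisibility is propagated to the higher levels.)
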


\begin{proof}
	Denote by $d$ the dimension of $X$. 
	Let $G\subset\Bir(X)$ a subgroup that either has the FW property, either that is generated by a single element $g\in \Bir(X)$ that is divisible or distorted. Consider the action of $\Bir(X)$ on $\CC^{0}(X)$.
	If $G$ does not have the FW property, defines the length function $L(f):=\dist((X,\id),(X,f))$ on $\Bir(X)$. As $G$ is generated by a distorted element or a divisible one, the restriction of this length function to $G$ has to be bounded.

	As a consequence, in any case, the orbits of $G$ on  the median graph $\CC^{0}(X)$ are bounded, and because the cubical orientation is preserved, $G$ fixes a vertex $[(X_{1},\varphi_1)]$. As a consequence, $\varphi_1$ conjugates $G$ to a subgroup of $\Psaut^{1}(X_{1})$. This yields an action of $G$ on $\CC^{1}(X_{1})$. We continue inductively until we find a variety $X_{d}$ and a birational map $\varphi=\varphi_1\dots\varphi_{d}\colon X_{d}\dashrightarrow X$ that conjugates $G$ to a subgroup of $\Psaut^{d}(X_{d})=\Aut(X_{d})$. Hence, $G$ is regularizable.
\end{proof}

\subsubsection{Note on related constructions \cite{Cantat_Cornulier_Commensurating} and \cite{Cornulier_FW}}\label{subsubsection_Cornulier_Cantat}
The first part of Theorem \ref{thm_regularization} answers a question of Cantat and Cornulier (\cite[Question 10.1]{Cantat_Cornulier_Commensurating}) where they proved that subgroups of $\Bir(X)$ with the FW property are pseudo-regularizable using the notion of commensurating actions. As mentioned by the authors in \cite{Lonjou_Urech_cube_complexe}, the constructions of the cube complexes $\CC^\ell(X)$ are inspired by 
\cite{Cantat_Cornulier_Commensurating}. 

In order to be more precise, let us introduce some definitions needed. 
Consider a group $G$ acting on a set $X$. We say that a  subset $A$ of $X$ \emph{is commensurated} by $G$ if for all $g\in G$ the symmetric difference $gA\bigtriangleup A$ is finite. If there exists a subset $B$ of $A$ that is $G$-invariant and such that the symmetric difference $A\bigtriangleup B$ is finite, then $A$ is said to be \emph{transfixed} by $G$. 
Note that a transfixed subset is also commensurated. But the converse does not hold in general. Nevertheless an equivalent definition of the property FW is the following. 
A group $G$ has Property FW if, given any action of $G$ on a set $X$, all commensurated subsets of $X$ are transfixed.

In \cite{Cantat_Cornulier_Commensurating}, $X$ is assumed to be a smooth projective variety over an algebraically closed field of characteristic $0$. Inspired by the construction of the space $\Hh^\infty$ they construct a space denoted by $\Hypt(X)$ consisting in all irreducible and reduced hypersurfaces in all birational smooth projective varieties dominating $X$ (up to a natural identification). They show that $\Bir(X)$ acts on this set by strict transform, and that the set $\Hyp(X)$ of all irreducible and reduced hypersurfaces of $X$ is commensurated by $\Bir(X)$.
They prove the following theorem:
\begin{theorem}[\cite{Cantat_Cornulier_Commensurating}]
Let $X$ be a smooth projective variety over a field of characteristic $0$. Let $\Gamma$ be a subgroup of $\Bir(X)$. Then $\Gamma$ transfixes the subset $\Hyp(X)$ of $\Hypt(X)$ if and only if $\Gamma$ is pseudo-regularizable. 
\end{theorem}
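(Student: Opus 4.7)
The plan is to prove each direction via an explicit geometric construction, with the characteristic-zero assumption entering only once, through Hironaka's resolution of singularities.

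For the forward direction, suppose $\Gamma$ is pseudo-regularizable, so there is a birational map $\varphi\colon Y\dashrightarrow X$ with $\varphi^{-1}\Gamma\varphi\subset\Psaut^1(Y)$. The map $\varphi$ induces a canonical identification $\Hypt(Y)=\Hypt(X)$, under which I would identify $\Hyp(Y)$ as the desired transfixing subset. Pseudo-automorphisms of $Y$ by definition preserve $\Hyp(Y)$ (they neither contract nor extract an irreducible hypersurface), so $\Hyp(Y)$ is $\Gamma$-invariant inside $\Hypt(X)$. Moreover the symmetric difference $\Hyp(Y)\bigtriangleup\Hyp(X)$ lies inside the finite union of the irreducible codimension-one components of $\Exc(\varphi)$ and $\Exc(\varphi^{-1})$, hence is finite, so $\Hyp(X)$ is transfixed.

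For the converse, let $B\subset\Hypt(X)$ be a $\Gamma$-invariant subset with $B\bigtriangleup\Hyp(X)$ finite. My aim is to construct a birational model $Y$ of $X$ satisfying $\Hyp(Y)=B$: once this is done, the $\Gamma$-invariance of $\Hyp(Y)$ will force $\Gamma$ to act on $Y$ by pseudo-automorphisms, giving pseudo-regularizability. Each of the finitely many hypersurfaces $H\in B\setminus\Hyp(X)$ is witnessed on some smooth projective birational model $X_H$. By iterating Hironaka's resolution theorem, I would first produce a single smooth projective variety $Y_0$ dominating $X$ and every such $X_H$, so that $B\subseteq\Hyp(Y_0)$. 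The complement $\Hyp(Y_0)\setminus B$ is still finite (it consists of the elements of $\Hyp(X)\setminus B$ together with the exceptional divisors of $Y_0\to X$ not belonging to $B$), and I set $Y:=Y_0\setminus\bigcup_{H\in\Hyp(Y_0)\setminus B}H$, a quasi-projective open subvariety of $Y_0$ satisfying $\Hyp(Y)=B$ by construction.

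To finish, I would check that the composed birational map $Y\hookrightarrow Y_0\dashrightarrow X$ conjugates $\Gamma$ into $\Psaut^1(Y)$: every $\gamma\in\Gamma$ lifts to a birational self-map of $Y_0$ and restricts to one of $Y$, and the assumption that $\Gamma$ preserves $B=\Hyp(Y)$ translates precisely into the statement that no such $\gamma$ contracts or extracts a hypersurface of $Y$, so it is a pseudo-automorphism. The principal obstacle is building the common resolution $Y_0$: this is the unique step requiring characteristic zero, and it is presumably what forces the theorem's standing hypothesis.
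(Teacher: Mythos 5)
Your proposal is correct and follows essentially the same strategy as the paper's sketch: the forward direction is the ``direct'' one (the symmetric difference $\Hyp(Y)\bigtriangleup\Hyp(X)$ is controlled by the finitely many codimension-one exceptional components), and for the converse you, like the paper, realize the finitely many added divisors on a common smooth projective model dominating $X$ (this is where characteristic $0$ enters, via resolution) and then delete the finitely many hypersurfaces not in $B$ to obtain a non-projective variety $Y$ with $\Hyp(Y)=B$, on which $\Gamma$ acts by isomorphisms in codimension $1$ precisely because it preserves $B$.
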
 

The strategy is as follows. Assume that $\Gamma$ transfixes $\Hyp(X)$. Then there exists a subset $B\subset\Hyp(X)$ $\Gamma$ invariant such that there exist two finite sets of irreducible and reduced hypersurfaces $H_1,\dots, H_n$ of $X$ and $D_1,\dots,D_\ell\in \Hypt(X)$ lying in some varieties dominating $X$ such that $B=\left(\Hyp(X) \setminus\left( H_1\cup\dots\cup H_n\right)\right)\cup \left(D_1\cup \dots\cup D_\ell\right)$. There exists a smooth projective variety $Y$ dominating $X$ such that $D_1,\dots,D_\ell\in \Hyp(Y)$. Denote by $\pi:Y\rightarrow X$, then up to replacing $\Gamma$ by  $\pi^{-1}\Gamma\pi$ and $X$ by $Y$ we can assume that $B=\Hyp(X) \setminus\left( H_1\cup\dots\cup H_n\right)$. Then we show that $\Gamma$ is a subgroup of the group of pseudo-automorphisms in codimension $1$ of the variety $Y\setminus H_1\cup\dots\cup H_n$, which is not projective anymore.
Note that the other way around is direct. 

As a consequence, they obtain the following corollary.
\begin{corollary}[\cite{Cantat_Cornulier_Commensurating}]
Let $X$ be a smooth projective variety over a field of characteristic $0$. Let $\Gamma$ be a subgroup of $\Bir(X)$ having the FW property. Then $\Gamma$ is pseudo-regularizable. 
\end{corollary}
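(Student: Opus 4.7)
The plan is to chain together the three ingredients that the excerpt has just laid down: (i) the action of $\Bir(X)$ on $\Hypt(X)$ by strict transform, (ii) the fact, stated immediately before the preceding theorem, that $\Hyp(X)$ is a subset of $\Hypt(X)$ commensurated by $\Bir(X)$, and (iii) the reformulation of the FW property as ``every commensurated subset of every set on which the group acts is transfixed.''

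First I would restrict the action of $\Bir(X)$ on $\Hypt(X)$ to the subgroup $\Gamma$. Since commensuration is preserved under restricting to subgroups (if $g\Hyp(X)\triangle\Hyp(X)$ is finite for every $g\in\Bir(X)$, it is in particular finite for every $g\in\Gamma$), the subset $\Hyp(X)$ is commensurated by $\Gamma$. Second, because $\Gamma$ has property FW, the equivalent formulation recalled in Subsubsection~\ref{subsubsection_Cornulier_Cantat} applies: every commensurated subset of a $\Gamma$-set is transfixed. Applying this to the $\Gamma$-action on $\Hypt(X)$ and to the commensurated subset $\Hyp(X)$, we conclude that $\Gamma$ transfixes $\Hyp(X)$.

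Finally, I would invoke the preceding theorem (Cantat--Cornulier) in its ``transfix $\Rightarrow$ pseudo-regularizable'' direction: since $\Gamma$ transfixes $\Hyp(X)$, it is pseudo-regularizable. That is exactly the desired conclusion.

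This proof is essentially a direct chaining of results, so there is no substantial obstacle; the only point that deserves a line of comment is why commensuration passes from $\Bir(X)$ to $\Gamma$, and this is immediate from the definition. The real content lies in the preceding theorem, whose proof (sketched in the excerpt via transfixing reducing to $\Hyp(X)\setminus\{H_1,\dots,H_n\}$ on a suitable dominating model $Y$) is the hard algebro-geometric step; the corollary merely packages it.
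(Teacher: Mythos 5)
Your proof is correct and matches the paper's intended argument: the survey states the corollary as an immediate consequence of the preceding theorem, precisely by combining the commensuration of $\Hyp(X)$ (which restricts to $\Gamma$), the reformulation of property FW as ``commensurated implies transfixed,'' and the ``transfixes $\Rightarrow$ pseudo-regularizable'' direction of the Cantat--Cornulier theorem. Nothing is missing.
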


Note that from a commensurating set, it is possible to construct a wall space on a group (\cite{Haglund_Paulin}, see also \cite{Cornulier_wallings}), and from a wall-space a CAT(0) cube complex (see \cite{Chatterji_Niblo}).
It seems that following this correspondence we would obtain $\CC^0(X)$ constructed directly in \cite{Lonjou_Urech_cube_complexe}.

\medskip

In \cite{Cornulier_FW}, Cornulier has also obtained another proof that groups with property FW are regularizable (first point of Theorem \ref{thm_regularization}), using the notion of partial actions. Unlike \cite{Lonjou_Urech_cube_complexe}, varieties in \cite{Cornulier_FW} are not assumed to be reduced.
The main tool is the notion of partial action.
\begin{definition}
Let $G$ be a group and $X$ be a set. 
A \emph{partial action of $G$ on $X$} is a map $\alpha$ from $G$ to the set of partial bijections on $X$ satisfying:
\begin{itemize}
	\item $\alpha(g):D_g\subset X \rightarrow D_g'\subset X$,
\item $\alpha(1_G)=\id_X$,
\item $\alpha(g^{-1})=\alpha(g)^{-1}$,
\item $\alpha(gh)\supset \alpha(g)\alpha(h)$.
\end{itemize}
\end{definition}

Note that the last point means that if $\alpha(h)$ is defined in $x$ and $\alpha(g)$ is defined in $\alpha(h)(x)$ then $\alpha(gh)$ is defined in $x$ and equals $\alpha(g)(\alpha(h)(x))$.

Let us give broadly the strategy. 
First look at the group of birational transformations of a variety $X$ as partially acting on $X$ as follows:
Let $f\in \Bir(X)$ then $\alpha(f): X\setminus \Exc(f)\rightarrow X\setminus \Exc(f^{-1})$.
By a result of Abadie, Kellendonk-Lawson, associated to a partial action there exists a \emph{universal globalization} $\hat{X}$, i.e, a set (unique in some sense) such that $G$ acts on it, and when restricted to $X$ we obtain the partial action given. It is
defined as follows: consider $G\times X$ with the $G$-action $g\bullet (h,x) =(gh,x)$ and identify $(g,x)$ and $(h,y)$ if there exists $k\in G$ such that $\alpha(kg)(x )$ and $\alpha(kh)(y )$ are
defined and equal.

It is almost a regularization result; the problem being that $\hat{X}$ is in general not a variety. Indeed, it is obtained by glueing copies of $X$ along open dense subsets. In \cite{Lonjou_Urech_cube_complexe} they encountered the same problem to show that the graphs $\CC^\ell(X)$ are median and more precisely to show that links of vertices are flag. Indeed, consider a vertex $v$, and a family $\{v_1,\dots, v_k\}$ of adjacent vertices at $v$ that pairwise span a square. Then we need to prove that there exists a cube containing them. To do so, we need to find a vertex dominating them (see Lemma \ref{lemma_v_dominates_cube}, so the difficulty comes from the ascending edges. To construct this vertex, we also need to glue together dense open subsets of a given variety and show that what we obtain is still a variety; in particular that it is a $\kk$-scheme that is separated.

Using the definition of the property FW in terms of commensurating sets and a consequence of Neumann's lemma (\cite{Neumann}) saying that if $\tilde{X}\setminus X$ is finite and meets no finite orbit then there exists $g\in G$ such that $\hat{X}=X\cup gX$, Cornulier proves the following theorem.
\begin{theorem}[\cite{Cornulier_FW}]
Let $G$ be a subgroup of $\Bir(X)$ having the property FW then there exists a dense $G$-invariant open subset of $\tilde{X}$ (of the partial action of $G$ on $X$) that is a variety.
\end{theorem}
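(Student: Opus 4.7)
The plan is to exploit the characterization of property FW via commensurating actions to promote the partial action of $G$ on $X$ to a genuine action on a variety sitting inside the universal globalization $\tilde{X}$. Recall that $\tilde{X}$ is covered by the translates $gX$ for $g \in G$, and that for each $f \in \Bir(X)$ the overlap $X \cap fX$ inside $\tilde{X}$ is the complement in $X$ of $\Exc(f)$, a finite union of irreducible codimension-one subvarieties. This finiteness is the key geometric input that will interact with property FW.

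First I would introduce the $G$-set $\mathcal{P}$ of prime divisors of $\tilde{X}$, i.e., generic points of codimension-one irreducible closed subsets of the sheets $gX$, modulo the identifications induced by the partial action. The subset $\mathcal{P}_X \subset \mathcal{P}$ consisting of prime divisors lying in $X$ is commensurated by $G$, since $\mathcal{P}_X \triangle g\mathcal{P}_X$ is contained in the finite set of prime components of $\Exc(f) \cup \Exc(f^{-1})$ whenever $g$ corresponds to $f \in \Bir(X)$. Property FW then delivers a $G$-invariant subset $\mathcal{P}_Y \subset \mathcal{P}$ such that the symmetric difference $F := \mathcal{P}_X \triangle \mathcal{P}_Y$ is finite. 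Geometrically, $\mathcal{P}_Y$ defines a candidate dense $G$-invariant subset $Y \subset \tilde{X}$: start from $X$, delete the divisors in $F \cap \mathcal{P}_X$, and glue in open neighborhoods of the divisors in $F \setminus \mathcal{P}_X$ extracted from appropriate translates $gX$. Invoking the consequence of Neumann's lemma quoted in the excerpt, after restricting to the complement of the finite $G$-orbits in $\tilde{X}$ in order to meet its hypothesis, yields a finite list $g_1, \dots, g_k \in G$ with $Y = g_1 X_0 \cup \cdots \cup g_k X_0$, where $X_0 = X \setminus F$.

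It remains to endow $Y$ with the structure of a variety. Each $g_i X_0$ is a copy of the quasi-projective variety $X_0$, and on an overlap $g_i X_0 \cap g_j X_0$ the transition is governed by $g_i^{-1} g_j \in \Bir(X)$, which by the very construction of $Y$ restricts to an isomorphism of open subvarieties of $X_0$. Thus $Y$ is realized as a finite gluing of open subvarieties along compatible open isomorphisms, producing a $\kk$-scheme of finite type of the correct dimension, and the action of $G$ on this scheme is by automorphisms. The main obstacle will be \emph{separatedness}: the diagonal $Y \to Y \times_{\kk} Y$ must be closed, equivalently no pair of ``non-Hausdorff'' points can arise from the gluing of the $k$ sheets. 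This should follow from the fact that the identifications inside $\tilde{X}$ come from honest birational transformations of the separated variety $X$, but the verification demands careful scheme-theoretic control of the overlaps $g_i X_0 \cap g_j X_0$ and is the technical heart of the argument, and is also the reason one cannot content oneself with a naive set-theoretic gluing.
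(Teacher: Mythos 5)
Your overall strategy --- commensurate the prime divisors of the globalization, transfix via property FW, then invoke Neumann's lemma and glue --- is the same route the survey sketches for Cornulier's argument, but two steps are off. First, the Neumann's lemma step: you only extract from it that $Y$ is covered by \emph{finitely many} translates $g_1X_0,\dots,g_kX_0$, but that much already follows from the finiteness of the symmetric difference $F$ and needs no Neumann's lemma at all. The actual content of the lemma, as quoted in the survey, is that once the finite set $\tilde{X}\setminus X$ meets no finite $G$-orbit, some $g\in G$ satisfies $g(\tilde{X}\setminus X)\cap(\tilde{X}\setminus X)=\emptyset$, so that the globalization is covered by exactly \emph{two} sheets, $\tilde{X}=X\cup gX$. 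This two-chart reduction is the point of the lemma and is what makes the final gluing analysis tractable; it is absent from your write-up.

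Second, and more seriously, the separatedness claim. You assert that separatedness ``should follow from the fact that the identifications inside $\tilde{X}$ come from honest birational transformations of the separated variety $X$.'' That is false as stated: gluing copies of a separated variety along the maximal open subsets where a birational map restricts to an isomorphism generally produces a non-separated prevariety. For instance, gluing two copies of $\PP^2$ via the standard quadratic involution $\sigma$ identifies the complements of the coordinate triangle, and a curve approaching a general point of $\{x_0=0\}$ in one chart approaches $[1:0:0]$ in the other, so these two limit points cannot be separated. This is precisely why the theorem asserts only the existence of a dense $G$-invariant \emph{open subset} of $\tilde{X}$ that is a variety, rather than that $\tilde{X}$ (or your $Y$) is one: a further $G$-invariant closed locus must be removed to kill the non-separated pairs, and identifying it --- while keeping the result dense and open --- is the technical heart that you flag but do not supply. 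As written, the $Y$ you construct need not be separated, so the proof is incomplete at its decisive step.
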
 

%
%
%

\medskip

	\subsubsection{Other consequence of the action of $\Bir(X) $ on $\CC^0(X)$}
	We focus on two other kind of results that are possible to obtain for Cremona transformations that are not pseudo-regularizable in codimension $1$, by using the action of the Cremona group on $\CC^{0}(\PP^n)$ from \cite{Lonjou_Urech_cube_complexe}. The first one gives new restrictions on centralizers of these Cremona transformations and the second one bounds the asymptotic degree growth by below of these Cremona transformations.

	\medskip
	Centralizers of elements in the Cremona group of rank $2$ have been studied in various papers (\cite{Cantat_groupes_birat}, \cite{Blanc_Cantat_Dynamical_degrees}, \cite{Blanc_Deserti}, \cite{Cerveau_Deserti_centralisateurs}, \cite{Zhao_Centralizers}): centralizers of general elements are virtually cyclic (see \cite{Zhao_Centralizers} and references therein). The rigid nature of isometries of median graphs allows to give new restrictions on centralizers of non-pseudo regularizable transformations of varieties of arbitrary dimension:
	
	\begin{theorem}[\cite{Lonjou_Urech_cube_complexe}]\label{centralizer}
		Let $X$ be a variety over an algebraically closed field $\kk$ of characteristic $0$. Let $f\in\Bir(X)$ be an element that is not pseudo-regularizable in codimension $1$ and let $cent(f)\subset\Bir(X)$ be its centralizer. Then either $f$ permutes the fibers of a rational map $X\dashrightarrow Y$, where $0<\dim(Y)<\dim(X)$, or $cent(f)$ contains as a finite index subgroup $\langle f\rangle\times H$, where $H\subset\Bir(X)$ is a torsion subgroup. 
	\end{theorem}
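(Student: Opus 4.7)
My plan is to exploit the action of $\Bir(X)$ on the median graph $\CC^0(X)$ from Subsection~\ref{subsection:ccn}. Since $f$ is not pseudo-regularizable in codimension $1$, Proposition~\ref{prop_catalog_ccn} \ref{item_ellipticn} shows that $f$ does not fix any vertex of $\CC^0(X)$. Because the $\Bir(X)$-action preserves the cubical orientation, Corollary~\ref{prop_action_semisimple_hag} forces $f$ to be loxodromic; I fix an axis $L$ of $f$. The centralizer $cent(f)$ preserves $\Min(f)$, which (via the median-graph version of the flat strip theorem) decomposes as a product $\Min(f) \cong F \times L$ with $cent(f)$ acting diagonally. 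This yields a translation homomorphism $\tau\colon cent(f) \to \Z$ along $L$ with $\tau(f) = \ell(f)$, together with an action $\pi\colon cent(f) \to \mathrm{Isom}(F)$.

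The key dichotomy is whether $\pi(cent(f))$ has a bounded or an unbounded orbit on $F$. In the unbounded case, Corollary~\ref{prop_action_semisimple_hag} provides a loxodromic $g \in cent(f)$ on $F$, so $\langle f, g \rangle$ preserves a two-dimensional flat $L \times L_g \subset \CC^0(X)$; the family of hyperplanes of $\CC^0(X)$ that cross $L_g$ and are parallel to $L$ is $g$-skewered and $f$-invariant. By Lemma~\ref{lemma_same_hyperplane}, after passage to a suitable common birational model $Y$ of $X$, these hyperplanes correspond to an infinite family $\{D_n\}_{n \in \Z}$ of pairwise disjoint irreducible $f$-invariant hypersurfaces in $Y$. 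Classical N\'eron--Severi arguments force the $D_n$ to lie in an algebraic pencil, yielding a rational map $X \dashrightarrow \PP^1$ whose fibers are preserved by $f$---the first alternative. In the bounded case, Proposition~\ref{prop:fixedpoint} together with orientation preservation gives a vertex $v \in F$ fixed by $cent(f)$, so $cent(f)$ preserves the line $\{v\} \times L$. The image $\tau(cent(f))$ is then a cyclic subgroup of $\Z$ containing $\ell(f)\Z$; since $f$ is central with $\langle f \rangle \cap \ker\tau = \{e\}$, the finite-index subgroup $\tau^{-1}(\ell(f)\Z)$ decomposes as the internal direct product $\langle f \rangle \times K$, where $K := \ker\tau$.

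It remains to prove that $K$ is torsion, else to fall back into the first alternative. For any $k \in K$ of infinite order, the cubical-orientation argument gives that $k$ is elliptic or loxodromic on $\CC^0(X)$. If $k$ is loxodromic, its axis is transverse to $L$ and the argument of the unbounded case applied to $\langle f, k \rangle$ produces the desired rational fibration. The genuinely hard case is when $k$ is elliptic of infinite order: then $k$ fixes a vertex $[(Y,\varphi)]$, so $\tilde k := \varphi^{-1} k \varphi$ lies in $\Psaut^1(Y)$, and I would iterate the whole construction on $\CC^1(Y)$ (where $\tilde k$ acts, although $\tilde f := \varphi^{-1} f \varphi$ in general does not) to peel off torsion codimension by codimension. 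Controlling this induction---and, more concretely, realizing the infinite family of hyperplanes inside one single birational model and converting them into a genuine rational map $X \dashrightarrow Y'$ with $0 < \dim Y' < \dim X$---is the main obstacle I anticipate, and is the step that seems to genuinely use the characteristic zero hypothesis via Theorem~\ref{Blanc_Deserti}-type regularization results.
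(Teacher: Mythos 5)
The survey states this theorem without proof, citing \cite{Lonjou_Urech_cube_complexe}, so your proposal can only be measured against the argument of the original article. Your opening is correct and is the intended one: by Proposition \ref{prop_catalog_ccn} and Corollary \ref{prop_action_semisimple_hag}, $f$ is loxodromic on $\CC^0(X)$, its centralizer preserves $\Min(f)$, and one wants a translation homomorphism $\tau\colon cent(f)\to\Z$ with $\tau(f)=\ell(f)\neq 0$, whose kernel must be shown to be torsion unless $f$ preserves a fibration. The problem is that the geometry you invoke to implement this does not exist in $\CC^0(X)$. By Remark \ref{rmk_onecube}, $\CC^0(X)$ is a single infinite cube: \emph{all} of its hyperplanes are pairwise transverse. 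Consequently there is no family of pairwise disjoint hyperplanes to be skewered, no hyperplanes ``parallel to $L$'', and the flat strip decomposition $\Min(f)\cong F\times L$ --- a CAT(0)-metric statement requiring completeness and, in its cubical form, finite dimension --- is not available for the combinatorial metric on this infinite-dimensional median graph and would have to be proved, not cited. Even granting the combinatorial picture, Lemma \ref{lemma_same_hyperplane} only identifies a hyperplane with a strict-transform class of hypersurfaces: transversality (or disjointness) of hyperplanes says nothing about the corresponding hypersurfaces being disjoint in a common model, an orbit of hyperplanes under $\langle f\rangle$ gives hypersurfaces \emph{permuted} by $f$ rather than $f$-invariant ones, and ``classical N\'eron--Severi arguments'' do not turn an infinite family of hypersurfaces into a pencil. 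The correct bridge to the first alternative is the theorem of Cantat that a birational transformation of a projective variety over a field of characteristic $0$ admitting infinitely many periodic hypersurfaces permutes the fibers of a nonconstant rational map $X\dashrightarrow Y$; this is where the characteristic-zero hypothesis actually enters, not through Theorem \ref{Blanc_Deserti}-type regularization as you suggest.

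The second gap is one you flag yourself, and it is not a peripheral technicality but one of the essential cases: after the (formally correct) algebraic splitting $\tau^{-1}(\ell(f)\Z)=\langle f\rangle\times\ker\tau$, you must show that an infinite-order element $k\in\ker\tau$ forces the fibration alternative. Your treatment of loxodromic $k$ rests on the flawed ``flat'' argument above, and for elliptic $k$ of infinite order you have no argument at all: as you observe, $f$ does not act on $\CC^1$ of a model regularizing $k$, so the proposed induction on codimension cannot start. The way out is again to extract, from the interaction of $k$ with the $f$-orbits of hyperplanes (equivalently, of contracted hypersurface classes), infinitely many $f$-periodic hypersurfaces and to conclude by Cantat's theorem; no product decomposition of $\Min(f)$ is needed anywhere. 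As written, your proposal reaches neither horn of the dichotomy.
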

	
	Note that the above result is not new for the case of surfaces, and a more precise result is known. Indeed, in \cite{Blanc_Cantat_Dynamical_degrees}, it is shown that any $f\in \Bir(\PP^2)$ that induces a loxodromic isometry on $\Hh^\infty$, has 
	$\langle f\rangle$ as finite index subgroup of its centralizer $cent(f)$.
	Note that they are the only ones that are not pseudo-regularizable and that do not permute the fibers of a rational map. Hence, one could ask whether in any dimension, the torsion group $H$ in Theorem~\ref{centralizer} is always finite? Another interesting question is to understand if there exist non pseudo-regularizable elements having such centralizers.

	\begin{theorem}[\cite{Lonjou_Urech_cube_complexe}]\label{thm_degree_growth}
		Let $g\in \Bir(\PP^d)$ be an element that is not pseudo-regularizable in codimension $1$. Then the asymptotic growth of $\deg(g^n)$ is at least $\frac{1}{d+1}n$.
	\end{theorem}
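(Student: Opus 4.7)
The plan is to exploit the action of $\Bir(\PP^d)$ on the median graph $\CC^0(\PP^d)$ from Section \ref{Subsection_median_graphs_groups_isom_codim_l}, combined with a classical Jacobian estimate relating the number of contracted hypersurfaces to the degree.

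First, I would translate the hypothesis on $g$ into geometric language. By Proposition \ref{prop_catalog_ccn}\eqref{item_ellipticn}, the pseudo-regularizable in codimension $1$ elements of $\Bir(\PP^d)$ are exactly those acting elliptically on $\CC^0(\PP^d)$. Since the dominance orientation on edges is preserved by $\Bir(\PP^d)$, Corollary \ref{prop_action_semisimple_hag} forces the non-elliptic element $g$ to be loxodromic. By Proposition \ref{prop_catalog_ccn}\eqref{item_translation_length_n} the translation length equals $2\nu^1(g)$, and by Theorem \ref{prop_nu_0} the quantity $\nu^1(g)$ is a non-negative integer, so $\nu^1(g)\geq 1$. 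Combined with subadditivity of $h\mapsto\lvert\Exc^1(h)\rvert$ and Fekete's lemma, this integrality yields the pointwise lower bound
\[
\lvert\Exc^1(g^n)\rvert \;\geq\; n\,\nu^1(g)\;\geq\; n \qquad\text{for every } n\geq 1.
\]

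The second ingredient is the Jacobian bound: for every $h\in\Bir(\PP^d)$,
\[
\lvert\Exc^1(h)\rvert\;\leq\;(d+1)\bigl(\deg(h)-1\bigr).
\]
Writing $h=[f_0:\cdots:f_d]$ with $f_i$ homogeneous of degree $\delta=\deg(h)$ and coprime, the Jacobian $J=\det(\partial f_i/\partial x_j)$ is a homogeneous polynomial of degree $(d+1)(\delta-1)$. It is nonzero because the affine lift $\A^{d+1}\to\A^{d+1}$, $(x_0,\dots,x_d)\mapsto(f_0,\dots,f_d)$, is dominant and separable (the function field extension induced by $h$ is an isomorphism). Any irreducible hypersurface $E\subset\PP^d$ contracted by $h$ is a codimension-one locus along which the differential of this affine lift drops rank, so the homogeneous equation of $E$ divides $J$; summing degrees and using $\deg(E)\geq 1$ gives the bound. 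Note that the indeterminacy locus of a rational self-map of $\PP^d$ has codimension $\geq 2$, hence contributes no spurious codimension-one component.

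Applying the Jacobian bound to $h=g^n$ and combining with the first step then gives
\[
\deg(g^n)\;\geq\;\frac{\lvert\Exc^1(g^n)\rvert}{d+1}+1\;\geq\;\frac{n}{d+1}+1
\]
for every $n\geq 1$, which is the desired conclusion. The main delicate point I expect is a clean verification of the Jacobian bound, specifically that each codimension-one component of $\Exc(h)$ really appears as a factor of $J$ (rather than merely lying inside its zero locus). This reduces to a local computation showing that, away from the indeterminacy locus, the critical locus of the affine lift coincides scheme-theoretically with the pullback of the codimension-one part of $\Exc(h)$; everything else is a direct application of the geometric dictionary furnished by Proposition \ref{prop_catalog_ccn}.
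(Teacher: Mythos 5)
Your argument is essentially the paper's own proof: the paper likewise combines the Jacobian degree bound $\lvert\Exc^1(g^n)\rvert\leq(d+1)(\deg(g^n)-1)$ with the observation that, since $g$ is not pseudo-regularizable in codimension $1$, Theorem \ref{prop_nu_0} forces $\nu^{1}(g)\geq 1$ and hence linear growth of $\lvert\Exc^{1}(g^n)\rvert$. The only differences are cosmetic --- you make the pointwise bound $\lvert\Exc^1(g^n)\rvert\geq n$ explicit via Fekete's lemma and unpack the translation-length mechanism behind Theorem \ref{prop_nu_0} --- though note that the separability you invoke for the affine lift (to ensure the Jacobian is not identically zero) can genuinely fail in positive characteristic (already the standard quadratic involution has identically vanishing Jacobian in characteristic $2$), a caveat your write-up shares with the paper's sketch.
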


\begin{proof}[Sketch of the proof]
For any $n$, we denote by $\delta_n$ the degree of $g^n$, and by $(g_{n,0},g_{n,1},\dots,g_{n,d})$ the polynomials of degree $\delta_n$ that define $g^n$. Then the exceptional locus of $g^n$ is the zero locus of the Jacobian of $(g_{n,0},g_{n,1},\dots,g_{n,d})$, which is of degree at most $(\delta_n-1)(d+1)$. Hence $\lvert\Exc^1(g^n)\rvert \leq \delta_n(d+1)$. Moreover, as $g$ is not pseudo-regularizable, by Theorem \ref{prop_nu_0}, the sequence $\left(\Exc^{1}(g^n)\right)_{n\in\N}$ grows linearly, which gives the excepted lower bound for $\delta_n$.
\end{proof}
	
\subsection{Application: construction of morphisms from some Cremona groups to $\Z$}
\label{Subsection_morphisms_Z}
In this subsection, we focus on an application of the action of Cremona groups on median graph in order to obtain morphisms to $\Z$. We first, introduce a bit of context.

\medskip

\subsubsection{Introduction}
As seen in Subsection \ref{Subsection_State_art}, the first proofs that the Cremona group of rank $2$ (\cite{Cantat-Lamy}, \cite{Lonjou_non_simplicity}) is not a simple group consist in exhibiting elements that generates proper normal subgroups of the Cremona group of rank $2$. 

Another strategy, not using geometric group theoretic methods and working over non algebraically closed field, emerged. It consisted in constructing non-trivial group homomorphisms from the Cremona group of rank $2$ to direct sums or free products of copies of $\mathbb{Z}/2\mathbb{Z}$ (for instance \cite{Zimmermann_abelianisation_real} over real numbers, \cite{Lamy_Zimmermann_non_closed_fields} over some fields) using Sarkisov links (special birational maps). This strategy has been fruitful as it has led to another recent breakthrough done by Blanc, Lamy and Zimmermann \cite{Blanc_Lamy_Zimmermann_quotients}: constructing non-trivial group homomorphisms from Cremona groups of rank at least $3$ over $\mathbb{C}$ to direct sums of copies of $\mathbb{Z}/2\mathbb{Z}$.

 For the first time and using motivic methods Lin and Shinder constructed non-trivial homomorphisms from some Cremona groups of rank at least three onto $\mathbb{Z}$ \cite{Lin_Shinder}. In \cite{Blanc_Schneider_Yasinsky} the result of Lin and Shinder is reproved using the techniques developed in \cite{Blanc_Lamy_Zimmermann_quotients}, where they show the much stronger result that $\Bir(\PP^n)$ surjects to a free product of infinitely many copies of $\Z$, if $n\geq 4$ and $\kk\subset \C$. Note that when the field is perfect, such morphisms can not exist in rank $2$ as it has been proved by \cite{Lamy_Schneider_involutions} that the Cremona group of rank $2$ over a perfect field is generated by involutions.
 
 \medskip 
 
An alternative proof of the result of \cite{Lin_Shinder} has been done in \cite{GLU_morphism_Z} using geometric group theoretic methods through the action of Cremona groups on the median graphs $\CC^0(\PP^n)$ defined in Subsection \ref{Subsection_median_graphs_groups_isom_codim_l}. We follow this proof.

 We need first to introduce the following definition.
  Two subvarieties $A,B\subset X$ are \emph{Cremona equivalent} if there exists a birational transformation $f$ of $X$ such that $f(A)=B$, where $f(A)$ denotes the strict transform of $A$. Denote by $\Z[\Div(X)/_\approx]$ the free abelian group on the set of Cremona equivalence classes of subvarieties of codimension $1$ of $X$. 
 
 \begin{theorem}[\cite{GLU_morphism_Z}]\label{thm_morphism_Z}
 	There exists a non-trivial homomorphism \[\varphi\colon\Bir(\PP^n)\to\Z[\Div(\PP^n)/_\approx] \]in the following cases:
 	\begin{enumerate}
 		\item $n\geq 5$ and the field is infinite,
 		\item $n\geq 4$ and the field is of characteristic $0$,
 		\item $n=3$ and the field is either a number field, a function field over a number field, or a function field over an algebraically closed field.
 	\end{enumerate} 
 	Moreover, in these cases, $\Bir(\PP^n)$ is not generated by pseudo-regularisable elements. 
 \end{theorem}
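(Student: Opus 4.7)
My plan is to extract a signed counting homomorphism from the cubically oriented action of $\Bir(\PP^n)$ on the median graph $\CC^0(\PP^n)$ of Subsection~\ref{Subsection_median_graphs_groups_isom_codim_l}, and then to verify non-triviality by inserting known irrationality results in each regime.

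First I would set up the map. Fix the base vertex $v_0=[(\PP^n,\id)]$. By Lemma~\ref{distancel}, every geodesic from $v_0$ to $g\cdot v_0=[(\PP^n,g)]$ consists of descending edges, one per irreducible component of $\Exc^1(g)$, followed by ascending edges, one per component of $\Exc^1(g^{-1})$. By Lemma~\ref{lemma_same_hyperplane}, two such hyperplanes crossed by the geodesic lie in the same $\Bir(\PP^n)$-orbit if and only if the corresponding hypersurfaces of $\PP^n$ are Cremona equivalent. This suggests defining
\[ \varphi(g) := \sum_{D'\in\Exc^1(g^{-1})}[D'] \ - \ \sum_{D\in\Exc^1(g)}[D] \ \in\ \Z[\Div(\PP^n)/_\approx]. \]

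Next I would verify the homomorphism property. Since $\Bir(\PP^n)$ preserves the cubical orientation on $\CC^0(\PP^n)$, $\varphi(g)$ is intrinsically the orbit-wise signed count of hyperplanes separating $v_0$ from $g\cdot v_0$ (independent of the chosen geodesic), and standard cocycle arguments for orientation-preserving actions on median graphs yield $\varphi(gh)=\varphi(g)+\varphi(h)$. Concretely, one can argue directly from the decomposition
\[ \Exc^1(gh)=\Exc^1(h)\ \sqcup\ h^{-1}\bigl(\Exc^1(g)\setminus \Exc^1(h^{-1})\bigr) \]
and the symmetric one for $(gh)^{-1}$; passing to $\Z[\Div(\PP^n)/_\approx]$ and using that strict transforms $h^{-1}D$ are Cremona equivalent to $D$, the overlap $[\Exc^1(g)\cap \Exc^1(h^{-1})]$ appears with opposite signs in $[\Exc^1(gh)]$ and in $[\Exc^1((gh)^{-1})]$ and therefore cancels in the difference defining $\varphi(gh)$.

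With $\varphi$ in hand, the last assertion is automatic: a pseudo-automorphism $f$ satisfies $\Exc^1(f)=\Exc^1(f^{-1})=\emptyset$, so $\varphi(f)=0$, and by the homomorphism property every pseudo-regularizable element lies in $\ker\varphi$; hence, as soon as $\varphi$ is non-trivial, $\Bir(\PP^n)$ cannot be generated by pseudo-regularizable elements. The main obstacle is precisely this \emph{non-triviality} in each listed regime of $(n,\kk)$: one needs an explicit $g\in \Bir(\PP^n)$ whose contracted and extracted hypersurfaces are not Cremona equivalent in $\PP^n$. The recipe is to realize $g$ as a Sarkisov-type link that contracts a single hypersurface $E$ of ``rigid'' birational type while producing only rational hypersurfaces (e.g.\ hyperplanes), so that $[E]$ survives as a non-zero summand of $\varphi(g)$. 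Feasibility is governed by the irrationality inputs available in each case: the Clemens--Griffiths irrationality of smooth cubic threefolds produces such an $E$ for $n\geq 4$ in characteristic zero; higher-dimensional analogues over infinite fields cover $n\geq 5$; and arithmetic irrationality obstructions (on conic bundles and related families) over number fields, function fields over a number field, and function fields over an algebraically closed field handle $n=3$. The algebro-geometric construction of these Sarkisov links, rather than the cube-complex argument proving that $\varphi$ is a homomorphism, is the genuinely hard step.
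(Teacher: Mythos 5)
Your overall strategy is the same as the paper's: both extract the homomorphism from the cubically oriented action of $\Bir(\PP^n)$ on $\CC^0(\PP^n)$, identify orbits of hyperplanes with Cremona equivalence classes of hypersurfaces via Lemma~\ref{lemma_same_hyperplane}, observe that vertex stabilizers (hence all pseudo-regularizable elements) lie in the kernel, and outsource non-triviality to explicit examples of Cremona transformations contracting a non-rational hypersurface (the paper cites \cite{Hassett_Lai} and \cite{Lin_Shinder}; your attributions to Clemens--Griffiths-type irrationality are in the right spirit but you should cite the actual constructions, which are the real content of the three cases in the statement).

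However, the one concrete verification you offer for the homomorphism property is false. The displayed identity $\Exc^1(gh)=\Exc^1(h)\sqcup h^{-1}\bigl(\Exc^1(g)\setminus\Exc^1(h^{-1})\bigr)$ fails already for $g=h^{-1}$: the left side is empty while the right side is $\Exc^1(h)$. The point is that a divisor $D$ contracted by $h$ need not be contracted by $gh$, because $g$ may extract $h(D)$ back to a divisor when $h(D)\subset\Ind(g)$; this is exactly the interaction recorded in Remark~\ref{rmk_bspt_subadditive} in the surface case, and it is where all the cancellation you need actually lives, not in the overlap $\Exc^1(g)\cap\Exc^1(h^{-1})$ alone. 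The paper sidesteps this bookkeeping entirely: it labels an arbitrary (not necessarily geodesic) path from $o$ to $g\cdot o$ by the oriented hyperplane-orbits it crosses, proves well-definedness by Lemma~\ref{lem:PathMedian} (any two paths with the same endpoints differ by backtracks and $4$-cycle flips, both of which preserve the label in $A(\Gamma)$), gets additivity by concatenating $[o,go]\cup g[o,ho]$ and using equivariance, and only then uses Remark~\ref{rmk_onecube} (all hyperplanes of $\CC^0(\PP^n)$ are pairwise transverse, so $A(\Gamma)$ is free abelian, identified with $\Z[\Div(\PP^n)/_\approx]$) together with the explicit path of Lemma~\ref{fact_intersection} to recover your formula for $\varphi(g)$. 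Your appeal to a ``standard cocycle argument'' is gesturing at exactly this, but as written your proof replaces it with an incorrect set-theoretic decomposition; you should either run the path-label argument or prove additivity of the signed orbit-count directly from the invariance of the number of separating hyperplanes in each orbit, rather than from a formula for $\Exc^1(gh)$.
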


Note that it is indeed, a reinforcement of \cite{Lin_Shinder} 
who proved the existence of a non-trivial homomorphism (in the same cases) \[c\colon\Bir(\PP^n)\to\Z[\Bir_{n-1}], \] where $\Z[\Bir_{n-1}]$ is the free abelian group on the set of birational equivalence classes of varieties of codimension $1$.
Indeed being birationally equivalent is weaker than being Cremona equivalent. Remark also that to this day it is unknown if there exist morphisms from the Cremona group of rank $3$ over $\C$ to $\Z$.
 
 \medskip

 In both cases, the homomorphism is constructed as follows. Let $f\in\Bir(\PP^n)$. Let $H_1,\dots, H_k$ be the irreducible components of codimension 1 of the exceptional locus of $f$ and let $K_1,\dots, K_m$ be the irreducible components of codimension 1 of the exceptional locus of $f^{-1}$. The group homomorphism $\varphi\colon\Bir(X)\to \Z[\Div(X)/_\approx]$ is given by 
 \begin{equation}\label{morphism}
f\mapsto [K_1]+\dots +[K_m] -[H_1]-\dots-[H_k].
 \end{equation}
 In order to show Theorem \ref{thm_morphism_Z}, two ingredients are needed. First, prove that $\varphi$ is a group homomorphism. In \cite{Lin_Shinder} it is done through motivic methods, while in \cite{GLU_morphism_Z} it is done through geometric group theoretic methods.  Second, show that it is non-trivial. 
 Note that in order to be non trivial it is enough to show that there exists an $f$ such that $H_i$ that is not Cremona equivalent to any of the $K_j$. Such examples can be found in \cite{Hassett_Lai} for the case $\PP^n$ if $n\geq 4$, or in \cite{Lin_Shinder}, where they extended the example of Hasset and Lai and other examples.

 \medskip 
 
 The construction of this morphism comes from a more general construction of morphisms from groups acting on median graphs and preserving a cubical orientation, to right-angled Artin groups.
 
 \medskip
 
 \subsubsection{Morphisms from groups acting without inversion on median graphs to right-angled Artin groups}
 \label{subsect_morphsim_artin}
 
 Given a graph $\Gamma$, the \emph{right-angled Artin group} $A(\Gamma)$ is defined by the presentation
 \[\langle u \in V(\Gamma) \mid  uvu^{-1}v^{-1}=1 \ (\{u,v\} \in E(\Gamma)) \rangle\]
 where $V(\Gamma)$ and $E(\Gamma)$ denote the vertex- and edge-sets of $\Gamma$.  Note that if $\Gamma$ is a complete graph then $A(\Gamma)$ is a free abelian group. 
 
 \medskip
 
 The goal of this subsection is to explain how, given an arbitrary group $G$ acting on a median graph $\mathcal{G}$ and preserving a cubical orientation, we can construct a morphism from $G$ to some right-angled Artin group. For free actions, this is done in \cite{Haglund_Wise}. We follow \cite{GLU_morphism_Z} where the point of view of \cite{Genevois_Special_CC} is used.
 
 \medskip \noindent
 Let us fix a cubical orientation on $\mathcal{G}$. Let $\Gamma$ denote the graph whose vertices are the $G$-orbits of hyperplanes in $\mathcal{G}$ and whose edges connect two orbits whenever they contain transverse hyperplanes. 
 
 \begin{example}\label{ex_morphism_artin}
 For instance, the action of $\Z^2$ on its classical Cayley graph (see Figure \ref{fig:Cay_Z2}) gives a graph made of two vertices and an edge linking them. The action of the standard quadratic Cremona involution $\sigma : (x,y) \mapsto (\frac{1}{x},\frac{1}{y})$ on the convex hull of the vertices $[(\PP^2,\id)]$ and $[(\PP^2,\sigma)]$ in the blow-up graph (see Figure \ref{ex:standard}) gives a complete graph with three vertices. 
 \end{example}

 \medskip
 
 Notice that an oriented path $\alpha$ in $\mathcal{G}$ is naturally labelled by the word written over $V(\Gamma) \sqcup V(\Gamma)^{-1}$ given by the oriented edges crossed by $\alpha$. Fixing a vertex $o \in \mathcal{G}$,
 \[\Theta : \left\{ \begin{array}{ccc} G & \to & A(\Gamma) \\ g & \mapsto & \text{label of a path from $o$ to $g \cdot o$} \end{array} \right.\]
 defines a group homomorphism.

 If we come back to the examples above (Example \ref{ex_morphism_artin}), in the case of $\Z^2$ we are constructing the morphism identity, while in the case of the subgroup generated by the standard quadratic Cremona involution, we obtain a trivial morphism.
 
 \medskip 
 The fact that $\Theta$ is well-defined is a consequence of the following lemma, which is a consequence of the fact that filling the $4$-cycles of a median graph with squares yields a simply connected square complex.
 
 \begin{lemma}\label{lem:PathMedian}
 	Let $\mathcal{G}$ be a median graph and let $\alpha,\beta$ be two paths with the same endpoints. Then $\alpha$ can be transformed into $\beta$ by adding or removing backtracks and by flipping $4$-cycles.
 \end{lemma}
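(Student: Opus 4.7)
The plan is to deduce the lemma from the simple-connectedness of the square completion of $\mathcal{G}$. Let $\mathcal{G}^{(2)}$ denote the $2$-complex obtained from $\mathcal{G}$ by gluing a square $2$-cell along every $4$-cycle. By Theorem~\ref{thm:MedianVsCC}, $\mathcal{G}$ is the $1$-skeleton of a CAT(0) cube complex, and by Theorem~\ref{thm_ccCAT((0))} this cube complex is simply connected. Since attaching cells of dimension $\geq 3$ does not change $\pi_1$, the $2$-skeleton of this cube complex is also simply connected, and this $2$-skeleton coincides with $\mathcal{G}^{(2)}$.

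Given edge-paths $\alpha,\beta$ with the same endpoints, form the closed loop $\gamma := \alpha \cdot \bar{\beta}$ based at the common starting vertex. Since $\mathcal{G}^{(2)}$ is simply connected, $\gamma$ is null-homotopic. By the standard combinatorial description of null-homotopies in a polygonal $2$-complex (Van Kampen's lemma, or equivalently the edge-path group presentation of the fundamental group), any such null-homotopy is realized by a finite sequence of elementary moves on edge-paths of precisely two types: insertion or deletion of a backtrack $e \bar{e}$, and replacement of two consecutive edges lying on the boundary of some square $2$-cell by the complementary two edges of that square. These are exactly the two operations allowed in the statement, and applying this sequence to $\alpha$ yields $\beta$.

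The only step requiring care is invoking the classical fact that any null-homotopic edge-loop in a polygonal $2$-complex can be reduced to the trivial loop by finitely many such elementary moves. This requires no assumption on $\mathcal{G}$ beyond the specification of its $2$-cells (here, the $4$-cycles); it can be proved directly by decomposing a Van Kampen diagram for $\gamma$ into a finite sequence of elementary expansions and collapses of the corresponding edge-word. There is no genuine obstacle here, since the median/CAT(0) hypothesis is used only to supply the simple-connectedness of $\mathcal{G}^{(2)}$; the passage from simple-connectedness to the combinatorial conclusion is formal.
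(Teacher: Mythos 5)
Your proof is correct and follows the same route the paper indicates: the paper justifies the lemma in one sentence by appealing to the simple-connectedness of the square complex obtained by filling the $4$-cycles, and your argument simply fills in the standard details (passing to the $2$-skeleton of the CAT(0) cube complex and invoking the combinatorial description of null-homotopies via backtracks and square flips). No gap.
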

 
  Indeed, adding or removing a backtrack to a path amounts to adding or removing a subword $uu^{-1}$ or $u^{-1}u$ (where $u \in V(\Gamma)$) to its label. And flipping a $4$-cycle amounts to replacing a subword $uv$ (resp. $u^{-1}v$, $uv^{-1}$, $u^{-1}v^{-1}$) with $vu$ (resp. $vu^{-1}$, $v^{-1}u$, $v^{-1}u^{-1}$) where $\{u,v\} \in E(\Gamma)$. Thus, $\Theta$ is well-defined.
 
 \medskip 
 Let us show now that $\Theta$ is a group homomorphism. Let $g,h \in G$ be two elements and fix two oriented paths $[o,go]$ and $[o,ho]$. Then
 \begin{align*}
\Theta(gh) & = \text{label of } [o,go] \cup g [o,ho] = (\text{label of } [o,go]) \cdot (\text{label of } g[o,ho])  \\ & = (\text{label of } [o,go]) \cdot (\text{label of } [o,ho]) = \Theta(g) \Theta(h).
 \end{align*}

 Note that the group homomorphism $\Theta$ depend on the choice of the basepoint $o$. But changing the basepoint amounts to change the morphism by a conjugation. Hence,  if the right-angled Artin group $A(\Gamma)$ turns out to be abelian, the morphism $\Theta$ does not depend on the choice of the basepoint $o$. As another consequence, note also that all $g\in G$ fixing a vertex are contained in the kernel of $\Theta$.

 \medskip
 
 \subsubsection{Application to Cremona groups}
 In order to prove Theorem \ref{thm_morphism_Z}, it remains to construct the morphism of Equation \ref{morphism}.
 
 \begin{proof}
Consider the action of $\Bir(\PP^n)$ on the oriented median graph $\CC^0(\PP^n)$ constructed in Section \ref{Section_median_graphs_higher_ranks}. By Remark \ref{rmk_onecube}, all the hyperplanes are pairwise transverse, so the graph $\Gamma$ constructed in Subsection~\ref{subsect_morphsim_artin} is complete and the group $A(\Gamma)$ is therefore free abelian.

Fix the vertex $o=([\PP^n,\id])$ in $\mathcal{C}^0(\PP^n)$. By Lemma \ref{fact_intersection}, we obtain a path from $o$ to $f(o)$ defined by the vertices 
\[[\PP^n,\id], [\PP^n\setminus K_1,\id],\dots,[\PP^n\setminus \{K_1\cup\dots\cup K_m\},\id]=[\PP^n\setminus \{H_1\cup\dots\cup H_k\},f],\dots, [\PP^n,f].\]
This shows that the image $\Theta(\Bir(\PP^n))$ is in fact contained in $A(\Gamma')$, where $\Gamma'\subset \Gamma$ is the complete subgraph whose vertices are represented by hyperplanes of the form $[(\PP^n,\psi, H)]$, with $\psi\in \Bir(\PP^n)$. Moreover, by Fact \ref{lemma_same_hyperplane} two hyperplanes $[(\PP^n,\psi, H)]$ and $[(\PP^n,\psi', H')]$ represent the same vertex if and only if $H$ and $H'$ are Cremona equivalent, hence we can identify the vertices of $\Gamma'$ with the Cremona equivalence classes of hypersurfaces in $\PP^n$ and the group $A(\Gamma')$ with $\Z[\Div(\PP^n)/_\approx]$. The path $[o, f(o)]$ crosses the hypersurface classes $[K_j]$'s with positive sign and the hypersurface classes $[H_i]$'s with negative sign. This achieves the proof.

Note that every pseudo-regularisable element fixes a vertex in $\mathcal{C}^0(\PP^n)$. Hence, all pseudo-regularisable elements are in the kernel of $\Theta$. Therefore, they cannot generate $\Bir(\PP^n)$ in this case.
 \end{proof}

\bibliographystyle{amsalpha}
\bibliography{bibliography_lonjou}

\end{document}